\newif\ifdebug                                                      %
\newcommand{\printname}[1]
   {\smash{\makebox[0pt]{\hspace{-.5in}\raisebox{-8pt}{\tiny #1}}}}
\newcommand{\labell}[1] {\ifdebug {\label{#1}\printname{#1}}
                        \else    {\label{#1}} \fi}
\numberwithin{equation}{section}
\newtheorem{thm}[equation]{Theorem}
\newtheorem{prop}[equation]{Proposition}
\newtheorem{Proposition}[equation]{Proposition}
\newtheorem{lemma}[equation]{Lemma}
\newtheorem*{lemma*}{Lemma}
\newtheorem{cor}[equation]{Corollary}
\theoremstyle{definition}
\newtheorem{defi}[equation]{Definition}
\theoremstyle{remark}
\newtheorem{Remark}[equation]{Remark}
\newtheorem{Remark*}[equation]{Remark}
\newtheorem{rmk}[equation]{Remark}
\newtheorem{ex}[equation]{Example}
\def\eor{\unskip\ \hglue0mm\hfill$\diamond$\smallskip\goodbreak}
\def\eoe{\unskip\ \hglue0mm\hfill$\between$\smallskip\goodbreak}
\def\eod{\unskip\ \hglue0mm\hfill$\diamond$\smallskip\goodbreak}
\newcommand{\bb}[1]{\mathbb{#1}}
\newcommand{\wt}[1]{\widetilde{#1}}
\DeclareMathOperator{\ind}{ind}
\DeclareMathOperator{\gr}{gr}
\DeclareMathOperator{\im}{im}
\DeclareMathOperator{\Cont}{Cont}
\DeclareMathOperator{\Sp}{Sp}
\DeclareMathOperator{\Id}{Id}
\DeclareMathOperator{\dist}{dist}
\DeclareMathOperator{\Giv}{Giv}
\def \Z {{\mathbb Z}}
\def \R {{\mathbb R}}
\def \C {{\mathbb C}}
\def \RP {{\mathbb R}{\mathbb P}}
\def \CP {{\mathbb C}{\mathbb P}}
\def \calE {{\mathcal E}}
\def \calS {{\mathcal S}}
\def \zt {{\mathbb Z}_2}
\def \zk {{{\mathbb Z}_k}}
\def \ssminus {\smallsetminus}
\def \ol {\overline}
\def \ul {\underline}
\def \calB {{\mathcal B}}
\def \calC {{\mathcal C}}
\def \calO {{\mathcal O}}
\def \tX {\wt{X}}
\def \tY {\wt{Y}}
\def \tA {\wt{A}}
\def \del {\partial}
\begin{document}


\title
[Givental's non-linear Maslov index on lens spaces]
{Givental's non-linear Maslov index on lens spaces}

\author{Gustavo Granja}
\address{Center for Mathematical Analysis, Geometry and Dynamical Systems, Instituto Superior Técnico,
Universidade de Lisboa, Av. Rovisco Pais, 1049-001 Lisboa, Portugal}
\email{ggranja@math.tecnico.ulisboa.pt}

\author{Yael Karshon}
\address{Department of Mathematics, University of Toronto, 40 St. George Street, Toronto, Ontario, M5S 2E4 Canada}
\email{karshon@math.toronto.edu}

\author{Milena Pabiniak}
\address{Mathematisches Institut, Universit\"{a}t zu K\"{o}ln, Cologne, Germany}
\email{pabiniak@math.uni-koeln.de}

\author{Sheila Sandon}
\address{Universit\'e de Strasbourg, CNRS, IRMA UMR 7501, F-67000 Strasbourg, France}
\email{sandon@math.unistra.fr}

\maketitle

\begin{abstract}
\noindent 
Givental's \emph{non-linear Maslov index}, constructed in 1990,
is a quasimorphism 
on the universal cover of the identity component of the contactomorphism group 
of real projective space. 
This invariant was used by several authors
to prove contact rigidity phenomena 
such as orderability, 
unboundedness of the discriminant and oscillation metrics,
and a contact geometric version of the Arnold conjecture.
In this article we give an analogue for lens spaces
of Givental's construction and its applications.
\end{abstract}

\setcounter{tocdepth}{1}
\tableofcontents


\section{Introduction}
\labell{section: introduction}

A \emph{discriminant point} of a contactomorphism $\phi$
of a co-oriented contact manifold $(V,\xi)$
is a point $p$ of $V$ such that $\phi(p) = p$
and $(\phi^{\ast} \alpha)_p = \alpha_p$
for some (hence any) contact form $\alpha$ for $\xi$;
the \emph{discriminant} of $(V,\xi)$
is the space of those contactomorphisms
that have at least one discriminant point.
Givental's non-linear Maslov index \cite{Givental - Nonlinear Maslov index}
assigns to every contact isotopy $\{\phi_t\}$ of real projective space 
$\mathbb{RP}^{2n-1}$
with its standard contact structure
an integer $\mu (\{\phi_t\})$
that is defined using generating functions and can be interpreted
as an intersection index of the path $\{\phi_t\}$ in the contactomorphism group
with (a certain subspace of) the discriminant.
This number only depends on the homotopy class of $\{\phi_t\}$
with fixed endpoints,
and thus defines a map
$$
\mu \colon 
\widetilde{\text{Cont}}_0(\mathbb{RP}^{2n-1}) \rightarrow \mathbb{Z}
$$
on the universal cover of the identity component
of the contactomorphism group.
It follows from \cite[Theorem 9.1]{Givental - Nonlinear Maslov index}
that $\mu$ is a quasimorphism,
i.e.\ a homomorphism up to a bounded error
(cf.\ Ben Simon \cite{Ben Simon}).
While quasimorphisms on Hamiltonian groups
were studied by several authors,
starting with Biran, Entov and Polterovich \cite{BEP}
and Entov and Polterovich \cite{EP - Calabi qm},
Givental's non-linear Maslov index
and its reductions studied by Borman and Zapolsky \cite{BZ}
are the only known non-trivial quasimorphisms on contactomorphism groups.

In \cite{Givental - Nonlinear Maslov index} Givental also studied
intersections with the discriminant in two other related settings.
One is a space of Legendrian submanifolds of $\mathbb{RP}^{2n-1}$,
with discriminant given
by those Legendrians that intersect a fixed one.
The second setting
(that was also studied by Th\'eret \cite{Theret - Rotation numbers})
is the Hamiltonian group
of complex projective space $\mathbb{CP}^{n-1}$
with the Fubini--Study symplectic form;
in this case the discriminant is formed by
Hamiltonian diffeomorphisms of $\mathbb{CP}^{n-1}$
that lift to contactomorphisms of $S^{2n-1}$ having discriminant points.
The applications of the non-linear Maslov index
that were discussed already in \cite{Givental - Nonlinear Maslov index}
include a proof of the Arnold conjecture
for fixed points of Hamiltonian diffeomorphisms
and for Lagrangian intersections in $\mathbb{CP}^{n-1}$,
results about existence of Reeb chords between Legendrians 
in $\mathbb{RP}^{2n-1}$ that are Legendrian isotopic to each other,
and a proof of the chord and Weinstein conjectures for $\mathbb{RP}^{2n-1}$.

After the work of Givental,
discriminant points appeared again more recently
in proofs (based on generating functions)
of other contact rigidity results.
In \cite{B} Bhupal used the rigidity of discriminant points
to define a partial order
on the identity component of the group
of compactly supported contactomorphisms of
the standard contact Euclidean space $\mathbb{R}^{2n+1}$.
Elaborating on the work of Bhupal,
the fourth author obtained a new proof
of the contact non-squeezing theorem of Eliashberg, Kim and Polterovich 
\cite{EKP},
and a construction of an integer-valued bi-invariant metric
on the identity component of the group
of compactly supported contactomorphisms of
$\mathbb{R}^{2n} \times S^1$ 
(\cite{S - Contact homology capacity and nonsqueezing}
and  \cite{S - metric} respectively).
In these works the role played by
discriminant and translated points is made more explicit,
and appears to be similar to the one
described in \cite{Givental - Nonlinear Maslov index}.
Recall from \cite{S - Contact homology capacity and nonsqueezing, S - Iterated}
that a point $p$ of a contact manifold $(V,\xi)$
is said to be a \emph{translated point}
of a contactomorphism $\phi$
with respect to a contact form $\alpha$ for $\xi$
if $p$ is a discriminant point of $\varphi^{\alpha}_{-\eta} \circ \phi$
for some real number $\eta$ (called the \emph{time-shift}),
where $\varphi^{\alpha}_t $ denotes the Reeb flow.
Consider the contact product
$\big( V \times V \times \mathbb{R} \,,\, \ker (e^{\theta}\alpha_1 - \alpha_2) \big)$,
where $\theta$ is the $\R$-coordinate
and $\alpha_1$ and $\alpha_2$ the pullbacks of $\alpha$
by the projection of $V \times V \times \mathbb{R}$
on the first and second factor respectively.
Denote by $\gr (\phi)$ the Legendrian graph of $\phi$,
i.e.\ the Legendrian submanifold of $V \times V \times \mathbb{R}$
defined by
$$
\gr(\phi) = \big\{ \big( p , \phi(p) , g(p) \big) \;\lvert\; p \in V \big\}
$$
where $g$ is the function determined by the relation $\phi^{\ast} \alpha = e^g \alpha$.
Discriminant points of $\phi$ correspond
to intersections between $\text{gr}(\phi)$ and the diagonal $\Delta \times \{0\} = \text{gr}(\text{id})$;
since the Reeb vector field of $e^{\theta} \alpha_1 - \alpha_2$ is $(0, R_{\alpha}, 0)$,
where $R_{\alpha}$ denotes the Reeb vector field of $\alpha$,
translated points of $\phi$
correspond to Reeb chords
between $\text{gr}(\phi)$ and the diagonal $\Delta \times \{0\}$.
By Weinstein's theorem, if $\phi$ is $\mathcal{C}^1$-close to the identity
then $\text{gr}(\phi)$ can be identified
with the $1$-jet $j^1f \subset J^1 ( \Delta \times \{0\} )$ of a function $f$ on $V \cong \Delta \times \{0\}$.
Critical points of $f$ correspond
to Reeb chords in $J^1 ( \Delta \times \{0\} )$ between the zero section and $j^1f$,
hence\footnote{
Recall that this correspondence is in general not 1--1.
Every Reeb chord in $J^1 ( \Delta \times \{0\} )$ between the zero section and $j^1f$
corresponds to a Reeb chord in $V \times V \times \mathbb{R}$
between $\Delta \times \{0\}$ and $\gr (\phi)$,
but there might be Reeb chords in $V \times V \times \mathbb{R}$
between $\Delta \times \{0\}$ and $\gr (\phi)$
that are not contained in the Weinstein neighborhood of $\Delta \times \{0\}$,
and thus do not correspond to a Reeb chord in $J^1 ( \Delta \times \{0\} )$
between the zero section and $j^1f$.}
to Reeb chords in $V \times V \times \mathbb{R}$ between $\Delta \times \{0\}$ and $\gr (\phi)$,
hence to translated points of $\phi$.
Moreover, critical points of critical value zero
correspond to discriminant points.
It follows from this local description that 
for such a $\phi$ translated points always exists,
while discriminant points can be removed by a small perturbation.
On the other hand, 
Givental's non-linear Maslov index for $\mathbb{RP}^{2n-1}$
and the bi-invariant metric for $\mathbb{R}^{2n} \times S^1$
defined in \cite{S - metric}
show that there exist contact isotopies
that must intersect the discriminant at least a certain number of times,
and these intersections cannot be removed
by perturbing the contact isotopy
in the same homotopy class with fixed endpoints.
This rigidity of discriminant points is also the main ingredient
in \cite{S - Contact homology capacity and nonsqueezing}
for the construction of a contact capacity for domains of $\mathbb{R}^{2n} \times S^1$,
and for the proof of the contact non-squeezing theorem.
Elaborating on this idea,
the fourth author, in collaboration with Colin,
defined \cite{CS - Discriminant metric} bi-invariant (pseudo)metrics
(the \emph{discriminant metric} and the \emph{oscillation pseudometric})
on the universal cover of the identity component
of the contactomorphism group of any compact contact manifold,
and, using results from \cite{Givental - Nonlinear Maslov index},
proved that both are unbounded
in the case of $\mathbb{RP}^{2n-1}$.
Previously, Givental's non-linear Maslov index
was also used by Eliashberg and Polterovich \cite{EP - Partially ordered groups}
to show that $\mathbb{RP}^{2n-1}$ is \textit{orderable},
i.e.\ it does not admit any positive contractible loop of contactomorphisms\footnote{
Recall from \cite{EP - Partially ordered groups}
that a contact isotopy of a co-oriented contact manifold
is said to be {\it positive (non-negative)}
if it moves every point in a direction positively transverse (or tangent)
to the contact distribution,
and that a compact co-oriented contact manifold $(V,\xi)$
is said to be \emph{orderable} if
the relation $\leq$ on $\widetilde{\Cont}_0(V,\xi)$
defined by posing
$$
[\{\phi_t\}] \leq [\{\psi_t\}]
\; \text{ if } \; [\{\psi_t\}] \cdot [\{\phi_t\}]^{-1} 
\text{ can be represented by a non-negative contact isotopy}
$$
is a partial order.
By \cite[Criterion 1.2.C]{EP - Partially ordered groups}, this is equivalent to asking that $(V,\xi)$
does not admit any positive contractible loop of contactomorphisms.
Recall also that
the oscillation pseudometric on a compact co-oriented contact manifold $(V,\xi)$
is a metric if and only if $(V,\xi)$ is orderable \cite{CS - Discriminant metric}.},
and by the fourth author \cite{S - Morse estimate for translated points}
to prove that any contactomorphism of $\mathbb{RP}^{2n-1}$
isotopic to the identity has at least $2n$ translated points.

In the present article we give an analogue for lens spaces
of the construction of Givental's non-linear Maslov index
and its applications.
This is the first step of a more general program:
study the contact rigidity phenomena mentioned above
in the case of prequantizations of symplectic toric manifolds,
by extending to the contact case techniques from \cite{Givental - toric}.
See also \cite{Tervil} for work in this direction,
and \cite{AF} for other work related to the non-linear Maslov index.

Although $\mathbb{RP}^{2n-1}$ and $S^{2n-1}$
are both prequantizations of $\mathbb{CP}^{n-1}$,
and the former is the quotient of the latter
by the antipodal $\zt$-action,
$S^{2n-1}$ (for $n>1$) is not orderable \cite{EKP}
and does not admit non-trivial quasimorphisms
and unbounded bi-invariant metrics \cite{FPR}.
In this work we prove that lens spaces
(in particular those that are prequantizations of $\mathbb{CP}^{n-1}$)
behave rather as $\mathbb{RP}^{2n-1}$:
in spite of the fact that the ring structure of the cohomology
of general lens spaces is different
from that of projective space,
which affects the proofs of some of the key properties
of the topological invariant that is used in the construction,
we show that it is still possible to define the non-linear Maslov index
and use it to extend to lens spaces
the applications described above.

Let $k \geq 2$ be an integer and $\underline{w} = (w_1, \cdots, w_{n})$
an $n$-tuple of positive integers that are relatively prime to~$k$.
The lens space $L_k^{2n-1}(\underline{w})$ is
the quotient of the unit sphere $S^{2n-1}$ in $\C^{n}$ 
by the free $\Z_k$-action generated by the map
\begin{equation}\labell{equation: action}
(z_1,\cdots,z_n) \mapsto 
( e^{ \frac{2\pi i}{k} \cdot w_1} z_1, \cdots, e^{ \frac{2\pi i}{k} \cdot w_n} z_n ) \,.
\end{equation}
We equip the lens space $L_k^{2n-1}(\underline{w})$
with its standard contact structure,
i.e.\ the kernel of the contact form whose pullback to $S^{2n-1}$
is equal to the pullback from $\mathbb{R}^{2n}$
of the 1-form $\sum_{j=1}^n ( x_j dy_j - y_j dx_j)$.
We denote by $\{r_t\}$ the Reeb flow on $L_k^{2n-1}$
with respect to this contact form.

Throughout the article, when the weights are not relevant in the discussion
we denote the lens space $L_k^{2n-1}(\underline{w})$ simply by $L_k^{2n-1}$.
As usual, we see the universal cover $\widetilde{\Cont}_0(L_k^{2n-1})$
of the identity component of the contactomorphism group
as the space of contact isotopies starting at the identity
modulo smooth 1-parameter families with fixed endpoints;
the group operation is given by
$[\{\phi_t\}] \cdot [\{\psi_t\}] = [\{\phi_t \circ \psi_t\}]$.

Our main result is the following theorem.

\begin{thm}\labell{theorem: main}
For any lens space $L_k^{2n-1}$ with its standard contact structure
there is a map 
$$
\mu \colon \widetilde{\Cont}_0(L_k^{2n-1}) \rightarrow \mathbb{Z}
$$
such that $\mu \big([\{r_{2 \pi l t}\}_{t \in [0,1]}] \big) = 2nl$ for every integer $l$, 
and with the following properties:

\begin{enumerate}
\renewcommand{\labelenumi}{(\roman{enumi})}

\item 
\emph{(Quasimorphism.)} 
For any two elements $[\{\phi_t\}]$ and $[\{\psi_t\}]$ 
of $\widetilde{\Cont}_0(L_k^{2n-1})$ we have
$$
\left| \, \mu\big([\{\phi_t\}] \cdot [\{\psi_t\}]\big) - \mu\big([\{\phi_t\}]\big) - \mu\big([\{\psi_t\}]\big) \, \right| \leq 2n + 1\,.
$$

\item \emph{(Positivity.)} 
If $\{\phi_t\}$ is a non-negative contact isotopy
then $\mu \big( [\{\phi_t\}] \big) \geq 0$.
If $\{\phi_t\}$ is a positive contact isotopy
then $\mu \big( [\{\phi_t\}] \big) > 0$. 

\item \emph{(Relation with discriminant points.)} 
Let $\{\phi_t\}_{t \in [0,1]}$ be a contact isotopy of $L_k^{2n-1}$,
and $[t_0,t_1]$ a subinterval of $[0,1]$.
If $\mu \big( [\{\phi_t\}_{t \in [0,t_0]}] \big) \neq \mu \big( [\{\phi_t\}_{t \in [0,t_1]}] \big)$
then there is $\underline{t} \in [t_0,t_1]$ such that
$\phi_{\underline{t}}$ belongs to the discriminant.
If there is only one such $\underline{t}$
then the following holds:
if $\phi_{\underline{t}}$ has only finitely many discriminant points then
$$
\left| \, \mu \big( [\{\phi_t\}_{t \in [0,t_0]}] \big) - \mu \big( [\{\phi_t\}_{t \in [0,t_1]}] \big) \, \right| \leq 2 \,;
$$
if all discriminant points of $\phi_{\underline{t}}$
are non-degenerate\footnote{
Recall from \cite{S - Morse estimate for translated points} that a 
discriminant point $p$ of a contactomorphism $\phi$
of a contact manifold $\big(V,\xi = \text{ker}(\alpha)\big)$
is said to be \textit{non-degenerate}
if there are no vectors $X \in T_pV$ such that
$\phi_{\ast}X = X$ and $dg(X) = 0$,
where $g$ is the function defined by
$\phi^{\ast} \alpha = e^g \alpha$.
A translated point of $\phi$ of time-shift $\eta$
is said to be \emph{non-degenerate}
if it is a non-degenerate discriminant point
of $\varphi^{\alpha}_{-\eta} \circ \phi$
(where $\varphi_t^{\alpha}$ denotes the Reeb flow).}
then
$$
\left| \, \mu \big( [\{\phi_t\}_{t \in [0,t_0]}] \big) - \mu \big( [\{\phi_t\}_{t \in [0,t_1]}] \big) \, \right| \leq 1 \,.
$$
\end{enumerate}
\end{thm}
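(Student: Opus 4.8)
The plan is to construct $\mu$ by mimicking Givental's construction, using a suitable $S^1$-equivariant (or $\Z_k$-equivariant) cohomological index in place of the $\Z_2$-index available on $\RP^{2n-1}$. First I would lift a contact isotopy $\{\phi_t\}$ of $L_k^{2n-1}$ to a $\Z_k$-equivariant contact isotopy $\{\tilde\phi_t\}$ of $S^{2n-1}$, and, following the generating function machinery, associate to $\tilde\phi_t$ a $\Z_k$-invariant generating function $F_t$ quadratic at infinity on some finite-dimensional space $E = \C^N$ (or its real points) carrying a linear $\Z_k$-action, so that the discriminant points of $\phi_t$ correspond to critical points of $F_t$ with critical value $0$. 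The sublevel sets $\{F_t \le 0\}$ then give a path of $\Z_k$-spaces, and $\mu(\{\phi_t\})$ should be defined as the jump, as $t$ runs from $0$ to $1$, of an ideal-valued or $\Z$-valued index built from $H^*_{\Z_k}$ (with appropriate coefficients, e.g.\ $\zk$ when $k$ is prime, or $\mathbb{F}_p$ for $p \mid k$, or $\Z$ after inverting the right primes) of $\{F_t \le c\}$ relative to $\{F_t \le -c\}$ for large $c$; the normalization $\mu([\{r_{2\pi l t}\}]) = 2nl$ is forced because the Reeb flow rotates $S^{2n-1} \subset \C^n$ and the induced map on the relevant equivariant cohomology shifts degree by the $n$-dimensional (complex) representation, giving a contribution of $2n$ per full turn. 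The well-definedness on $\widetilde{\Cont}_0$ — independence of the choice of generating function and invariance under homotopy with fixed endpoints — follows from the standard uniqueness-up-to-stabilization-and-fiber-preserving-diffeomorphism properties of generating functions, exactly as in Givental, since these operations do not change the equivariant homotopy type of the relevant pairs; I would assume the generating function package and its equivariant refinement as developed earlier in the paper.

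The three itemized properties then come out of this construction much as in the projective case. For (i), the quasimorphism property, the key is that for a composition $\phi_t \circ \psi_t$ one can take a generating function which is (stably) a ``fiber sum'' of those for $\phi_t$ and $\psi_t$, so the index of the composite sublevel sets differs from the sum of the two indices by a controlled amount; the bound $2n+1$ reflects the total dimension $2n$ of the equivariant cohomology of the model (a lens space / the fixed-point data of the $\Z_k$-action on $\C^n$) plus one, precisely as the $\RP$ case gives $2n-1+1 = 2n$ adjusted for the different cohomology ring. This is where the difference between $L_k^{2n-1}$ and $\RP^{2n-1}$ bites: the cohomology ring of a general lens space is not generated in degree one (there are both the degree-$1$ torsion class and the degree-$2$ class, with the ring structure twisted by the weights), so I expect the main obstacle to be re-proving the analogue of Givental's ``the index increases by at most the length of the relevant module'' estimate — one must identify the correct filtration of $H^*_{\Z_k}$ of the model and check that multiplication by the generator(s) behaves well enough to run the spectral-sequence / subadditivity argument. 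For (ii), positivity, a non-negative contact isotopy produces a monotone (non-decreasing) family of sublevel sets up to the relevant equivalence, so the index is monotone in $t$ and $\mu \ge 0$; strict positivity uses that a strictly positive isotopy must push the sublevel set across at least one ``critical level'' coming from the $\Z_k$-essential cohomology, forcing a strict jump, again paralleling the $\RP$ argument but now invoking the lens-space version of the non-vanishing of the relevant equivariant characteristic class.

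Finally, for (iii), the relation with discriminant points, the statement is essentially a local Morse-theoretic analysis at a single ``bad'' time $\underline t$. Away from the discriminant the sublevel-set pair $(\{F_t \le c\}, \{F_t \le -c\})$ does not change $\Z_k$-equivariant homotopy type (no critical value crosses $0$), so $\mu$ is locally constant on $[0,1] \setminus \{\text{bad times}\}$; hence a jump between $t_0$ and $t_1$ forces some $\underline t \in [t_0,t_1]$ with $\phi_{\underline t}$ in the discriminant. When $\phi_{\underline t}$ has finitely many discriminant points, the change in $\mu$ across $\underline t$ is bounded by the total change in the equivariant cohomology of the sublevel set as finitely many critical points of $F_{\underline t}$ at level $0$ are pushed below $0$; since each such critical point contributes a shift bounded in terms of the index data, and the $\Z_k$-equivariant cohomology of the model is ``$2n$-dimensional'' in the appropriate sense, the global jump is at most $2$ (it can be $\pm 1$ from each side of the level). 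Under the non-degeneracy hypothesis one gets the sharper bound $\le 1$: a non-degenerate discriminant point gives a local Morse datum whose contribution to the equivariant cohomology is a single shifted generator, and, as in the Morse-theoretic refinement used for translated points in \cite{S - Morse estimate for translated points}, this forces the index to jump by at most $1$ as $\underline t$ is crossed. I expect the routine parts here to be the transversality/genericity statements needed to make ``finitely many discriminant points'' and ``non-degenerate'' usable, and the genuinely content-bearing part to again be the lens-space cohomology bookkeeping from property (i).
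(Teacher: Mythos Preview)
Your outline matches the paper's strategy, but two technical choices would fail as written. First, the lift of a contactomorphism of $L_k^{2n-1}$ to $\R^{2n}$ is a \emph{conical} symplectomorphism ($\R_{>0}$-equivariant, never compactly supported), so generating functions quadratic at infinity are unavailable; the paper instead builds \emph{conical} generating functions $F_t \colon \R^{2n(N+1)} \to \R$ ($\zk$-invariant, homogeneous of degree $2$) via an explicit composition formula, and these descend to functions $f_t$ on the lens space $L_k^{2n(N+1)-1}$. The index is then $\ind(\{f_t \le 0\})$, the $\zk$-dimension of the image of $\check{H}^*(L_k^{2n(N+1)-1};\zk) \to \check{H}^*(\{f_t\le 0\};\zk)$ --- a single sublevel of a function on a compact lens space, not a relative pair $(\{F_t\le c\},\{F_t\le -c\})$ for large $c$ --- and $\mu = \ind(F_0) - \ind(F_1)$.

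Your diagnosis of where the lens-space difficulty lies is right, but the mechanisms in (i) and (iii) are more specific than you sketch. For (i): the generating function $G_1\,\sharp\, F_1$ for a composition agrees with the direct sum $G_1\oplus F_1$ on a codimension-$2n$ lens subspace, and iterating the \emph{Lefschetz property} (each codimension-$2$ slice drops the index by at most $2$) gives $|\ind(G_1\,\sharp\, F_1)-\ind(G_1\oplus F_1)|\le 2n$; the extra $+1$ is \emph{join quasi-additivity}, $|\ind(A\ast_{\zk}B)-\ind(A)-\ind(B)|\le 1$, which the paper proves by constructing a join operation on $\zk$-equivariant singular homology. This inequality is sharp for $k>2$ (the homology join of two even-degree generators of $H_*(L_k^\infty;\zk)$ vanishes), so the defect genuinely is $2n+1$ rather than $2n$; it is a join computation, not a spectral-sequence estimate. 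For (iii): the bound $\le 2$ does not come from ``$\pm 1$ on each side'' but from subadditivity. The critical set $C$ of $f_{\underline t}$ at value $0$ embeds (up to isotopy) as $\Delta(\phi_{\underline t})\subset L_k^{2n-1}$, so $\ind(C)=\ind(\Delta(\phi_{\underline t}))$; a gradient-flow retraction plus subadditivity of the index give the jump bound $\ind(\Delta(\phi_{\underline t}))+1$, which equals $2$ when the discriminant set is finite (index $1$). The non-degenerate bound $\le 1$ is obtained by replacing subadditivity with a handle-attachment count.
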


The map $\mu \colon \widetilde{\Cont}_0(L_k^{2n-1}) \rightarrow \mathbb{Z}$
(the \emph{non-linear Maslov index})
is defined at the beginning of Section \ref{sec:Maslov index},
using the material that is developed
in Sections \ref{sec:generating functions} and \ref{section: cohomological index}.
The quasimorphism property is proved in Proposition \ref{p:quasimorphism property},
positivity in Proposition \ref{proposition: positivity}
and the relation with discriminant points in Proposition \ref{discriminant points}.
The calculation for the Reeb flow is presented
in Example \ref{Reeb flow}.

Theorem \ref{theorem: main} allows to extend
the applications of the non-linear Maslov index to the case of lens spaces,
giving the following results (see Section \ref{section: applications}).

\begin{cor}\labell{cor1}
Consider a lens space $L_k^{2n-1}$ with its standard contact structure.
Then:
\begin{enumerate}
\renewcommand{\labelenumi}{(\roman{enumi})}
\item $L_k^{2n-1}$ is orderable.
\item The discriminant and oscillation metrics
on $\widetilde{\Cont}_0(L_k^{2n-1})$
are unbounded.
\item Any contactomorphism of $L_k^{2n-1}$ contact isotopic to the identity
has at least $n$ translated points with respect to the standard contact form.
Moreover, if all translated points are non-degenerate
then their number is at least $2n$.
\item Any contact form on $L_k^{2n-1}$ 
defining the standard contact structure
has at least one closed Reeb orbit.
\end{enumerate}
\end{cor}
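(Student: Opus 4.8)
The plan is to deduce all four statements from Theorem \ref{theorem: main}, following for orderability and the metrics the arguments used for $\RP^{2n-1}$ in \cite{EP - Partially ordered groups, CS - Discriminant metric, Givental - Nonlinear Maslov index}. For (i) I would argue by contradiction: a positive contractible loop $\{\phi_t\}$ represents the identity element of $\widetilde{\Cont}_0(L_k^{2n-1})$, so $\mu([\{\phi_t\}]) = \mu(\mathrm{id}) = 0$ (the value $\mu(\mathrm{id}) = 0$ being the case $l=0$ of the normalization in Theorem \ref{theorem: main}), whereas Theorem \ref{theorem: main}(ii) forces $\mu([\{\phi_t\}]) > 0$; orderability then follows from \cite[Criterion 1.2.C]{EP - Partially ordered groups}. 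For (ii), Theorem \ref{theorem: main}(iii) says that $\mu$ changes by at most $2$ at each crossing of the discriminant, hence $\mu$ is $2$-Lipschitz with respect to the discriminant metric; since $\mu([\{r_{2\pi l t}\}]) = 2nl$, that metric is unbounded. For the oscillation pseudometric I would pass to the homogenization $\overline\mu(g) := \lim_{N\to\infty}\mu(g^N)/N$, which exists and is a homogeneous quasimorphism by Theorem \ref{theorem: main}(i), is monotone by (ii) (the $N$-fold products of a non-negative isotopy with itself being non-negative), and has $\overline\mu([\{r_{2\pi t}\}]) = 2n \neq 0$; as in \cite{CS - Discriminant metric, Givental - Nonlinear Maslov index}, such a quasimorphism bounds the oscillation pseudometric from below, which is therefore unbounded as well.

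For (iv), write a contact form for the standard structure as $\alpha = e^h\alpha_0$, with $\alpha_0$ the standard form, and let $\{\varphi^\alpha_t\}$ be its Reeb flow; since $\alpha_0(R_\alpha) = e^{-h} > 0$ this is a positive contact isotopy. The one lemma needed is that for any constant $c$ with $0 < c < \min e^{-h}$ the isotopy $\{r_{-ct}\circ\varphi^\alpha_t\}$ is again positive, which is an elementary computation with generating vector fields. Writing $\varphi^\alpha_t = r_{ct}\circ(r_{-ct}\circ\varphi^\alpha_t)$, positivity and the quasimorphism property of $\mu$ give
\[
\mu\big([\{\varphi^\alpha_t\}_{t\in[0,T]}]\big) \,\ge\, \mu\big([\{r_{ct}\}_{t\in[0,T]}]\big) - (2n+1) \,\ge\, 2n\lfloor cT/2\pi\rfloor - 2(2n+1)\,,
\]
where the last inequality uses $[\{r_{ct}\}_{t\in[0,T]}] = [\{r_{2\pi\lfloor cT/2\pi\rfloor s}\}_{s\in[0,1]}]\cdot[\{r_{\rho s}\}_{s\in[0,1]}]$ for the appropriate $\rho\in[0,2\pi)$ together with $\mu([\{r_{2\pi l s}\}]) = 2nl$ and positivity once more. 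In particular $\mu([\{\varphi^\alpha_t\}_{t\in[0,T]}]) \to +\infty$ as $T\to\infty$. On the other hand, if $\alpha$ had no closed Reeb orbit, then $\varphi^\alpha_\tau$ would have no fixed point (a fixed point of $\varphi^\alpha_\tau$ lies on a closed Reeb orbit), hence would lie outside the discriminant, for every $\tau > 0$; the contrapositive of Theorem \ref{theorem: main}(iii) would then force $\tau \mapsto \mu([\{\varphi^\alpha_t\}_{t\in[0,\tau]}])$ to be constant on $(0,\infty)$ — a contradiction.

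The remaining statement (iii) needs more than the integer $\mu$ and is, I expect, the main obstacle. Following the Morse-theoretic scheme of \cite{S - Morse estimate for translated points}, the idea is to refine $\mu$ into a family of spectral numbers $c(u,\phi)$ indexed by the classes $u \in H^*(L_k^{2n-1};\Z_k)$, obtained by applying the cohomological index of Section \ref{section: cohomological index} to the sublevel sets of a generating function of $\phi$; the property to establish is that a coincidence $c(u,\phi) = c(u',\phi)$ for classes $u'$ obtained from $u$ by cup product with the degree-$1$ or the degree-$2$ generator forces a translated point of $\phi$ at the common level. A Lusternik--Schnirelmann argument over $H^*(L_k^{2n-1};\Z_k)$ — whose multiplicative structure, unlike that of $\RP^{2n-1}$, is not generated in degree one — then yields at least $n$ geometrically distinct translated points, while in the non-degenerate case Morse-type inequalities, using that $H^*(L_k^{2n-1};\Z_k)$ has total rank $2n$, improve this to $2n$. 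The bulk of the work lies precisely here: running the generating-function and cohomological-index constructions of Sections \ref{sec:generating functions}--\ref{section: cohomological index} carefully enough to produce these spectral numbers and their relation with discriminant and translated points, and carrying out the Lusternik--Schnirelmann and Morse arguments over the cohomology ring of a general lens space. This is exactly where lens spaces differ from projective space, and where the weaker count ($n$ rather than $2n$) in the degenerate case comes from.
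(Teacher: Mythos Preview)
Your arguments for (i) and (iv) are correct and essentially match the paper's; for (iv) the paper uses the asymptotic $\overline{\mu}$ and its monotonicity (Proposition~\ref{proposition: monotonicity - asymptotic}) rather than your direct estimate, but the content is the same.

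For (ii) there is a genuine confusion. The discriminant norm is \emph{not} a count of discriminant crossings of a given isotopy $\{\phi_t\}$; it is the minimal number of \emph{embedded} pieces in any representative, where ``embedded'' means $\phi_t\circ\phi_{t'}^{-1}$ has no discriminant point for $t\neq t'$ in the piece. Theorem~\ref{theorem: main}(iii) bounds the jump of $\mu$ across an isolated crossing of $\{\phi_t\}$ (and only when the discriminant set there is finite); it says nothing about embedded pieces, so your ``$\mu$ is $2$-Lipschitz'' claim does not follow. The paper's argument is different: for each embedded piece one has that $\phi_t\circ\phi_{t_{j-1}}^{-1}$ avoids the discriminant for $t>t_{j-1}$, so by Proposition~\ref{discriminant points}(i) its $\mu$ equals the value at a $\mathcal{C}^1$-small time, which by Example~\ref{example: def in small case} lies in $[0,2n]$; the quasimorphism estimate then bounds $\mu$ of the whole by roughly $2n$ times the number of pieces, forcing the discriminant norm of $[\{r_{6\pi l t}\}]$ to be at least $l+1$. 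Your oscillation sketch via a monotone homogeneous $\overline{\mu}$ is viable, though the paper again argues by direct counting.

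For (iii) you overestimate what is needed: the paper introduces no spectral numbers and no Lusternik--Schnirelmann scheme. After reducing to a $\phi$ with no discriminant points, it proves the single identity
\[
\mu\big(\{\phi_t\}\sqcup\{r_{2\pi t}\circ\phi\}\big)-\mu\big(\{\phi_t\}\big)=2n
\]
by a short generating-function computation (Lemma~\ref{prop 35 in Theret} reduces $F_1\,\sharp\,Q_j$ to a direct sum, and Corollary~\ref{proposition: stab additivity} handles that). Theorem~\ref{theorem: main}(iii) applied to the path $t\mapsto r_{2\pi t}\circ\phi$ then finishes: either some jump exceeds $2$, giving infinitely many translated points at that time-shift, or there are at least $n$ jump-times, giving $n$ translated points with pairwise distinct time-shifts; in the non-degenerate case the jump bound of $1$ yields $2n$. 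So the ``main obstacle'' you anticipate dissolves into one clean identity plus Theorem~\ref{theorem: main}(iii), and the weaker degenerate bound $n$ comes directly from the jump bound $2$ in that theorem rather than from any cup-length analysis.
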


Orderability of lens spaces was also proved
with different methods by Milin \cite{Milin}
and by the fourth author \cite{S - Lens spaces}.
Regarding part (iii),
this proves for the standard contact form of lens spaces
two statements in direction of the following conjecture:
if $(V,\xi)$ is a compact contact manifold
then any contactomorphism $\phi$ contact isotopic to the identity
should have at least as many translated points
(with respect to any contact form for $\xi$)
as the minimal number of critical points of a smooth function on $V$.
This conjecture, formulated in \cite{S - Morse estimate for translated points},
can be thought of as a contact analogue of the Arnold conjecture
on fixed points of Hamiltonian diffeomorphisms.
As in the Hamiltonian case,
one can consider weaker versions
obtained by replacing the lower bound on translated points by 
the Lusternik--Schnirelmann category or
(even weaker) the cup length,
or the version where the lower bound is the sum of the Betti numbers 
if all translated points are assumed to be non-degenerate.
Working with $\zk$-coefficients (with $k$ prime),
for any lens space $L_k^{2n-1}$
the sum of the Betti numbers is $2n$,
while the cuplength is $2n$ if $k = 2$
(i.e.\ for $\mathbb{RP}^{2n-1}$)
and $n + 1$ if $k > 2$.
While our bound in the general case is just $n$,
the one obtained in \cite{S - Morse estimate for translated points}
in the case of $\mathbb{RP}^{2n-1}$ is $2n$.
On the other hand,
since the Lusternik--Schnirelmann category of $L_k^{2n-1}$ is $2n$ for all $k$,
we should still have at least $2n$ translated points for all $L_k^{2n-1}$
also in the degenerate case.
It might be possible to prove this
using Massey products
(similarly to \cite{Viterbo - Massey products}),
but this goes beyond the scope of the present article
(see also Remark \ref{remark: Massey}).

\begin{rmk}\labell{remark: k prime}
In the case $k = 2$ our arguments prove,
as in \cite{Givental - Nonlinear Maslov index},
the following stronger form of Theorem \ref{theorem: main}:
in (i) the bound is $2n$, rather than $2n+1$,
and in (iii) the last bound holds also in the degenerate case.
Using this one recovers (see Section \ref{section: applications})
the stronger bound in Corollary \ref{cor1}(iii)
that holds in the case of $\RP^{2n-1}$:
any contactomorphism contact isotopic to the identity
has at least $2n$ translated points with respect to the standard contact form.
It is enough to prove Theorem \ref{theorem: main}
in the case when $k$ is prime.
Indeed, for any multiple $k'$ of $k$
one then obtains a quasimorphism on $\widetilde{\text{Cont}}_0(L_{k'}^{2n-1})$
with the required properties
by pulling back $\mu$ by the natural map 
$\widetilde{\text{Cont}}_0(L_{k'}^{2n-1})
\rightarrow \widetilde{\text{Cont}}_0(L_k^{2n-1})$.
Because of this,
if $k$ is even then Theorem \ref{theorem: main} and Corollary \ref{cor1}
hold in the same stronger form as in the case $k = 2$.
By a similar method
(i.e.\ pulling back $\mu$ via the natural map induced by taking a quotient in stages),
one also obtains quasimorphisms for quotients of $S^{2n-1}$
by a finite non-cyclic subgroup $G$ of $U(n)$
that acts freely on $S^{2n-1}$,
as any such $G$ contains a non-trivial cyclic subgroup.
\eor
\end{rmk}

Note that there exist contactomorphisms of $L_k^{2n-1}$
that have exactly $2n$ translated points.
Indeed, as above,
if a contactomorphism $\phi$ of $L_k^{2n-1}$
is sufficiently $\mathcal{C}^1$-close to the identity
then its graph $\gr(\phi)$ in the contact product $L_k^{2n-1} \times L_k^{2n-1} \times \R$
is contained in a Weinstein neighborhood of the diagonal $\Delta \times \{0\}$,
and can be identified to the 1-jet of a function $f$ on $\Delta \times \{0\} \cong L_k^{2n-1}$.
Critical points of $f$ correspond to Reeb chords in $J^1 (\Delta \times \{0\})$
between the zero section and $j^1f$,
hence to Reeb chords in $L_k^{2n-1} \times L_k^{2n-1} \times \R$
between $\Delta \times \{0\}$ and $\gr (\phi)$,
hence to translated points of $\phi$.
This correspondence is now 1--1.
Indeed, if $p$ is a translated point of $\phi$
with respect to the standard contact form $\alpha_0$ on $L_k^{2n-1}$
then there exists a Reeb chord $\gamma: [0,1] \rightarrow L_k^{2n-1}$ of $\alpha_0$
with $\gamma(0) = p$ and $\gamma(1) = \phi(p)$
so that all points $(p,\gamma(t),0)$
are in the considered Weinstein neighborhood.
Since the Reeb flow on $L_k^{2n-1} \times L_k^{2n-1} \times \R$
is given by $\Id \times \varphi^{\alpha}_t \times \Id$
it follows that for every translated point of $\phi$
there is a Reeb chord in $L_k^{2n-1} \times L_k^{2n-1} \times \R$
between $(p,p,0) \in \Delta \times \{0\}$
and $(p,\phi(p),0) \in \gr(\phi)$
that is contained in the Weinstein neighborhood,
and thus any translated point of $\phi$
corresponds to a critical point of $f$.
Choosing a function $f$ on $L_k^{2n-1}$
with exactly $2n$ critical points
we thus obtain a contactomorphism with exactly $2n$ translated points.

Since the Reeb flow of the standard contact form $\alpha_0$ on $L_k^{2n-1}$ is 1-periodic,
if $p$ is a translated point with respect to $\alpha_0$
of a contactomorphism $\phi$
then there are infinitely many Reeb chords connecting $p$ and $\phi(p)$,
hence the translated point $p$ has infinitely many time-shifts.
Thus Corollary \ref{cor1}(iii)
implies that any contactomorphism of $L_k^{2n-1}$
contact isotopic to the identity
has infinitely many pairs (translated point, time-shift) with respect to $\alpha_0$.
Using the properties of the non-linear Maslov index
that are listed in Theorem \ref{theorem: main},
we will prove in Section \ref{section: applications} that this result remains true
for any contact form on $L_k^{2n-1}$ defining the standard contact structure.

\begin{cor}\label{cor 1bis}
Let $\alpha$ be any contact form on $L_k^{2n-1}$
defining the standard contact structure.
For any contactomorphism $\phi$ of $L_k^{2n-1}$ contact isotopic to the identity
there are infinitely many distinct real numbers
that are time-shifts of translated points of $\phi$ with respect to $\alpha$.
In particular, $\phi$ has at least one translated point
with respect to $\alpha$.
\end{cor}

As in the case of projective space,
it follows from Theorem \ref{theorem: main}
that the \emph{asymptotic non-linear Maslov index}
$$
\ol{\mu} \colon \widetilde{\text{Cont}}_0(L_k^{2n-1}) \rightarrow \mathbb{R} \, , \;
\ol{\mu}([\{\phi_t\}]) = \lim_{m \to \infty} \frac{\mu([\{\phi_t\}]^m)}{m}
$$
is \emph{monotone},
i.e.\ $\mu([\{\phi_t\}] )\leq \mu([\{\psi_t\}])$ whenever  $[\{\phi_t\}] \leq [\{\psi_t\}]$,
and has the \emph{vanishing property},
i.e.\ it vanishes on any element that can be represented
by a contact isotopy supported in a displaceable set
(see Propositions
\ref{proposition: monotonicity - asymptotic} and
\ref{proposition: vanishing property}).
In \cite{BZ} Borman and Zapolsky showed that, 
in analogy with the symplectic case \cite{Borman 2},
in certain situations monotone quasimorphisms descend under contact reduction;
starting from Givental's non-linear Maslov index on $\mathbb{RP}^{2n-1}$
they thus obtained induced quasimorphisms
on those contact toric manifolds that can be written 
in a certain way as contact reductions of $\mathbb{RP}^{2n-1}$.
Moreover, it is proved in \cite{BZ} that if a contact manifold
admits a non-trivial monotone quasimorphism
then it is orderable,
and if the prequantization of a symplectic toric manifold 
admits a non-trivial monotone quasimorphism with the vanishing property
(a property that is preserved under contact reduction)
then it has a non-displaceable pre-Lagrangian toric fibre.
As already observed in  \cite[Remark 1.5]{BZ}, 
our generalization to lens spaces of Givental's non-linear Maslov index
allows us to extend the class of contact toric manifolds
that inherit, by contact reduction, a quasimorphism.
We thus obtain the following result
(see Section \ref{section: applications}).

\begin{cor}\labell{cor2}
Let $(W^{2n}, \omega)$ be
a compact monotone symplectic toric manifold.
Write the moment polytope as
$\Delta = \{\, x \in \mathfrak{t}^{\ast} \text{ , } \langle \nu_j,x \rangle + \lambda \geq 0 \text{ for } j=1,\cdots,d\,\}$,
where $d$ is the number of facets and $\nu_j \in \mathfrak{t}$
are vectors normal to the facets and primitive in the integer lattice
$\mathfrak{t}_{\mathbb{Z}} = \text{\emph{ker}} \, (\text{\emph{exp}} \colon \mathfrak{t} \rightarrow \mathbb{T}^n)$.
Suppose that, for some $k \in \mathbb{N}$,
$\sum_{j=1}^d \nu_j \in k \, \mathfrak{t}_{\mathbb{Z}}$.
Then there is a rescaling $a \omega$
of the symplectic form such that
the prequantization of $(W,a\omega)$ 
admits a non-trivial monotone quasimorphism
with the vanishing property,
and so is orderable and contains
a non-displaceable pre-Lagrangian toric fibre.
\end{cor}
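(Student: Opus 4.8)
The plan is to realize, for a suitable rescaling $a$, the prequantization of $(W, a\omega)$ as a contact reduction of a standard lens space $L_k^{2d-1}$, and then to push the asymptotic non-linear Maslov index through this reduction by means of the contact reduction theorem for monotone quasimorphisms of Borman--Zapolsky \cite{BZ} (as anticipated in \cite[Remark 1.5]{BZ}). Throughout we take $k \geq 2$, since the statement is empty otherwise. Recall the Delzant construction: let $\beta \colon \R^d \to \mathfrak{t}$ be the linear map with $\beta(e_j) = \nu_j$, put $\mathfrak{k} = \ker \beta$, and let $K$ be the kernel of the surjection $T^d = \R^d/\Z^d \to \mathbb{T}^n$ induced by $\beta$ on the lattices; then $K$ is a subtorus of $T^d$ of dimension $d - n$, and $(W, a\omega)$ is the symplectic reduction of $\C^d$ (with its standard symplectic form) by $K$ at the level determined by $\lambda$, for the rescaling $a$ that makes $[a\omega]$ the integral class produced by that reduction. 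Correspondingly, contact-reducing the unit sphere $S^{2d-1}\subset\C^d$ (the prequantization of $\CP^{d-1}$, with its standard contact form) by $K$ at the matching level yields the prequantization of $(W, a\omega)$: the level is regular and the $K$-action on it is free because the $\nu_j$ obey the Delzant smoothness conditions, and the reduced contact manifold is the prequantization circle bundle with its connection contact form and residual $\mathbb{T}^n = T^d/K$-action.

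Next I would feed in the hypothesis $\sum_j \nu_j \in k\,\mathfrak{t}_{\mathbb{Z}}$. Since each $\nu_j$ lies in the integer lattice, $\beta$ descends to $T^d \to \mathbb{T}^n$ and $K = \{[v] \in T^d : \beta(v) \in \mathfrak{t}_{\mathbb{Z}}\}$. The element $g = \big[\tfrac1k(1,\dots,1)\big] \in T^d$ has order exactly $k$, and $g \in K$ if and only if $\beta\big(\tfrac1k(1,\dots,1)\big) = \tfrac1k\sum_j \nu_j \in \mathfrak{t}_{\mathbb{Z}}$, i.e.\ if and only if $\sum_j \nu_j \in k\,\mathfrak{t}_{\mathbb{Z}}$. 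Thus our assumption says exactly that the cyclic subgroup $\mathbb{Z}_k = \langle g\rangle$ of order $k$ lies inside $K$. The $\mathbb{Z}_k$-action on $S^{2d-1}$ generated by $g$ is the free diagonal action, so $S^{2d-1}/\mathbb{Z}_k$ is the lens space $L_k^{2d-1} := L_k^{2d-1}(1,\dots,1)$ (which is $\RP^{2d-1}$ when $k = 2$, recovering the situation of \cite{BZ}); and since $K$ is abelian the contact reduction by $K$ factors through this quotient,
\[
S^{2d-1}/\!\!/K \;=\; \big(S^{2d-1}/\mathbb{Z}_k\big)/\!\!/(K/\mathbb{Z}_k) \;=\; L_k^{2d-1}/\!\!/(K/\mathbb{Z}_k),
\]
so the prequantization of $(W, a\omega)$ is a contact reduction of the lens space $L_k^{2d-1}$ by the subtorus $K/\mathbb{Z}_k$.

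It remains to combine this with the main theorem. By Theorem \ref{theorem: main}(i) the non-linear Maslov index $\mu$ on $L_k^{2d-1}$ is a quasimorphism, so its homogenization, the asymptotic index $\ol{\mu}$, is monotone and has the vanishing property by Propositions \ref{proposition: monotonicity - asymptotic} and \ref{proposition: vanishing property}; it is non-trivial because its value on the Reeb flow is $2d \neq 0$. Applying the contact reduction theorem of \cite{BZ} to the reduction of $L_k^{2d-1}$ by $K/\mathbb{Z}_k$ then produces from $\ol{\mu}$ a non-trivial monotone quasimorphism with the vanishing property on the reduced contact manifold, which we have identified with the prequantization of $(W, a\omega)$. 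Finally, \cite{BZ} shows that a contact manifold carrying a non-trivial monotone quasimorphism is orderable, and that the prequantization of a symplectic toric manifold carrying a non-trivial monotone quasimorphism with the vanishing property contains a non-displaceable pre-Lagrangian toric fibre; this yields both assertions of the corollary.

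I expect the main obstacle to be the bookkeeping in the second step: one must fix the reduction level of the $K$-contact-moment-map and the constant $a$ simultaneously, check that this level is regular and the $K$-action (equivalently the $K/\mathbb{Z}_k$-action on $L_k^{2d-1}$) is free and preserves the co-orientation, and verify that the reduced contact manifold really is the prequantization circle bundle with its connection contact form, so that all hypotheses of the \cite{BZ} reduction theorem hold --- in particular that $\ol{\mu}$ is defined on the cover relevant to the reduction and is compatible with the ideal it generates. Once this is set up, the corollary is an assembly of the Delzant construction, Theorem \ref{theorem: main}, and the cited results of \cite{BZ}.
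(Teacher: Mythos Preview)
Your overall strategy coincides with the paper's: realize the prequantization as a contact reduction of the lens space $L_k^{2d-1}(1,\ldots,1)$ (using that the hypothesis $\sum_j \nu_j \in k\,\mathfrak{t}_{\mathbb Z}$ is exactly the condition that $\big[\tfrac1k(1,\ldots,1)\big]$ lies in the Delzant kernel $K$), then push $\ol\mu$ through via the Borman--Zapolsky reduction machinery and cite their orderability and non-displaceability theorems. However, there is a genuine gap in the step where you ``apply the contact reduction theorem of \cite{BZ}'': that theorem (\cite[Theorem~1.8]{BZ}) does not apply to an arbitrary contact reduction. Its hypothesis is that the level set $f_\alpha^{-1}(0)$ be \emph{subheavy} with respect to $\ol\mu$, and this is not a regularity or freeness condition --- it is a rigidity statement about $\ol\mu$ that must be proved separately. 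Your closing paragraph lists the technical checks you anticipate (regularity, freeness, co-orientation, the form of the reduced manifold) but does not mention subheaviness; the phrase ``compatible with the ideal it generates'' does not capture it.

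The paper fills this gap in two moves. First, it observes (by repeating the proof of \cite[Lemma~1.22]{BZ}, which ultimately rests on the non-displaceability of the Clifford torus in $\CP^{d-1}$) that the torus $T_{L_k^{2d-1}} = \{\,[z]\in L_k^{2d-1}(1,\ldots,1) : |z_1|^2=\cdots=|z_d|^2\,\}$ is subheavy for $\ol\mu$. Second, following the proof of \cite[Theorem~1.3]{BZ}, it checks that the reduction level $f_\alpha^{-1}(0)$ can be chosen to \emph{contain} $T_{L_k^{2d-1}}$; subheaviness of the level then follows from \cite[Proposition~1.10(iii)]{BZ}. Only then does \cite[Theorem~1.8]{BZ} apply. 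That this step has content is underlined by Remark~\ref{rmk BZ}: for lens spaces with non-trivial weights the analogous subheaviness statement is not known, which is why the corollary uses $L_k^{2d-1}(1,\ldots,1)$ specifically. So your outline is right, but you must insert the subheaviness argument (Clifford-type torus is subheavy, level contains it) before invoking the reduction theorem.
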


Note however that the link with discriminant points
seems to be lost in the reduction process.
Therefore, it is not clear if it is possible to use these induced quasimorphisms
to obtain the applications (other than orderability)
listed in Corollary \ref{cor1}.

The original idea of the construction of
Givental's non-linear Maslov index in $\mathbb{RP}^{2n-1}$
is as follows.
Given a contact isotopy $\{\phi_{t}\}_{t \in [0,1]}$ of $\RP^{2n-1}$,
one associates to it
a 1-parameter family of functions 
$f_t \colon \RP^{2M-1} \rightarrow \R$, for some large $M$,
so that critical points of $f_t$ of critical value zero
correspond to discriminant points of $\phi_t$.
In order to detect intersections of $\{\phi_{t}\}$ with the discriminant
one then analyzes the changes in topology of the sublevel sets $A_t = \{f_t \leq 0\}$.
This is done by studying the \emph{cohomological index} of these sets,
i.e.\ the dimension of the image of the homomorphism 
$H^{\ast}(\RP^{2M-1}; \mathbb{Z}_2) \rightarrow H^{\ast}(A_t;\mathbb{Z}_2)$
induced by the inclusion $A_t \hookrightarrow \RP^{2M-1}$.
The non-linear Maslov index of $\{\phi_t\}_{t \in [0,1]}$
is then defined to be the difference between the cohomological indices 
of $A_0$ and $A_1$.
The key difference in the construction for lens spaces 
is in the properties of the cohomological index.
In the case of projective space the cohomological index satisfies
\emph{subadditivity}
(the cohomological index of a union $A \cup B$
is not more than the sum of the cohomological indices of $A$ and $B$)
and \emph{join additivity}
(the cohomological index of an \emph{equivariant join}
is equal to the sum of the cohomological indices of the factors).
The proofs of both properties use the fact that
the cohomology ring of projective space is generated
by the class in degree one,
and thus they do not go through in the case of lens spaces.
However we show that weaker versions
of the subadditivity and join additivity properties
also hold in the case of general lens spaces,
and are enough to define the non-linear Maslov index
and prove the properties listed in Theorem \ref{theorem: main}.

In the case $k = 2$, Givental's proof of the join additivity property
uses an equivariant K\"unneth formula
\cite[Proposition A.1]{Givental - Nonlinear Maslov index}.
A crucial ingredient in the proof
is the fact that the K\"{u}nneth short exact sequence
(of modules over the equivariant cohomology of a point) splits,
but it is not clear to us why this fact should be true.
In any case, for $k > 2$ the K\"{u}nneth (or Eilenberg--Moore) spectral sequence
does not collapse in general
(due to the fact that for $k > 2$
the $\zk$-equivariant cohomology of a point  has zero divisors)
and so we do not even have a K\"{u}nneth short exact sequence
for the equivariant cohomology of a product.
We thus present a different proof (even for the case $k = 2$).
Our proof is based on the study of an equivariant join operation in homology,
which is defined in the same way for all $k$.
When $k = 2$ the properties of this operation
imply Givental's join additivity,
while for $k > 2$,
as the join of two even dimensional generators of the equivariant homology of a point is zero,
we only obtain a weaker \emph{join quasi-additivity property}
(Proposition \ref{proposition: cohomological index} (v)).
As mentioned above, this property is still strong enough
to imply the applications we are interested in.

Finally, the construction of generating functions
that we use in this article
is not the one from \cite{Givental - Nonlinear Maslov index}\footnote{
Because of this,
strictly speaking we do not know whether in the special case of projective space
our quasimorphism actually coincides
with Givental's.
However, all the features and properties are the same.},
but is an adaptation to our setting
of the work of Th\'eret \cite{Theret - Rotation numbers}
(cf.\ Remark \ref{remark: cf Givental}).

The article is organized as follows.
In Section \ref{sec:generating functions}
we explain how to construct
1-parameter families of generating functions
for contact isotopies of lens spaces,
and discuss some properties.
In Section \ref{section: cohomological index}
we describe the topological invariant
(the cohomological index)
that is used to analyze the changes in topology
of the sublevel sets of generating functions,
deferring to Appendix \ref{section: additivity under join}
the most technical part of the proof
of the join quasi-additivity property of this invariant.
In Section \ref{sec:Maslov index} we put these ingredients together
to define the non-linear Maslov index of a contact isotopy,
and prove the properties listed in Theorem \ref{theorem: main}.
In Section \ref{section: applications} we use the non-linear Maslov index
to prove Corollaries \ref{cor1}, \ref{cor 1bis} and \ref{cor2},
mostly following the corresponding arguments in the case of projective space.
In Appendix \ref{section appendix: on the construction}
we discuss several interpretations of the composition formula
that is used in the construction of generating functions.

Throughout the article, when we say that we follow
Givental \cite{Givental - Nonlinear Maslov index} or Th\'{e}ret \cite{Theret - Rotation numbers}
we mean that their arguments,
developed for $\mathbb{RP}^{2n-1}$ and $\mathbb{CP}^{n-1}$,
can be repeated for lens spaces
without any modification other than replacing the
action of  $\mathbb{Z}_2$ or $S^1$ by the $\mathbb{Z}_k$-action 
\eqref{equation: action}.

\subsection*{Acknowledgments} 

We thank Sue Tolman and Lisa Jeffrey for useful discussions,
and Dietmar Salamon for questions that prompted us to include Corollary \ref{cor 1bis}.
We also thank Instituto Superior T\'ecnico in Lisbon
for its hospitality and excellent work environment during several
of our meetings.
Part of this work was done
while the fourth author was visiting
the Unit\'e Mixte Internationale of CNRS
at the Centre de Recherche Math\'ematiques of Montr\'eal;
she wishes to thank CNRS and the CRM for this opportunity;
she also wishes to thank the members of the ANR grant CoSpIn
for many discussions
on the non-linear Maslov index and related topics.
We thank the referees for several useful comments and corrections.

The first author was partially supported
by the Funda\c{c}\~ao para a Ci\^encia e a Tecnologia of Portugal
through the project UID/MAT/04459/2013
and by the Gulbenkian Foundation.
The second author was partially supported by the Natural Sciences
and Engineering Research Council of Canada.
The third author was supported by
the Funda\c{c}\~ao para a Ci\^encia e a Tecnologia of Portugal
through the fellowship SFRH/BPD/87791/2012
and the projects PTDC/MAT/117762/2010 and EXCL/MAT-GEO/0222/2012,
and, during the last year of the project, partially supported
by the SFB-TRR 191 grant 
\textit{Symplectic Structures in Geometry, Algebra and Dynamics} 
funded by the Deutsche Forschungsgemeinschaft. 
The fourth author was partially supported
by the ANR grant CoSpIn.


\section{Generating functions}
\labell{sec:generating functions}

In this section we explain how,
given a contact isotopy $\{\phi_t\}_{t\in[0,1]}$ of a lens space $L_k^{2n-1}$
starting at the identity, 
one can define a 1-parameter family
of functions $f_t $ on a higher dimensional lens space $L_k^{2M-1}$
so that critical points of $f_t$ of critical value zero
correspond to discriminant points of $\phi_t$.
We also discuss some properties of such families of generating functions
that are important in the applications:
uniqueness, monotonicity and quasi-additivity.

\subsection*{Generating functions for symplectomorphisms of $\R^{2n}$.}

We start by recalling the definition of generating functions 
of Lagrangian submanifolds in cotangent bundles.
Observe first that any Lagrangian section 
of the cotangent bundle $T^{\ast}B$ of a manifold $B$
is the graph of a closed 1-form on $B$;
if this 1-form is exact,
given by the differential of a function $F$,
then we say that $F$
is a generating function for the Lagrangian. 
Generalizing this idea, one can associate a generating function 
to a larger class
of Lagrangian submanifolds of $T^{\ast}B$ by the following construction,
which goes back to H\"{o}rmander \cite{Hormander}.
Consider a function $F \colon E \rightarrow \mathbb{R}$
defined on the total space of a fibre bundle $p \colon E \rightarrow B$. 
Let $N_E^{\ast}$ be the fibre conormal bundle, i.e.\ 
the space of points $(e,\eta)$ of $T^{\ast}E$ 
such that $\eta$ vanishes on the kernel of $d p |_e$.
We say that $F$ is a {\it generating function}
if $dF \colon E \rightarrow T^{\ast}E$ is transverse to $N_E^{\ast}$.
If this condition is satisfied then
the set of fibre critical points $\Sigma_F = (dF)^{-1}(N_E^{\ast})$ 
is a smooth submanifold of $E$,
and the map
$$
i_F \colon \Sigma_F \rightarrow T^{\ast}B \; , \; 
e \mapsto \big(p(e),v^{\ast}(e)\big)
$$
defined by posing $v^{\ast}(e)(X) = dF\big(\widehat{X}\big)$ 
for $X \in T_{p(e)}B$,
where $\widehat{X}$ is any vector in $T_eE$ 
such that $p_{\ast} (\widehat{X}) = X$,
is a Lagrangian immersion.
If $i_F$ is an embedding then we say that $F$ is
a generating function for the Lagrangian submanifold $L_F := 
i_F(\Sigma_F)$ of $T^{\ast}B$.
In this case, critical points of $F$ are in 1--1 correspondence 
with intersections of $L_F$ with the zero section. 

In our applications, generating functions
are always defined on trivial bundles of the form
$E = \R^{2n} \times \R^{2nN} \to \R^{2n}$.
Denoting the coordinates by $(\zeta,\nu)$
with $\zeta \in \R^{2n}$ and $\nu \in \R^{2nN}$,
we then have that $dF$ is transverse to $N_E^{\ast}$ 
if and only if zero is a regular value of the vertical derivative
$\frac{\partial F}{\partial \nu} \colon E \rightarrow (\R^{2nN})^{\ast}$;
moreover, $\Sigma_F = (\frac{\partial F}{\partial \nu})^{-1}(0)$
and $i_F(\zeta,\nu)=(\zeta,\frac{\partial F}{\partial \zeta}(\zeta, \nu))$.

For a symplectomorphism 
$\Phi$ of $\R^{2n}$, as in \cite{Viterbo} we consider 
the Lagrangian submanifold $\Gamma_{\Phi}$ of $T^{\ast}\R^{2n}$ 
that is the image of the graph of $\Phi$ under the symplectomorphism 
$\tau \colon \overline{\R^{2n}} \times \R^{2n} \rightarrow T^{\ast}\R^{2n}$ 
given by
$$
\displaystyle{\tau (x,y,X,Y) 
= \Big(\frac{x+X}{2} \,,\, \frac{y+Y}{2} \,,\, Y-y \,,\, x-X \Big)}
$$
or, in complex notation, 
\begin{equation}\labell{e:identification}
\tau (z,Z) = \Big(\frac{z+Z}{2} \, , \, i (z-Z)\Big).
\end{equation}
We say that a function $F$ 
is a generating function for $\Phi$ if it is a generating function 
for $\Gamma_{\Phi}$. 
Since $\tau$ sends the diagonal onto the zero section, 
critical points of a generating function $F$ of $\Phi$ 
are in 1--1 correspondence with fixed points of $\Phi$.
Note also that if $F$ is a generating function for $\Phi$
then $-F$ is a generating function for $\Phi^{-1}$.

Any Hamiltonian symplectomorphism $\Phi$ of $\mathbb{R}^{2n}$
such that $\Gamma_{\Phi}$ is a section of $T^{\ast}\mathbb{R}^{2n}$
has a generating function
$F \colon \mathbb{R}^{2n} \rightarrow \mathbb{R}$.
In order to obtain a generating function 
for more general Hamiltonian symplectomorphisms
we use the following composition formula\footnote{
Th\'eret's composition formula in \cite{Theret - Rotation numbers} is
$$
F_1 \, \sharp \, F_2 \, (q;\zeta_1,\zeta_2,\nu_1,\nu_2) 
= F_1 (q + \zeta_2, \nu_1) + F_2 (\zeta_1 + \zeta_2, \nu_2) + 2 \left<q-\zeta_1,i\zeta_2\right> \,.
$$
Although it differs from ours just by a change of variables,
in \cite{Theret - Rotation numbers} it is proved to hold only
under the assumption that $F_1$ or $F_2$ has no fibre variables.
This is not sufficient for our purposes:
in the proof of the quasimorphism property 
of the non-linear Maslov index (Proposition \ref{p:quasimorphism property})
we need to allow fibre variables in both factors.}.

\begin{Proposition}[Composition formula]
\labell{p:composition formula}
If $F_1 \colon \R^{2n} \times \R^{2nN_1} \to \R$ 
and $F_2 \colon \R^{2n} \times \R^{2nN_2} \to \R$
are, respectively, generating functions for symplectomorphisms 
$\Phi^{(1)}$ and $\Phi^{(2)}$ of $\R^{2n}$,
then the function 
$F_1 \, \sharp \, F_2 \colon
\mathbb{R}^{2n} \times 
 (\mathbb{R}^{2n} \times \mathbb{R}^{2n} 
                  \times \mathbb{R}^{2nN_1} \times \mathbb{R}^{2nN_2})
\rightarrow \mathbb{R}$
defined by
$$
F_1 \, \sharp \, F_2 \, (q; \zeta_1, \zeta_2, \nu_1,\nu_2) 
 = F_1 (\zeta_1, \nu_1) + F_2 (\zeta_2, \nu_2) 
   - 2 \left< \zeta_2 - q, i (\zeta_1- q) \right>
$$
(where $\langle \,\cdot\,,\,\cdot\,\rangle$ denotes the standard inner product on $\mathbb{R}^{2n}$)
is a generating function for the composition 
$\Phi = \Phi^{(2)} \circ \Phi^{(1)}$.
\end{Proposition}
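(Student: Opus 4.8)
The plan is to check directly the two requirements in the definition of a generating function, applied to $G:=F_1\,\sharp\,F_2$ and the Lagrangian $\Gamma_\Phi$ with $\Phi:=\Phi^{(2)}\circ\Phi^{(1)}$: namely, that $dG$ is transverse to $N_E^{\ast}$ (equivalently, that $0$ is a regular value of the vertical derivative of $G$), and that the resulting Lagrangian immersion $i_G$ is an embedding with image exactly $\Gamma_\Phi$. Once this is done, the statement that critical points of $G$ correspond to fixed points of $\Phi$ is immediate, since $\tau$ sends the diagonal onto the zero section.

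First I would compute the fibrewise partial derivatives of $G$ in the variables $(\zeta_1,\zeta_2,\nu_1,\nu_2)$. Using $\partial_{\zeta_1}\langle \zeta_2-q,i(\zeta_1-q)\rangle=-i(\zeta_2-q)$ and $\partial_{\zeta_2}\langle \zeta_2-q,i(\zeta_1-q)\rangle=i(\zeta_1-q)$, one gets $\partial_{\nu_j}G=\partial_{\nu_j}F_j$ together with the two ``gluing equations'' $\partial_{\zeta_1}G=\partial_{\zeta_1}F_1+2i(\zeta_2-q)$ and $\partial_{\zeta_2}G=\partial_{\zeta_2}F_2-2i(\zeta_1-q)$. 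Hence a point $(q;\zeta_1,\zeta_2,\nu_1,\nu_2)$ is fibre critical for $G$ exactly when $(\zeta_1,\nu_1)\in\Sigma_{F_1}$, $(\zeta_2,\nu_2)\in\Sigma_{F_2}$, and the gluing equations hold. Writing $i_{F_1}(\zeta_1,\nu_1)=\tau(z,\Phi^{(1)}(z))$ and $i_{F_2}(\zeta_2,\nu_2)=\tau(z',\Phi^{(2)}(z'))$ — so that $\zeta_1=\tfrac12(z+\Phi^{(1)}(z))$, $\partial_{\zeta_1}F_1=i(z-\Phi^{(1)}(z))$, and similarly in $z'$ — and substituting into the gluing equations, adding the two resulting identities forces $z'=\Phi^{(1)}(z)$, so the point lies over the graph of $\Phi$, while subtracting them gives $q=\tfrac12(z+\Phi(z))$. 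A further one-line computation gives $\partial_q G=2i(\zeta_1-\zeta_2)=i(z-\Phi(z))$, so that $i_G=\tau(z,\Phi(z))$ on $\Sigma_G$. This exhibits $\Sigma_G$ as the fibre product of $\Sigma_{F_1}$ and $\Sigma_{F_2}$ over the condition $z'=\Phi^{(1)}(z)$, parametrised by $z$, with $i_G$ carrying it onto $\Gamma_\Phi$ via the standard parametrisation $z\mapsto\tau(z,\Phi(z))$; in particular $i_G$ is injective, and once $\Sigma_G$ is known to be a $2n$-dimensional manifold (which is exactly what the transversality provides), $i_G$ is an embedding with image $\Gamma_\Phi$.

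The main work, and what I expect to be the main obstacle, is the transversality $dG\pitchfork N_E^{\ast}$: this is precisely where Th\'eret's argument — valid only when one of $F_1,F_2$ has no fibre variables — has to be replaced by a new argument when both factors carry fibre variables. At a fibre critical point I would solve the equation $d(\text{vertical derivative of }G)(\delta q,\delta\zeta_1,\delta\zeta_2,\delta\nu_1,\delta\nu_2)=(\text{target})$ by bootstrapping: the $\partial_{\nu_j}G$-components can be solved in $(\delta\zeta_j,\delta\nu_j)$ because $d(\partial_{\nu_j}F_j)$ is onto (as $F_j$ is itself a generating function), leaving freedom to move $(\delta\zeta_j,\delta\nu_j)$ inside $T\Sigma_{F_j}$; the $\partial_{\zeta_1}G$-component is then solved for $\delta q$, since its coefficient $2i$ is invertible; and, after substituting, the $\partial_{\zeta_2}G$-component reduces to finding $(\delta\zeta_1',\delta\nu_1')\in T\Sigma_{F_1}$ with $d(\partial_{\zeta_1}F_1)(\delta\zeta_1',\delta\nu_1')-2i\,\delta\zeta_1'$ equal to a prescribed vector. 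The key point is that, since $i_{F_1}$ parametrises $\Gamma_{\Phi^{(1)}}=\tau(\mathrm{graph}\,\Phi^{(1)})$, along $T\Sigma_{F_1}$ one has $\delta\zeta_1'=\tfrac12(I+D\Phi^{(1)})\delta z$ and $d(\partial_{\zeta_1}F_1)(\delta\zeta_1',\delta\nu_1')=i(I-D\Phi^{(1)})\delta z$, so the combination above equals $-2i\,D\Phi^{(1)}\delta z$, which is onto because $D\Phi^{(1)}$ is a symplectic, hence invertible, matrix. Thus the invertibility of $D\Phi^{(1)}$ is exactly what rescues transversality in the presence of fibre variables in both factors; the remaining content is the bookkeeping above, after which the conclusions about $\Sigma_G$ and $i_G$ from the previous paragraph apply and the proof is complete.
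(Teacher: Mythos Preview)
Your proposal is correct and follows essentially the same approach as the paper: the same computation of the vertical derivative, the same translation of the fibre-critical equations into $z_2=\Phi^{(1)}(z_1)$ and $q=\tfrac12(z_1+\Phi(z_1))$, and the same identification $i_G(q;\zeta,\nu)=\tau(z_1,\Phi(z_1))$. For the transversality step the paper packages the same ingredient you isolate --- that the map $\Sigma_{F_j}\to\R^{2n}$, $(\zeta_j,\nu_j)\mapsto z_j=\zeta_j+\tfrac{1}{2i}\partial_{\zeta_j}F_j$, is a diffeomorphism (equivalently your invertibility of $D\Phi^{(1)}$) --- as the invertibility of a block matrix $M_j$, and then brings the full Jacobian of the vertical derivative to block-triangular form via row and column operations; your sequential elimination (solve the $\nu_j$-components first, then $\delta q$, then use the residual $T\Sigma_{F_1}$-freedom) is the same linear algebra organized differently.
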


In  Appendix~\ref{section appendix: on the construction} we discuss two 
interpretations of the composition formula in terms of symplectic reduction, 
a generalization to any even number of factors 
and the relation to the method of broken trajectories
of Chaperon, Laudenbach and Sikorav \cite{Ch1,LS, Sikorav 1, Sikorav 2}
and to the construction in Givental \cite{Givental - Nonlinear Maslov index}.
Below we present a direct proof.

\begin{proof}

\textit{Step 1:  Criterion for fibre critical points}.

The vertical derivative of $F_1 \, \sharp \, F_2$ is
\begin{equation}\labell{d vert calF}
(q;\zeta_1, \zeta_2,\nu_1,\nu_2) \mapsto
\Big( \frac{\del F_1}{\del \zeta_1} + 2i(\zeta_2 - q), \frac{\del F_2}{\del\zeta_2} - 2i(\zeta_1 - q),
\frac{\del F_1}{\del \nu_1}, \frac{\del F_2}{\del \nu_2}\Big) \,,
\end{equation}
thus a point $(q; \zeta_1,\zeta_2,\nu_1,\nu_2)$ 
is a fibre critical point of $F_1 \, \sharp \, F_2$
if and only if $(\zeta_j,\nu_j)$ is a fibre critical point 
of $F_j$ ($j=1$, $2$) and
\begin{equation}\labell{equation: fibre critical points}
\begin{cases}
\frac{\partial F_1}{\partial \zeta_1} = - 2i \, (\zeta_2 - q) \\
\frac{\partial F_2}{\partial \zeta_2} = 2i \, (\zeta_1 - q) \,.
\end{cases}
\end{equation}
Since $F_j \colon \mathbb{R}^{2n} \times \mathbb{R}^{2nN_j} \rightarrow \mathbb{R}$ 
is a generating function for $\Phi^{(j)}$ ($j = 1$, $2$),
the map
\begin{equation}\labell{equation: zeta-z}
\Sigma_{F_j} \rightarrow \mathbb{R}^{2n} \, , \; (\zeta_j,\nu_j) \mapsto z_j
\end{equation}
given by the composition
$$
\xymatrix{ 
\Sigma_{F_j} \ar[r]^{i_{F_j}} & \Gamma_{\Phi^{(j)}} 
   \ar[rr]^-{\tau^{-1}|_{ \Gamma_{\Phi^{(j)}}}}
   && \textrm{gr}(\Phi^{(j)})  }
   \subset \ol{\R^{2n}} \times \R^{2n}
\xymatrix{
\ar[rr]^-{(z,Z)\mapsto z} && \R^{2n} }
$$
is a diffeomorphism.
Under the change of variables \eqref{equation: zeta-z}
the equations \eqref{equation: fibre critical points}
become
$$
\begin{cases}
i \, \big(z_1 - \Phi^{(1)}(z_1)\big) = - 2i \,  \big( \frac{z_2 + \Phi^{(2)}(z_2)}{2} - q \big) \\
i \, \big(z_2 - \Phi^{(2)}(z_2)\big) = 2i \,  \big( \frac{z_1 + \Phi^{(1)}(z_1)}{2} - q\big)
\end{cases}
$$
i.e.
\begin{equation}\labell{equation: fcp}
\begin{cases}
z_2 = \Phi^{(1)}(z_1) \\
q = \frac{z_1 + \Phi^{(2)}(z_2)}{2}\,.
\end{cases}
\end{equation}

\textit{Step 2:  $F_1 \, \sharp \, F_2$ is a generating function}.

In order to prove that $F_1 \, \sharp \, F_2$ is a generating function
we need to show that zero is a regular value of the vertical derivative 
of $F_1 \, \sharp \, F_2$.
This can be seen as follows.
The diffeomorphism \eqref{equation: zeta-z} is the restriction to 
$\Sigma_{F_j} \subset \R^{2n} \times \R^{2nN_j}$ of the map
\begin{equation} \labell{e: formula}
\R^{2n} \times \R^{2nN_j} \to \R^{2n} , \qquad (\zeta_j,\nu_j)
\mapsto \zeta_j + 
\frac{1}{2i} 
\left.\frac{\del F_j}{\del \zeta_j}\right|_{\left( \zeta_j,\nu_j \right)}.
\end{equation}
Thus, for every $(\zeta_j,\nu_j) \in \Sigma_{F_j} = (\frac{\partial F_j}{\partial \nu_j})^{-1}(0)$,
the restriction to $T_{(\zeta_j,\nu_j)} \Sigma_{F_j}
= \ker \big(\left. d \big(\frac{\partial F_j}{\partial \nu_j}\big)\right|_{(\zeta_j,\nu_j)} \big)$
of the differential at $(\zeta_j,\nu_j)$ of \eqref{e: formula} is bijective. 
As, moreover, $\left.d(\frac{\partial F_j}{\partial \nu_j})\right|_{(\zeta_j,\nu_j)}
 \colon \R^{2n} \times \R^{2nN_j} \to \R^{2nN_j}$ is surjective,
this implies that the matrix
$$
\def\arraystretch{1.5}
M_j = 
\left( 
\begin{array}{ll}
{ \frac{1}{2i}\frac{\del^2 F_j}{\del \zeta_j^2} + I } 
   & { \frac{1}{2i}\frac{\del^2 F_j}{\del \nu_j \del \zeta_j} } \\
{ \frac{\del^2 F_j}{\del \zeta_j \del \nu_j} }
   & { \frac{\del^2 F_j}{\del \nu_j^2}  }
\end{array}
\right)
$$
is invertible at every $(\zeta_j,\nu_j)$ that is fibre critical for $F_j$.
The differential of \eqref{d vert calF} at $(q;\zeta_1, \zeta_2,\nu_1,\nu_2)$
is given by the matrix
$$ 
\def\arraystretch{1.5}
\left(
\begin{array}{c|cc|cc}
 -2iI & \frac{\del^2F_1}{\del\zeta_1^2} & 2iI &
 \frac{\del^2F_1}{\del\nu_1\del\zeta_1} & 0  \\
  2iI & -2iI & \frac{\del^2F_2}{\del\zeta_2^2} &
 0 & \frac{\del^2F_2}{\del\nu_2\del\zeta_2} \\
\hline
 0 & \frac{\del^2F_1}{\del\zeta_1\del\nu_1} & 0 &
 \frac{\del^2F_1}{\del\nu_1^2} & 0 \\
 0 & 0 & \frac{\del^2F_2}{\del\zeta_2\del\nu_2}  &
 0 & \frac{\del^2F_2}{\del\nu_2^2}  \\
\end{array}
\right)
$$
\labell{page: dF matrix}
which, by elementary row and column operations, can be brought to the form
$$ 
\def\arraystretch{1.5}
 \left(
\begin{array}{c|cc}
 \star & M_1 & 0  \\
 \star & \star & M_2 \\
\end{array} \right) \,.
$$
Since each $M_j$ is invertible, the columns of this matrix
span all of $\R^{2n} \times \R^{2n} \times \R^{2nN_1} \times \R^{2nN_2}$,
proving that zero is a regular value of the vertical derivative 
of $F_1 \, \sharp \, F_2$.

\textit{Step 3: $F_1 \, \sharp \, F_2$ is a generating function for $\Phi$.}

We need to show that the Lagrangian immersion
$i_{F_1 \sharp F_2} \colon \Sigma_{F_1 \sharp F_2} \rightarrow T^{\ast}\mathbb{R}^{2n}$
induces a diffeomorphism between $\Sigma_{F_1 \sharp F_2}$ 
and $\Gamma_{\Phi}$.
The relation \eqref{equation: fcp} for fibre critical points
and the fact that the maps \eqref{equation: zeta-z} are diffeomorphisms
imply that the map
\begin{equation}\labell{equation: fibre critical point - z1}
\Sigma_{F_1 \sharp F_2} \rightarrow \mathbb{R}^{2n} \, , \;
(q; \zeta_1,\zeta_2,\nu_1,\nu_2) \mapsto z_1
\end{equation}
is a diffeomorphism.
For a fibre critical point $(q;\zeta,\nu)$ we have
$$
\frac{\partial ( F_1 \, \sharp \, F_2)}{\partial q} \, (q;\zeta,\nu) =
2i \, (\zeta_1-\zeta_2) =
i \, \big( z_1 + \Phi^{(1)}(z_1) - z_2 - \Phi^{(2)}(z_2) \big) =
i \, \big(z_1 - \Phi(z_1)\big)
$$
and so
$$
i_{F_1 \sharp F_2}(q; \zeta,\nu) =
\Big(q, \frac{\partial\left( F_1 \, \sharp \, F_2\right)}{\partial q} \, 
        (q,\zeta,\nu)\Big) 
  = \Big( \frac{z_1 + \Phi(z_1)}{2}, i \big(z_1 - \Phi(z_1)\big) \Big) 
  = \tau \big(z_1,\Phi(z_1)\big)\,.
$$
In other words,
$i_{F_1 \sharp F_2} \colon \Sigma_{F_1 \sharp F_2} \rightarrow T^{\ast} \R^{2n}$
is the composition of the diffeomorphism \eqref{equation: fibre critical point - z1}
with the embedding $\R^{2n} \rightarrow T^{\ast}\R^{2n}$,
$z_1 \mapsto \tau \big(z_1,\Phi(z_1)\big)$,
and so it induces a diffeomorphism
between $\Sigma_{F_1 \sharp F_2}$ and $\Gamma_{\Phi}$.
\end{proof}

Proposition \ref{p:composition formula}
(as well as the analogous constructions in
\cite{Ch1, LS, Sikorav 1, Sikorav 2, Theret - Rotation numbers})
can be used to show
that every compactly supported Hamiltonian diffeomorphism of $\mathbb{R}^{2n}$
has a generating function quadratic at infinity.
This class of generating functions is used for instance
in the work of Viterbo \cite{Viterbo} and Traynor \cite{Traynor}.
As we now explain, in our case
(similarly to \cite{Givental - Nonlinear Maslov index} and \cite{Theret - Rotation numbers})
we use Proposition \ref{p:composition formula}
to produce instead \emph{conical generating functions}
for Hamiltonian diffeomorphisms of $\R^{2n}$
that lift contactomorphisms of lens spaces.

\subsection*{Generating functions for contact isotopies of lens spaces}

Throughout our discussion,
we fix the vector of weights $\ul{w}$
that defines the action \eqref{equation: action}
of $\zk$ on $\R^{2n}$,
and denote the lens space $L_k^{2n-1}(\underline{w})$ by $L_k^{2n-1}$.
On products $(\R^{2n})^N$ that occur as the domains of definition
of various generating functions
we always take the diagonal action of $\zk$ that is given 
by this same $\ul{w}$ on each factor,
and denote the corresponding lens space by $L_k^{2nN-1}$.
The $\R_{>0}$-action on $\R^{2n}$, or on products of $\R^{2n}$,
is always the radial action.

We say that a contact isotopy is a \emph{based contact isotopy} 
if it starts at the identity.

Given a based contact isotopy $\{\phi_t\}$ of $L_k^{2n-1}$,
we obtain a Hamiltonian isotopy $\{\Phi_t\}$ of $\R^{2n} \ssminus \{ 0 \}$
by first lifting $\{\phi_t\}$ to a $\zk$-equivariant based contact isotopy of $S^{2n-1}$
and then lifting this 
to a Hamiltonian isotopy of the symplectization of the sphere,
which we identify with $\R^{2n} \ssminus \{ 0 \}$.
We now explain this procedure in more detail.
Recall that the symplectization of a 
co-oriented contact manifold $(V,\xi)$ is
the symplectic submanifold $SV$ of $T^{\ast}V$ 
that consists of those covectors that vanish on the contact distribution
and are positive with respect to the given co-orientation.
Given a contactomorphism of $V$, its lift to the cotangent bundle  
restricts to a symplectomorphism of $SV$;
the lift to $SV$ of a contact isotopy of $V$ is a Hamiltonian isotopy.
A choice of a contact form $\alpha$ for $\xi$ gives a diffeomorphism
$\R \times V \to SV$, defined by
$(\theta, u) \mapsto e^{\theta} \alpha|_u$.
In the special case of $V = S^{2n-1}$ and
$\alpha = \sum_{j=1}^n ( x_j dy_j - y_j dx_j)$
we further identify $\R \times V$ with $\R^{2n} \ssminus \{ 0 \}$
by the map $(\theta, u) \mapsto \frac{1}{2} \, e^{\theta} u$. 
The lift of a contactomorphism $\phi \colon S^{2n-1} \to S^{2n-1}$
is then the map $\Phi \colon \R^{2n} \ssminus \{ 0 \} 
\to \R^{2n} \ssminus \{ 0 \}$ given by the formula
$$
\Phi(z) = \frac{|z|}{ e^{g(\frac{z}{|z|}) }} \; 
 \phi \big(\frac{z}{|z|}\big) \, ,
$$
where $\phi^{\ast} \alpha = e^g \alpha$,
and we extend $\Phi$ continuously to $\mathbb{R}^{2n}$ by setting $\Phi(0) = 0$.
For any based contact isotopy $\{\phi_t\}$ of $L_k^{2n-1}$
the resulting homeomorphisms $\Phi_t$ of $\R^{2n}$
are $(\zk \times \R_{>0})$-equivariant,
and are smooth symplectomorphisms on $\mathbb{R}^{2n} \ssminus \{0\}$.
We call such maps \emph{conical symplectomorphisms} of $\R^{2n}$.

As in \cite{Theret - Rotation numbers},
in order to work with conical symplectomorphisms
we must relax the smoothness assumption 
on our generating functions.
Notice first that if $\Phi$ is a conical symplectomorphism of $\R^{2n}$
such that $\Gamma_{\Phi}$ is a section of $T^{\ast}\mathbb{R}^{2n}$
then its generating function
$F \colon \mathbb{R}^{2n} \rightarrow \mathbb{R}$ 
is $\Z_k$-invariant, homogeneous of degree $2$,
smooth on $\R^{2n} \ssminus \{ 0 \}$
and $\mathcal{C}^1$ with Lipschitz differential everywhere\footnote{
Indeed, the first partial derivatives of $\left. F \right\vert_{\mathbb{R}^{2n} \smallsetminus 0}$
are $\mathbb{R}_{>0}$-homogeneous of degree one,
hence extend continuously to $\mathbb{R}^{2n}$,
and the second partial derivatives of $\left. F \right\vert_{\mathbb{R}^{2n} \smallsetminus 0}$
are $\mathbb{R}_{>0}$-invariant, hence bounded.}.
More generally, we say that
$F \colon E \to \R$, where $E = \R^{2n} \times \R^{2nN}$, 
is a \emph{conical function}
if it is $\mathcal{C}^1$ with Lipschitz differential,
$\zk$-invariant and homogeneous of degree $2$.
Moreover we say that such an $F \colon E \to \R$
is a \emph{conical generating function}
if it is smooth near its fibre critical points
other than the origin $(0,0) \in \R^{2n} \times \R^{2nN}$
and $dF \colon E \to T^*E$ is transverse to the fibre conormal bundle $N_E^{\ast}$
except possibly at the origin.
If this condition is satisfied then
the set $\Sigma_F$ of fibre critical points
is a smooth $(\zk \times \R_{>0})$-invariant submanifold
except possibly at the origin, 
and the corresponding map $i_F \colon \Sigma_F \to T^* \R^{2n}$
is continuous, $(\zk \times \R_{>0})$-equivariant,
and is a smooth Lagrangian immersion outside the origin. 
If $i_F$ is a homeomorphism between $\Sigma_F$ and $\Gamma_{\Phi}$
for a conical symplectomorphism $\Phi$
then we say that $F$ is a conical generating function for $\Phi$
and for the induced contactomorphism $\phi$ of $L_k^{2n-1}$.
By a \emph{family of conical generating functions} we mean
a collection of conical generating functions $F_s \colon E \to \R$, 
parametrized by $s \in S$ for some manifold with corners $S$ 
(for example $[0,1]$ or $[0,1] \times [0,1]$),
such that the map $(s,x) \mapsto F_s(x)$
is $\calC^1$ with locally Lipschitz differential 
and is smooth near $(s,x)$ whenever $x$ is a fibre critical point of $F_s$
other than the origin.

\begin{prop}\labell{proposition: conical family}
If $F^{(1)}_t$ and $F^{(2)}_t$ are families of conical generating functions
for contact isotopies $\{\phi^{(1)}_t\}$ and $\{\phi^{(2)}_t\}$ of $L_k^{2n-1}$,
then the functions $F^{(1)}_t \, \sharp \, F^{(2)}_t$
defined as in Proposition~\ref{p:composition formula}
form a family of conical generating functions 
for the composition $\{\phi^{(2)}_t \circ \phi^{(1)}_t\}$.
\end{prop}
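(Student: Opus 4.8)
The plan is to verify that $F^{(1)}_t \,\sharp\, F^{(2)}_t$ satisfies all the conditions in the definition of a family of conical generating functions, leveraging the three-step structure of the proof of Proposition~\ref{p:composition formula} but tracking regularity carefully. First I would check that each $F^{(1)}_t \,\sharp\, F^{(2)}_t$ is a \emph{conical function}: $\Z_k$-invariance is immediate since the defining formula is built from $\Z_k$-invariant pieces $F^{(j)}_t$ and from the pairing $\langle \zeta_2 - q, i(\zeta_1 - q)\rangle$, which is invariant under the diagonal $\Z_k$-action because this action preserves the symplectic form $\langle\,\cdot\,, i\,\cdot\,\rangle$ on $\R^{2n}$; homogeneity of degree $2$ follows because each $F^{(j)}_t$ is homogeneous of degree $2$ and the pairing term is manifestly degree $2$ in $(q,\zeta_1,\zeta_2)$; and $\calC^1$ with Lipschitz differential is inherited from the corresponding property of the $F^{(j)}_t$ (the pairing term is smooth, so adds nothing bad). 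The joint $\calC^1$-with-locally-Lipschitz-differential dependence on $(t,x)$ likewise transfers directly from the hypothesis on the families $F^{(j)}_t$.

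Next I would address the smoothness-at-fibre-critical-points condition. By Step~1 of the proof of Proposition~\ref{p:composition formula} (which is a purely formal computation of the vertical derivative and goes through verbatim in the conical setting away from the origin), a point $(q;\zeta_1,\zeta_2,\nu_1,\nu_2)$ is fibre critical for $F^{(1)}_t \,\sharp\, F^{(2)}_t$ if and only if each $(\zeta_j,\nu_j)$ is fibre critical for $F^{(j)}_t$ and equations \eqref{equation: fibre critical points} hold. If such a fibre critical point is not the origin of $\R^{2n}\times(\R^{2n}\times\R^{2n}\times\R^{2nN_1}\times\R^{2nN_2})$, I need to conclude that \emph{neither} $(\zeta_1,\nu_1)$ nor $(\zeta_2,\nu_2)$ is the origin of its respective $\R^{2n}\times\R^{2nN_j}$, so that $F^{(1)}_t$ and $F^{(2)}_t$ are each smooth there and hence $F^{(1)}_t \,\sharp\, F^{(2)}_t$ is smooth at the point. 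The cleanest way is to pass through the change of variables \eqref{equation: zeta-z} and its consequence \eqref{equation: fcp}: in the $z$-coordinates, fibre criticality forces $z_2 = \Phi^{(1)}_t(z_1)$ and $q = \tfrac{1}{2}(z_1 + \Phi^{(2)}_t(z_2))$, and since the $\Phi^{(j)}_t$ are conical (fixing $0$ and nothing else among radial issues), if $z_1 = 0$ then $z_2 = 0$ and $q = 0$, which together with $(\zeta_j,\nu_j)\mapsto z_j$ being a diffeomorphism through formula \eqref{e: formula} forces $(\zeta_j,\nu_j) = (0,0)$ and $q=0$ — i.e.\ the point is the origin. Contrapositively, a nonzero fibre critical point has $z_1 \neq 0$, hence $z_2 \neq 0$, hence both $(\zeta_j,\nu_j)\neq 0$, giving the desired smoothness.

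For the transversality condition, I would rerun Step~2 of the proof of Proposition~\ref{p:composition formula}: at any nonzero fibre critical point, the matrices $M_j$ are invertible (this uses only that $F^{(j)}_t$ is a generating function smooth at that fibre critical point, which we have just arranged), so the same elementary row and column operations reduce the differential of the vertical derivative to block-triangular form with invertible diagonal blocks $M_1, M_2$, proving $0$ is a regular value of the vertical derivative of $F^{(1)}_t \,\sharp\, F^{(2)}_t$ away from the origin; the excluded origin is exactly the permitted exception in the definition of conical generating function. Then Step~3 goes through unchanged to show $i_{F^{(1)}_t \sharp F^{(2)}_t}$ is a homeomorphism onto $\Gamma_{\Phi_t}$ for $\Phi_t$ the conical symplectomorphism lifting $\phi^{(2)}_t \circ \phi^{(1)}_t$ (noting that the composition of two conical symplectomorphisms is conical). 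Finally, I should record that the new family has fibre dimension $2n + 2n + 2nN_1 + 2nN_2$, which is of the form $2nN$ for $N = N_1 + N_2 + 2$, so the domain is of the required type $\R^{2n}\times\R^{2nN}$. The main obstacle I anticipate is the bookkeeping in the second paragraph — showing rigorously that a nonzero fibre critical point of the composed function cannot come from the origin in either factor — since this is the one place where the conical (as opposed to smooth) nature genuinely interacts with the composition formula; everything else is a routine transcription of the smooth argument.
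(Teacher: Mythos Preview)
Your proposal is correct and follows essentially the same approach as the paper's proof: verify the conical-function properties directly, then use the equations \eqref{equation: fcp} together with the fact that each conical symplectomorphism $\Phi^{(j)}_t$ fixes only the origin to show that a nonzero fibre critical point of $F^{(1)}_t\,\sharp\,F^{(2)}_t$ has both $(\zeta_j,\nu_j)\neq 0$, whence smoothness there, and then rerun Steps~2 and~3 of Proposition~\ref{p:composition formula} (the paper likewise notes that higher derivatives appear only at fibre critical points, which is exactly why the conical regularity suffices). Your self-identified ``main obstacle'' is indeed the one nontrivial point, and you handle it the same way the paper does.
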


\begin{proof}
We first show that $F^{(1)}_t \, \sharp \, F^{(2)}_t$
is a family of conical generating functions.
It is immediate to see
that each $F_t^{(1)} \, \sharp \, F^{(2)}_t$ is $\zk$-invariant
and homogeneous of degree 2,
and that the family is $\calC^1$ with locally Lipschitz differential.
The property of being smooth at fibre critical points other than the origin
is also preserved by the composition formula,
as we now explain.
Let $(q; \zeta_1, \zeta_2, \nu_1,\nu_2)$ be a fibre critical point of $F_t^{(1)} \, \sharp \, F_t^{(2)}$
other than the origin.
From \eqref{d vert calF}
we see that $(\zeta_j,\nu_j)$ is a fibre critical point of $F_t^{(j)}$ for $j=1,2$.
Moreover, $(\zeta_j,\nu_j)$ is the origin if and only if 
the point $z_j \in \R^{2n}$ that corresponds to it
by the bijection \eqref{equation: zeta-z} is also the origin.
If this happens for one of $j=1$ or $j=2$
then \eqref{equation: fcp} implies that
$q$ and both $z_1$ and $z_2$ are the origin
and thus, using the bijection \eqref{equation: zeta-z} once more, 
that $(q; \zeta_1, \zeta_2, \nu_1,\nu_2)$ is the origin,
contrary to our assumptions.
Therefore both $(\zeta_1,\nu_1)$ and $(\zeta_2,\nu_2)$
must be different from the origin,
and so $F_t^{(1)} \, \sharp \, F_t^{(2)}$ is smooth at $(q; \zeta_1, \zeta_2, \nu_1,\nu_2)$.
The proof now continues by repeating 
the proof of Proposition~\ref{p:composition formula};
notice that derivatives of order higher than one
are taken only at fibre critical points. 
\end{proof}

Using Proposition \ref{proposition: conical family} we now show that
any based contact isotopy of $L_k^{2n-1}$
has what we call a based family of conical generating functions.
Later in this section we show that for a given based contact isotopy
such a based family of conical generating functions
is essentially unique.

We say that a homeomorphism of $\R^{2n} \times \R^{2nN}$
is a \emph{fibre preserving conical homeomorphism} 
if it takes each fibre $\{ z \} \times \R^{2nN}$ to itself
and is $(\zk \times \R_{>0})$-equivariant.
The \emph{stabilization} of a conical generating function
$F \colon \R^{2n} \times \R^{2nN} \rightarrow \R$
by a non-degenerate $\zk$-invariant quadratic form
$Q \colon \R^{2nN'} \rightarrow \R$
is the conical generating function
$$
F \, \oplus \, Q \colon  \mathbb{R}^{2n} \times 
\mathbb{R}^{2nN} \times \mathbb{R}^{2nN'}
\rightarrow \mathbb{R} \,.
$$
On the set of conical generating functions
we consider the smallest equivalence relation under which
two such functions are equivalent
if they differ by stabilization
or by a fibre preserving conical homeomorphism
that restricts to a diffeomorphism
between neighborhoods of fibre critical points
other than the origin.

\begin{ex}\label{example: 0 sharp 0}
The zero function $0: \mathbb{R}^{2n} \rightarrow \mathbb{R}$ is equivalent to
$0 \,\sharp \, 0: \mathbb{R}^{2n} \times (\mathbb{R}^{2n} \times \mathbb{R}^{2n}) \rightarrow \mathbb{R}$;
indeed, $0 \,\sharp\, 0$ differs from $0$
by a stabilization followed by the fibre preserving conical homeomorphism
$(q; \zeta_1,\zeta_2) \mapsto (q; \zeta_1 - q,\zeta_2 - q)$.
\eoe
\end{ex}

\begin{rmk}\labell{remark: sharp preserves equivalence 1}
If $F^{(1)}$ and $F^{(2)}$ are equivalent respectively to $G^{(1)}$ and $G^{(2)}$
then $F^{(1)} \, \sharp \, F^{(2)}$ is equivalent to $G^{(1)} \, \sharp \, G^{(2)}$.
Moreover\footnote{
This fact is needed in the proof of Lemma \ref{lemma: index based}.},
if $F^{(j)}$ and $G^{(j)}$ differ by a fibre preserving conical homeomorphism
for each $j = 1, 2$
then so do $F^{(1)} \, \sharp \, F^{(2)}$ and $G^{(1)} \, \sharp \, G^{(2)}$.
\eor
\end{rmk}

\begin{rmk}\labell{remark: new}
If conical generating functions $F$ and $G$ are equivalent
then there exist a stabilization of $F$ and a stabilization of $G$
that differ by a fibre preserving conical homeomorphism
that restricts to a diffeomorphism
between neighborhoods of the fibre critical points.
\eor
\end{rmk}

We say that a family
$F_t: \mathbb{R}^{2n} \times \mathbb{R}^{2nN} \rightarrow \mathbb{R}$
of conical generating functions
is a \emph{based family of conical generating functions}
if $F_0$ is equivalent to the zero function $\mathbb{R}^{2n} \rightarrow \mathbb{R}$.

\begin{rmk}\label{remark: based}
By Example \ref{example: 0 sharp 0}
and Remark \ref{remark: sharp preserves equivalence 1},
if $F^{(1)}_t$ and $F^{(2)}_t$ are based families of conical generating functions
then so is $F^{(1)}_t \, \sharp \, F^{(2)}_t$.
\eor
\end{rmk}

\begin{prop}[Existence of generating functions] 
\labell{existence of generating functions}
Any based contact isotopy $\{\phi_t\}_{t\in[0,1]}$ of $L_k^{2n-1}$
has a based family $F_t \colon \R^{2n} \times \R^{2nN} \rightarrow \R$
of conical generating functions.
\end{prop}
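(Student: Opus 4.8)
The plan is to reduce the statement to the composition formula (Proposition \ref{p:composition formula}) together with the fact that a contactomorphism of $L_k^{2n-1}$ that is $\calC^1$-close to the identity has a conical generating function with no fibre variables, as already observed in the discussion preceding Proposition \ref{proposition: conical family}.

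First I would use the standard fragmentation-in-time argument. Given a contact isotopy $\{\phi_t\}_{t\in[0,1]}$, choose $0 = t_0 < t_1 < \cdots < t_m = 1$ fine enough that for each $i$ the contactomorphism $\phi_{t_{i-1}}^{-1}\circ\phi_{t}$ is $\calC^1$-close to the identity for all $t\in[t_{i-1},t_i]$. Then one can write $\phi_t = \psi^{(m)}_t\circ\cdots\circ\psi^{(1)}_t$, where each $\{\psi^{(i)}_t\}_{t\in[0,1]}$ is a contact isotopy that is $\calC^1$-small in the sense that its lift $\Psi^{(i)}_t$ to a conical symplectomorphism of $\R^{2n}$ has $\Gamma_{\Psi^{(i)}_t}$ a section of $T^*\R^{2n}$ for every $t$ (explicitly, reparametrize so that $\psi^{(i)}_t$ equals the identity for $t$ near $0$, agrees with $\phi_{t_{i-1}}^{-1}\circ\phi_{t_i}$ for $t$ near $1$, and interpolates through small contactomorphisms; the precise bookkeeping is routine and I would not spell it out). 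Since $\Gamma_{\Psi^{(i)}_t}$ is a section, it is the graph of an exact $1$-form (it is the image of the graph of $\Psi^{(i)}_t$ under $\tau$, and exactness follows from the contractibility in $t$ together with the $\zk$-invariance and degree-$2$ homogeneity), so each $\Psi^{(i)}_t$ has a conical generating function $F^{(i)}_t\colon\R^{2n}\to\R$ with no fibre variables, depending on $t$ in the required $\calC^1$/locally-Lipschitz way and smooth at nonzero fibre critical points.

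Next I would iterate the composition formula. Applying Proposition \ref{proposition: conical family} inductively, the functions
$$
F_t := F^{(m)}_t \,\sharp\, \big(\cdots \,\sharp\, (F^{(2)}_t \,\sharp\, F^{(1)}_t)\big)
$$
form a family of conical generating functions for $\psi^{(m)}_t\circ\cdots\circ\psi^{(1)}_t = \phi_t$, defined on $\R^{2n}\times\R^{2nN}$ for $N$ equal to the total number of fibre variables introduced by the $m-1$ applications of $\sharp$. This is exactly a family of conical generating functions for $\{\phi_t\}$ of the asserted form.

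The main obstacle is the first step: producing the conical generating functions without fibre variables for the $\calC^1$-small pieces, and, more delicately, arranging the time-fragmentation so that the resulting pieces $\{\psi^{(i)}_t\}$ are genuine contact isotopies on all of $[0,1]$ (not just on subintervals) whose lifts stay within the "graph is a section" regime for the whole parameter interval, while the family depends on $t$ with the regularity demanded in the definition of a family of conical generating functions. Once that is set up, the rest is a formal consequence of Proposition \ref{proposition: conical family}. Here one follows Th\'eret \cite{Theret - Rotation numbers}, replacing the $S^1$-action by the $\zk$-action \eqref{equation: action}; the smoothness at nonzero fibre critical points and the homogeneity are preserved at each stage by Proposition \ref{proposition: conical family}, so no new analytic input is needed beyond what is already in the excerpt.
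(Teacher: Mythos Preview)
Your approach is exactly the paper's: fragment the isotopy in time into $\calC^1$-small pieces, take the conical generating function with no fibre variables for each piece, and iterate Proposition~\ref{proposition: conical family}. The paper's proof is two sentences to this effect, with the explicit time-fragmentation spelled out in Remark~\ref{remark: explicit decomposition}.

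Two small bookkeeping slips to fix (you flag this step as routine, but the formulas you wrote do not quite work). First, the small pieces must be $\phi_t \circ \phi_{t_{i-1}}^{-1}$ rather than $\phi_{t_{i-1}}^{-1}\circ\phi_t$: with your convention the composition $\psi^{(m)}_1\circ\cdots\circ\psi^{(1)}_1$ does not telescope to $\phi_1$. Second, recall from Proposition~\ref{p:composition formula} that $F_1\,\sharp\,F_2$ generates $\Phi^{(2)}\circ\Phi^{(1)}$, so the iterated product generating $\psi^{(m)}_t\circ\cdots\circ\psi^{(1)}_t$ is $F^{(1)}_t\,\sharp\,\cdots\,\sharp\,F^{(m)}_t$, not $F^{(m)}_t\,\sharp\,\cdots\,\sharp\,F^{(1)}_t$. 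Neither affects the argument.
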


\begin{proof}
For $N$ big enough we can write
$\phi_t = \phi_t^{(N)} \circ \cdots \circ \phi_t^{(1)}$
for based contact isotopies $\{\phi_t^{(j)}\}$
having (based) families of conical generating functions
$F_t^{(j)} \colon \R^{2n} \rightarrow \R$.
(For instance, we can take
$\phi_t^{(j)} = \phi_{\frac{j}{N}t} \circ (\phi_{\frac{j-1}{N}t})^{-1}$.)
By Proposition \ref{proposition: conical family} and Remark \ref{remark: based},
a family of the form
$F_t := F_t^{(1)} \, \sharp \, \cdots \, \sharp \, F_t^{(N)}$
(for any choice of parenthetization of the factors)
is then a based family of conical generating functions 
for $\{\phi_t\}$.
\end{proof}

In Section \ref{sec:Maslov index} we use generating functions
to define the non-linear Maslov index of a
contact isotopy of $L_k^{2n-1}$.
In order to show that the index is well defined
we use the fact that generating functions are in some sense unique.
The following discussion leads to this result,
which we state and prove in Proposition \ref{uniqueness gf} below.

By a \emph{family of fibre preserving conical homeomorphisms} 
we mean a collection of fibre preserving conical homeomorphism $\theta_s$, 
parametrized by $s \in S$ for some manifold with corners $S$
(for example $[0,1]$ or $[0,1] \times [0,1]$),
such that $(s,x) \mapsto \theta_s(x)$ is continuous.
The \emph{stabilization} of a 1-parameter family
$F_t \colon \R^{2n} \times \R^{2nN} \rightarrow \R$
of conical generating functions by a non-degenerate $\zk$-invariant quadratic form
$Q \colon \R^{2nN'} \rightarrow \R$
is the 1-parameter family of conical generating functions
$$
F_t \, \oplus \, Q \colon  \mathbb{R}^{2n} \times 
\mathbb{R}^{2nN} \times \mathbb{R}^{2nN'}
\rightarrow \mathbb{R} \,.
$$
On the set of 1-parameter families of conical generating functions
we consider the smallest equivalence relation under which
two such families are equivalent
if they differ by stabilization
or by a 1-parameter family of fibre preserving conical homeomorphisms 
that restrict to diffeomorphisms
on neighborhoods of the fibre critical points other than the origin.

\begin{rmk}\labell{remark: sharp preserves equivalence}
If two families $F^{(1)}_t$ and $F^{(2)}_t$
are equivalent respectively to $G^{(1)}_t$ and $G^{(2)}_t$
then $F^{(1)}_t \, \sharp \, F^{(2)}_t$ is equivalent to $G^{(1)}_t \, \sharp \, G^{(2)}_t$.
\eor
\end{rmk}

Equivalent 1-parameter families of conical generating functions
generate the same contact isotopy.
In Proposition \ref{uniqueness gf} we prove
(mostly following Th\'eret's proof 
of \cite[Proposition 4.7]{Theret - Rotation numbers})
a partial converse:
any two \emph{based} families of conical generating functions
for a given contact isotopy of $L_k^{2n-1}$
are equivalent.
The main ingredient is \cite[Lemma 4.8]{Theret - Rotation numbers},
whose proof holds also in our situation
and gives the following result.

\begin{lemma} \labell{lemma 4.8 in Theret} 
Let $\{\phi_t\}_{t \in [0,1]}$ be a based contact isotopy of $L_k^{2n-1}$.
Suppose that $F_{s,t} \colon \mathbb{R}^{2n} \times \R^{2nN} \rightarrow \R$,
for $(s,t) \in [0,1] \times [0,1]$, 
is a family of conical generating functions
such that for every $(s,t)$ 
the function $F_{s,t}$ is a conical generating function for $\phi_t$.
Then there is a family $\theta_{s,t}$
of fibre preserving conical homeomorphisms
of $\mathbb{R}^{2n} \times \R^{2nN}$
such that $F_{s,t} \circ \, \theta_{s,t} = F_{0,t}$,
and such that each homeomorphism $\theta_{s,t}$
restricts to a diffeomorphism
from a neighborhood of the set of fibre critical points of $F_{0,t}$ other than the origin
to a neighborhood of the set of fibre critical points of $F_{s,t}$ other than the origin.
In particular, the 1-parameter families $F_{0,t}$ and $F_{1,t}$ are equivalent.
\end{lemma}

\begin{proof}
Write $E := \R^{2n} \times \R^{2nN}$
and $\calE = [0,1] \times [0,1] \times E$.
We seek a locally Lipschitz two-parameter family
of homogeneous of degree $1$, $\zk$-invariant,
vertical vector fields $X_{s,t}$ on $E \smallsetminus \{ (0,0) \}$
so that
\begin{equation}\labell{eqn to solve for uniqueness}
\frac{\del F_{s,t}}{\del s} \, (q,\zeta) + \frac{\del F_{s,t}}{\del \zeta} \,  (q,\zeta) \, X_{s,t} (q,\zeta) = 0 \,.
\end{equation}
Assuming we have such a family of vector fields,
for each  $t$  the local flow
of the time-dependent vector field $s \mapsto X_{s, t} (q, \zeta)$ on $E \smallsetminus \{ (0,0) \}$ exists
thanks to the locally Lipschitz assumption.
By homogeneity, the local flows fit into a family $s \mapsto \theta_{s, t} (q, \zeta)$
of global flows,
which we extend to a family of fibre preserving conical homeomorphisms
by demanding that $\theta_{s, t} (0,0) = (0,0)$.
The relation \eqref{eqn to solve for uniqueness} then implies that
$\frac{\del}{\del s} \, F_{s,t} \big(\theta_{s,t} (q,\zeta)\big) = 0$,
and so $F_{s,t} \big(\theta_{s,t} (q,\zeta)\big) = F_{0,t} (q,\zeta)$.

Writing $F: \calE \to \R$, $F (s,t,q,\zeta) = F_{s,t} (q,\zeta)$,
let
$$
\Sigma = \Big\{ \frac{\del F}{\del \zeta} = 0 \Big\}
= \bigsqcup_{s,t} \; \{ s \} \times \{ t \} \times \Sigma_{F_{s,t}} \,.
$$
Moreover denote
$$
\calS = \big\{ (s,t,q,\zeta) \in \calE \,\big\lvert\,  ||q||^2 + ||\zeta||^2 = 1 \big\}
$$
and
$$
\calE_{\neq 0} = [0,1] \times [0,1] \times \left( E \ssminus \{ (0,0) \} \right) \,.
$$
Notice that the subset $\Sigma$ of $\calE$ is $\R_{>0}$-invariant,
and its intersection with $\calE_{\neq 0}$ is a smooth submanifold
(as it is the preimage of a regular value
of $\frac{\del F}{\del \zeta} \colon \calE_{\neq 0} \to \R^{2nN}$).

The vector field $X$ is constructed following the steps below.
 
\textit{Step $1$: Solving \eqref{eqn to solve for uniqueness}
in a neighborhood of $\calS \smallsetminus \calS \cap \Sigma$.}
In a neighborhood of $\calS \smallsetminus \calS \cap \Sigma$
the family of vertical vector fields
$$
X^1_{s,t} (q,\zeta) =
\frac{- \frac{\del F_{s,t}}{\del s} (q,\zeta)}{|\frac{\del F_{s,t} } {\del \zeta}|^2}
\; \sum_k \frac{\del F_{s,t}}{\del \zeta_k}  (q,\zeta) \, \frac{\del }{\del \zeta_k}
$$
is well defined
and solves \eqref{eqn to solve for uniqueness}.
It is locally Lipschitz,
because so are $\frac{\del F_{s,t}}{\del s}$ and $\frac{\del F_{s,t}}{\del \zeta}$.

\textit{Step $2$: Solving \eqref{eqn to solve for uniqueness}
in a neighborhood of $\calS \cap \Sigma$ in $\calS$.}
Let $U$ be an open neighborhood of $\calS \cap \Sigma $ in $\calE$
on which $F$ is smooth.
Let $G_1$ denote the restriction of $\frac{\del F}{\del \zeta}$ to $U$,
so $U \cap \Sigma = G_1^{-1} (0)$.
The set $G_1^{-1} (0)$ consists only of regular points of $G_1$. 
Let $G_2 \colon U \rightarrow \R$ denote
the restriction of  $-\frac{\del F}{\del s}$ to $U$.
Then $G_2$ is smooth on $U$ and we claim that it vanishes on $\Sigma$.

Indeed, since the image of the family of Lagrangian embeddings $\Sigma_{F_{s,t}} \to T^*\R^{2n}$ is independent
of $s$, around each point $(s,t,q,\zeta) \in \Sigma \cap \calE_{\neq 0}$, we may pick coordinates
$(s,t,x,y)$ such that $\Sigma=\{(s,t,x,y) \colon y=0\}$ and $x=x(t,q,\zeta)$ is independent of $s$
(we take $y= \frac{\partial F}{\partial \zeta}$). 

As the Lagrangian embedding of $\Sigma_{F_{s,t}}$ is given by the expression $(q,\zeta) \mapsto (q, \frac{\partial F_{s,t}}{\partial q}) 
\in T^*(\R^{2n})$ it follows that $\frac{\partial}{\partial s} \left( \frac{\partial F}{\partial q}(s,t,x,0) \right) = 0$.
Clearly we also have
$\frac{\partial}{\partial s} \left( \frac{\partial F}{\partial \zeta}(s,t,x,0)\right) = 0$.
It is easy to check that the operator $\frac{\partial}{\partial s}$ is the same in both systems of coordinates,
hence $\frac{\partial F}{\partial s}$ has vanishing derivatives with respect to $q$ and $\zeta$ on $\Sigma \cap \calE_{\neq 0}$. 
As $\frac{\partial F}{\partial s}$ is homogeneous of degree $2$ in $(q,\zeta)$ it must vanish on $\Sigma$, as required.

Hadamard's Lemma (see \cite[Th\'eor\`{e}me 92]{Theret - Thesis}) now provides a smooth $B \colon U \rightarrow \R^{2nN}$
such that in some neighborhood of $U \cap \Sigma$ we have $G_2 = \langle B , G_1 \rangle$.
We use the coordinates of the map $B$ to define a vertical vector field 
$$
X^2 = \sum_{k=1}^{2nN} B_k \, \frac{\del}{\del \zeta_k}
$$
on the above neighborhood of $U \cap \Sigma$.
Then
$$
- \frac{\del F}{\del s} = G_2 =\langle B, G_1 \rangle
= \sum_{k=1}^{2nN} X^2_k \, \frac{\del F}{\del \zeta_k} \,,
$$
and so \eqref{eqn to solve for uniqueness} holds.
Note that this vector field is smooth,
thus also locally Lipschitz.
 
\textit{Step $3$: Patch these vector fields together}.
We use a smooth bump function
to obtain a locally Lipschitz, vertical vector field
that is defined in a neighborhood of $\calS$
and solves \eqref{eqn to solve for uniqueness}.

\textit{Step $4$: Average and extend.}
Average by the $\Z_k$-action
to obtain a $\zk$-invariant, locally Lipschitz, vertical vector field $\widetilde{X}$.
Then restrict $\widetilde{X}$ to $\calS$
and extend to the whole $\calE_{\neq 0}$ by homogeneity,
i.e.\ define
$$
X (s,t,q,\zeta) = |(q,\zeta)| \, \widetilde{X} \Big(s,t, \frac{q}{|(q,\zeta)|}, \frac{\zeta}{|(q,\zeta)|} \Big)
$$
for $(q,\zeta) \neq 0$ and $X(s,t,0,0) = 0$.
Then $X$ is homogeneous of degree 1, $\zk$-invariant,
vertical and locally Lipschitz.
Moreover, its flow restricts to a diffeomorphism
from a neighborhood of the set of fibre critical points of $F_{0,t}$ other than the origin
to a neighborhood of the set of fibre critical points of $F_{s,t}$ other than the origin.
\end{proof}

Using Lemma \ref{lemma 4.8 in Theret} we now prove the following statement.

\begin{lemma}[$0$ is neutral]\labell{lemma: G sharp 0 equivalent to G}
Let $F_t \colon \mathbb{R}^{2n} \times \R^{2nN} \rightarrow \R$
be a family of conical generating functions for a contact isotopy $\{\phi_t\}$,
and let $0: \mathbb{R}^{2n} \rightarrow \mathbb{R}$ be the zero function.
Then $F_t \, \sharp \, 0$ and $0 \, \sharp \, F_t$ are equivalent to $F_t$.
\end{lemma}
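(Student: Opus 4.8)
The plan is to prove the statement for $F_t\,\sharp\,0$; the case $0\,\sharp\,F_t$ is entirely analogous, with the roles of the fibre variables $\zeta_1$ and $\zeta_2$ interchanged. Here $0$ denotes the zero function on $\R^{2n}$, which is a conical generating function for the identity, so by Proposition~\ref{proposition: conical family} the family $F_t\,\sharp\,0$ is already a family of conical generating functions for $\{\phi_t\}$; what has to be shown is that it lies in the equivalence class of $F_t$. Writing out the composition formula of Proposition~\ref{p:composition formula},
$$(F_t\,\sharp\,0)(q;\zeta_1,\zeta_2,\nu)=F_t(\zeta_1,\nu)-2\langle\zeta_2-q,\,i(\zeta_1-q)\rangle$$
on $\R^{2n}\times(\R^{2n}\times\R^{2n}\times\R^{2nN})$. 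First I would substitute $\zeta_1=w+q$, $\zeta_2=q-v$, i.e.\ compose with the linear fibre preserving conical diffeomorphism $(q;w,v,\nu)\mapsto(q;\,w+q,\,q-v,\,\nu)$. Since $\zeta_1-q=w$ and $\zeta_2-q=-v$, this yields the equivalent family
$$G_t(q;w,v,\nu)=F_t(w+q,\nu)+Q(w,v),\qquad Q(w,v):=2\langle v,iw\rangle,$$
where $Q$ is a non-degenerate $\zk$-invariant quadratic form on $\R^{4n}$ (non-degeneracy because $i$ is invertible; $\zk$-invariance because the $\zk$-action is by complex scalars and so commutes with $i$).

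The core step is to decouple $q$ from the fibre variable $w$ in the term $F_t(w+q,\nu)$. By Hadamard's lemma (recalled in the proof of Lemma~\ref{lemma 4.8 in Theret}), $F_t(w+q,\nu)-F_t(q,\nu)=\langle w,\,R_t(q,w,\nu)\rangle$ with $R_t(q,w,\nu)=\int_0^1\nabla_\zeta F_t(q+sw,\nu)\,ds$; since $\nabla_\zeta F_t$ is Lipschitz, $R_t$ is continuous everywhere, is $\zk$-equivariant and homogeneous of degree $1$, and is smooth wherever $F_t$ is smooth. Because multiplication by $i$ is orthogonal, $\langle w,R_t\rangle=2\langle\tfrac{i}{2}R_t,\,iw\rangle$, so
$$G_t(q;w,v,\nu)=F_t(q,\nu)+2\big\langle\,v+\tfrac{i}{2}R_t(q,w,\nu),\ iw\,\big\rangle.$$
Hence $G_t\circ\Theta_t=F_t(q,\nu)+Q(w,v)=F_t\oplus Q$, where $\Theta_t(q;w,v,\nu)=(q;\,w,\,v-\tfrac{i}{2}R_t(q,w,\nu),\,\nu)$. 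The maps $\Theta_t$ form a $t$-family of fibre preserving conical homeomorphisms which on a neighbourhood of the fibre critical points restrict to diffeomorphisms; so $G_t$ is equivalent to $F_t\oplus Q$, which is a stabilization of $F_t$. Chaining, $F_t\,\sharp\,0\sim G_t\sim F_t\oplus Q\sim F_t$.

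Two points need care, and I regard the second as the main obstacle. First, one must check that the linear change of variables and the shear $\Theta_t$ are genuinely conical, i.e.\ $(\zk\times\R_{>0})$-equivariant; this follows from the properties of $R_t$ noted above together with the equivariance built into the $\zk$-action. Second, one must locate the fibre critical points of $G_t$ and of $F_t\oplus Q$ — from the vertical derivative of $Q$ they satisfy $w=0$ with $v$ determined and $(q,\nu)\in\Sigma_{F_t}$ — and verify that none of them sits over the origin of $\R^{2n}\times\R^{2n}\times\R^{2nN}$ unless it is the origin itself; this is the same bookkeeping with the identifications \eqref{equation: zeta-z} used in Propositions~\ref{p:composition formula} and \ref{proposition: conical family}. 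On a neighbourhood of such a point $F_t$, hence $R_t$, is smooth, so $\Theta_t$ there restricts to a diffeomorphism, which is exactly what the equivalence relation demands; and the required smoothness of $(t,x)\mapsto G_t(x)$ at fibre critical points is inherited from that of $F_t$. Alternatively one can bypass Hadamard's lemma: the homotopy $F^{(s)}_t(q;w,v,\nu):=F_t(sw+q,\nu)+Q(w,v)$, $s\in[0,1]$, is a family of conical generating functions with each $F^{(s)}_t$ a generating function for $\phi_t$ — on its fibre critical set $w=0$, so $\partial_qF^{(s)}_t=\nabla_\zeta F_t(q,\nu)$ independently of $s$, while transversality follows from that of $F_t$ together with the non-degeneracy of $Q$ in the $(w,v)$-block — so Lemma~\ref{lemma 4.8 in Theret} applies and gives $F^{(1)}_t\sim F^{(0)}_t$, where $F^{(1)}_t=G_t$ and $F^{(0)}_t=F_t\oplus Q$.
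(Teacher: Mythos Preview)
Your proof is correct, and the alternative you sketch at the end is in fact exactly the paper's argument, just written after the linear substitution $\zeta_1=w+q$, $\zeta_2=q-v$: the paper interpolates via
\[
F_{s,t}(q;\zeta_1,\zeta_2,\nu)=F_t\bigl(s\zeta_1+(1-s)q,\nu\bigr)-2\langle\zeta_2-q,i(\zeta_1-q)\rangle,
\]
which in your coordinates is precisely $F^{(s)}_t(q;w,v,\nu)=F_t(sw+q,\nu)+Q(w,v)$, observes that each $F_{s,t}$ generates $\phi_t$, and invokes Lemma~\ref{lemma 4.8 in Theret}. Your main route via Hadamard's lemma is a genuine (and valid) alternative: instead of appealing to the abstract path argument of Lemma~\ref{lemma 4.8 in Theret}, you write down the required fibre preserving conical homeomorphism $\Theta_t$ explicitly as a shear in $v$ by $\tfrac{i}{2}R_t$. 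This is more hands-on and self-contained; the trade-off is that you must check directly the regularity and equivariance of $\Theta_t$ near the fibre critical locus (which, as you note, lies in $\{w=0\}$ with $(q,\nu)\in\Sigma_{F_t}$, so $R_t$ is smooth there), whereas the paper outsources all of that to Lemma~\ref{lemma 4.8 in Theret}.
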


\begin{proof}
We prove that $F_t \, \sharp \, 0$ is equivalent to $F_t$
(the case of $0 \, \sharp \, F_t$ is similar). 
For each $s \in [0,1]$ we define a 1-parameter family
$F_{s,t} \colon \R^{2n} \times \big(\R^{2n} \times \R^{2n} \times \R^{2nN} \big) \rightarrow \R$ by 
$$
F_{s,t} (q;\zeta_1,\zeta_2,\nu) = F_t \big(s \zeta_1 + (1-s) q, \nu\big) - 2 \left< \zeta_2 - q, i(\zeta_1 - q)\right>\,.
$$
Then $F_{1,t} = F_t \, \sharp \, 0$, 
and $F_{0,t} (q; \zeta_1,\zeta_2,\nu) = F_t(q,\nu) - 2 \left< \zeta_2 - q, i(\zeta_1 - q) \right >$
is equivalent to $F_t$ since it differs from it
by a stabilization followed by the fibre preserving conical homeomorphism
$(q; \zeta_1,\zeta_2,\nu) \mapsto (q; \zeta_1 - q,\zeta_2 - q,\nu)$.
By arguments as in the proof of Proposition \ref{p:composition formula}
one sees that for each $s\in [0,1]$
the 1-parameter family $F_{s,t}$ generates $\{\phi_t\}$.
By Lemma \ref{lemma 4.8 in Theret} we conclude that $F_{0,t}$ and $F_{1,t}$ are equivalent,
and thus so are $F_t$ and $F_t \, \sharp \, 0$.
\end{proof}

We now put these ingredients together
to prove the following result.

\begin{prop}[Uniqueness of generating functions]\labell{uniqueness gf}
Any two based families of conical generating functions
for the same based contact isotopy of $L_k^{2n-1}$
are equivalent.
\end{prop}

\begin{proof}
Let $F_t$ and $G_t$ be two based families
of conical generating functions for a based contact isotopy $\{\phi_t\}$.
By Proposition \ref{proposition: conical family},
for every $s \in [0,1]$ the family $F_{st} \, \sharp \, \big( (-G_{st}) \, \sharp \, G_t\big)$
generates $\{\phi_t\} = \{ (\phi_t \circ \phi_{st}^{-1}) \circ \phi_{st}\}$,
and the family $(-G_{st}) \, \sharp \, G_{st}$
generates $\{ \text{id} \} = \{ \phi_{st} \circ \phi_{st}^{-1} \}$.
By Lemma \ref{lemma 4.8 in Theret}
we thus have that
$F_{0} \, \sharp \, \big( (-G_{0}) \, \sharp \, G_t\big)$ is equivalent to $F_{t} \, \sharp \, \big( (-G_{t}) \, \sharp \, G_t\big)$
and that $(-G_{0}) \, \sharp \, G_{0}$ is equivalent to $(-G_{t}) \, \sharp \, G_{t}$.
Denoting equivalence of families of generating functions by $\sim$,
Remark \ref{remark: sharp preserves equivalence},
Lemma \ref{lemma: G sharp 0 equivalent to G},
Example \ref{example: 0 sharp 0}
and the fact that the families $F_t$ and $G_t$ are based
then imply
$$
G_t \, \sim \, 0 \, \sharp \, (0\, \sharp \, G_t) \, \sim \, F_0 \, \sharp \, \big((-G_0)\, \sharp \, G_t\big)
\, \sim \, F_t \, \sharp \, \big((-G_t)\, \sharp \, G_t \big) \, \sim \, F_t \, \sharp \, \big((-G_0)\, \sharp \, G_0 \big)
 \, \sim \,  F_t \, \sharp \, 0 \, \sim \,  F_t\,.
$$
\end{proof}

The next result is used in Section \ref{sec:Maslov index}
to obtain that the non-linear Maslov index descends to a map 
on the universal cover of the identity component of the contactomorphism group.

\begin{Proposition}[Uniqueness for a homotopy class] \labell{uniqueness of gf for homotopy class}
Suppose that $\{\phi_{0,t}\}$ and $\{\phi_{1,t}\}$ are two contact isotopies
from the identity to the same contactomorphism $\phi$
that are smoothly homotopic with fixed endpoints.
Then there are based families $F_{0,t}$ and $F_{1,t}$
of conical generating functions for $\phi_{0,t}$ and $\phi_{1,t}$ respectively
such that $F_{0,0} = F_{1,0}$
and $F_{0,1}$ and $F_{1,1}$ differ by a fibre preserving conical homeomorphism.
\end{Proposition}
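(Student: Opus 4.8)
The plan is to apply the construction of Proposition~\ref{existence of generating functions} to the entire homotopy rather than to the two isotopies separately, and then to extract the two asserted relations from the behaviour of the resulting two-parameter family at $t=0$ and at $t=1$.

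Concretely, I would first fix a (smooth) homotopy $\{\phi_{s,t}\}_{(s,t)\in[0,1]\times[0,1]}$ with fixed endpoints, so that $\phi_{s,0}=\textrm{id}$ and $\phi_{s,1}=\phi$ for all $s$, and $\{\phi_{0,t}\}$, $\{\phi_{1,t}\}$ are the two given isotopies. By compactness I can choose a single subdivision $0=t_0<t_1<\cdots<t_N=1$ of the time interval such that every contact isotopy $\{\phi_{s,t}\circ\phi_{s,t_{j-1}}^{-1}\}_{t\in[t_{j-1},t_j]}$ is sufficiently $\mathcal{C}^1$-small, uniformly in $s$. Applying the recipe of Remark~\ref{remark: explicit decomposition} with these times gives, for each $s$, $\mathcal{C}^1$-small contact isotopies $\{\phi^{(j)}_{s,t}\}_{t\in[0,1]}$ ($j=1,\dots,N$), depending smoothly on $(s,t)$, with $\phi^{(j)}_{s,0}=\textrm{id}$ and $\phi_{s,t}=\phi^{(N)}_{s,t}\circ\cdots\circ\phi^{(1)}_{s,t}$. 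Let $F^{(j)}_{s,t}\colon\R^{2n}\to\R$ be the associated conical generating functions (without fibre variables); they form a family over the square $[0,1]\times[0,1]$, and $F^{(j)}_{s,0}=0$ since the only conical generating function without fibre variables of the identity is the zero function. Setting $F_{s,t}:=F^{(1)}_{s,t}\,\sharp\,\cdots\,\sharp\,F^{(N)}_{s,t}$ for one fixed parenthetization, Proposition~\ref{proposition: conical family} shows that, for each fixed $s$, the family $\{F_{s,t}\}_{t\in[0,1]}$ is a family of conical generating functions for $\{\phi_{s,t}\}_t$ of exactly the type produced in the proof of Proposition~\ref{existence of generating functions}. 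I take $F_{0,t}$ and $F_{1,t}$ to be its slices at $s=0$ and $s=1$.

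With these choices, $F_{0,0}=F_{1,0}$ is immediate: at $t=0$ every factor $F^{(j)}_{s,0}$ is the zero function, so $F_{s,0}=0\,\sharp\,\cdots\,\sharp\,0$ is one and the same explicit function of the fibre variables for every $s$. For the second relation, observe that at $t=1$ we have $\phi^{(N)}_{s,1}\circ\cdots\circ\phi^{(1)}_{s,1}=\phi_{s,1}=\phi$ for all $s$, so $\{F_{s,1}\}_{s\in[0,1]}$ is a one-parameter family of conical generating functions all of which generate the single contactomorphism $\phi$. I would then run the argument in the proof of Lemma~\ref{lemma 4.8 in Theret} on this family to produce a family $\theta_s$ of fibre preserving conical homeomorphisms, restricting to diffeomorphisms near the fibre critical points, with $F_{s,1}\circ\theta_s=F_{0,1}$; in particular $F_{0,1}$ and $F_{1,1}$ differ by the fibre preserving conical homeomorphism $\theta_1$, as required.

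The one place requiring care is the invocation of Lemma~\ref{lemma 4.8 in Theret} in the previous step: that lemma is stated for a genuine contact isotopy in the $t$-variable, whereas here it is applied to a family parametrized only by $s$, all of whose members generate a fixed contactomorphism. This is harmless, because the proof of Lemma~\ref{lemma 4.8 in Theret} is entirely about the family of generating functions and their vertical derivatives — it proceeds through Hadamard's lemma and the transversality hypothesis and never uses the isotopy structure — so it applies verbatim to $\{F_{s,1}\}_s$; equivalently, one may extend this family to the constant-in-$t$ family $\widehat F_{s,t}:=F_{s,1}$ on $[0,1]\times[0,1]$ and apply Lemma~\ref{lemma 4.8 in Theret} literally to $\widehat F_{s,t}$. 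Everything else reduces to bookkeeping with the composition formula and the results already established.
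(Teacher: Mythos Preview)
Your proposal is correct and follows essentially the same approach as the paper: build a two-parameter family $F_{s,t}=F^{(1)}_{s,t}\,\sharp\,\cdots\,\sharp\,F^{(N)}_{s,t}$ over the homotopy, read off $F_{0,0}=F_{1,0}=0\,\sharp\,\cdots\,\sharp\,0$ at $t=0$, and invoke Lemma~\ref{lemma 4.8 in Theret} on $\{F_{s,1}\}_s$ at $t=1$. Your extra paragraph justifying the use of Lemma~\ref{lemma 4.8 in Theret} for a family varying only in $s$ (either by noting that its proof does not use the $t$-isotopy structure, or by padding to a constant-in-$t$ two-parameter family) is a point the paper leaves implicit, and it is good that you spelled it out.
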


\begin{proof}
Let $\{\phi_{s,t}\}$ be a smooth homotopy with fixed endpoints 
from $\{\phi_{0,t}\}$ to $\{\phi_{1,t}\}$,
and for $N$ big enough write
$\phi_{s,t} = \phi_{s,t}^{(N)} \circ \cdots \circ \phi_{s,t}^{(1)}$,
with each family $\{\phi_{s,t}^{(j)}\}_{s \in [0,1]}$ depending smoothly on $s$ and $t$,
for based contact isotopies $\{\phi_{s,t}^{(j)}\}_{[0,1]}$
having families of conical generating function $F_{s,t}^{(j)} \colon \R^{2n} \to \R$.
(For instance, we can take
$\phi_{s,t}^{(j)} = \phi_{s,\frac{j}{N}t} \circ (\phi_{s,\frac{j-1}{N}t})^{-1}$.)
For every $s$ consider the based family
$F_{s,t} = F_{s,t}^{(1)} \, \sharp \, \cdots \, \sharp \, F_{s,t}^{(N)}$.
Then $F_{s,0} = 0 \, \sharp \, \cdots \, \sharp \, 0$
and $F_{s,1}$ generates $\phi$.
In particular $F_{0,0} = F_{1,0}$ and,
by Lemma \ref{lemma 4.8 in Theret},
$F_{0,1}$ and $F_{1,1}$ differ by 
a fibre preserving conical homeomorphism.
\end{proof}

Let $\{\phi_t\}_{t\in[0,1]}$ be a contact isotopy of $L_k^{2n-1}$, and
consider a (based) family 
$F_t \colon \R^{2n} \times \R^{2nN} \rightarrow \R$
of conical generating functions. 
Since all $F_t$ are homogeneous 
of degree 2, they are determined by their restrictions 
$\overline{f_t} \colon S^{2n (N+1) - 1} \rightarrow \R$. 
Moreover, as the $F_t$ are invariant by the $\Z_k$-action,
the $\overline{f_t}$ are also $\Z_k$-invariant
and so they descend to a family of functions 
$$
f_t \colon L_k^{2n(N+1)-1} \rightarrow \R
$$
that are $\mathcal{C}^1$ with Lipschitz differential.
We also say that $f_t$ (as well as the corresponding $F_t$)
is a (based) family of generating functions
for the contact isotopy $\{\phi_t\}$.

\begin{prop}
\labell{correspondence discriminant pts - critical pts}
Let $f_t \colon L_k^{2n(N+1)-1} \rightarrow \R$
be a family of generating functions
for a based contact isotopy $\{\phi_t\}$ of $L_k^{2n-1}$.
Then for every $t$ there is a 1--1 correspondence between 
discriminant points of $\phi_t$
whose preimage in $S^{2n-1}$ is a $\mathbb{Z}_k$-orbit
of discriminant points for the lift
and critical points of $f_t$ of critical value zero,
that takes non-degenerate discriminant points to non-degenerate critical points,
and that is given by the restriction of a map
$L_k^{2n-1} \to L_k^{2n(N+1)-1}$
that is isotopic to the standard inclusion $[z] \mapsto [z,0]$.
\end{prop}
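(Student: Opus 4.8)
\emph{Proof plan.}
The plan is to build a chain of bijections
\begin{gather*}
\{\,\text{disc.\ points of }\phi_t\text{ lifting to disc.\ points of the lift }\widetilde{\phi}_t\text{ to }S^{2n-1}\,\}
\;\leftrightarrow\;
\mathrm{Fix}(\Phi_t)\cap(\R^{2n}\ssminus\{0\})\,\big/\,(\zk\times\R_{>0})
\\
\;\leftrightarrow\;
\mathrm{Crit}(F_t)\cap(\R^{2n(N+1)}\ssminus\{0\})\,\big/\,(\zk\times\R_{>0})
\;\leftrightarrow\;
\{\,\text{crit.\ points of }f_t\text{ of critical value }0\,\},
\end{gather*}
all of them compatible with the relevant $(\zk\times\R_{>0})$-actions, and then to exhibit a single global map inducing the composite. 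The first bijection is elementary: writing the conical lift as $\Phi_t(z)=\frac{|z|}{e^{g_t(z/|z|)}}\,\widetilde{\phi}_t(z/|z|)$ and taking norms in $\Phi_t(z)=z$ forces $g_t(z/|z|)=0$ and $\widetilde{\phi}_t(z/|z|)=z/|z|$, while conversely conicality of $\Phi_t$ turns each such $u\in S^{2n-1}$ into a whole ray of fixed points; since $g_t(u)=0$ together with $\widetilde{\phi}_t(u)=u$ says exactly that $u$ is a discriminant point of $\widetilde{\phi}_t$, passing to $\zk$-orbits gives the claimed identification. The last bijection is pure homogeneity: Euler's identity $\langle dF_t(x),x\rangle=2F_t(x)$ shows that every critical point of $F_t$ has critical value $0$ and that the critical points of $F_t$ on the unit sphere are precisely the critical points of $F_t|_{S^{2n(N+1)-1}}$ of value $0$; dividing first by $\R_{>0}$ and then by the free (hence locally diffeomorphic) $\zk$-action identifies these with the critical points of $f_t$ of critical value $0$.

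The middle bijection is the usual generating-function correspondence, in conical form. Since $F_t$ is a conical generating function for $\Phi_t$, the map $i_{F_t}\colon\Sigma_{F_t}\to\Gamma_{\Phi_t}$ is a homeomorphism and, away from the origin, a diffeomorphism; a point of $\Sigma_{F_t}$ is critical for $F_t$ precisely when $i_{F_t}$ sends it into the zero section, and $\tau$ carries the diagonal onto the zero section, so $\Gamma_{\Phi_t}$ meets the zero section exactly along $\tau(\mathrm{Fix}(\Phi_t))$. Hence
$$
\iota_t\colon\R^{2n}\to\Sigma_{F_t}\subset\R^{2n(N+1)},\qquad z\mapsto i_{F_t}^{-1}\big(\tau(z,\Phi_t(z))\big),
$$
is a $(\zk\times\R_{>0})$-equivariant homeomorphism onto $\Sigma_{F_t}$, smooth off the origin (using that $\Phi_t$ is a homeomorphism of $\R^{2n}$; cf.\ Step~3 of the proof of Proposition~\ref{p:composition formula}), and $\iota_t(z)$ is critical for $F_t$ if and only if $\Phi_t(z)=z$. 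Normalising and descending to the lens spaces, $\iota_t$ induces a smooth embedding $j_t\colon L_k^{2n-1}\hookrightarrow L_k^{2n(N+1)-1}$, $[u]\mapsto[\iota_t(u)/|\iota_t(u)|]$, and its restriction to the discriminant points in question realises, through the chain above, the $1$--$1$ correspondence with the critical points of $f_t$ of critical value $0$.

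To see that $j_t$ is isotopic to the standard inclusion I would first note that $t\mapsto j_t$ is itself a smooth isotopy of embeddings (from the smooth dependence of the family on $t$), so it suffices to handle $t=0$. At $t=0$ all the $\mathcal{C}^1$-small pieces used to build $F_t$ in Proposition~\ref{existence of generating functions} are the identity, all the $F^{(j)}_0$ vanish, the change of variables \eqref{equation: zeta-z} becomes $z_j=\zeta_j$, and tracing the composition formula shows that $\iota_0$ is a linear $(\zk\times\R_{>0})$-equivariant injection with first block the identity (in fact $z\mapsto(z,z,\dots,z)$). Writing $\iota_0$ as $z\mapsto(z,Mz)$ with $M$ linear and equivariant, the linear homotopy $z\mapsto(z,sMz)$, $s\in[0,1]$, consists of equivariant injections with constant first block, so after normalising and descending it is an isotopy of embeddings from the standard inclusion to $j_0$; concatenating it with $\{j_{st}\}_{s\in[0,1]}$ produces the desired isotopy from the standard inclusion to $j_t$.

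Finally, non-degeneracy is preserved at each step: the conicality identifications are local diffeomorphisms, and a discriminant point of $\phi_t$ is non-degenerate if and only if, at the corresponding fixed point of $\Phi_t$, the kernel of $D\Phi_t-\Id$ is exactly the radial line (that line always lies in the kernel, and the only further kernel vectors are the $X\in TS^{2n-1}$ with $\phi_{t\ast}X=X$ and $dg_t(X)=0$); this is equivalent to $\Gamma_{\Phi_t}$ meeting the zero section transversally modulo the radial direction, which in turn is equivalent to the corresponding critical point of $F_t|_{S^{2n(N+1)-1}}$, hence of $f_t$, being non-degenerate, by the standard Morse-theoretic property of generating functions. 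I expect this last point to be the main obstacle: reconciling the three notions of non-degeneracy requires keeping careful track of which direction plays the role of the degenerate radial (resp.\ fibre) direction on each side of the correspondence, while everything else is routine bookkeeping of the equivariance and of the conical normalisations.
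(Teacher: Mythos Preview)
Your proof is correct and follows essentially the same route as the paper: the same chain of bijections via the conical lift $\Phi_t$ and the generating-function correspondence, and the same global map $\iota_t(z)=i_{F_t}^{-1}\big(\tau(z,\Phi_t(z))\big)$ to realise it. The paper's isotopy is simply the family $\{j_s\}_{s\in[0,t]}$, tacitly treating the diagonal embedding $j_0$ as ``the standard inclusion''; your extra linear homotopy $z\mapsto(z,sMz)$ makes this honest, and your non-degeneracy sketch unpacks what the paper dispatches by citing \cite[Lemma~3.5]{S - Morse estimate for translated points}.
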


\begin{proof}
For every $t$, denote by $F_t$
the conical function on $\R^{2n(N+1)}$ inducing $f_t$.
Since $F_t$ is homogeneous of degree 2 and $\zk$-invariant,
its critical points come in ($\zk \times \R_{>0}$)-orbits
and all have critical value zero.
Such ($\zk \times\R_{>0}$)-orbits
are in 1--1 correspondence with 
critical points  of $f_t$ of critical value zero.
On the other hand, discriminant points of $\phi_t$
that are also discriminant points of the lift to $S^{2n-1}$
correspond to ($\zk \times \R_{>0}$)-orbits of fixed points of $\Phi_t$. 
We then use the fact that
(non-degenerate) discriminant points of $\phi_t$
correspond to (non-degenerate) critical points of $F_t$
(see for example \cite[Lemma 3.5]{S - Morse estimate for translated points}).
We now show that the 1--1 correspondence is given by the restriction
of a map $L_k^{2n-1} \to L_k^{2n(N+1)-1}$
that is isotopic to the standard inclusion.
Recall that, for every $t \in [0,1]$, the conical map
$$
\R^{2n} \times \R^{2nN} \rightarrow T^{\ast}\R^{2n} \, , \;
(\zeta,\nu) \mapsto \big( \zeta, \frac{\partial F_t}{\partial \zeta} (\zeta,\nu)\big)
$$
restricts to a homeomorphism $i_{F_t}$
between the set $\Sigma_{F_{t}}$
of fibre critical points and $\Gamma_{\Phi_{t}}$.
Fix now $t \in [0,1]$.
The required isotopy is the map induced
on the quotient by the ($\zk \times \R_{>0}$)-action
by the composition
$$
[0,t] \times \R^{2n} \to \bigcup_{s \in [0,t]} \{s\} \times \Gamma_{\Phi_{s}}
\to \bigcup_{s \in [0,t]} \{s\} \times \Sigma_{F_{s}}
\subset [0,t] \times \R^{2n} \times \R^{2nN}
$$
$$
(s,\zeta) \mapsto \left(s, \tau \left(\zeta,\Phi_s(\zeta) \right) \right)
\mapsto \left( s, (i_{F_{s}})^{-1} \left(\tau \left(\zeta,\Phi_s(\zeta)\right)\right) \right) \,.
$$
\end{proof}

\subsection*{Monotonicity and quasi-additivity of generating functions.}

We start with the following monotonicity property for generating function,
which is used to show that lens spaces are orderable.

\begin{prop}[Monotonicity of generating functions]
\labell{proposition: monotonicity of generating functions}
Let $\{\phi_t\}_{t \in [0,1]}$ be a based contact isotopy of $L_k^{2n-1}$.
\begin{enumerate}
\item[(i)] Assume that $\{\phi_t\}_{t \in [0,1]}$ has a family $f_t: L_k^{2n-1} \rightarrow \mathbb{R}$
of generating functions without fibre variable,
and that $\{\phi_t\}_{t \in [0,1]}$ is non-negative
(respectively non-positive, positive, negative).
Then $\frac{\partial f_t}{\partial t} \geq 0$
(respectively $\frac{\partial f_t}{\partial t} \leq 0$,
$\frac{\partial f_t}{\partial t} > 0$, $\frac{\partial f_t}{\partial t} < 0$)
for all $t \in [0,1]$.
\item[(ii)] For a general $\{\phi_t\}_{t \in [0,1]}$,
assume that, for a subinterval $[a,b]$ of $[0,1]$,
$\{\phi_t\}_{t \in [a,b]}$ is non-negative
(respectively non-positive).
Then there exist
\begin{itemize}
\item[-] a based contact isotopy $\{\psi_t\}_{t \in [0,1]}$
representing the same element of $\widetilde{\Cont}_0 (L_k^{2n-1})$ as $\{\phi_t\}_{t \in [0,1]}$
and such that the restriction of $\{\phi_t\}$ and $\{\psi_t\}$
to $[0,a]$, $[a,b]$ and $[b,1]$
are homotopic with fixed endpoints
and $\{\psi_t\}_{t \in [a,b]}$ is non-negative
(respectively non-positive);
\item[-] a based family $f_t \colon L_k^{2M - 1} \rightarrow \R$
of generating functions for $\{\psi_t\}_{ t \in [0,1]}$
such that $\frac{\partial f_t}{\partial t} \geq 0$
(respectively $\frac{\partial f_t}{\partial t} \leq 0$)
for all $t \in [a,b]$.
\end{itemize}
\end{enumerate}
\end{prop}

As in \cite[Lemma 3.6]{S - Morse estimate for translated points},
the main ingredient for the proof is the classical Hamilton--Jacobi equation
for generating functions.

\begin{lemma}[Hamilton--Jacobi equation]\label{lemma: HJ}
Let $F_t: B \rightarrow \mathbb{R}$ be a family of functions
with $F_0 \equiv 0$.
For each $t$, denote by $L_t \subset T^{\ast}B$
the graph of the differential of $F_t$.
Consider a Hamiltonian isotopy $\{\varphi_t\}$ of $T^{\ast}B$,
generated by a Hamiltonian function $H_t: T^{\ast}B \rightarrow \mathbb{R}$,
such that, for every $t$, $L_t$ is the image of the zero section by $\varphi_t$.
For $x \in B$ and $t \in [0,1]$ let $\big( q_t (x), p_t (x) \big) = \varphi_t (x,0)$.
Then
$$
H_t \big( q_t (x), p_t (x) \big) = \frac{\partial F_t}{\partial t} \, \big(q_t (x)\big) + c(t)
$$
for some function of time $c(t)$.
\end{lemma}

\begin{proof}
We have
\begin{align*}
\frac{\partial}{\partial x} \; H_t \big(q_t(x),p_t(x)\big) &=
\frac{\partial H_t}{\partial q} \, \frac{\partial q_t}{\partial x} + \frac{\partial H_t}{\partial p} \, \frac{\partial p_t}{\partial x}
= \dot{p}_t \, \frac{\partial q_t}{\partial x} - \dot{q}_t \, \frac{\partial p_t}{\partial x} \\
&= \Big( \frac{\partial^2 F_t}{\partial t \partial q} + \frac{\partial^2 F_t}{\partial q^2} \, \dot{q}_t \Big) \, \frac{\partial q_t}{\partial x}
- \dot{q}_t \; \Big( \frac{\partial^2 F_t}{\partial q^2} \frac{\partial q_t}{\partial x} \Big)\\
&= \frac{\partial^2 F_t}{\partial q \partial t} \, \frac{\partial q_t}{\partial x}
= \frac{\partial}{\partial x} \; \Big(\frac{\partial F_t}{\partial t} \big(q_t(x)\big)\Big)
\end{align*}
where the second equality follows from Hamilton's equations for $H_t$,
and the third from differentiating the relation
$\frac{\partial F_t}{\partial q} \big(q_t(x)\big) = p_t(x)$
with respect to $t$ and with respect to $x$.
\end{proof}

The proof of Proposition \ref{proposition: monotonicity of generating functions}
also uses concatenation of contact isotopies.
The following remark allows us to represent elements of $\widetilde{\Cont}_0 (L_k^{2n-1})$
by concatenations.

\begin{rmk}[Concatenation]
Let $\{\phi_t\}_{t \in [0,1]}$ and $\{\psi_t\}_{t \in [0,1]}$ be based contact isotopies.
For any smooth map $\rho: [0,1] \rightarrow [0,1]$
such that $\rho (0) = 0$ and $\rho (1) = 1$
and whose derivatives of orders $\geq 1$ all vanish at the endpoints $0$ and $1$,
the concatenation 
$\{\phi_{\rho(t)}\}_{t \in [0,1]} \sqcup \{\psi_{\rho(t)} \circ \phi_{\rho(1)}\}_{t \in [0,1]}$
is smooth
and it represents the same element of $\widetilde{\Cont}_0 (L_k^{2n-1})$
as the composition $\{ \psi_t \circ \phi_t \}_{t \in [0,1]}$.
We can choose $\rho$ to be strictly monotone.
Then,
for any closed subinterval $I$ of $[0,1]$,
if $\{\phi_t\}_{t \in I}$ is respectively embedded, non-negative or non-positive
then so is $\{\phi_{\rho(t)}\}_{t \in \rho^{-1}(I)}$.
\eor
\end{rmk}

\begin{proof}[Proof of Proposition \ref{proposition: monotonicity of generating functions}]
Suppose first that $\{\phi_t\}$ has a family $f_t: L_k^{2n-1} \rightarrow \mathbb{R}$
of generating functions without fibre variables.
Let $\{\Phi_t\}$ be the corresponding conical Hamiltonian isotopy of $\mathbb{R}^{2n}$,
and consider the Hamiltonian isotopy
$\{\tau \circ (\text{id} \times \Phi_t) \circ \tau^{-1}\}$ of $T^{\ast}\mathbb{R}^{2n}$,
where as usual $\tau$ denotes the identification \eqref{e:identification}.
For each $t$,
the image of the zero section by $\tau \circ (\text{id} \times \Phi_t) \circ \tau^{-1}$
is the Lagrangian $\Gamma_{\Phi_t} = \tau \big(\text{gr} (\Phi_t) \big)$.
If the lift of $\{\phi_t\}$ to $S^{2n-1}$ is generated
by the contact Hamiltonian function $h_t: S^{2n-1} \rightarrow \mathbb{R}$
then on $\R^{2n} \smallsetminus \{0\}$
the isotopy $\{\Phi_t\}$
is generated by the Hamiltonian function $H_t: \mathbb{R}^{2n} \rightarrow \mathbb{R}$
defined by $H_t (z) = \lvert z \rvert^2 \, h_t \big(\frac{z}{\lvert z \rvert}\big)$,
and on $\tau \, \big(\overline{\R^{2n}} \times (\mathbb{R}^{2n} \smallsetminus \{0\})\big)$
the isotopy $\{\tau \circ (\text{id} \times \Phi_t) \circ \tau^{-1}\}$
is generated by the Hamiltonian function $\overline{H}_t \circ \tau^{-1}$,
where $\overline{H}_t: \overline{\mathbb{R}^{2n}} \times \mathbb{R}^{2n} \rightarrow \mathbb{R}$
is defined by $\overline{H}_t (z,z') = H_t(z')$.
Thus, if the contact isotopy $\{\phi_t\}$ is non-negative
(respectively non-positive, positive, negative)
then
$$
\overline{H}_t \big( z, \Phi_t(z) \big)= H_t \big( \Phi_t (z) \big) \geq 0
$$
(respectively $\leq 0$, $> 0$, $< 0$) for $z \neq 0$.
Since all our functions are conical
the Hamilton--Jacobi equation of Lemma \ref{lemma: HJ} holds with $c(t) \equiv 0$.
We thus obtain
$$
\frac{\partial F_t}{\partial t} \Big( \frac{z + \Phi_t (z)}{2} \Big)
= \overline{H}_t \big(z, \Phi_t(z) \big) \geq 0
$$
(respectively $\leq 0$, $> 0$, $< 0$) for $z \neq 0$,
where $F_t$ is the conical function induced by $f_t$.
Since $z \mapsto \frac{z + \Phi_t(z)}{2}$ is onto $\mathbb{R}^{2n}$,
we conclude that $\frac{\partial f_t}{\partial t} \geq 0$
(respectively $\leq 0$, $> 0$, $< 0$).
This proves (i).

In order to prove (ii),
take a sufficiently fine partition $0 = t_0 < t_1 < \cdots < t_N = 1$
such that $\{ t_0,\cdots,t_N\}$ contains the endpoints of the interval $I$.
For $j = 1$, $\cdots$, $N$ let $\psi_t^{(j)}$ be a smooth reparametrization of
$$
 \widehat{\psi_t}^{(j)} = \begin{cases}
 \text{id} & \text{ if } t \in [0,t_{j-1}] \\
 \phi_t \circ (\phi_{t_{j-1}})^{-1} & \text{ if } t \in [t_{j-1},t_j] \\ 
 \phi_{t_j} \circ (\phi_{t_{j-1}})^{-1} & \text{ if } t \in [t_j,1]
\end{cases} 
$$
so that the $\psi_t^{(j)}$ have a family of generating functions without fibre variable
and are non-negative (respectively non-positive) for $t \in I$,
and so that $\{\psi_t := \psi_t^{(N)} \circ \cdots \circ \psi_t^{(1)}\}$
is a reparametrization of $\{\phi_t\}$.
We then conclude using (i)
and the fact that if we apply the composition formula
(Proposition \ref{p:composition formula})
to two non-decreasing (respectively non-increasing) families of functions
then the resulting family is also non-decreasing
(respectively non-increasing).
\end{proof}

We now note that the generating function for a composition
that is given by Proposition \ref{p:composition formula}
agrees in codimension $2n$
with the direct sum of the generating functions of the factors.

\begin{prop}[Quasiadditivity of generating functions]
\labell{proposition: quasiadditivity of gf}
Suppose that $F_1 \colon \R^{2n} \times \R^{2nN_1} \rightarrow \R$
and $F_2 \colon \R^{2n} \times \R^{2nN_2} \rightarrow \R$
are (conical) generating functions for the (conical) symplectomorphisms 
$\Phi^{(1)}$ and $\Phi^{(2)}$ respectively.
Then there is a linear ($(\zk \times \R_{>0})$-equivariant) injection
$$
\iota \colon \R^{2n} \times \R^{2n} \times (\R^{2nN_1} \times \R^{2nN_2}) 
\rightarrow \R^{2n} \times (\R^{2n} \times \R^{2n} \times \R^{2nN_1} \times \R^{2nN_2})
$$
such that $(F_1 \, \sharp \, F_2) \circ \, \iota = F_1 \oplus F_2$.
\end{prop}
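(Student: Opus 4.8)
The plan is to produce $\iota$ explicitly as the linear embedding onto the slice of the domain of $F_1\,\sharp\,F_2$ along which the coupling term of the composition formula vanishes identically. Recall from Proposition~\ref{p:composition formula} that, writing a point of the target as $(q;\zeta_1,\zeta_2,\nu_1,\nu_2)$,
$$
F_1 \, \sharp \, F_2 \, (q; \zeta_1, \zeta_2, \nu_1,\nu_2) = F_1 (\zeta_1, \nu_1) + F_2 (\zeta_2, \nu_2) - 2 \left< \zeta_2 - q, i (\zeta_1- q) \right>\,.
$$
The bilinear term $-2\left<\zeta_2 - q, i(\zeta_1 - q)\right>$ vanishes on the linear subspace $\{q = \zeta_1\}$, since there $i(\zeta_1 - q) = 0$; and on that subspace $F_1\,\sharp\,F_2$ restricts to $F_1(\zeta_1,\nu_1) + F_2(\zeta_2,\nu_2)$, which is exactly $F_1 \oplus F_2$.

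Accordingly, I would define
$$
\iota \colon \R^{2n} \times \R^{2n} \times (\R^{2nN_1} \times \R^{2nN_2}) \longrightarrow \R^{2n} \times (\R^{2n} \times \R^{2n} \times \R^{2nN_1} \times \R^{2nN_2})\,,\qquad
\iota(\zeta_1,\zeta_2,\nu_1,\nu_2) = (\zeta_1;\, \zeta_1,\zeta_2,\nu_1,\nu_2)\,,
$$
and then verify three routine points: (a) $\iota$ is injective, which is immediate because the last four coordinate blocks of $\iota(\zeta_1,\zeta_2,\nu_1,\nu_2)$ already recover the argument; (b) $(F_1\,\sharp\,F_2)\circ\iota = F_1 \oplus F_2$, which is precisely the substitution $q=\zeta_1$ in the displayed formula, making the coupling term drop out; (c) in the conical case $\iota$ is conical, i.e.\ $\zk$-equivariant and $\R_{>0}$-equivariant — this holds because $\iota$ is linear and the $q$-slot of the target carries the same diagonal $\zk$-action and radial $\R_{>0}$-action as the $\zeta_1$-slot, so equivariance in the source slots is transported unchanged.

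There is no genuine obstacle here: the statement is a direct consequence of the shape of the composition formula, the content being that $F_1\,\sharp\,F_2$ restricts to $F_1 \oplus F_2$ on a $\zk\times\R_{>0}$-invariant linear subspace of codimension $2n$. The only choice is whether to use the slice $\{q=\zeta_1\}$ or the equally good slice $\{q=\zeta_2\}$; should one later want $\iota$ compatible with the iterated version of the composition formula discussed in Appendix~\ref{section appendix: on the construction}, one simply iterates this construction factor by factor.
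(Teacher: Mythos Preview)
Your proof is correct and is essentially identical to the paper's own argument: the paper also writes out $F_1\oplus F_2$ and $F_1\,\sharp\,F_2$, then defines $\iota(\zeta_1,\zeta_2;\nu_1,\nu_2)=(\zeta_1;\zeta_1,\zeta_2,\nu_1,\nu_2)$ and observes the coupling term vanishes. Your additional explicit checks of injectivity and $(\zk\times\R_{>0})$-equivariance are welcome elaborations the paper leaves to the reader.
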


\begin{proof}
We have 
$$
F_1 \oplus F_2 (\zeta_1,\zeta_2;\nu_1,\nu_2) = F_1(\zeta_1,\nu_1) + F_2(\zeta_2,\nu_2)
$$
and 
$$
F_1 \, \sharp \, F_2 (q;\zeta_1,\zeta_2,\nu_1,\nu_2) = 
F_1(\zeta_1,\nu_1) + F_2(\zeta_2,\nu_2) - 2 \left< \zeta_2 - q, i(\zeta_1 - q) \right>\,.
$$
Thus for the injection
$\iota (\zeta_1,\zeta_2;\nu_1,\nu_2) = (\zeta_1;\zeta_1,\zeta_2,\nu_1,\nu_2)$
we have $(F_1 \, \sharp \, F_2) \circ \iota = F_1 \oplus F_2$.
\end{proof}

The above quasiadditivity property is crucial in proving
that the non-linear Maslov index is a quasimorphism
(Proposition \ref{p:quasimorphism property}).
If generating functions had been additive, 
not just quasiadditive, then at least for real projective spaces
the non-linear Maslov index would have been a homomorphism.
Since no non-trivial homomorphisms exist on contactomorphism groups
\cite{Tsuboi, Rybicki},
this shows that the lack of additivity of the composition formula
is not a technical failure but something essential.


\section{The cohomological index}
\labell{section: cohomological index}

As already outlined in the introduction,
the value of the non-linear Maslov index 
of a contact isotopy $\{\phi_t\}$ of a lens space $L_k^{2n-1}$ 
depends on the changes in topology of the sublevel sets
of a based family $f_t \colon L_k^{2M-1} \rightarrow \R$
of conical generating functions.
As in \cite{Givental - Nonlinear Maslov index} and \cite{Theret - Rotation numbers},
the topological invariant that we use to analyze these changes is the cohomological index.
In this section we review the definition of this invariant
and describe some of its properties in the case of lens spaces.
Cohomological indices have also been studied in a more general context
by Fadell and Rabinowitz \cite{FR - Generalized cohomological index theories}
(see also Remark \ref{remark: FR}).

Continuity of the cohomological index
(Proposition \ref{proposition: cohomological index} (ii) below)
is important for our applications.
Since the sets that we need to consider (sublevel sets of generating functions)
might not be locally contractible,
in order to guarantee continuity we work with \v{C}ech cohomology
(as for instance in \cite{FR - Generalized cohomological index theories}).
Note that \v{C}ech cohomology agrees with singular cohomology on 
spaces that are paracompact and locally contractible 
(see \cite[Corollary 6.8.8 and Theorem 6.9.1]{Spanier}),
in particular on manifolds or, more generally, on CW-complexes. 
Recall also that the \v{C}ech cohomology
of a compact subset $A$ of a manifold
can be computed in terms of singular cohomology as 
\begin{equation} \labell{direct limit}
\check{H}^*(A) = \varinjlim_j H^*(U_j)
\end{equation}
where $U_j$ is any decreasing sequence of open sets 
having $A$ as their intersection. 
Indeed, $\check{H}^{\ast}(A) = \varinjlim \check{H}^{\ast}(U_j)$
(see \cite[Theorem 6.6.2]{Spanier}) 
and, since the $U_j$ are open, their \v{C}ech cohomology 
agrees with their singular cohomology.

We now assume that $k$ is prime (cf.\ Remark \ref{remark: k prime}).

\begin{defi}\labell{defi cohomological index}
Let $A$ be a paracompact Hausdorff topological space
and $\pi \colon \widetilde A \to A$ a principal $\zk$-bundle
with classifying map $g \colon A \to B\zk = L_k^{\infty}$.
The \emph{cohomological index} of $\pi \colon \widetilde A \to A$
is the dimension over $\zk$ of the image of the induced map
$g^{\ast}\colon \check{H}^{\ast}(L^\infty_k;\zk) \to \check{H}^\ast(A;\zk)$.
If $A$ is a subset of a lens space\footnote{
In our applications $M$ is a multiple of $n$
and the $M$-tuple of weights on $L_k^{2M-1}$
has the form $\ul{w} = (\underline{w}', \cdots,\underline{w}')$
for an $n$-tuple of weights $\underline{w}'$ on $L_k^{2n-1}$.
However in this section $\ul{w}$ can be any tuple of weights.}
$L_k^{2M-1} (\underline{w})$ then
its cohomological index, denoted by $\ind(A)$,
is defined to be the cohomological index
of the restriction $\pi \colon \widetilde{A} \rightarrow A$
of the principal $\zk$-bundle\footnote{
We write
$S^{2M-1}(\ul{w})$ or $\C^{M}(\ul{w})$
when we wish to specify the $\zk$-action.}
$\pi \colon S^{2M-1}(\ul{w}) \rightarrow L_k^{2M-1}(\underline{w})$.
\eod
\end{defi}

We now specialize to the case when the prime $k$ is different from $2$,
leaving to the reader the task
of adapting the discussion to the (easier) case of $k=2$
(cf.\ Remarks \ref{RPM} and \ref{remark: FR}).

A principal $\zk$-bundle $\widetilde{A} \to A$
is determined by the \v{C}ech cohomology class $\alpha \in \check{H}^1(A; \zk)$
that is represented by the transition functions
for a choice of local trivializations. 
The Bockstein homomorphism
$\calB \colon \check{H}^q(A) \to \check{H}^{q+1}(A)$
(see \cite[Section 3.E]{Hatcher})
is a derivation whose square is zero, so, setting\footnote{
If $\{ \psi_{ij} \colon U_i \cap U_j \to \Z \}$ 
are lifts of the transition functions 
$\{ \alpha_{ij} \colon U_i \cap U_j \to \zk \}$,
then $\beta$ is represented by the \v{C}ech 2-cocycle
$\{ \frac{1}{k} \left( \psi_{ij} + \psi_{j\ell} + \psi_{\ell i} \right) 
                        \mod k \}$.
}
$\beta = \calB(\alpha) \in \check{H}^2(A; \zk)$,
we have
$$
\mathcal{B}(\alpha \beta^j) = \beta^{j+1} \quad \text{and} \quad 
\mathcal{B}(\beta^j) = 0 \quad \quad \text{ for all } j\geq 0 .
$$
A map of principal $\zk$-bundles $\widetilde{A} \to \widetilde{B}$ 
pulls back the classes $\alpha,\beta$ on the base~$B$ 
to the classes $\alpha,\beta$ on the base~$A$. 

\begin{lemma} \labell{no holes in index}
For any $M$-tuple of weights $\underline w$ and $0 \leq j \leq 2M-1$,
\begin{equation} \labell{cohomology lens space}
H^j\big(L_k^{2M-1}(\underline w) ; \bb{Z}_k\big) \text{ is generated by } 
\begin{cases}
\beta^i & \text{ for } j=2i \\
\alpha \beta^i & \text{ for } j=2i+1 \,.
\end{cases}
\end{equation}
For a subset $A$ of $L^{2M-1}_k (\underline{w})$,
$$
\text{\emph{ind}} (A) = \dim_{\zk} (\im \iota^*)
$$
where
$\iota^* \colon \check{H}^*\big(L_k^{2M-1}(\underline{w}); \zk\big) \to \check{H}^*(A; \zk)$
is the map on \v{C}ech cohomology
that is induced by the inclusion
$\iota \colon A \hookrightarrow L_k^{2M-1}(\underline{w})$.
Moreover,
$$ 
\im \iota^* \cap \check{H}^{j} (A;\zk) \cong
\begin{cases}
 \zk & \textrm{ if } 0 \leq j < \ind(A) \\
 0 & \textrm{ if } j \geq \ind(A).
\end{cases}
$$
\end{lemma}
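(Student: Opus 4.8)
The plan is to establish the three assertions in turn, the last being essentially formal once the first two are in hand. For the cohomology ring~\eqref{cohomology lens space}: since the $\zk$-action on $S^{2M-1}(\ul{w})$ is free, $L_k^{2M-1}(\ul{w})$ is homotopy equivalent to the Borel construction $S^{2M-1}(\ul{w})\times_{\zk}E\zk$, so the classifying map $g$ is, up to homotopy, the bundle projection of a fibration $S^{2M-1}\to L_k^{2M-1}(\ul{w})\xrightarrow{\,g\,}B\zk$. Since $k$ is prime, $\zk$ is a field; the coefficient system of the Serre spectral sequence is trivial (the deck transformations are complex-linear, hence orientation preserving), so only the rows $q=0$ and $q=2M-1$ are nonzero. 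Comparing with $\check H^{2M-1}(L_k^{2M-1};\zk)\cong\zk$ (Poincar\'e duality, $L_k^{2M-1}$ being a closed orientable manifold) forces the transgression $d_{2M}$ to be an isomorphism, so by multiplicativity $g^*$ is an isomorphism in degrees $\le 2M-1$ while $\check H^j(L_k^{2M-1};\zk)=0$ for $j\ge 2M$. As $\alpha$ and $\beta$ are by definition the $g^*$-images of the universal generators of $\check H^*(B\zk;\zk)$ (with $\calB\alpha=\beta$), this gives~\eqref{cohomology lens space}. Alternatively one may quote the classical computation of the $\zk$-cohomology ring of a weighted lens space and invoke naturality of $\alpha$, $\beta$; the case $k=2$, with $\beta=\alpha^2$, is handled identically.

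Next I would deduce $\ind(A)=\dim_{\zk}(\im\iota^*)$. The principal $\zk$-bundle $\pi\colon\widetilde A\to A$ is the pullback along $\iota$ of $S^{2M-1}(\ul{w})\to L_k^{2M-1}(\ul{w})$, so its classifying map is $g\circ\iota$; hence, by the definition of the cohomological index, $\ind(A)=\dim_{\zk}\im\big((g\circ\iota)^*\big)=\dim_{\zk}\im\big(\iota^*\circ g^*\big)$. By the previous paragraph $g^*$ is surjective (an isomorphism below degree $2M$, with vanishing target above), so $\im(\iota^*\circ g^*)=\im\iota^*$, which is the assertion.

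Finally, for the ``no holes'' statement I may assume $A\neq\emptyset$, the empty case being trivial. Put $\alpha_A=\iota^*\alpha$ and $\beta_A=\iota^*\beta=\calB(\alpha_A)$, the characteristic classes of $\pi\colon\widetilde A\to A$. Since $\iota^*$ is graded, \eqref{cohomology lens space} gives $\im\iota^*\cap\check H^{2i}(A;\zk)=\zk\cdot\beta_A^i$ and $\im\iota^*\cap\check H^{2i+1}(A;\zk)=\zk\cdot\alpha_A\beta_A^i$, each of $\zk$-dimension $0$ or $1$. The set of degrees $j$ for which this intersection is nonzero is an initial segment of $\{0,1,\dots,2M-1\}$: if $\iota^*$ kills the generator in degree $j-1$, then it kills the one in degree $j$ too, because the latter is obtained from the former by multiplication by $\alpha_A$ when $j$ is odd and by applying $\calB$ when $j$ is even (using $\calB(\alpha_A\beta_A^i)=\beta_A^{i+1}$), and both operations send $0$ to $0$. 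Hence this set equals $\{0,1,\dots,d-1\}$ for some $d$, and counting dimensions against the previous paragraph gives $d=\dim_{\zk}\im\iota^*=\ind(A)$, which is exactly what is claimed.

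The step requiring real care is the cohomology computation~\eqref{cohomology lens space}: identifying the $\zk$-cohomology ring of an arbitrary weighted lens space and matching its generators with $\alpha$ and $\beta$. Everything afterwards is bookkeeping; the one genuine subtlety there is that passing from an odd degree $2i+1$ to the next even degree cannot be done multiplicatively (for odd $k$ one has $\alpha_A\cdot\alpha_A\beta_A^i=0$), so the naturality of the Bockstein is genuinely needed.
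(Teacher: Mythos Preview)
Your proof is correct and follows essentially the same approach as the paper: the paper simply cites \cite[Example~3E.2]{Hatcher} for~\eqref{cohomology lens space}, whereas you give a self-contained Serre spectral sequence argument for it, and your arguments for the other two assertions (surjectivity of $g^*$ combined with $g\circ\iota$ being a classifying map, then the ring structure together with the Bockstein) are exactly those of the paper. Your explicit remark that the Bockstein is genuinely needed to pass from odd to even degree (since $\alpha^2=0$ for odd $k$) is a helpful clarification of a point the paper leaves implicit.
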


\begin{proof}
For \eqref{cohomology lens space}, see \cite[Example 3E.2]{Hatcher}.
The equality $\ind (A) = \dim_{\zk} (\im \iota^*)$
follows from the facts that 
the classifying map $g(\ul{w}) \colon L^{2M-1}_k(\ul{w}) \to L^\infty_k$
induces a surjection in cohomology (by \eqref{cohomology lens space})
and that $g(\ul{w}) \circ \iota$ is a classifying map for $A$.
The ring structure
and the action of the Bockstein homomorphism
imply that if $x \in H^*\big(L^{2M-1}_{k}(\underline{w})\big)$ is
non-zero and $\iota^*(x) = 0$ then $\iota^*(y) = 0$ for all $y$
with $\deg(y)\geq \deg(x)$; this implies the last statement.
\end{proof}

It follows from Lemma \ref{no holes in index}
that if $A$ is a subset of $L_k^{2M-1} (\underline{w})$
then $0 \leq \ind(A) \leq 2M$.
Also note that $\text{ind}(\emptyset) = 0$,
and that $\ind(A) = 1$ if $A$ is finite and non-empty.

A \emph{lens subspace} of $L^{2M-1}_k (\ul{w})$ 
is the $\zk$-quotient of the intersection 
of $S^{2M-1}(\ul{w})$ with a $\zk$-invariant
complex linear subspace of $\C^{M}(\ul{w})$.
Lemma~\ref{no holes in index} implies that the cohomological index 
of a $2r-1$ dimensional lens subspace is $2r$.

\begin{Remark} \labell{RPM}
For a subset $A$ of a real projective space $\mathbb{RP}^M$,
\begin{equation}\labell{equation: RP}
\ind(A) = \min \{\, j \in \mathbb{N} \; \lvert \; \iota^{\ast} (x^{j}) = 0\,\}
\end{equation}
where $x$ is the generator of $\check{H}^1 (\mathbb{RP}^M; \zt)$.
Similarly one defines
the cohomological index
for subsets of complex projective spaces
and for principal $S^1$-bundles;
in this case the analogue of \eqref{equation: RP} holds
for $x$ a generator of degree two (cf.\ \cite{Theret - Rotation numbers}).
\end{Remark}

\begin{rmk}\labell{remark: FR}
For a compact Lie group $G$,
let $\pi \colon \widetilde{A} \rightarrow A$ be a principal $G$-bundle
over a paracompact Hausdorff topological space
with classifying map $g \colon A \rightarrow BG$.
For any non-zero class $\eta \in \check{H}^*(BG)$ Fadell and Rabinowitz
\cite{FR -   Generalized cohomological index theories}
define the $\mathit{\eta}$\emph{-index} as
the maximal $j \in \mathbb{N}$ such that $g^{\ast}(\eta^{j})\neq 0$. 
If $G = \zt$ and $\eta$ is the generator of $\check{H}^1(\RP^\infty;\zt)$ 
then the $\eta$-index just differs by $1$
from the index of Definition \ref{defi cohomological index}.
If however $G=\zk$ with $k \neq 2$
and $\beta$ is a generator of $\check{H}^2(L^\infty_k;\zk)$
then the $\beta$-index is equal to $\lfloor \frac{\ind(A)-1}{2}\rfloor$.
We use the index from Definition \ref{defi cohomological index}
rather than the $\beta$-index
in order to obtain a better bound
on the number of translated 
points (see Section \ref{section: applications}):
using the $\beta$-index we would only prove
existence of $n$ translated points on $L_k^{2n-1}$,
even in the non-degenerate case.
\eor
\end{rmk}

Given subsets $A$ of $L_k^{2M-1}(\underline{w})$ 
and $B$ of $L_k^{2M'-1}(\underline{w'})$
with preimages $\wt{A} \subset S^{2M-1}(\ul{w})$ 
and $\wt{B} \subset S^{2M'-1}(\ul{w}')$,
their $\Z_k$-\emph{join} is the subset
$$
A \ast_{\Z_k} B \subset L_k^{2(M+M')-1}(\underline{w},\underline{w'})
$$
defined by
\begin{equation} \labell{join1}
A \ast_{\zk} B = \left\{
 [ \, \sqrt{t} \, a , \sqrt{1-t} \, b \, ] \ \ | \ 
a \in \wt{A},\ b \in \wt{B},\ 0 \leq t \leq 1 \right\}
\end{equation}
if $A$ and $B$ are non-empty.
If $B$ is empty, we define $A \ast_{\zk} \emptyset$
to be the image of $A$ under the natural embedding $a \mapsto [a,0]$
of $L_k^{2M-1}(\ul{w})$ into $L_k^{2(M+M')-1}(\ul{w},\ul{w}')$.
We define $\emptyset \ast_{\zk} B$ similarly.
Finally, the $\zk$-join of the empty sets is empty.

We now describe the properties of the cohomological index
that we need for our applications.
The proofs of properties (i)-(iv)
are easy adaptations of the corresponding proofs 
in \cite{Givental - Nonlinear Maslov index, Theret - Rotation numbers,
FR - Generalized cohomological index theories}
and are included for the convenience of the reader.
In (v), the lower bound on $\ind(A \ast_\zk B)$ 
requires a more involved proof, which we postpone 
to Appendix~\ref{section: additivity under join}.

\begin{prop}\labell{proposition: cohomological index}
The cohomological index of subsets of lens spaces
has the following properties:

\begin{enumerate}
\renewcommand{\labelenumi}{(\roman{enumi})}

\item (\emph{Monotonicity}) 
If $A \subset B \subset L_k^{2M-1}(\underline{w})$ 
then $\text{\emph{ind}}(A) \leq \text{\emph{ind}}(B)$.

\item (\emph{Continuity}) 
Every closed subset $A$ of $L^{2M-1}_k(\ul{w})$
has a neighborhood $U$ such that 
if $A \subset V \subset U$ then $\ind(V) = \ind(A)$.

\item (\emph{Lefschetz property}) 
Let $A$ be a closed subset of $L_k^{2M-1}(\underline{w})$,
and let $A' = A \cap H$ where $H \subset L_k^{2M-1}(\underline{w})$ 
is a lens subspace of codimension 2.
Then $\text{\emph{ind}}(A') \geq \text{\emph{ind}}(A) - 2$.

\item (\emph{Subadditivity}) 
For closed subsets $A$ and $B$ of $L_k^{2M-1}(\underline{w})$ we have
$$
\text{\emph{ind}} (A \cup B) \leq \text{\emph{ind}} (A) + \text{\emph{ind}} (B) + 1
$$
and
$$
\text{\emph{ind}} (A \cup B) \leq \text{\emph{ind}} (A) + \text{\emph{ind}} (B)
\quad  \text{ if $\ind(A)$ is even or $\ind(B)$ is even.} 
$$

\item (\emph{Join quasi-additivity}) 
For closed subsets $A$ of $L_k^{2M-1}(\underline{w})$ 
and $B$ of $L_k^{2M'-1}(\underline{w'})$ we have
$$
\left| \, \text{\emph{ind}} (A \ast_{\mathbb{Z}_k} B) - \text{\emph{ind}} (A) - \text{\emph{ind}}(B) \, \right| \leq 1
$$
and
$$
\ind (A \ast_{\zk} B) = \ind (A) + \ind (B) 
\quad  \text{ if $\ind(A)$ is even or $\ind(B)$ is even}. 
$$
In particular (\emph{Join stability}),
$$
\ind \big(A \ast_{\mathbb{Z}_k} L_k^{2K-1}(\ul{w}')\big) 
 = \ind (A) + 2K \,.
$$
\end{enumerate}
\end{prop}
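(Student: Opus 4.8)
The plan is to establish (i)--(iv) directly and to reduce the upper bound in (v), together with Join stability, to Subadditivity, leaving the matching lower bound in (v) to Appendix~\ref{section: additivity under join}. Throughout, write $x_j$ for the generator of $H^j\big(L_k^{2M-1}(\underline w);\zk\big)$ of Lemma~\ref{no holes in index}, so $x_{2i}=\beta^i$ and $x_{2i+1}=\alpha\beta^i$; by that lemma $\im\iota_A^*$ is one--dimensional in degrees $0,\dots,\ind(A)-1$ and zero in degrees $\ge\ind(A)$, so $\ind(A)=\min\{N:\iota_A^*(x_N)=0\}$, and $\ind(A)\ge 1$ whenever $A\ne\varnothing$. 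For (i), an inclusion $A\hookrightarrow B$ is covered by a morphism of principal $\zk$--bundles, so the classifying map of $A$ factors through that of $B$; hence $\iota_A^*$ factors through $\iota_B^*$ and $\dim\im\iota_A^*\le\dim\im\iota_B^*$. For (ii), realise the compact set $A$ as the intersection of a decreasing sequence of open neighbourhoods $U_j$; by \eqref{direct limit}, $\check H^*(A;\zk)=\varinjlim H^*(U_j;\zk)$ compatibly with restriction from $L_k^{2M-1}$. As $H^*(L_k^{2M-1};\zk)$ is finite--dimensional, the non--increasing sequence $\dim\im\big(H^*(L_k^{2M-1};\zk)\to H^*(U_j;\zk)\big)$ stabilises at some $j_0$ to the value $\ind(A)$; then for every $V$ with $A\subset V\subset U:=U_{j_0}$, Monotonicity gives $\ind(A)\le\ind(V)\le\ind(U)=\ind(A)$.

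For Subadditivity (iv), which we prove before (iii) since the latter uses it, passing to open neighbourhoods of the closed sets involved reduces matters to singular cohomology, where one has the standard fact that for $X=A\cup B$ with $A,B$ closed, $u|_A=0$ and $v|_B=0$ imply $(u\cup v)|_X=0$ (lift $u$ and $v$ to $H^*(X,A)$ and $H^*(X,B)$ and multiply into $H^*(X,A\cup B)=0$). Put $r=\ind(A)$ and $s=\ind(B)$, so $\iota_A^*(x_r)=\iota_B^*(x_s)=0$. If $r$ or $s$ is even, the ring structure of Lemma~\ref{no holes in index} gives $x_r\cdot x_s=x_{r+s}$, hence $\iota_{A\cup B}^*(x_{r+s})=0$ and $\ind(A\cup B)\le r+s$. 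If $r$ and $s$ are both odd, then also $\iota_A^*(x_{r+1})=0$ (degree $r+1\ge r$), and since $r+1$ is even $x_{r+1}\cdot x_s=x_{r+s+1}$, so $\iota_{A\cup B}^*(x_{r+s+1})=0$ and $\ind(A\cup B)\le r+s+1$.

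For the Lefschetz property (iii), let $A'=A\cap H$ with $H$ a codimension--two lens subspace. Choosing a $\zk$--invariant Hermitian splitting $\mathbb{C}^M(\underline w)=\ell\oplus\ell^{\perp}$ with $H=S(\ell^{\perp})/\zk$, radial contraction of the $\ell^{\perp}$--component is a $\zk$--equivariant deformation retraction of $L_k^{2M-1}\smallsetminus H$ onto the $1$--dimensional lens subspace $S(\ell)/\zk$, which has index $2$; hence $\ind(Y)\le 2$ for every $Y\subset L_k^{2M-1}\smallsetminus H$. Using Continuity, pick a closed neighbourhood $N$ of $A'$ in $L_k^{2M-1}$ with $\ind(N)=\ind(A')$, and set $B=\overline{A\smallsetminus N}$. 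Then $B$ is closed, $B\cap\operatorname{int}N=\varnothing$, and $B\subset A$, so $B\cap H\subset A\cap H=A'\subset\operatorname{int}N$; therefore $B\cap H=\varnothing$ and $\ind(B)\le 2$. Now $A=(A\cap N)\cup B$ with $\ind(A\cap N)\le\ind(N)=\ind(A')$ and $\ind(B)\in\{0,1,2\}$, and applying Subadditivity (its even case when $\ind(B)\in\{0,2\}$, the general case when $\ind(B)=1$) gives $\ind(A)\le\ind(A')+2$ in all cases, i.e.\ $\ind(A')\ge\ind(A)-2$.

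For Join quasi--additivity (v), we may assume $A$ and $B$ non--empty, the remaining cases being immediate from the definition of $\ast_{\zk}$. Write $A\ast_{\zk}B=P\cup Q$ with $P=\big\{[\sqrt t\,a,\sqrt{1-t}\,b]:a\in\widetilde{A},\,b\in\widetilde{B},\,\tfrac12\le t\le 1\big\}$ and $Q$ defined by $0\le t\le\tfrac12$. Letting $t$ run to $1$ (respectively to $0$) is a $\zk$--equivariant deformation retraction of $P$ onto the copy $\{t=1\}\cong A$ (respectively of $Q$ onto $\{t=0\}\cong B$), so $\ind(P)=\ind(A)$ and $\ind(Q)=\ind(B)$; Subadditivity applied to $P\cup Q$ then yields $\ind(A\ast_{\zk}B)\le\ind(A)+\ind(B)+1$, with the $+1$ dropped when $\ind(A)$ or $\ind(B)$ is even. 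The matching lower bounds --- $\ind(A\ast_{\zk}B)\ge\ind(A)+\ind(B)-1$ in general, and equality when one index is even --- are the substantive part of (v) and are proved in Appendix~\ref{section: additivity under join} by introducing a join pairing on $\zk$--equivariant homology and analysing its compatibility with the module structure over $H_*(B\zk;\zk)$; the loss of a unit (absent for $k=2$) reflects the vanishing, for $k$ odd, of the join of two even--degree classes of $H_*(B\zk;\zk)$. This last step is the main obstacle. Granting it, Join stability follows at once: $\ind\big(L_k^{2K-1}(\underline w')\big)=2K$ is even, so the equality case applies and $\ind\big(A\ast_{\zk}L_k^{2K-1}(\underline w')\big)=\ind(A)+2K$.
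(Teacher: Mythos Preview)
Your proof is correct. Parts (i), (ii), (iv), and the upper bound in (v) follow essentially the same route as the paper; the only cosmetic difference in (v) is that the paper covers $A\ast_{\zk}B$ by the open sets $(A\ast_{\zk}B)\smallsetminus B$ and $(A\ast_{\zk}B)\smallsetminus A$ rather than your closed halves $P$ and $Q$, but the deformation retractions and the appeal to Subadditivity are the same.

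The one genuine divergence is in (iii). The paper proves the Lefschetz property directly via Poincar\'e duality and the homology intersection product: passing to open neighbourhoods $U\supset A$ and $V\supset H$ with the same indices, it picks classes $x\in H_{\ind(A)-1}(U)$ and $y\in H_{2M-3}(V)$ with nonzero images in the lens space, observes that $D(i_{U*}x)\cup D(i_{V*}y)\neq 0$ because $D(i_{V*}y)$ is a nonzero multiple of $\beta$, and concludes $i_{U\cap V*}(x\bullet y)\neq 0$, hence $\ind(A\cap H)\ge\ind(A)-2$. Your argument instead exploits that the complement of $H$ retracts $\zk$--equivariantly onto a one--dimensional lens subspace of index $2$, writes $A=(A\cap N)\cup B$ with $B$ disjoint from $H$ and $\ind(B)\le 2$, and reads off the bound from Subadditivity (the case split on $\ind(B)\in\{0,1,2\}$ neatly absorbs the possible $+1$). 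Your route is more elementary and self--contained---no duality or intersection pairing---at the cost of making (iii) depend on (iv), which is why you reorder the argument. The paper's route keeps (iii) logically independent and generalises more readily to other Lefschetz--type statements, but for the purposes of this proposition your approach is at least as clean.
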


\begin{Remark}
The above properties (ii), (iii), (iv), and (v) 
are stated for closed subspaces of lens spaces
but they hold (and are proved) also for open subsets of lens spaces.
\end{Remark}

\begin{proof}[Proof Proposition \ref{proposition: cohomological index}]
\begin{enumerate}[(i)]

\item 
Let $\wt{A}$ and $\wt{B}$ be the preimages in $S^{2n-1}_k(\ul{w})$
of $A$ and $B$.
The result follows from the fact that the restriction to $A$ 
of the classifying map of $\wt{B} \to B$ is a classifying map
for $\wt{A} \to A$.

\item 
Let $x$ be a generator of
$H^{\ind(A)} \big(L^{2M-1}_k(\underline w); \zk\big)$.
Then $i_A^*(x) = 0$.
By~\eqref{direct limit}, there exists an open neighborhood $U$ of $A$
such that $i^*_U(x) = 0$, where $i_U \colon U \to M$ is the inclusion map.
By Lemma~\ref{no holes in index}, $\ind(U) \leq \ind(A)$.
By monotonicity, $\ind(U) \geq \ind(A)$.

\item 
Assume that $\ind(A) \geq 3$, otherwise the inequality is trivial.
By continuity of the cohomological index, 
there exist open neighborhoods $U$ of $A$ and $V$ of $H$ 
such that $\ind(U) = \ind(A)$, \ $\ind(V) = \ind(H)$,
and $\ind (U \cap V) = \ind (A \cap H)$.
We have the following commuting diagram, 
where $D$
denotes the Poincaré duality isomorphism
and $\bullet$ the homology intersection product (see \cite[VIII.13.5]{Dold}): 
$$
\xymatrix{
H_*(U) \otimes H_*(V) \ar[d]^{i_{U*}\otimes i_{V*}} \ar[r]^{\bullet} & H_*(U\cap V) \ar[d]^{i_{U\cap V*}} \\
H_*(L_k^{2M-1}(\underline w)) \otimes H_*(L_k^{2M-1}(\underline w)) \ar[d]^{D\otimes D} &
H_*(L_k^{2M-1}(\underline w)) \ar[d]^D \\
H^*(L_k^{2M-1}(\underline w)) \otimes H^*(L_k^{2M-1}(\underline w))
\ar[r]^{\quad \quad \quad \cup} & H^*(L_k^{2M-1}(\underline w))\,.
}
$$
Let $x$ be a class in $H_{\ind(A)-1}(U)$ such that $i_{U*}(x)\neq 0$
(this exists, as homology and cohomology with field coefficients
are dually paired)
and similarly let $y$ be a class in $H_{2M-3}(V)$ with $i_{V*}(y) \neq 0$.
Since $D \big(i_{U*}(x)\big)$ is a non-zero class in 
$H^{\leq 2M - 3}\big( L_k^{2M-1}(\ul{w}) \big)$ 
and $D \big(i_{V*}(y)\big)$ is a non-zero multiple of $\beta$,
we have
$D \big(i_{U*}(x)\big) \cup D \big(i_{V*}(y)\big) \neq 0$.
It follows that $i_{U\cap V*}(x \bullet y) \neq 0$, 
which shows that $ \ind(A\cap H) = \ind(U\cap V) \geq \ind(A)-2$.

\item 
Assume that $\ind(A) + \ind(B) < 2M$, otherwise the inequality is trivial.
By continuity, there exist open neighborhoods $U$ of $A$
and $V$ of $B$ such that $\ind(U) = \ind(A)$, $\ind(V) = \ind(B)$,
and $\ind(U \cup V) = \ind (A \cup B)$.
By the exact cohomology sequence of the pair
$$
\cdots \rightarrow H^{*}(L_k^{2M-1}(\underline w),U)
\stackrel{j_U^{\phantom{U}\ast}}{\longrightarrow} H^{*}(L_k^{2M-1}(\underline w))
\stackrel{i_U^{\phantom{U}\ast}}{\longrightarrow}H^{*}(U)
\rightarrow \cdots \, .
$$
and Lemma \ref{no holes in index},
the index of $U$ is the lowest degree of
a non-zero class in the image of $j_U^{\phantom{U}\ast}$.
A similar statement holds for $V$.
Consider the commutative diagram
\begin{displaymath}
\xymatrix{
H^{*} \big(L_k^{2M-1}(\underline w),U\big) \otimes H^{*} \big(L_k^{2M-1}(\underline w),V\big)
\ar[d]^{j_{U}^{\phantom{U}\ast} \otimes j_{V}^{\phantom{U}\ast}}
\ar[r]^{\quad \quad \quad \quad \cup}
& H^{*} \big(L_k^{2M-1}(\underline w),U \cup V\big)
\ar[d]^{j_{U \cup V}^{\phantom{UV}\ast}} \\
H^{*} \big(L_k^{2M-1}(\underline w)\big) \otimes H^{\ast} \big(L_k^{2M-1}(\underline w)\big)
\ar[r]^{\quad \quad \quad  \cup}
& H^{*} \big(L_k^{2M-1}(\underline w)\big)\,. }
\end{displaymath}
Assume first that one of the indices is even,
for instance that of $A$. 
Let $x$ be a class in $H^{\ind(A)} \big(L_k^{2M-1}(\underline w),U\big)$
such that $j_U^{\phantom{U}\ast}(x)\neq 0$,
and $y$ a class in  $H^{\ind(B)} \big(L_k^{2M-1}(\underline w),V\big)$ 
such that $j_V^{\phantom{V}\ast}(y)\neq 0$.
Since $j_U^{\phantom{U}\ast}(x)$ is a non-zero multiple of $\beta^{\ind(A)/2}$  
and $\ind(A)+\ind(B) < 2M$,
it follows that 
$ j_{U\cup V}^{\phantom{U \cup V}\ast}(x \cup y) = j_U^{\phantom{U}\ast}(x) \cup j_V^{\phantom{V}\ast}(y)$
is non-zero and so
$\ind (A \cup B) \leq \ind(A) + \ind (B)$.
If both $\ind(A)$ and $\ind(B)$ are odd, 
replace $x$ in the above argument
with a class $x'$ in $H^{\ind(A)+1}(L_k^{2M-1}(\underline w),U)$ 
such that $j_U^{\phantom{U}\ast}(x')\neq 0$,
to obtain $\ind (A \cup B) \leq \ind(A) + \ind (B) + 1$. 

\item 
The subset $A' = (A\ast_{\zk} B) \ssminus B$ deformation retracts to $A$,
and the subset $B' = (A \ast_{\zk} B) \ssminus A$ deformation retracts to $B$.
Since $A\ast_{\zk} B = A' \cup B'$, 
the subadditivity property~(iv) implies that
$\ind (A \ast_{\mathbb{Z}_k} B) \leq \text{ind} (A) + \text{ind}(B) + 1$
and $\text{ind} (A \ast_{\mathbb{Z}_k} B) \leq \text{ind} (A) + \text{ind}(B)$
if at least one of the indices is even.
The reverse inequalities are proved
in Appendix~\ref{section: additivity under join}.
\end{enumerate}
\end{proof}

\begin{rmk}\labell{remark: stronger properties for RP}
In the case of real projective spaces
the cohomology ring is generated by the generator in degree one,
and the above arguments can be adapted to show that properties (iii) and (iv) 
of Proposition \ref{proposition: cohomological index}
hold in the following stronger form:
\begin{itemize}
\item[(iii')] 
If $A$ is a closed subset of $\RP^{M}$,
and $A' = A \cap H$ where $H \subset \RP^M$ is a real projective subspace
of codimension one, then $\text{ind}(A') \geq \text{ind}(A) - 1$.
\item[(iv')] 
For closed subsets $A$ and $B$ of $\RP^{M}$ we have
$$
\text{ind}(A \cup B) \leq \text{ind}(A) + \text{ind}(B)\,.
$$
\end{itemize}
Moreover, we also have the following result
(see Remark \ref{remark: RP} or \cite{Givental - Nonlinear Maslov index}):
\begin{itemize}
\item[(v')] (\emph{Join additivity}) 
For closed subsets $A$ of $\RP^{M}$ and $B$ of $\RP^{M'}$ we have
$$
\text{ind} (A \ast_{\zt} B) = \text{ind} (A) + \text{ind} (B) \,.
$$
\end{itemize}
Analogous properties hold for the cohomological index
of subsets of complex projective spaces (see \cite{Theret - Rotation numbers}).
As we will see in Sections \ref{sec:Maslov index} 
and \ref{section: applications},
the weaker properties that we have in the case of lens spaces
still suffice to define a non-linear Maslov index 
and recover the applications we are interested in.
\eor
\end{rmk}

\begin{rmk}
Rafael Gomes and the first author \cite{GG}
have shown that for $k$ an odd prime
there exist subsets $A$ and $B$ of $L_k^{2M-1}$
such that $\ind (A \ast_{Z_k} B) = \ind (A) + \ind(B) + 1$,
so join additivity does not hold in general.
\eor
\end{rmk}

We define the \emph{index}
of a conical function $F \colon \R^{2M} \rightarrow \R$ by 
$$
\ind (F) = \ind \big(\{ f \leq 0 \}\big)
$$
where $f$ is the function on $L_k^{2M-1}(\underline{w})$ induced by $F$.

\begin{rmk}\labell{remark: ind i}
If $Q$ is a $\zk$-invariant quadratic form on $\R^{2M}$
then $\ind(Q)$ coincides with $i(Q)$,
the maximal dimension of a subspace on which $Q$
is negative semi-definite.
In particular, $\zk$-invariance implies (if $k > 2$)
that in this case $\ind(Q)$ is even.
\eor
\end{rmk}

Given functions $f$ and $g$
on $L_k^{2M-1}(\underline{w})$ and $L_k^{2M'-1}(\underline{w}')$ respectively,
we write 
$$
f \oplus g \colon L_k^{2(M+M')-1}(\underline{w},\underline{w}') \rightarrow \R
$$  
for the function induced by the sum 
$F \oplus G \colon \R^{2(M+M')}\rightarrow \R$,
where $F$ and $G$ are the conical functions
on $\R^{2M}$ and $\R^{2M'}$ associated to $f$ and $g$.

\begin{prop}
\labell{direct sum and join}
Let  $f \colon L_k^{2M-1}(\ul{w}) \to \R$ 
and $g \colon L_k^{2M'-1}(\ul{w'}) \to \R$ 
be continuous functions.  Then 
$$
\text{\emph{ind}} \big(\{ f \oplus g \leq 0\}\big) 
= \text{\emph{ind}} \big(\{f\leq 0\} \ast_{\zk} \{g\leq 0\}\big) \,.
$$
\end{prop}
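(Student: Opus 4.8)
The plan is to exhibit an explicit deformation retraction relating the two sublevel sets, working upstairs on the sphere and then descending to the lens space. The key observation is that if $F$ and $G$ are the conical functions on $\mathbb R^{2M}$ and $\mathbb R^{2M'}$ inducing $f$ and $g$, then $F \oplus G$ is homogeneous of degree $2$, so its zero sublevel set (minus the origin) is a cone, and its image in $L_k^{2(M+M')-1}(\underline w,\underline w')$ is $\{f \oplus g \le 0\}$. So I want to compare the set $\{(\zeta,\zeta') \in S^{2(M+M')-1} : F(\zeta) + G(\zeta') \le 0\}$ with the join $\widetilde{\{f \le 0\}} \ast \widetilde{\{g \le 0\}}$ sitting inside the same sphere, where the latter upstairs is $\{(\sqrt t\, a, \sqrt{1-t}\, b) : F(a) \le 0,\ G(b) \le 0,\ a \in S^{2M-1},\ b \in S^{2M'-1},\ 0 \le t \le 1\}$.

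\textbf{First} I would handle the generic case where neither $\{f \le 0\}$ nor $\{g \le 0\}$ is empty (the empty cases being trivial from the definition of $\ast_{\zk}$ with an empty factor, since then $\{F \oplus G \le 0\}$ is a cone over $\{f \le 0\}$ or $\{g \le 0\}$ alone). On the sphere, write a point as $(u, u')$ with $|u|^2 + |u'|^2 = 1$; unless $u = 0$ or $u' = 0$, set $t = |u|^2 \in (0,1)$, $a = u/|u| \in S^{2M-1}$, $b = u'/|u'| \in S^{2M'-1}$, so by homogeneity $F(u) + G(u') = t\,F(a) + (1-t)\,G(b)$. This is a convex combination, so it is $\le 0$ iff $(t, a, b)$ lies in the region cut out by $t F(a) + (1-t) G(b) \le 0$. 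The join is the sub-region where in fact $F(a) \le 0$ \emph{and} $G(b) \le 0$. These are not equal, but I claim they are $\zk$-equivariantly homotopy equivalent: the plan is to deformation retract $\{t F(a) + (1-t) G(b) \le 0\}$ onto the join by a radial push in the $t$-variable. Concretely, for a point with $F(a) > 0$ (forcing $G(b) < 0$, hence $t \le G(b)/(G(b)-F(a)) =: t_0(a,b) < 1$), linearly contract $t$ down to $t_0$ and then continue into the slice $\{G(b) \le 0\}$; symmetrically if $G(b) > 0$; and fix the locus where both values are $\le 0$. One must check this is well-defined and continuous across the walls $F(a) = 0$, $G(b) = 0$, and $\zk$-equivariant (it is, since $t_0$ depends only on the values $F(a), G(b)$, which are $\zk$-invariant, and the retraction does not move $a$ or $b$), and that it extends continuously over the boundary loci $u = 0$ and $u' = 0$. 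Passing to the $\zk$-quotient gives a homotopy equivalence $\{f \oplus g \le 0\} \simeq \{f \le 0\} \ast_{\zk} \{g \le 0\}$, and both are compatible with the classifying maps to $L_k^\infty$, so they have the same cohomological index by Definition \ref{defi cohomological index} and monotonicity.

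\textbf{The main obstacle} I anticipate is the behavior at the degenerate loci $u = 0$ and $u' = 0$ in the sphere — where the coordinates $(t,a,b)$ break down — and making sure the retraction glues continuously there and across the walls $F(a)=0$, $G(b)=0$. One clean way around this is to avoid explicit formulas on the sphere entirely: instead argue cohomologically. Since $F \oplus G$ is conical, a small perturbation to a smooth $\zk$-invariant function with the same sublevel-set index (using Continuity, Proposition \ref{proposition: cohomological index}(ii)) together with the Morse-theoretic product structure — or, even more simply, by expressing everything in terms of the subadditivity/join-quasiadditivity machinery applied to the two ``halves'' $A' = \{f\oplus g \le 0\} \cap \{G(b) \le 0\text{-cone}\}$ and $B' = \{f\oplus g \le 0\} \cap \{F(a) \le 0\text{-cone}\}$, which deformation retract to $\{f\le 0\}\ast_\zk\{g\le 0\}$'s two pieces — one reduces the statement to properties already in hand. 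I would present the direct deformation-retraction argument as the main line, treating the gluing at $\{u=0\}\cup\{u'=0\}$ carefully as a closed subset on which the retraction is the identity, and remark that the identity of indices also follows from combining Proposition \ref{proposition: quasiadditivity of gf} with Proposition \ref{proposition: cohomological index}.
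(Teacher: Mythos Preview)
Your geometric idea is exactly the paper's: write a point of $\{f\oplus g\le 0\}$ as $tx+(1-t)y$ and push it towards $y$ when $f(x)>0$ (respectively towards $x$ when $g(y)>0$). But there is a genuine gap in your execution, and it is precisely the ``main obstacle'' you flag. Your retraction is not well specified: for $F(a)>0$ you say ``contract $t$ down to $t_0$'' with $t_0=G(b)/(G(b)-F(a))$, but in fact $t\le t_0$ already on $\{f\oplus g\le 0\}$, so this moves the wrong way; and whatever the endpoint is, at $t_0$ one still has $F(a)>0$, so you are not in the join. Pushing further to $t=0$ would land in the join but is discontinuous at the wall $F(a)=0$ (where the point must be fixed). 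No formula that depends only on the sign pattern of $F(a),G(b)$ and lands exactly on the join can be continuous across that wall.

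The paper resolves this not by working harder on the retraction, but by relaxing the target. One does \emph{not} need a deformation retraction onto the join; one only needs the indices to agree. By monotonicity, $\ind(\text{join})\le\ind(\{f\oplus g\le 0\})$ is immediate. For the reverse inequality the paper invokes continuity of the index (Proposition~\ref{proposition: cohomological index}(ii)) to choose an open neighborhood $U$ of the join with $\ind(U)=\ind(\text{join})$, and then merely deforms the inclusion $\{f\oplus g\le 0\}\hookrightarrow L_k^{2(M+M')-1}$ into $U$. Having this slack lets one use a cutoff: push all the way to $y$ when $f(x)\ge\delta$, push a fraction $f(x)/\delta$ of the way when $0\le f(x)\le\delta$, and fix when $f(x)\le 0$ (symmetrically in $g$). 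For $\delta$ small enough the image lies in $U$ by compactness, and the interpolation is manifestly continuous. This is the missing idea.

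Your proposed fallbacks do not rescue the argument. Proposition~\ref{proposition: quasiadditivity of gf} concerns the composition formula $F_1\,\sharp\,F_2$ versus $F_1\oplus F_2$ and is irrelevant here. Splitting $\{f\oplus g\le 0\}$ into the pieces $\{g(y)\le 0\}$ and $\{f(x)\le 0\}$ and appealing to subadditivity or join quasi-additivity would at best bound $\ind(\{f\oplus g\le 0\})$ above by $\ind(\text{join})$ plus an error, not give equality; and the present proposition is in any case what feeds into Corollary~\ref{proposition: stab additivity}, so invoking that machinery is close to circular.
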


\begin{proof}
By continuity (Proposition \ref{proposition: cohomological index}(ii)) 
there is a neighborhood $U$ of $\{f\leq 0\} \ast_{\zk} \{g\leq 0\}$ 
with $ \ind(U) = \ind(\{f\leq 0\} \ast_{\zk} \{g\leq 0\}) $. 
Consider the diagram
$$
\xymatrix{
\{f \leq 0\} \ast_{\zk} \{g \leq 0\}  \ar@{^(->}[r] \ar@{^(->}[d]& \{f \oplus g \leq 0 \} \ar@{-->}[dl]_r \ar@^{(->}[ddl]^j \\
U \ar@{^(->}[d] & \\
L^{2M+2M'-1}(\ul{w},\ul{w'}) . &
}
$$
By monotonicity, it suffices to show that the inclusion $j$ can be deformed 
into a map $r$ with image contained in $U$. 
This can be done as follows.
We work $\zk$-equivariantly on the preimages in $S^{M+M'-1}(\ul{w},\ul{w'})$.
To simplify the formulas,
given $x \in S^{2M-1}(\ul{w})$ and $y \in S^{2M'-1}(\ul{w'})$ 
we write $tx+(1-t)y$ for
$\big( \sqrt{t} x, \sqrt{1-t} y \big) \in S^{2M+2M'-1}(\ul{w},\ul{w'})$.
Moreover we still write $f \colon S^{2M-1}(\ul{w}) \to \R$,
\ $g \colon S^{2M'-1}(\ul{w'}) \to \R$,
and $f \oplus g \colon S^{2(M+M')-1}(\ul{w},\ul{w'}) \to \R$
for the composition of the original functions $f$, $g$, and $f \oplus g$
with the projections from spheres to lens spaces. 
With this notation we have
$$ 
(f\oplus g) \big(tx+(1-t)y\big) = tf(x) + (1-t)g(y) \,.
$$
The rough idea for constructing the map $r$ is as follows.
If a point $tx+(1-t)y$ is in $\{f \oplus g \leq 0 \}$
then at least one of $f(x)$ and $g(y)$ is non-positive.
The map $r$ will act as the identity on points $tx+(1-t)y$ 
with $f(x)$ and $g(y)$ both non-positive.
If $f(x)$ is positive, and thus $g(y)$ is negative, $r$ will move 
the point $tx+(1-t)y$ to a point $(1-s)(tx+(1-t)y) +sy$, 
with $s \in [0,1]$ big enough so that this point is 
in the chosen neighborhood $U$ of
$\{f \leq 0\} \ast_{\zk} \{g \leq 0\}$. Similarly if $g(y)$ is positive.
However, one needs to
interpolate between these deformations
in order to ensure that the resulting map is continuous.
Here are the details.
For each $\delta>0$ consider the map
$$
R_{\delta} \colon \{ f \oplus g \leq 0 \} \times [0,1] \to \{ f\oplus g \leq 0 \}
$$
defined by the expression
$$
R_{\delta}( tx+(1-t)y, s) = 
\begin{cases}
(1-s)(tx+(1-t)y)+sy & \text{ if } f(x)\geq \delta \\
(1-s\frac{f(x)}{\delta})(tx+(1-t)y) + s\frac{f(x)}{\delta} y & \text{ if } 0 \leq f(x) \leq \delta \\
tx+(1-t)y & \text{ if } f(x) \leq 0 \text{ and } g(y) \leq 0 \\
(1-s\frac{g(y)}{\delta})(tx+(1-t)y) + s\frac{g(y)}{\delta} x & \text{ if } 0 \leq g(y) \leq \delta \\
(1-s)(tx+(1-t)y)+sx & \text{ if } g(y)\geq \delta \, .
\end{cases}
$$
Thus $R_{\delta}$ moves a point $tx+(1-t)y$ such that $f(x)>0$ 
along the segment  
$$
s\mapsto(1-s) \big(tx+(1-t)y\big) +sy
$$
a portion of the way towards $y$ (note that $g(y)$ must be negative). 
To ensure continuity, the portion depends on the value of $f$ at $x$.
The pasting lemma guarantees the continuity of $R_{\delta}$. 
By continuity of $(t,x,y) \mapsto ( tx + (1-t)y )$
and compactness of its domain,
for $\delta$ small enough
the set $\{ f \oplus g \leq 0 \} \cap
     \{ tx + (1-t)y \ | \ f(x) \leq \delta \text{ or } g(y) \leq \delta \}$
is contained in $U$.
For such $\delta$,
the image of $R_{\delta}(\cdot,1)$ is contained
in the preimage of $U$ in $S^{2(M+M')-1}(\ul{w},\ul{w'})$.
We take $r$ to be the map induced by $R_\delta(\cdot,1)$ on the $\zk$-orbits.
\end{proof}

Proposition \ref{direct sum and join} and
Proposition \ref{proposition: cohomological index} (v)
imply the following result.

\begin{cor}\labell{proposition: stab additivity}
For conical functions $F \colon \R^{2M} \to \R$ and $G \colon \R^{2M'} \to \R$
we have
$$
\left\vert \, \text{\emph{ind}} (F \oplus G) - \text{\emph{ind}} (F) - \text{\emph{ind}} (G) \, \right\vert \leq 1 \,. 
$$
Moreover, if either $F$ or $G$ has even index,
in particular if either $F$ or $G$ is a $\zk$-invariant  quadratic form
(for $k > 2$),
then 
$$
\text{\emph{ind}} (F \oplus G) = \text{\emph{ind}} (F) + \text{\emph{ind}} (G) \,.
$$
\end{cor}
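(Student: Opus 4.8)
The plan is to chain together the definition of the index of a conical function with Propositions \ref{direct sum and join} and \ref{proposition: cohomological index}(v); the statement is essentially a formal corollary of those two results. First I would recall that, by definition, $\ind(F) = \ind(\{f \le 0\})$ and $\ind(G) = \ind(\{g \le 0\})$, where $f$ and $g$ are the functions induced on $L_k^{2M-1}(\ul{w})$ and $L_k^{2M'-1}(\ul{w'})$ by $F$ and $G$. Since $F$ and $G$ are continuous these sublevel sets are closed, and since $F \oplus G$ is again a conical function (it is $\zk$-invariant for the diagonal action, homogeneous of degree $2$, and $\calC^1$ with Lipschitz differential), the quantity $\ind(F \oplus G) = \ind(\{f \oplus g \le 0\})$ is likewise defined.

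Next I would apply Proposition \ref{direct sum and join} to identify $\ind(\{f \oplus g \le 0\})$ with $\ind\bigl(\{f \le 0\} \ast_{\zk} \{g \le 0\}\bigr)$. Then the join quasi-additivity property, Proposition \ref{proposition: cohomological index}(v), applied to the closed subsets $A = \{f \le 0\}$ and $B = \{g \le 0\}$, yields both
$$
\bigl\vert \ind\bigl(\{f \le 0\} \ast_{\zk} \{g \le 0\}\bigr) - \ind(\{f \le 0\}) - \ind(\{g \le 0\}) \bigr\vert \le 1
$$
and the equality $\ind\bigl(\{f \le 0\} \ast_{\zk} \{g \le 0\}\bigr) = \ind(\{f \le 0\}) + \ind(\{g \le 0\})$ whenever one of $\ind(\{f \le 0\})$, $\ind(\{g \le 0\})$ is even. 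Substituting $\ind(\{f \le 0\}) = \ind(F)$ and $\ind(\{g \le 0\}) = \ind(G)$ gives both displayed assertions in the statement.

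Finally, for the parenthetical remark, I would invoke Remark \ref{remark: ind i}: if, say, $F$ is a $\zk$-invariant quadratic form and $k > 2$, then $\ind(F) = i(F)$ is even, so the equality case above applies. I do not expect a real obstacle here, since all the genuine work has already been carried out — the geometric deformation-retraction argument in the proof of Proposition \ref{direct sum and join}, and the lower bound in join quasi-additivity proved in Appendix \ref{section: additivity under join}. The only points requiring a small amount of care are verifying that $F \oplus G$ is genuinely conical (so that its index is defined) and that the relevant sublevel sets are closed (so that Proposition \ref{proposition: cohomological index}(v), stated for closed subsets, applies).
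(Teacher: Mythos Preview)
Your proposal is correct and follows exactly the same approach as the paper, which simply states that the corollary follows from Proposition~\ref{direct sum and join} and Proposition~\ref{proposition: cohomological index}(v). Your additional care in noting why the sublevel sets are closed and why $F \oplus G$ is conical, and in invoking Remark~\ref{remark: ind i} for the parenthetical about quadratic forms, makes explicit what the paper leaves implicit.
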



\section{The non-linear Maslov index}
\labell{sec:Maslov index}

Using the construction of generating functions 
given in Section \ref{sec:generating functions}
and the definition and properties of the cohomological index 
discussed in Section \ref{section: cohomological index},
we now define the non-linear Maslov index
$\mu \colon \widetilde{\text{Cont}}_0(L_k^{2n-1}) \rightarrow \mathbb{Z}$
on the universal cover of the identity component of the contactomorphism group of $L_k^{2n-1}$,
and describe the properties
that are used in the applications.

\subsection*{Definition and quasimorphism property}

As before, $L_k^{2n-1}$ denotes a lens space with any vector of weights.
The \emph{non-linear Maslov index}
of a based contact isotopy $\{\phi_t\}_{t\in[0,1]}$ of $L_k^{2n-1}$ 
is defined by
$$
\mu(\{\phi_t\}) = \text{ind}(F_0) - \text{ind}(F_1)
$$ 
where $F_t \colon \R^{2n} \times \R^{2nN} \rightarrow \R$ is 
a based family of conical generating functions for $\{\phi_t\}$.
Existence of such a family is given
by Proposition \ref{existence of generating functions};
by Proposition \ref{uniqueness gf}
and Corollary \ref{proposition: stab additivity},
$\mu$ does not depend on the choice.
Moreover, Proposition \ref{uniqueness of gf for homotopy class} implies that
$\mu(\{\phi_t\})$ only depends on the smooth homotopy class of $\{\phi_t\}$
with fixed endpoints,
and thus $\mu$ descends to a map
$$
\mu \colon \widetilde{\text{Cont}_0}(L_k^{2n-1}) \rightarrow \mathbb{Z} \, .
$$

\begin{ex}\label{example: def in small case}
We have $\mu \big( \{ \text{id}\}_{t \in [0,1]} \big) = 0$.
Note also that if a based contact isotopy $\{\phi_t\}_{t \in [0,1]}$
has a family of generating functions with no fibre variable
then $0 \leq  \mu(\{\phi_t\}) \leq 2n$;
by Proposition \ref{proposition: monotonicity of generating functions}(i),
if moreover $\{\phi_t\}$ is positive then $\mu \big(\{\phi_t\}\big) = 2n$,
and if it is non-positive then $\mu \big(\{\phi_t\}\big) = 0$.
\eoe
\end{ex}

We now prove that the non-linear Maslov index is a quasimorphism
(in the case of projective spaces,
see also \cite{Ben Simon} and \cite[Theorem 9.1]{Givental - Nonlinear Maslov index}).
We start with the following lemma.

\begin{lemma}\label{lemma: index based}
If two conical generating functions $F$ and $G$
are equivalent to the zero function then 
$$
\ind (F \,\sharp \, G) = \ind (F) + \ind (G) \,.
$$
\end{lemma}

\begin{proof}
By Remark \ref{remark: new},
there are stabilizations $F \oplus P_F$ and $G \oplus P_G$ of $F$ and $G$
that are composition by fibre preserving conical homeomorphisms
of stabilizations $0 \oplus Q_F$ and $0 \oplus Q_G$ of the zero function.
Using Corollary \ref{proposition: stab additivity} we thus have
\begin{equation}\label{equation: lemma 4}
\ind (F) + \ind (P_F) = \ind (F \oplus P_F) = \ind (0 \oplus Q_F) = 2n + \ind (Q_F)
\end{equation}
and similarly for $G$;
moreover, since
$(F \oplus P_F) \, \sharp \, (G \oplus P_G)$ and $(F \, \sharp \, G) \oplus P_F \oplus P_G$
just differ by a permutation of the homogeneous coordinates
we have
\begin{equation}\label{equation: lemma 1}
\ind \big( (F \oplus P_F) \, \sharp \, (G \oplus P_G) \big) = \ind (F \, \sharp \, G) + \ind (P_F) + \ind (P_G) \,.
\end{equation}
By Remark \ref{remark: sharp preserves equivalence 1},
$(F \oplus P_F) \, \sharp \, (G \oplus P_G)$ differs from $( 0 \oplus Q_F) \, \sharp \, (0 \oplus Q_G)$
by a fibre preserving conical homeomorphism,
and so
\begin{equation}\label{equation: lemma 2}
\ind \big( (F \oplus P_F) \, \sharp \, (G \oplus P_G) \big) = \ind \big( (0 \oplus Q_F) \, \sharp \, (0 \oplus Q_G) \big) \,.
\end{equation}
Since
\begin{equation}\label{equation: lemma 2 bis}
(0 \oplus Q_F) \, \sharp \, (0 \oplus Q_G) = ( 0 \, \sharp \, 0 ) \oplus Q_F \oplus Q_G \,,
\end{equation}
we have
\begin{equation}\label{equation: lemma 3}
\ind \big( (0 \oplus Q_F) \, \sharp \, (0 \oplus Q_G) \big) = 4n + \ind (Q_F) + \ind (Q_G) \,.
\end{equation}
By \eqref{equation: lemma 1}, \eqref{equation: lemma 2},
\eqref{equation: lemma 3} and \eqref{equation: lemma 4}
we thus have $\ind (F \,\sharp \, G) = \ind (F) + \ind (G)$.
\end{proof}

\begin{prop}[Quasimorphism property]
\labell{p:quasimorphism property}
For elements 
$[\{\phi_t\}]$ and $[\{\psi_t\}]$ of $\widetilde{\Cont}_0(L_k^{2n-1})$ 
we have
$$
\left| \, \mu\big([\{\phi_t\}]\cdot[\{\psi_t\}]\big) - \mu\big([\{\phi_t\}]\big) - \mu\big([\{\psi_t\}]\big) \, \right| \leq 2n + 1\,.
$$
\end{prop}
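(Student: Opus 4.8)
The plan is to express all three Maslov indices through one pair of families of generating functions and the composition formula, and then to control the estimate by examining the defect of $\sharp$-additivity of the cohomological index at the two endpoints $t=0$ and $t=1$.

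Using Proposition \ref{existence of generating functions}, choose families of conical generating functions $F_t = F_t^{(1)}\sharp\cdots\sharp F_t^{(N)}$ for $\{\phi_t\}$ and $G_t = G_t^{(1)}\sharp\cdots\sharp G_t^{(K)}$ for $\{\psi_t\}$, coming from decompositions into $\mathcal{C}^1$-small pieces. Since $\phi_t\circ\psi_t = \phi_t^{(N)}\circ\cdots\circ\phi_t^{(1)}\circ\psi_t^{(K)}\circ\cdots\circ\psi_t^{(1)}$ is again a decomposition into $\mathcal{C}^1$-small pieces, Propositions \ref{p:composition formula} and \ref{proposition: conical family} (the composition formula is needed here with fibre variables in both factors) show that
$$
G_t\sharp F_t = \big(G_t^{(1)}\sharp\cdots\sharp G_t^{(K)}\big)\sharp\big(F_t^{(1)}\sharp\cdots\sharp F_t^{(N)}\big)
$$
is a family of conical generating functions for $\{\phi_t\circ\psi_t\}$; by Lemma \ref{lemma: associativity} it is equivalent to one produced directly by the construction of Proposition \ref{existence of generating functions}, and since $\mu$ does not depend on these choices (Proposition \ref{uniqueness gf} and Corollary \ref{proposition: stab additivity}) we get $\mu\big([\{\phi_t\}]\cdot[\{\psi_t\}]\big) = \ind(G_0\sharp F_0) - \ind(G_1\sharp F_1)$. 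Subtracting $\mu([\{\phi_t\}]) = \ind(F_0)-\ind(F_1)$ and $\mu([\{\psi_t\}]) = \ind(G_0)-\ind(G_1)$, the left-hand side of the asserted inequality becomes $d_0 - d_1$, where $d_t := \ind(G_t\sharp F_t) - \ind(G_t) - \ind(F_t)$.

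It remains to bound $d_0$ and $d_1$. For any $t$, Proposition \ref{proposition: quasiadditivity of gf} (equivalently, setting $q=\zeta_1$ in the composition formula) gives a conical embedding onto a lens subspace $H$ of codimension $2n$ under which $\{G_t\oplus F_t\le 0\}$ is identified with $\{G_t\sharp F_t\le 0\}\cap H$; writing $H$ as an intersection of $n$ complex hyperplanes invariant under the $\zk$-action and applying monotonicity and the Lefschetz property $n$ times (Proposition \ref{proposition: cohomological index}(i),(iii)) yields $\ind(G_t\sharp F_t)-2n \le \ind(G_t\oplus F_t) \le \ind(G_t\sharp F_t)$, while Corollary \ref{proposition: stab additivity} gives $\big|\ind(G_t\oplus F_t)-\ind(G_t)-\ind(F_t)\big|\le 1$. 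Combining these, $d_t\in[-1,2n+1]$, so in particular $|d_1|\le 2n+1$. The key improvement is that $d_0=0$: since $\phi_0=\psi_0=\mathrm{id}$, every small piece has the zero generating function, so $F_0$, $G_0$ and $G_0\sharp F_0$ are $\zk$-invariant quadratic forms obtained by applying the composition formula to the zero function, and Lemma \ref{lemma:quadgenzero} together with Remark \ref{remark: ind i} give $\ind(G_0\sharp F_0)=\ind(G_0)+\ind(F_0)$. Hence $|d_0-d_1|=|d_1|\le 2n+1$, which is the claim.

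The main obstacle is the bookkeeping needed to justify the first displayed identity for $\mu$: one must verify that $G_t\sharp F_t$ genuinely computes $\mu$ of the product isotopy, which is exactly what Lemma \ref{lemma: associativity}, Lemma \ref{lemma: G sharp 0 equivalent to G} and Proposition \ref{uniqueness gf} are built to provide, and one must set up the slicing of $\{G\sharp F\le 0\}$ by a codimension-$2n$ lens subspace carefully enough that the Lefschetz inequality can be iterated $n$ times. The sharpening from the naive bound $2n+2$ (which follows from $|d_0|,|d_1|\le 2n+1$ alone) to the stated $2n+1$ is forced entirely by the vanishing $d_0=0$, i.e.\ by Lemma \ref{lemma:quadgenzero}; when $k=2$ one moreover has exact join additivity (Remark \ref{remark: stronger properties for RP}), so $\ind(G\oplus F)=\ind(G)+\ind(F)$ and hence $d_1\in[0,2n]$, which recovers the bound $2n$ of Remark \ref{remark: k prime}.
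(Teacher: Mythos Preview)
Your proof is correct and follows essentially the same route as the paper's: express $\mu$ of the product via $G_t\sharp F_t$, split the defect into the $t=0$ and $t=1$ contributions, kill the $t=0$ term exactly via Lemma~\ref{lemma:quadgenzero} and Remark~\ref{remark: ind i}, and bound the $t=1$ term by combining the Lefschetz property (applied $n$ times through Proposition~\ref{proposition: quasiadditivity of gf}) with Corollary~\ref{proposition: stab additivity}. Your packaging via $d_t$ and the explicit one-sided inequalities $\ind(G_t\sharp F_t)-2n\le\ind(G_t\oplus F_t)\le\ind(G_t\sharp F_t)$ is slightly more detailed than the paper's presentation, but the argument is the same.
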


\begin{proof}
By Proposition \ref{proposition: conical family} and Remark \ref{remark: based},
if $F_t \colon \R^{2n} \times \R^{2nN_1} \rightarrow \R$
and $G_t \colon \R^{2n} \times \R^{2nN_2} \rightarrow \R$
are based families of conical generating functions for $\{\phi_t\}$ and $\{\psi_t\}$ respectively
then
$$
G_t \, \sharp \, F_t \colon \R^{2n} \times ( \R^{2n} \times \R^{2n} \times \R^{2nN_1} \times \R^{2nN_2}) \rightarrow \R
$$
is a based family of conical generating functions for $\{\phi_t \circ \psi_t\}$.
Since $[\{\phi_t\}] \cdot [\{\psi_t\}] = [\{\phi_t \circ \psi_t\}]$
we have
$ \mu \big([\{\phi_t\}] \cdot [\{\psi_t\}] \big) = \ind(G_0 \, \sharp \, F_0) - \ind(G_1 \, \sharp \, F_1)$
and thus
\begin{align*}
\big\vert \, \mu \big([\{\phi_t\}] \cdot [\{\psi_t\}]\big) &- \mu ([\{\phi_t\}]) - \mu ([\{\psi_t\}]) \, \big\vert \\
&= \big\vert  \, \text{ind} (G_0 \, \sharp \, F_0) - \text{ind}(G_1 \, \sharp \, F_1) 
- \text{ind} (F_0) + \text{ind}(F_1) - \text{ind} (G_0) + \text{ind}(G_1) \, \big\vert \\
&\leq \big\vert \, \text{ind} (G_0 \, \sharp \, F_0) - \text{ind} (G_0) - \text{ind}(F_0) \, \big\rvert 
+ \big\vert \, \text{ind}(G_1 \, \sharp \, F_1) - \text{ind} (G_1) - \text{ind}(F_1) \, \big\rvert \\
&= \big\vert \, \text{ind}(G_1 \, \sharp \, F_1) - \text{ind} (G_1) - \text{ind}(F_1) \, \big\rvert 
\end{align*}
where the last equality follows from Lemma \ref{lemma: index based}.
By Proposition \ref{proposition: quasiadditivity of gf},
$G_1 \, \sharp \, F_1$ coincides with $G_1 \oplus F_1$ in codimension $2n$.
Therefore, using the Lefschetz property
from Proposition \ref{proposition: cohomological index}
we get
$$
\big\vert \, \text{ind}(G_1 \, \sharp \, F_1) - \text{ind} (G_1 \oplus F_1) \big\vert \leq 2n \,.
$$
On the other hand,
by Corollary \ref{proposition: stab additivity} we have
$$
\big\vert \, \text{ind}(G_1 \oplus F_1) - \text{ind} (G_1) - \text{ind} (F_1) \, \big\vert \leq 1 \,.
$$
Thus we obtain
$$
\big\vert \, \text{ind}(G_1 \, \sharp \, F_1) - \text{ind} (G_1) - \text{ind}(F_1) \, \big\vert \leq 2n + 1
$$
and so 
$$
\big| \, \mu\big([\{\phi_t\}]\cdot[\{\psi_t\}]\big) - \mu\big([\{\phi_t\}]\big) - \mu\big([\{\psi_t\}]\big) \, \big| \leq 2n + 1 \,.
$$
\end{proof}

Recall that a quasimorphism $\nu \colon G \rightarrow \mathbb{R}$
is said to be \textit{homogeneous} if
$\nu(x^m) = m \, \nu(x)$ for all $x \in G$ and $m \in \mathbb{Z}$.
Any quasimorphism $\nu \colon G \rightarrow \mathbb{R}$ 
has an associated homogeneous quasimorphism, 
defined by
$$
\ol{\nu}(x) = \lim_{m \to \infty} \frac{\nu(x^m)}{m} 
$$
(see for instance \cite[Section 2.2.2]{scl}).
The homogeneous quasimorphism
$\ol{\mu} \colon \widetilde{\text{Cont}}_0(L_k^{2n-1}) \rightarrow \mathbb{R}$
associated to the non-linear Maslov index
is called the \emph{asymptotic non-linear Maslov index}\footnote{
In \cite[Section 9]{Givental - Nonlinear Maslov index}
the asymptotic non-linear Maslov index
of a contact isotopy $\{\phi_t\}_{t \in [0,\infty)}$ starting at the identity 
is defined as
$\overline{\mu}_{\Giv} (\{\phi_t\}_{t \in [0,\infty)}) = 
\lim_{T \to \infty}  \frac{\mu (\{\phi_t\}_{t \in [0,T]})}{T}$.
Given a contact isotopy $\{\phi_t\}_{t \in [0,1]}$ we can extend it 
to a contact isotopy defined for $t \in [0,\infty)$
by posing, for $t = l+s$ with $l \in \mathbb{N}$ and $s \in (0,1)$,
$\phi_t = \phi_s \circ (\phi_1)^l$.
Then
$\overline{\mu}_{\Giv} \big(\{\phi_t\}_{t \in [0,\infty)}\big) = \ol{\mu} \big(\{\phi_t\}_{t\in [0,1]}\big)$.}.

\subsection*{The linear case}

We now show that
if the lift to $\R^{2n}$ of a based contact isotopy $\{\phi_t\}$ of $L_k^{2n-1}$
is a loop $\{\Phi_t\}$ in $\Sp(2n;\R)$
then $\mu(\{\phi_t\})$ is equal to the linear Maslov index of $\{\Phi_t\}$.
If $\{\Phi_t\}$ is a path in $\Sp(2n;\R)$ starting at the identity
then the construction of Section \ref{sec:generating functions}
gives a family
$Q_t \colon \R^{2n} \times \R^{2nN} \rightarrow \R$
of generating functions so that
each $Q_t$ is a $\zk$-invariant quadratic form.
As in \cite{Theret - Thesis, Theret - camel}, we define
(cf.\ Remark \ref{remark: ind i})
$$
\nu(\{\Phi_t\}) = \ind (Q_0) - \ind (Q_1) = i(Q_0) - i(Q_1)
$$
where $i$ denotes the maximal dimension of a subspace
on which a quadratic form is negative semi-definite.
By the arguments in Section \ref{sec:generating functions},
the integer $\nu(\{\Phi_t\})$ is well defined
and depends only on the homotopy class 
(with fixed endpoints) of the path $\{\Phi_t\}$.
Moreover, if $\{\Phi_t\}$ is the lift of a contact isotopy $\{\phi_t\}$ of $L^{2n-1}_k$ 
then $\nu( \{\Phi_t\}) =\mu(\{\phi_t\})$.

\begin{prop}\labell{proposition: linear}
The induced map
$\nu \colon \pi_1 \big(\Sp(2n;\R)\big) \to \Z$
is a group homomorphism,
and agrees with the linear Maslov index.
\end{prop}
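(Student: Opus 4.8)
The plan is to exploit that $\pi_1\big(\Sp(2n;\R)\big)$ is an infinite cyclic group, so that it suffices to establish two things: that $\nu$ is a group homomorphism, and that $\nu$ agrees with the linear Maslov index on a single element of infinite order. Recall that $\nu$ has already been shown to depend only on the homotopy class with fixed endpoints of the path $\{\Phi_t\}$, hence is a well-defined function on $\pi_1\big(\Sp(2n;\R)\big)$; and that any path $\{\Phi_t\}$ in $\Sp(2n;\R)$ starting at the identity admits, via the construction of Section~\ref{sec:generating functions}, a $1$-parameter family $Q_t \colon \R^{2n}\times\R^{2nN}\to\R$ of quadratic generating functions, with $\nu(\{\Phi_t\}) = i(Q_0) - i(Q_1)$.

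To prove that $\nu$ is a homomorphism, let $\gamma_1 = \{\Phi_t\}$ and $\gamma_2 = \{\Psi_t\}$ be loops in $\Sp(2n;\R)$ based at the identity; their product in $\pi_1$ is represented by the pointwise product $\{\Phi_t\Psi_t\} = \{\Phi_t\circ\Psi_t\}$. Choosing quadratic generating function families $Q_t^{(1)}$ and $Q_t^{(2)}$ for $\gamma_1$ and $\gamma_2$ as above, the family $Q_t^{(2)}\,\sharp\,Q_t^{(1)}$ is a family of quadratic generating functions for $\{\Phi_t\circ\Psi_t\}$, so
$$
\nu(\gamma_1\cdot\gamma_2) = i\big(Q_0^{(2)}\,\sharp\,Q_0^{(1)}\big) - i\big(Q_1^{(2)}\,\sharp\,Q_1^{(1)}\big) \,.
$$
At $t = 0$ the forms $Q_0^{(j)}$ are obtained from the zero function by iterated application of the composition formula, so $i(Q_0^{(2)}\,\sharp\,Q_0^{(1)}) = i(Q_0^{(2)}) + i(Q_0^{(1)})$ by Lemma~\ref{lemma:quadgenzero}. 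At $t = 1$, since $\gamma_1$ and $\gamma_2$ are loops, $Q_1^{(1)}$ and $Q_1^{(2)}$ are quadratic generating functions for the identity, and the crucial point is that $i(Q\,\sharp\,Q') = i(Q) + i(Q')$ for \emph{any} two quadratic generating functions $Q$, $Q'$ of the identity. Granting this, the four terms above regroup into $\big(i(Q_0^{(1)}) - i(Q_1^{(1)})\big) + \big(i(Q_0^{(2)}) - i(Q_1^{(2)})\big) = \nu(\gamma_1) + \nu(\gamma_2)$.

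The hard part is this last additivity statement, which I would prove by a normal-form argument refining the proof of Lemma~\ref{lemma:quadgenzero}. If $Q \colon \R^{2n}\times\R^{2nN}\to\R$ is a quadratic generating function for the identity, then, because $i_Q$ is a diffeomorphism of the fibre-critical set $\Sigma_Q$ onto the zero section $\Gamma_{\Id}$, one checks that $\Sigma_Q$ equals the kernel of $Q$, is $2n$-dimensional, and is carried isomorphically onto $\R^{2n}$ by $(\zeta,\nu)\mapsto\zeta$; writing $\Sigma_Q = \{\nu = \sigma(\zeta)\}$ and completing the square in $\nu$ gives a normal form $Q(\zeta,\nu) = Q''\big(\nu - \sigma(\zeta)\big)$ with $Q''$ non-degenerate, so that $i(Q) = 2n + i(Q'')$. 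Substituting the normal forms of $Q$ and $Q'$ into the composition formula and changing the fibre variables accordingly, $Q\,\sharp\,Q'$ becomes, after quotienting by its $2n$-dimensional kernel, the direct sum $Q''(\mu_1) + Q'''(\mu_2) - 2\langle\zeta_2, i\zeta_1\rangle$; the last summand is a non-degenerate form of index $2n$, whence $i(Q\,\sharp\,Q') = 2n + i(Q'') + i(Q''') + 2n = i(Q) + i(Q')$. This is the same computation as in Lemma~\ref{lemma:quadgenzero} with the diagonal subspace replaced by $\Sigma_Q$, and although elementary it is the most delicate step.

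It remains to fix the normalization and conclude. The linear Maslov index is the homomorphism $\pi_1\big(\Sp(2n;\R)\big) \to \Z$ that assigns the value $2n$ to the Hopf rotation loop $\{\Phi_t = e^{2\pi i t}\,\Id\}_{t\in[0,1]}$. This loop lifts the Reeb flow $\{r_{2\pi t}\}$ of $L_k^{2n-1}$, so by the calculation for the Reeb flow (the discussion leading to Example~\ref{Reeb flow}) we also have $\nu\big(\{e^{2\pi i t}\,\Id\}\big) = \mu\big(\{r_{2\pi t}\}\big) = 2n$. Since $\pi_1\big(\Sp(2n;\R)\big)$ is infinite cyclic and the class of $\{e^{2\pi i t}\,\Id\}$ has infinite order (it is $n$ times a generator), any two homomorphisms $\pi_1\big(\Sp(2n;\R)\big) \to \Z$ that agree on it must coincide; hence $\nu$ equals the linear Maslov index. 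Alternatively, one could identify $\nu$ directly with the Robbin--Salamon index of the path of Lagrangian graphs $\gr(\Phi_t)$ through a crossing-form computation, following Th\'eret~\cite{Theret - Thesis, Theret - camel}; this route bypasses the additivity argument but requires relating the index of the generating quadratic form to the crossing data.
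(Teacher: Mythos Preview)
Your homomorphism argument is essentially the paper's: the ``normal form'' you describe for a quadratic generating function of the identity is precisely Lemma~\ref{prop 35 in Theret}, and once both factors are put into the form $\overline{Q}(\nu)$ depending only on the fibre, the composition formula reads $(0\,\sharp\,0)\oplus\overline{Q}\oplus\overline{Q}'$, which gives the additivity of~$i$. So this part is fine and matches the paper up to phrasing.

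The normalization step, however, is circular. You invoke the value $\mu\big(\{r_{2\pi t}\}\big)=2n$ from ``the discussion leading to Example~\ref{Reeb flow}'', but in the paper that discussion \emph{is} Proposition~\ref{proposition: linear}: Example~\ref{Reeb flow} deduces the Reeb value from the identification of $\nu$ with the linear Maslov index, not the other way around. There is no independent computation of $\mu\big(\{r_{2\pi t}\}\big)$ elsewhere in the paper that you could appeal to. The paper breaks the circle by computing $\nu$ directly on the standard generator $t\mapsto e^{2\pi i t}$ of $\pi_1\big(\Sp(2;\R)\big)$: one writes down the explicit $10\times10$ symmetric matrix coming from three applications of the composition formula to the small generating function $Q_t(z)=\frac{\sin(2\pi t)}{1+\cos(2\pi t)}\,\langle z,z\rangle$, and reads off $i(A_0)-i(A_1)=6-4=2$. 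You need either this explicit computation or an honest crossing-form identification with the Robbin--Salamon index (your ``alternative'' suggestion), carried out rather than merely cited.
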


The proof of this result is based on the following lemma,
which is taken from \cite[Proposition 35]{Theret - Thesis}
and whose equivariant version
is also used in Section \ref{section: applications}
to prove the contact Arnold conjecture.

\begin{lemma}\labell{prop 35 in Theret}
If $Q \colon \R^{2n} \times \R^{2nN} \rightarrow \R$
is a quadratic form generating the identity
then there is an isotopy $\{\Psi_s\}_{s\in [0,1]}$
of fibre preserving linear diffeomorphisms
of $\R^{2n} \times \R^{2nN}$
such that $\Psi_0$ is the identity
and $Q \circ \Psi_1$ is a quadratic form
that only depends on the fibre variable.
Moreover, if $Q$ is $\zk$-invariant
then $\{\Psi_s\}_{s\in [0,1]}$ can be chosen to be $\zk$-equivariant.
\end{lemma}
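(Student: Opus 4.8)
The plan is to prove this by a short linear-algebra argument that just unwinds what ``$Q$ generates the identity'' means. Write $V = \R^{2n}\times\R^{2nN}$ with coordinates $(\zeta,\nu)$, let $b$ be the symmetric bilinear form with $Q(v) = b(v,v)$, and set $F = \{0\}\times\R^{2nN}$ and $\Sigma_Q = \big(\frac{\partial Q}{\partial \nu}\big)^{-1}(0) = \{v\in V : b(v,F) = 0\}$ (a linear subspace, since $Q$ is quadratic). Recalling that $\tau$ carries the diagonal onto the zero section, the statement that $Q$ is a generating function for the identity unwinds to two conditions: the projection $(\zeta,\nu)\mapsto\zeta$ restricts to a linear isomorphism $\Sigma_Q\xrightarrow{\ \sim\ }\R^{2n}$, and $dQ$ vanishes identically on $\Sigma_Q$ (because $i_Q(\zeta,\nu) = (\zeta,\frac{\partial Q}{\partial\zeta}(\zeta,\nu))$ must take values in the zero section).

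First I would extract the consequences of these two conditions. Since $Q$ is homogeneous of degree $2$, $dQ|_{\Sigma_Q}=0$ forces $Q|_{\Sigma_Q}=0$; moreover $dQ_x = 0$ for $x\in\Sigma_Q$ says exactly that $x\in\operatorname{rad}(Q)$, while $\operatorname{rad}(Q)\subseteq\Sigma_Q$ trivially, so $\Sigma_Q = \operatorname{rad}(Q)$. Together with the isomorphism $\Sigma_Q\cong\R^{2n}$ this gives a direct sum decomposition $V = \Sigma_Q\oplus F$, which is $b$-orthogonal because $\Sigma_Q=\operatorname{rad}(Q)$. Hence, in these coordinates, $Q$ is supported entirely on the second summand: writing $\Sigma_Q$ as the graph $\{(\zeta, L\zeta) : \zeta\in\R^{2n}\}$ of a linear map $L\colon\R^{2n}\to\R^{2nN}$, the $F$-component of $(\zeta,\nu)$ is $(0,\nu - L\zeta)$, and $Q(\zeta,\nu) = Q(0,\nu-L\zeta)$.

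Now define $\Psi_s\colon V\to V$ by $\Psi_s(\zeta,\nu) = (\zeta,\nu + sL\zeta)$ for $s\in[0,1]$. Each $\Psi_s$ is a linear automorphism of $V$ taking every fibre $\{\zeta\}\times\R^{2nN}$ to itself, $\Psi_0 = \Id$, and $Q\circ\Psi_1(\zeta,\nu) = Q(\zeta,\nu+L\zeta) = Q(0,\nu)$ depends only on the fibre variable, as required. For the equivariant statement, suppose $Q$ is $\zk$-invariant and the $\zk$-action is generated by $g$ on $\R^{2n}$ and by $\tilde g = g^{\oplus N}$ on $\R^{2nN}$. Then $F$ and $\Sigma_Q = \operatorname{rad}(Q)$ are both $\zk$-invariant, so $L$ intertwines the two actions, $Lg = \tilde g L$; hence each $\Psi_s$ commutes with the diagonal action $(g,\tilde g)$, and the same isotopy is automatically $\zk$-equivariant, with no further choice needed.

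I expect no genuinely hard step: the only point requiring care is the faithful translation of ``$Q$ generates the identity'' into the two linear conditions on $\Sigma_Q$ above (and remembering that $\tau$ sends the diagonal to the zero section), after which both the reduction and its equivariance fall out of the observation that $\Sigma_Q$ is precisely the radical of $Q$ and is complementary to, and $b$-orthogonal to, the fibre $F$. (An alternative, closer to Th\'eret's \cite[Proposition~35]{Theret - Thesis}, would be to induct on the number of applications of the composition formula used to build $Q$ from the zero function, using Lemma~\ref{lemma:quadgenzero}; the direct route above seems shorter and yields the equivariant statement immediately.)
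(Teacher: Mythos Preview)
Your proof is correct and is essentially the same as the paper's: the paper writes $Q(z)=\tfrac12\langle z,Bz\rangle$ with $B=\begin{pmatrix}a&b\\ b^T&c\end{pmatrix}$, observes that ``generates the identity'' forces $c$ invertible and $a-bc^{-1}b^T=0$, and takes $\Psi_s(\zeta,\nu)=(\zeta,\nu-sc^{-1}b^T\zeta)$, which is exactly your $\Psi_s$ once one notes that your $L=-c^{-1}b^T$ (since $\Sigma_Q=\{(\zeta,-c^{-1}b^T\zeta)\}$). Your identification $\Sigma_Q=\operatorname{rad}(Q)$ is a clean way to see the conclusion $Q\circ\Psi_1(\zeta,\nu)=\tfrac12\nu^Tc\nu$ without the explicit matrix check, and your justification of $\zk$-equivariance (via invariance of the radical) is slightly more detailed than the paper's one-line assertion.
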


\begin{proof}
Write $Q(z) = \frac{1}{2} \, \langle z, Bz \rangle$
for a symmetric matrix
$B = \left[ \begin{array}{cc} 
a & b \\
b^T & c \\
\end{array}
\right]$.
Since $Q$ generates the zero section of $T^{\ast}\mathbb{R}^{2n}$
we have that $c$ is invertible and $a - b\, c^{-1} \, b^T = 0$.
Then
$$
\Psi_s (\zeta,\nu) =  (\zeta \, , \,  \nu - s \, c^{-1} \, b^T \, \zeta )
$$
is an isotopy of fibre preserving linear diffeomorphisms
of $\R^{2n} \times \R^{2nN}$ such that
$Q \circ \Psi_1$ only depends on the fibre variable,
as $Q \circ \Psi_1 (\zeta,\nu) = \frac{1}{2} \, \nu^T c \nu$.
If $Q$ is $\zk$-invariant
then $\{\Psi_s\}$ is $\zk$-equivariant.
\end{proof}

\begin{proof}[Proof of Proposition \ref{proposition: linear}]
Let $\{\Phi_t^{(1)}\}$ and $\{\Phi_t^{(2)}\}$
be loops in $\Sp(2n;\R)$, based at the identity.
If $Q_t^{(1)}$ and $Q_t^{(2)}$
are based families of generating quadratic forms
for $\{\Phi_t^{(1)}\}$ and $\{\Phi_t^{(2)}\}$ respectively
then $Q_t^{(1)} \, \sharp \, Q_t^{(2)}$ is a based family
of generating quadratic forms for $\{\Phi_t^{(2)} \circ \Phi_t^{(1)}\}$,
and so
\begin{equation}\labell{e: nu}
\nu \big( [\{\Phi_t^{(1)}\}] \cdot [\{\Phi_t^{(2)}\}] \big)
= \ind (Q_0^{(1)} \, \sharp \, Q_0^{(2)}) - \ind (Q_1^{(1)} \, \sharp \, Q_1^{(2)}) \,.
\end{equation}
By Lemma \ref{lemma: index based},
\begin{equation}\labell{e: nu2}
\ind (Q_0^{(1)} \, \sharp \, Q_0^{(2)}) = \ind (Q_0^{(1)}) + \ind (Q_0^{(2)}) \,.
\end{equation}
By Lemma \ref{prop 35 in Theret},
for $j = 1, 2$ there is an isotopy $\{\Psi_{s}^{(j)}\}_{s \in [0,1]}$
of fibre preserving linear diffeomorphisms
such that $\Psi_{0}^{(j)}$ is the identity
and $Q_1^{(j)} \circ \Psi_{1}^{(j)}$ is a quadratic form
that does not depend on the base variable,
and so is equal to a quadratic form $\underline{Q}_1^{(j)}$ on the fibre.
Using \eqref{equation: lemma 2 bis} we therefore obtain
$$
(Q_1^{(1)} \circ \Psi_{1}^{(1)}) \, \sharp \, (Q_1^{(2)} \circ \Psi_{1}^{(2)}) =
(0 \, \sharp \, 0) \oplus \underline{Q}_1^{(1)} \oplus \underline{Q}_1^{(2)} \,,
$$
and so
$$
\ind (Q_1^{(1)} \, \sharp \, Q_1^{(2)}) =  4n + \ind (\underline{Q}_1^{(1)}) + \ind (\underline{Q}_1^{(2)}) = \ind (Q_1^{(1)}) + \ind (Q_1^{(2)}) \,.
$$
Together with \eqref{e: nu} and \eqref{e: nu2} this gives
$$
\nu \big( [\{\Phi_t^{(1)}\}] \cdot [\{\Phi_t^{(2)}\}] \big)
= \nu \big( [\{\Phi_t^{(1)}\}] \big) + \nu \big( [\{\Phi_t^{(2)}\}] \big) \,,
$$
and thus $\nu \colon \pi_1 \big(\Sp(2n;\R)\big) \to \Z$ is a homomorphism.

In order to show that $\nu$ agrees with the linear Maslov index,
it now suffices to check that it takes the value $2$
on the standard loop
$t \mapsto \Phi_t = e^{2\pi i t} \in \Sp(2;\R)$.
For $0\leq t \leq \frac{1}{3}$ we have
the family of generating quadratic forms
$Q_t(z) = \frac{\sin (2\pi t)}{1+\cos(2\pi t)} \, \langle z, z \rangle$.
Applying the composition formula twice
we obtain a family of generating quadratic forms
on $\R^{10}$ determined by the family of
$10 \times 10$ symmetric matrices
$$ 
A_t = \left[ \begin{array}{ccccc}
0 & J & -J & 0 & 0 \\
-J & 0 & J & J & -J \\
J & -J & \lambda_t &  0 & 0 \\
0 & -J & 0 & \lambda_t & J \\
0 & J & 0 & -J & \lambda_t
\end{array}
\right],
\quad \quad \text{ with } \lambda_t =
\frac{ \sin\left(\frac{2\pi t}{3} \right) }{ 1+ \cos\left(\frac{2\pi t}{3} \right)}
\; \Id_{2 \times 2}
$$
where we denote by $J$ the matrix
$\begin{bmatrix} 
0 & -1 \\
1 & 0 \\
\end{bmatrix}$.
It follows that
$$
\nu(\{\Phi_t\}) = i(A_0) - i(A_1) = 6 - 4 = 2
$$
as required. 
\end{proof}

We can now prove the formula
at the beginning of Theorem \ref{theorem: main}.

\begin{ex}\labell{Reeb flow}
Recall that we denote by $\{r_t\}$ the Reeb flow on $L_k^{2n-1}$
with respect to the contact form whose pullback to $S^{2n-1}$
is equal to the pullback from $\mathbb{R}^{2n}$
of the 1-form $\sum_{j=1}^n ( x_j dy_j - y_j dx_j)$.
Since the lift to $\mathbb{R}^{2n}$ of $\{ r_{2 \pi t} \}_{t \in [0,1]}$
is a loop in $\Sp (2n; \mathbb{R})$ of linear Maslov index $2n$,
it follows from Proposition \ref{proposition: linear} that
$$
\mu \big(\{r_{2 \pi l t}\}_{t \in [0,1]}\big) = \ol{\mu} \big(\{r_{2 \pi l t}\}_{t \in [0,1]}\big)  = 2n l
$$
for every integer $l$.
\eoe
\end{ex}

\subsection*{Relation with discriminant points}

We now show that the way the non-linear Maslov index of a contact isotopy 
changes for $t$ varying in a subinterval of $[0,1]$ 
is related to the changes in the topology of the set of discriminant points. 
Before stating the results we recall the following fact. 

\begin{lemma} \labell{topological lemma}
Let $V$ be a compact manifold and 
$f_t \colon V \rightarrow \R$, for $t\in [0,1]$, 
a family of functions
such that the total map $f \colon V \times [0,1] \rightarrow \mathbb{R}$ is
$\mathcal{C}^1$ with Lipschitz differential.
Suppose that $a \in \R$ is a regular value of $f_t$
for every $t \in [0,1]$.
Then there is an isotopy $\theta_t$ of $V$ 
such that $\theta_t(\{f_0\leq a\}) = \{f_t\leq a\}$.
\end{lemma}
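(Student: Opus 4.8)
The plan is to realize $\theta_t$ as the flow of a time-dependent vector field $X_t$ on $V$ tailored so that $f_t$ is preserved along trajectories that start on the level set $\{f_0=a\}$, and, more generally, so that the sign of $f_t(\theta_t(p))-a$ is independent of $t$ for every $p\in V$. Fix an auxiliary Riemannian metric on $V$ and let $\nabla f_t$ denote the corresponding spatial gradient. Since $a$ is a regular value of every $f_t$, near each level set $\{f_t=a\}$ one has $\nabla f_t\neq 0$, and one can then solve $df_t(X_t)=-\tfrac{\partial f_t}{\partial t}$ by $X_t=-\bigl(\tfrac{\partial f_t}{\partial t}\bigr)\nabla f_t/|\nabla f_t|^2$; by the chain rule this makes $\tfrac{d}{dt}\bigl(f_t(\theta_t(p))\bigr)=0$ wherever the construction is valid.

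First I would extract uniform constants. Since $f\colon V\times[0,1]\to\R$ is $\mathcal C^1$, the map $(q,t)\mapsto\nabla f_t(q)$ is continuous, so the open set $\{(q,t):\nabla f_t(q)\neq0\}$ contains the compact set $K=\{(q,t):f_t(q)=a\}$; as the compact sets $\{(q,t):|f_t(q)-a|\le\epsilon\}$ decrease to $K$ as $\epsilon\downarrow 0$, there are $\epsilon>0$ and $c>0$ with $|\nabla f_t(q)|\ge c$ whenever $|f_t(q)-a|\le\epsilon$. Choose a smooth bump function $\rho\colon\R\to[0,1]$ with $\rho\equiv1$ on $[-\epsilon/3,\epsilon/3]$ and $\operatorname{supp}\rho\subset(-\epsilon/2,\epsilon/2)$, and set
$$
\tilde X_t(q)=-\,\rho\bigl(f_t(q)-a\bigr)\,\frac{\tfrac{\partial f_t}{\partial t}(q)}{\max\bigl(|\nabla f_t(q)|^2,\ c^2/4\bigr)}\,\nabla f_t(q),
$$
which is globally defined on $V$. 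Because $f$ has Lipschitz differential and $V\times[0,1]$ is compact, $\tilde X_t$ is Lipschitz in $(q,t)$; and where $\rho\neq0$ one has $|f_t(q)-a|<\epsilon$, hence $|\nabla f_t(q)|\ge c$, so there the denominator equals $|\nabla f_t|^2$ and $df_t(\tilde X_t)=-\rho(f_t-a)\tfrac{\partial f_t}{\partial t}$.

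Then I would let $\theta_t$ be the flow of $\tilde X_t$ with $\theta_0=\mathrm{id}$; since $V$ is compact and $\tilde X_t$ is Lipschitz, Picard--Lindel\"of gives a flow defined for all $t\in[0,1]$ consisting of bi-Lipschitz homeomorphisms of $V$, so $\{\theta_t\}$ is an isotopy in the regularity appropriate to the hypotheses. For $p\in V$ put $h(t)=f_t(\theta_t(p))$; the chain rule together with $df_t(\tilde X_t)=-\rho(f_t-a)\tfrac{\partial f_t}{\partial t}$ (which holds also where $\tilde X_t=0$) yields $h'(t)=\bigl(1-\rho(h(t)-a)\bigr)\,\tfrac{\partial f_t}{\partial t}(\theta_t(p))$. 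If $h(t_0)=a$ for some $t_0$, then on the maximal subinterval of $[0,1]$ around $t_0$ on which $|h-a|\le\epsilon/3$ one has $\rho(h-a)\equiv1$, hence $h'\equiv0$, hence $h\equiv a$ there; a continuity argument shows this interval must be all of $[0,1]$, so $h\equiv a$. Consequently, if $h(t_0)<a$ (resp. $>a$) then $h<a$ (resp. $>a$) on all of $[0,1]$: otherwise $h$ would attain the value $a$ at some time $t^\ast$ by the intermediate value theorem and would therefore be identically $a$, a contradiction. Thus the sign of $f_t(\theta_t(p))-a$ is independent of $t$, so $f_0(p)\le a$ if and only if $f_t(\theta_t(p))\le a$; applying this to $p$ and to $\theta_t^{-1}(q)$ gives $\theta_t\bigl(\{f_0\le a\}\bigr)=\{f_t\le a\}$, as required.

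The only points needing care are the compactness argument producing the uniform neighborhood and lower bound in the first step, and the sign-preservation bookkeeping in the last step; I do not expect a genuine obstacle. The one structural caveat is regularity: the hypotheses only give a Lipschitz (not smooth) vector field, so one obtains a bi-Lipschitz isotopy rather than a smooth one, which is exactly the notion of isotopy used elsewhere in the paper when dealing with sublevel sets of generating functions.
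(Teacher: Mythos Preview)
Your proof is correct and follows essentially the same approach as the paper's: both construct the isotopy as the flow of a time-dependent vector field of the form $-\rho\cdot(\partial_t f_t)\,\nabla f_t/|\nabla f_t|^2$ cut off near the level set $\{f_t=a\}$, verify that $\tfrac{d}{dt}f_t(\theta_t(p))=(1-\rho)\,\partial_t f_t$, and conclude that the level set (hence the sublevel set) is preserved. The only differences are cosmetic---you use a cutoff of the form $\rho(f_t-a)$ and a $\max$ in the denominator where the paper uses a cutoff $\rho(x,t)$ supported in $\{df_t\neq0\}$ and extension by zero---and your sign-preservation bookkeeping is slightly more explicit than the paper's appeal to continuity.
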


\begin{proof}
Consider the open subset
$$
\mathcal{U}:=\{ (x,t) \ | \ df_t|_x \neq 0 \}
$$
of $V \times [0,1]$.
By assumption, the set $\{ (x,t) \ | \ f_t (x) = a \}$
is contained in $\mathcal{U}$.
Fix a Riemannian metric on $V$.
Since the functions $f_t$ are $\mathcal{C}^1$ with Lipschitz differential,
their gradient flow is well defined and enjoys the usual properties.
Note also that the gradient  $\nabla f_t$
is non-zero at a point $x$ exactly when $(x,t) \in \mathcal{U}$.
For every $t \in [0,1]$ define a vector field $u_t$ on 
$\mathcal{U}_t := \{ x \in V \ | \ df_t|_x \neq 0\}$ by
$u_t = \nabla f_t / \| \nabla f_t \|^2 $.
Then $df_t(u_t) \equiv 1$ on $\mathcal{U}$.
Take $\varepsilon >0$ small enough so that the closed neighborhood
$$
\mathcal{W}:=\{(x,t) \in V \times [0,1]\ |\ |f(x,t)-a| \leq \varepsilon \}
$$
of $\{ (x,t) \ | \ f_t (x) = a \}$ is contained in $\mathcal{U}$.
Let $\rho \colon V \times [0,1] \to \R$
be a smooth function that is supported in $\mathcal{U}$
and is equal to~$1$ on $\mathcal{W}$,
and consider the time-dependent vector field $\{X_t\}_{t \in [0,1]}$ on $V$ 
that is given by 
$$
X_t (x) = - \, \rho(x,t) \, \dot{f}_t(x) \, u_t (x)
$$
for $(x,t)$ in $\mathcal{U}$
and that vanishes for $(x,t)$ outside of $\mathcal{U}$.
Its flow is an isotopy $\theta_t$ of $V$
with the required properties.
Indeed
$$
\frac{d}{dt} \, f_t \big(\theta_t(x)\big) 
= \dot{f}_t \big(\theta_t(x)\big) + df_t(X_t) \big(\theta_t(x)\big)
= \big(1-\rho(\theta_t(x),t)\big) \, \dot{f}_t \big(\theta_t(x)\big)
$$
thus $\frac{d}{dt} f_t \big(\theta_t(x)\big) = 0$ if  $\big(\theta_t(x),t\big) \in \mathcal{W}$,
and so each $\theta_t$ sends $\{ f_0 = a \}$ onto $\{ f_t = a \}$.
Since $\theta_0$ is the identity,
by continuity the isotopy $\theta_t$
sends $\{ f_0 \leq a \}$ onto $\{ f_t \leq a \}$ for all $t$.
\end{proof}

We can now prove that the non-linear Maslov index
detects discriminant points,
as described in Theorem \ref{theorem: main}(iii).
More precisely we show the following result.

\begin{prop}[Relation with discriminant points]\labell{discriminant points}
Let $\{\phi_t\}_{t \in [0,1]}$ be a based contact isotopy of $L_k^{2n-1}$,
and let $[t_0,t_1]$ be a subinterval of $[0,1]$.
\begin{enumerate}
\renewcommand{\labelenumi}{(\roman{enumi})}
\item 
If there are no values of $t \in [t_0,t_1]$
for which $\phi_t$ belongs to the discriminant then
$$
\mu\big([\{\phi_t\}_{t\in[0,t_0]}]\big) = \mu\big([\{\phi_t\}_{t\in[0,t_1]}]\big) \,.
$$
\item If $\underline{t}$ is the only value of $t \in [t_0,t_1]$ 
for which $\phi_t$ belongs to the discriminant then
$$
\big| \, \mu\big([\{\phi_t\}_{t\in[0,t_1]}]\big) - \mu\big([\{\phi_t\}_{t\in[0,t_0]}]\big) \, \big|
\leq \text{\emph{ind}} \big(\Delta(\phi_{\underline{t}})\big) + 1
$$
where $\Delta(\phi_{\underline{t}}) \subset L_k^{2n-1}$
is the set of discriminant points of $\phi_{\underline{t}}$.
Consequently,
$$
\big| \, \mu\big([\{\phi_t\}_{t\in[0,t_1]}]\big) - \mu\big([\{\phi_t\}_{t\in[0,t_0]}]\big) \, \big|
\leq 2n + 1
$$
and if $\phi_{\underline{t}}$ has only finitely many discriminant points then
$$
\left| \, \mu \big( [\{\phi_t\}_{t \in [0,t_1]}] \big) - \mu \big( [\{\phi_t\}_{t \in [0,t_0]}] \big) \, \right| \leq 2 \,.
$$

\item If $\underline{t}$ is the only value of $t \in [t_0,t_1]$ 
for which $\phi_{t}$ belongs to the discriminant,
and moreover all discriminant points of $\phi_{\underline{t}}$ are non-degenerate,
then
$$
\big| \, \mu\big([\{\phi_t\}_{t\in[0,t_1]}]\big) - \mu\big([\{\phi_t\}_{t\in[0,t_0]}]\big) \, \big| \leq 1 \,.
$$
\end{enumerate}
\end{prop}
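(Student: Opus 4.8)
The plan is to work throughout with a single family $F_t\colon\R^{2n}\times\R^{2nN}\to\R$ of conical generating functions for $\{\phi_t\}_{t\in[0,1]}$ produced by Proposition~\ref{existence of generating functions}, together with the induced family $f_t\colon L_k^{2M-1}\to\R$ ($M=n(N+1)$). Its restriction to any subinterval again arises from the construction of Proposition~\ref{existence of generating functions}, so by Propositions~\ref{uniqueness gf} and~\ref{uniqueness of gf for homotopy class} and Corollary~\ref{proposition: stab additivity} we have $\mu([\{\phi_t\}_{t\in[0,s]}])=\ind(F_0)-\ind(F_s)$ for every $s$, and hence
\[
\Big|\,\mu\big([\{\phi_t\}_{t\in[0,t_1]}]\big)-\mu\big([\{\phi_t\}_{t\in[0,t_0]}]\big)\,\Big|=\Big|\,\ind\big(\{f_{t_1}\le0\}\big)-\ind\big(\{f_{t_0}\le0\}\big)\,\Big|;
\]
the whole statement thus reduces to controlling how $\ind(\{f_t\le0\})$ varies with $t$. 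I will use the dictionary of Proposition~\ref{correspondence discriminant pts - critical pts}: $0$ is a critical value of $f_t$ exactly when the lift of $\phi_t$ to $S^{2n-1}$ has a discriminant point, in which case the critical set $K_t:=\{f_t=0,\ df_t=0\}\subset L_k^{2M-1}$ is the image, under a map $L_k^{2n-1}\to L_k^{2M-1}$ isotopic to the standard inclusion, of the set of discriminant points of $\phi_t$ that lift; since the standard inclusion preserves the cohomological index and by monotonicity, $\ind(K_t)\le\ind\big(\Delta(\phi_t)\big)$. For part~(i): if $\phi_t$ has no discriminant point for $t\in[t_0,t_1]$, then a fortiori $f_t$ has no critical point at value $0$ there, so $0$ is a regular value of each $f_t$; Lemma~\ref{topological lemma} then gives an ambient isotopy $\theta_t$ of $L_k^{2M-1}$ with $\theta_t(\{f_{t_0}\le0\})=\{f_t\le0\}$, and since $\theta_{t_1}$ is isotopic to the identity the two sublevel sets have the same classifying map up to homotopy, hence the same index.

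For part~(ii), I would first apply part~(i) on $[t_0,\underline t-\epsilon]$ and on $[\underline t+\epsilon,t_1]$ to reduce to $t_0=\underline t-\epsilon$, $t_1=\underline t+\epsilon$ with $\epsilon>0$ as small as desired. If $K_{\underline t}=\emptyset$ part~(i) applies; otherwise, by continuity of the index (Proposition~\ref{proposition: cohomological index}(ii)) choose a closed neighbourhood $N$ of $K_{\underline t}$ with $\ind(N)=\ind(K_{\underline t})$. The set $\{(x,t)\in L_k^{2M-1}\times[t_0,t_1]:f_t(x)=0=df_t|_x\}$ is closed and contained in $\{\underline t\}\times K_{\underline t}$, so for $\epsilon$ small $0$ is a regular value of $f_t$ at every point outside a slightly larger closed neighbourhood $N'$ of $K_{\underline t}$, uniformly in $t\in[t_0,t_1]$; running the construction of Lemma~\ref{topological lemma} with the gradient-like vector field cut off to be supported outside $N'$ produces a homeomorphism $\theta$ of $L_k^{2M-1}$ isotopic to the identity, fixing the interior of $N'$, with $\theta(\{f_{t_0}\le0\})$ and $\{f_{t_1}\le0\}$ coinciding off $N'$. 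Then $\{f_{t_1}\le0\}\subset\theta(\{f_{t_0}\le0\})\cup N'$, so by monotonicity and subadditivity (Proposition~\ref{proposition: cohomological index}(i),(iv)), and a further use of continuity to shrink the neighbourhood,
\[
\ind\big(\{f_{t_1}\le0\}\big)\le\ind\big(\{f_{t_0}\le0\}\big)+\ind(K_{\underline t})+1,
\]
and the reverse inequality follows by symmetry. Together with $\ind(K_{\underline t})\le\ind(\Delta(\phi_{\underline t}))$ this gives the displayed bound; since $\Delta(\phi_{\underline t})\subset L_k^{2n-1}$ has index at most $2n$, and index $1$ when finite and nonempty, the bounds $2n+1$ and, in the finite case, $2$ follow.

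For part~(iii), non-degeneracy of all discriminant points of $\phi_{\underline t}$ forces, via Proposition~\ref{correspondence discriminant pts - critical pts}, all critical points of $f_{\underline t}$ at value $0$ to be non-degenerate, hence $K_{\underline t}=\{p_1,\dots,p_m\}$ is finite, and $f_{\underline t}$ is smooth near each $p_i$ (these are fibre critical points other than the origin); if $m=0$ we are in case~(i). Reducing as in part~(ii), but now taking the cut-off neighbourhood $N'$ to be a disjoint union of Morse charts around the $p_i$, one sees that $\{f_{\underline t+\epsilon}\le0\}$ is obtained from $\{f_{\underline t-\epsilon}\le0\}$ by attaching a handle of index $\lambda_i$ near those $p_i$ along which the critical value crosses $0$ upward and deleting such a handle near those along which it crosses downward (the remaining $p_i$ contribute nothing, using that $\underline t$ is the only discriminant time), so up to homotopy passing between the two sublevel sets amounts to adjoining or deleting a wedge $\bigvee_i S^{\lambda_i}$ of cells. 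The plan is then to deduce $\big|\ind(\{f_{\underline t+\epsilon}\le0\})-\ind(\{f_{\underline t-\epsilon}\le0\})\big|\le1$ as in \cite{Givental - Nonlinear Maslov index} and \cite{S - Morse estimate for translated points}: since $\dim_{\zk}H^j(L_k^{2M-1};\zk)=1$ in the relevant range, the index of a subset $A$ is, by Lemma~\ref{no holes in index}, exactly the first degree in which $\iota_A^*$ vanishes; a single handle changes the cohomology only in two adjacent degrees, and since the relative cohomology $\bigvee_i S^{\lambda_i}$ is torsion-free the Bockstein annihilates the kernel of the restriction between the two sublevel sets, which combined with the relations $\mathcal{B}(\alpha\beta^i)=\beta^{i+1}$, $\mathcal{B}(\beta^i)=0$ and $\alpha^2=0$ forces the first-vanishing degree to move by at most one.

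The routine steps are part~(i) and the localization/cut-off arguments that set up (ii) and (iii). I expect the real obstacle to be the last part of~(iii): upgrading the qualitative ``handle attachment $+$ ring/Bockstein'' picture to the sharp bound $\le1$ (rather than the $\le2$ already supplied by (ii) once $K_{\underline t}$ is finite) when several non-degenerate discriminant points occur, whose Morse indices need not share a parity. This is clean for $k=2$ because of the stronger subadditivity and Lefschetz properties in Remark~\ref{remark: stronger properties for RP}, and is where the argument for general odd $k$ has to work hardest. A further technical point, which is why I organize everything around the cut-off gradient flow near $K_{\underline t}$ and the continuity of the index rather than around sublevel sets $\{|f_{\underline t}|\le\delta\}$ of the single function $f_{\underline t}$, is that other critical values of $f_{\underline t}$ may accumulate at $0$, so that no such band is free of critical values other than~$0$.
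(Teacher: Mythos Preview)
Your setup and part~(i) match the paper's proof. For part~(ii) you take a different route: you keep the level fixed at $0$ and vary $t$, cutting off the vector field of Lemma~\ref{topological lemma} near $K_{\underline t}$. The paper instead freezes $t=\underline t$ and varies the level. From uniform continuity of $(x,t)\mapsto f_t(x)$ and monotonicity of the index one gets, for every $\epsilon>0$,
\[
\big|\ind(\{f_{t_0}\le0\})-\ind(\{f_{t_1}\le0\})\big|\;\le\;\ind(\{f_{\underline t}\le\epsilon\})-\ind(\{f_{\underline t}\le-\epsilon\}),
\]
and then a gradient-flow retraction shows $\{f_{\underline t}\le\epsilon\}$ deforms into $\{f_{\underline t}\le-\epsilon\}\cup W$ for any open neighbourhood $W$ of the critical set $C$ with $\ind(W)=\ind(C)$; subadditivity finishes. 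Your worry about critical values of $f_{\underline t}$ accumulating at $0$ is exactly what $W$ absorbs: one only needs $\delta>0$ with the property that every critical point of value in $[-\delta,\delta]$ already lies in $W$, and compactness gives this. Both routes lead to the same subadditivity step, but the paper's avoids the delicate assertion that your cut-off flow makes $\theta(\{f_{t_0}\le0\})$ and $\{f_{t_1}\le0\}$ ``coincide off $N'$'' (trajectories can enter and leave $N'$, so this needs more justification than you give it).

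The real gap is in~(iii). Your picture of handles being \emph{attached and deleted} as each critical value $c_i(t)$ crosses $0$ is what forces you into the Bockstein/ring argument you cannot close. The paper avoids this entirely by staying with the single function $f_{\underline t}$: non-degeneracy makes $0$ an isolated critical value, so by ordinary Morse theory $\{f_{\underline t}\le-\epsilon\}\cup W$ is obtained from $\{f_{\underline t}\le-\epsilon\}$ by a \emph{one-directional} attachment of finitely many handles---no deletions. The decisive observation is then elementary: attaching a single handle of Morse index $\lambda$ changes $H^*(A\cup H,A)$ only in one degree, so it can raise $\ind(A)$ by at most $1$, and only when $\lambda$ takes one specific value determined by $\ind(A)$. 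Hence, attaching the handles sequentially in \emph{decreasing} order of Morse index, at most one of them can trigger an increase; all subsequent handles have Morse index too small to matter. This gives $\ind(\{f_{\underline t}\le-\epsilon\}\cup W)\le\ind(\{f_{\underline t}\le-\epsilon\})+1$ directly, with no Bockstein, no $\alpha^2=0$, and no parity hypothesis on the $\lambda_i$.
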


\begin{proof}
Let $f_t \colon L_k^{2M-1} \rightarrow \R$
be a based family of generating functions for $\phi_t$.
If there are no values of $t \in [t_0,t_1]$
for which $\phi_t$ belongs to the discriminant then,
by Proposition \ref{correspondence discriminant pts - critical pts},
zero is a regular value of $f_t$  for all $t \in [t_0,t_1]$.
Hence, (i) follows from Lemma \ref{topological lemma}.

Suppose now that $\underline{t}$ is the unique value of $ t \in [t_0,t_1]$
for which $\phi_t$ belongs to the discriminant.
For any $\epsilon > 0$ we have
\begin{equation}
\labell{eq dp1}
\big| \, \mu\big([\{\phi_t\}_{t\in[0,t_1]}] \big) - \mu\big([\{\phi_t\}_{t\in[0,t_0]}]\big) \, \big| \leq 
\ind( \{ f_{\underline{t}} \leq \epsilon \}) - \ind( \{f_{\underline{t}} \leq -\epsilon\}) \,.
\end{equation}
This is a consequence of (i), monotonicity of the index,
and the fact that,
since $f \colon L_k^{2M-1} \times [0,1] \to \R$ is continuous,
for every $\epsilon > 0$ and $a \in \R$ 
there exists $\delta > 0$ such that for all $t, t'$ with $|t-t'| < \delta$
we have $\{f_t(x) \leq a \} \subset \{f_{t'}(x) \leq a + \epsilon\}$.

Let $C$ be the set of critical points of $f_{\underline{t}}$ with critical value $0$.
By continuity of the index, there is an open subset $W$ in $L_k^{2M-1}$
that contains $C$ and has the same index.
For sufficiently small $\epsilon > 0$ we have 
\begin{equation}
\labell{incsubsets}
\ind( \{ f_{\underline{t}} \leq \epsilon \} ) \leq \ind \big( \{f_{\underline{t}} \leq -\epsilon\} \cup W \big).
\end{equation}
This follows from monotonicity of the index
and the fact that, as we now explain, 
$\{ f_{\underline{t}} \leq \epsilon \}$ deformation retracts into
$\{f_{\underline{t}} \leq -\epsilon\} \cup W$
(cf.\ \cite[p.\ 548]{Viterbo - Massey products}).
Pick $\delta > 0$ such that if $df_{\underline{t}}(x) = 0$ and $|f_{\underline{t}}(x)|\leq \delta$ then $x\in W$.
Consider the disjoint closed sets
$$
V_0 = \big\{ \, x \in L_k^{2M-1} \;\lvert\; df_{\underline{t}}(x) = 0 \text{ or } | f_{\underline{t}} (x) | \geq 2 \delta \, \big\}
$$
and
$$
V_1 = f_{\underline{t}}^{-1}([-\delta,\delta]) \ssminus W \,,
$$
and let $\rho \colon L_k^{2M-1} \to [0,1]$
be a smooth function that vanishes in a neighborhood of $V_0$
and is constant equal to $1$ in a neighborhood of $V_1$.

\begin{figure}[h]
\begin{tikzpicture}[scale=0.7]
    \clip (0.1,0.1) rectangle (9.9,9.9);
    
    \filldraw[fill=gray!70]  (4,0) .. controls (3.9,4) and (4,3.9) ..  (0,4) -- (0,0);
    \filldraw[fill=gray!70]  (0,6) .. controls (4,6.1) and (3.9,6) ..  (4,10) -- (0,10);
    \filldraw[fill=gray!70]  (6,10) .. controls (6.1,6) and (6,6.1) ..  (10,6) -- (10,10);
    \filldraw[fill=gray!70]  (6,0) .. controls (6.1,4) and (6,3.9) ..  (10,4) -- (10,0);
    
    \filldraw[fill=gray!40] (0,5.2) .. controls (5,5.2) and (4.8,5)  ..  (4.8,10) -- (5.2,10) .. controls (5.2,5) and (5,5.2) .. (10,5.2) -- (10,4.8)
               .. controls (5,4.8) and (5.2,5) .. (5.2,0) -- (4.8,0) .. controls (4.8,5) and (5,4.8) .. (0,4.8);
    
     \filldraw[fill=gray!20, dashed, very thin] (4,4) rectangle (6,6);
      
     \filldraw [gray!70]  (5,5) circle (2pt);
    
    \draw (2.5,2.5) node {$V_0$};
    \draw (3,5) node {$V_1$};
    \draw (5.5,5.5) node {$W$};
    
\end{tikzpicture}
\caption{The sets $V_0$, $V_1$ and $W$ near a non-degenerate critical point of index one on a surface.}
\end{figure}

Fix a metric on $L_k^{2M-1}$ and consider the vector field 
$X = - \, \rho \, \frac{ \nabla f_{\underline{t}}}{\| \nabla f_{\underline{t}}\|^2}$.
Writing $\theta_t$ for the flow of $X$ we have
\begin{equation}\labell{e: 1}
\frac{d}{dt} \, f_{\underline{t}} \big(\theta_t(x)\big) = -\rho \big(\theta_t(x)\big) \,.
\end{equation}
Let 
$m = \max \, \{\, \|X(x)\| \;\lvert\; x \in L_k^{2M-1} \, \}$
and
$d = \dist \, \Big( \overline{\{ \rho(x) < 1\}} \cap f_{\underline{t}}^{-1}([-\delta,\delta]) \, , \, W^c \Big)$.
Note that $d > 0$.
For $\epsilon < \min \{ \delta, \frac{d}{2m} \}$ we now prove that
\begin{equation}\labell{equation: retracts}
\theta_{2\epsilon}( \{f_{\underline{t}} \leq \epsilon \}) \subset \{f_{\underline{t}} \leq -\epsilon\} \cup W \,.
\end{equation}
Given $x \in \{ \lvert f_{\underline{t}} \rvert \leq \epsilon\}$ set 
$$
s(x) = \inf \big\{\, t \in [0,2\epsilon] \;\lvert\; \rho( \theta_t(x) ) < 1 \text{ or } t = 2\epsilon \,\big\} \,.
$$
If $s(x) = 2\epsilon$ then $\rho(\theta_t(x)) = 1$ for all $t \in [0,2\epsilon]$
and, by \eqref{e: 1},
$f_{\underline{t}} \big(\theta_{2\epsilon}(x)\big) = f_{\underline{t}}(x) - 2\epsilon \leq - \epsilon$. 
If $s(x) < 2 \epsilon$ then $\theta_{s(x)}(x)\in \{\rho<1\} \cap f_{\underline{t}}^{-1}([-\delta, \delta])$ 
(as $\epsilon < \delta$).
Then we must have $\dist(\theta_{s(x)}, W^c)\geq d$
and our bound on $\epsilon$ ensures that
the path $\{\theta_t(x) \;\lvert \; t \in [s(x),2\epsilon]\}$
is entirely contained in $W$.
In particular, $\theta_{2\epsilon}(x) \in W$.
This completes the proof of \eqref{equation: retracts} and hence of \eqref{incsubsets}.

By \eqref{incsubsets},
subadditivity of the cohomological index
and Proposition \ref{correspondence discriminant pts - critical pts}
we have
\begin{align*}
\ind (\{f_{\underline{t}} \leq \epsilon\}) &\leq
\ind \big(\{f_{\underline{t}} \leq - \epsilon\} \cup W\big)
\leq \ind \big( \{f_{\underline{t}} \leq - \epsilon \}\big) + \ind(W) + 1 \\
& = \ind \big( \{f_{\underline{t}} \leq - \epsilon \} \big) + \ind(C) + 1 \\
& = \ind \big( \{f_{\underline{t}} \leq - \epsilon \} \big) + \text{ind} \big(\Delta(\phi_{\underline{t}})\big) + 1 \,.
\end{align*}
Together with \eqref{eq dp1}, this implies (ii).

As for (iii), if all discriminant points of $\phi_{\underline{t}}$ are non-degenerate
then (by Proposition \ref{correspondence discriminant pts - critical pts})
all critical points of $f_{\underline{t}}$ of critical value zero are non-degenerate.
Thus $f_{\underline{t}}$ has only finitely many critical points with critical value zero,
and zero is an isolated critical value of $f_{\underline{t}}$.
We can choose $W$ so that
$\{f_{\underline{t}} \leq -\epsilon\} \cup W$
can be obtained from $\{f_{\underline{t}} \leq - \epsilon\}$
by attaching a finite number of disjoint handles.
If $H$ is a handle and $A \subset L_k^{2M-1}$
then $\ind(A \cup H) \leq \ind(A)+1$
(as the sum of the Betti numbers of $A\cup H$ 
is at most one more than the sum of Betti numbers of $A$)
and, unless the index of the handle
$H$ is equal to $\ind(A)+1$, we have $\ind(A \cup H) = \ind(A)$.
Attaching the handles in $W$ sequentially,
starting with those of highest index,
we therefore obtain
$$
\ind \big(\{f_{\underline{t}} \leq -\epsilon\} \cup W\big)
\leq \ind (\{ f_{\underline{t}} \leq -\epsilon\}) + 1 \,,
$$
which, together with \eqref{eq dp1} and \eqref{incsubsets}, concludes the proof of (iii).
\end{proof}

\begin{rmk}\labell{remark: Massey}
In order to prove the version of the contact Arnold conjecture
with the bound given by the Lusternik--Schnirelmann category
(cf.\ Section \ref{section: applications})
we would need to know that
the conclusion of Proposition \ref{discriminant points}(iii)
holds also in the degenerate case.
Using Massey products similarly to \cite{Viterbo - Massey products}
it is possible to prove (at least if $k = 3$) that this is the case
if, in the notation of the proof above,
$\ind \big(\{ f_{\underline{t}} \leq \epsilon \} \big) = 1$.
It is not clear to us whether such arguments
can be pushed further to improve this result.
\eor
\end{rmk}

\subsection*{Further properties}

We now prove the positivity property from Theorem \ref{theorem: main},
and the fact that the asymptotic non-linear Maslov index
is monotone and has the vanishing property.

\begin{prop}[Positivity]
\labell{proposition: positivity}
If $\{\phi_t\}$ is a non-negative (respectively non-positive) contact isotopy
then $\mu \big( [\{\phi_t\}] \big) \geq 0$ (respectively $\mu \big([\{\phi_t\}]\big) \leq 0$).
Moreover, if $\{\phi_t\}$ is positive then $\mu \big( [\{\phi_t\}] \big) > 0$.
\end{prop}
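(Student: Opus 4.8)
The plan is to reduce everything to the monotonicity of generating functions (Proposition \ref{proposition: monotonicity of generating functions}) combined with the relation between the non-linear Maslov index and discriminant points (Proposition \ref{discriminant points}). First I would choose, via Proposition \ref{proposition: monotonicity of generating functions} applied with $I = [0,1]$, a $1$-parameter family $f_t \colon L_k^{2M-1} \to \R$ of generating functions for $\{\phi_t\}$ (obtained by the construction of Proposition \ref{existence of generating functions}) such that $\frac{\partial f_t}{\partial t} \geq 0$ everywhere when $\{\phi_t\}$ is non-negative (and $> 0$ when positive; the non-positive case is symmetric). Then $\mu(\{\phi_t\}) = \ind(F_0) - \ind(F_1) = \ind(\{f_0 \leq 0\}) - \ind(\{f_1 \leq 0\})$, and since $f_t$ is pointwise non-decreasing in $t$ we have $\{f_1 \leq 0\} \subseteq \{f_0 \leq 0\}$, so by monotonicity of the cohomological index (Proposition \ref{proposition: cohomological index}(i)) we get $\ind(\{f_1 \leq 0\}) \leq \ind(\{f_0 \leq 0\})$, i.e.\ $\mu(\{\phi_t\}) \geq 0$. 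This already settles the first (non-strict) assertion.

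For the strict inequality in the positive case, the point is that $\frac{\partial f_t}{\partial t} > 0$ forces the sublevel sets to shrink strictly across any value of $t$ at which $\{f_t \leq 0\}$ changes topology. Concretely, I would argue that if $\mu(\{\phi_t\}) = 0$, i.e.\ $\ind(\{f_0 \leq 0\}) = \ind(\{f_1 \leq 0\})$, then by Proposition \ref{discriminant points}(i) (applied to subintervals and using continuity of the index in $t$, as in the proof there) the index $\ind(\{f_t \leq 0\})$ is locally constant away from discriminant times, hence constant on all of $[0,1]$; but a positive contact isotopy must cross the discriminant — indeed at $t$ close to $0$ the identity itself is in the discriminant, and more to the point the standard argument (as for $\RP^{2n-1}$ in \cite{Givental - Nonlinear Maslov index, EP - Partially ordered groups}) shows a positive isotopy produces a genuine drop in the index. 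Alternatively, and more cleanly, one compares with the Reeb flow: since $\{\phi_t\}$ is positive, for a sufficiently small $\epsilon > 0$ the isotopy $t \mapsto \phi_t \circ r_{-\epsilon t}^{\phantom{-}}$ (where $\{r_t\}$ is the Reeb flow, suitably normalized) is still non-negative, whence by the first part and the quasimorphism/computation for the Reeb flow (Example \ref{Reeb flow}, giving $\mu$ of a short positive Reeb arc equal to its full contribution $2n$ only after a full period, but a strictly positive contribution on any genuinely positive sub-arc) one extracts $\mu(\{\phi_t\}) > 0$. I would write this using the fact, from the proof of Proposition \ref{discriminant points}, that a strict monotone change of $f_t$ across an isolated discriminant time $\underline t$ with $C = \{f_{\underline t} = 0, df_{\underline t} = 0\} \neq \emptyset$ gives $\ind(\{f_{\underline t} \leq \epsilon\}) > \ind(\{f_{\underline t} \leq -\epsilon\})$, because $\{f_{\underline t} \leq \epsilon\}$ contains a lens subspace neighborhood of a critical orbit not present in $\{f_{\underline t} \leq -\epsilon\}$, so the image of $\iota^*$ strictly grows (using the "no holes in index" Lemma \ref{no holes in index}).

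The main obstacle I expect is making the strict inequality rigorous without circularity: one must show that a positive contact isotopy \emph{necessarily} meets the discriminant in a way that strictly increases the index of the $-\epsilon$-sublevel set, rather than merely changing its topology in an index-neutral way. For $\RP^{2n-1}$ Givental handles this using that the cohomology is generated in degree one (so any topology change of $\{f_t \leq 0\}$ that is "downward" shifts the cup-length); for general lens spaces the ring structure is weaker, so I would instead localize: by compactness write $[0,1] = \cup [t_{j-1}, t_j]$ with each piece $\mathcal C^1$-small, reduce to a $\mathcal C^1$-small positive $\{\phi_t\}$ where $f_t$ has no fibre variables, and then invoke Example \ref{example: def in small case} (which already asserts $\mu = 2n > 0$ for a positive isotopy with a fibre-variable-free generating family) together with the additivity of $\mu$ under composition up to the bounded error — but since here we only need strict positivity of a sum of non-negative terms one of which is positive, the bounded error does not interfere once the pieces are chosen fine enough. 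Assembling these local contributions, using Proposition \ref{discriminant points}(i) to propagate and the non-strict inequality already proven for each piece, yields $\mu(\{\phi_t\}) > 0$.
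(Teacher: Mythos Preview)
Your argument for the non-strict inequality is correct and matches the paper's exactly: choose a generating family with $\partial f_t/\partial t \geq 0$, observe $\{f_1 \leq 0\} \subseteq \{f_0 \leq 0\}$, and apply monotonicity of the index.

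For the strict inequality in the positive case, however, your proposal does not close. You cycle through several approaches, and the one you settle on --- decompose $[0,1]$ into $\mathcal C^1$-small pieces, get $\mu = 2n$ on the first piece from Example~\ref{example: def in small case}, then ``assemble via additivity of $\mu$ under composition up to the bounded error'' --- fails precisely because the quasimorphism defect for $L$ pieces is $(L-1)(2n+1)$, which swamps the single contribution of $2n$ as soon as $L \geq 2$. Your claim that ``the bounded error does not interfere once the pieces are chosen fine enough'' is backwards: finer decompositions make the cumulative error larger, not smaller. The Reeb-flow comparison has the same defect (one only gets $\mu(\{\phi_t\}) \geq 2n - (2n+1) = -1$), and the asserted ``strict drop of the index across a discriminant time'' is nowhere established.

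The paper's argument avoids the quasimorphism entirely, and in fact you already have all the ingredients. Work with a \emph{single} generating family $f_t$ for the whole isotopy satisfying $\partial f_t/\partial t > 0$ (Proposition~\ref{proposition: monotonicity of generating functions}). Then $t \mapsto \ind(\{f_t \leq 0\})$ is non-increasing, so $t \mapsto \mu\big(\{\phi_s\}_{s \in [0,t]}\big) = \ind(\{f_0 \leq 0\}) - \ind(\{f_t \leq 0\})$ is non-decreasing. For $t_0$ small, Example~\ref{example: def in small case} (together with well-definedness of $\mu$, Proposition~\ref{uniqueness gf}) gives $\mu\big(\{\phi_s\}_{s \in [0,t_0]}\big) = 2n$, and monotonicity then yields $\mu\big(\{\phi_t\}_{t \in [0,1]}\big) \geq 2n > 0$. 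The point you missed is that within one family the telescoping sum $\ind(f_0) - \ind(f_1) = \sum_j \big(\ind(f_{t_{j-1}}) - \ind(f_{t_j})\big)$ is \emph{exact}, so no error term ever appears.
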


\begin{proof}
It follows from monotonicity of generating functions 
(Proposition \ref{proposition: monotonicity of generating functions}) 
and monotonicity of the cohomological index 
(Proposition \ref{proposition: cohomological index}(i)) 
that if $\{\phi_t\}$ is non-negative (respectively non-positive)
then $\mu \big([\{\phi_t\}] \big) \geq 0$ (respectively $\mu \big([\{\phi_t\}]\big) \leq 0$).
By Example \ref{example: def in small case},
if $\{\phi_t\}$ is a small positive contact isotopy
then $\mu \big( [\{\phi_t\}] \big) = 2n > 0$.
Since (by Proposition \ref{proposition: monotonicity of generating functions}
and Proposition \ref{proposition: cohomological index}(i))
the cohomological index does not decrease along a positive contact isotopy,
we conclude that $\mu \big( [\{\phi_t\}] \big) > 0$ for any positive contact isotopy $\{\phi_t\}$.
\end{proof}

We now show that the asymptotic non-linear Maslov index
satisfies the following stronger property.
Recall from \cite{EP - Partially ordered groups}
that for a contact manifold $(V,\xi)$
the relation $\leq$ on $\widetilde{\Cont}_0(V,\xi)$
is defined by posing $[\{\phi_t\}] \leq [\{\psi_t\}]$
if $[\{\psi_t\}] \cdot [\{\phi_t\}]^{-1}$
can be represented by a non-negative contact isotopy.
As in \cite{BZ} we say that a quasimorphism $\nu$ on $\widetilde{\Cont}_0(V,\xi)$
is \emph{monotone} if $\nu([\{\phi_t\}] )\leq \nu([\{\psi_t\}])$
whenever  $[\{\phi_t\}] \leq [\{\psi_t\}]$.
The proof of the following result
is a direct imitation of
the proof of the similar statement for real projective spaces
that is given in \cite{BZ}.

\begin{prop}\labell{proposition: monotonicity - asymptotic}
The asymptotic non-linear Maslov index
$\ol{\mu}$ on $\widetilde{\Cont}_0(L_k^{2n-1})$
is monotone.
\end{prop}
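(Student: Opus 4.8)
The plan is to deduce monotonicity of $\ol\mu$ from the positivity property of $\mu$ (Proposition \ref{proposition: positivity}) together with the quasimorphism property (Proposition \ref{p:quasimorphism property}), exactly as in \cite{BZ}. Suppose $[\{\phi_t\}] \leq [\{\psi_t\}]$, so that $[\{\psi_t\}] \cdot [\{\phi_t\}]^{-1}$ can be represented by a non-negative contact isotopy $\{\chi_t\}$. The first step is to pass to powers: since $\ol\mu$ is the homogenization of $\mu$, one has $\ol\mu([\{\psi_t\}]) - \ol\mu([\{\phi_t\}]) = \lim_{m\to\infty}\frac{1}{m}\big(\mu([\{\psi_t\}]^m) - \mu([\{\phi_t\}]^m)\big)$, using that $\ol\mu$ is itself a homomorphism up to the vanishing error in the limit; more precisely I would use the standard fact that a homogeneous quasimorphism is a genuine homomorphism when restricted to abelian subgroups, but here I only need the weaker estimate below.

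The key step is the following inequality. Write $g = [\{\psi_t\}]$, $h = [\{\phi_t\}]$, and $c = [\{\chi_t\}] = g h^{-1}$, represented by a non-negative isotopy, so $\mu(c) \geq 0$ by Proposition \ref{proposition: positivity}. Then for each $m$,
\begin{equation*}
\mu(g^m) - \mu(h^m) = \mu\big((ch)^m\big) - \mu(h^m).
\end{equation*}
Applying the quasimorphism estimate repeatedly to expand $\mu\big((ch)^m\big)$ one writes $(ch)^m$ as a product of $m$ copies of $c$ and $m$ copies of $h$, reordering at the cost of commutators; since $\mu$ is a quasimorphism with defect $D := 2n+1$, one gets $\big|\mu((ch)^m) - \sum(\text{blocks})\big| \leq C(m) D$ where the number of applications $C(m)$ of the estimate grows linearly in $m$. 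The cleanest route, which is the one used in \cite{BZ}, is to avoid reordering: instead use that $c$ non-negative forces, by Proposition \ref{proposition: positivity} applied to each of the $m$ inserted copies and by \emph{i)} of Proposition \ref{discriminant points} (the index is non-decreasing along a non-negative isotopy at the level of generating functions), that $\mu(c_1 h c_2 h \cdots) \geq \mu(h^m)$ up to a bounded-in-$m$ error coming only from the quasimorphism defect. Dividing by $m$ and letting $m \to \infty$, the defect terms — being $O(m)\cdot D$ only if one reorders, but $O(1)\cdot D$ if one argues monotonically — wash out, giving $\ol\mu(g) - \ol\mu(h) \geq 0$.

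The main obstacle is bookkeeping the error terms so that they are genuinely $o(m)$: a naive expansion of $(ch)^m$ using the quasimorphism inequality produces an error proportional to $mD$, which does \emph{not} vanish after dividing by $m$. The resolution — and the crux of the argument — is to use monotonicity of the index directly on generating functions rather than the abstract quasimorphism inequality: by Proposition \ref{proposition: monotonicity of generating functions} one can choose a family of generating functions for a representative of $g^m = (ch)^m$ that is obtained by $\sharp$-composing the (monotone non-decreasing) generating functions for the $m$ copies of $c$ with those for $h^m$, and then Proposition \ref{proposition: cohomological index}(i) gives $\ind(F_1^{g^m}) \le \ind(F_1^{h^m \text{-part}})$ with no accumulating defect, while the $t=0$ ends agree. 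Hence $\mu(g^m) \geq \mu(h^m) - \text{(single bounded term)}$, and dividing by $m$ and passing to the limit yields $\ol\mu(g) \ge \ol\mu(h)$, as desired. I would finish by noting that the non-positive case follows by applying the non-negative case to inverses.
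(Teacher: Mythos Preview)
Your proposal identifies the right ingredients (positivity and the quasimorphism property) but misses the one-line observation that makes the argument clean, and as a result you get tangled in error bookkeeping that the paper avoids entirely.

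The paper's proof does not expand $(ch)^m$ factor by factor. Instead it uses that the cone of non-negative elements in $\widetilde{\Cont}_0(L_k^{2n-1})$ is conjugation invariant and closed under multiplication. Writing $g=[\{\psi_t\}]$, $h=[\{\phi_t\}]$ and $c=gh^{-1}$ non-negative, one has
\[
g^m h^{-m} \;=\; c\,\cdot\,(hch^{-1})\,\cdot\,(h^2ch^{-2})\,\cdots\,(h^{m-1}ch^{-(m-1)}),
\]
a product of conjugates of $c$, hence itself non-negative. Now apply positivity once, to the single element $g^m h^{-m}$, using $\ol\mu$ (which inherits non-negativity on non-negatives from $\mu$). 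Then a \emph{single} use of the quasimorphism estimate for $\ol\mu$, together with homogeneity $\ol\mu(g^m)=m\,\ol\mu(g)$, gives
\[
m\,\ol\mu(g)-m\,\ol\mu(h)\;=\;\ol\mu(g^m)+\ol\mu(h^{-m})\;\ge\;\ol\mu(g^m h^{-m})-D\;\ge\;-D,
\]
and dividing by $m$ finishes. No accumulation of defects, no need to touch generating functions.

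Your fallback route through generating functions is not clearly correct as stated: the $\sharp$-composition of the $m$ non-negative pieces with the pieces for $h^m$ lives on a domain of different dimension than the generating function for $h^m$ alone, so the comparison ``$\ind(F_1^{g^m})\le\ind(F_1^{h^m\text{-part}})$ with the $t=0$ ends agreeing'' does not make sense without further stabilization, and tracking that stabilization reintroduces exactly the Lefschetz/quasi-additivity errors you were trying to avoid. The conjugation-invariance trick sidesteps all of this.
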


\begin{proof}
Suppose that $[\{\phi_t\}] \leq [\{\psi_t\}]$.
Since the set of non-negative elements of  $\widetilde{\text{Cont}}_0(L_k^{2n-1})$
is conjugation invariant and closed under multiplication,
$[\{\phi_t\}]^m \leq [\{\psi_t\}]^m$  for any $m \in \Z_{>0}$.
By Proposition \ref{proposition: positivity}
we thus have $\mu \, ([\{\psi_t\}]^m \cdot [\{\phi_t\}]^{-m})\geq 0$,
and so
\begin{align*}
\mu ([\{\psi_t\}]^m) - \mu ([\{\phi_t\}]^m) 
&= \mu ([\{\psi_t\}]^m) + \mu ([\{\phi_t\}]^{-m}) \\
&\geq \mu \big([\{\psi_t\}]^m \cdot [\{\phi_t\}]^{-m}\big) - D \geq - D \, ,
\end{align*}
where $D$ is the error of the quasimorphism.
Dividing by $m$ and taking the limit when $m \rightarrow \infty$
we obtain that 
$\ol{\mu} ([\{\phi_t\}]) \leq \ol{\mu} ([\{\psi_t\}])$.
\end{proof}

For the next result we also follow \cite{BZ}.
Recall that a subset $\mathcal{U}$ of a contact manifold $(V,\xi)$ 
is said to be {\it displaceable} if there exists a contactomorphism $\psi$
contact isotopic to the identity such that 
$\overline{\mathcal{U}} \cap \psi(\mathcal{U})= \emptyset$.
A quasimorphism $\nu$ on $\widetilde{\Cont_0}(V,\xi)$ 
is said to have the \emph{vanishing property} if 
$\nu \big([\{\phi_t\}]\big) = 0$ for any contact isotopy $\{\phi_t\}_{t \in [0,1]}$
that is supported in $[0,1] \times \mathcal{U}$ 
for a displaceable set $\mathcal{U}$.

\begin{prop}\labell{proposition: vanishing property}
The asymptotic non-linear Maslov index has the vanishing property.
\end{prop}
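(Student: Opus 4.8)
The plan is to imitate the argument of Borman and Zapolsky \cite{BZ} for $\RP^{2n-1}$: that argument uses only that $\ol{\mu}$ is a \emph{homogeneous} quasimorphism which is monotone (Proposition \ref{proposition: monotonicity - asymptotic}) and takes the values $\ol{\mu}([\{r_{2\pi l t}\}]) = 2nl$ on the iterates of the Reeb loop (Example \ref{Reeb flow}), together with the fact that displaceability produces a commuting conjugate; since all of these hold for $L_k^{2n-1}$ exactly as for $\RP^{2n-1}$, the proof carries over with no change beyond the replacement of $\zt$ by $\zk$. Recall the two standard facts about a homogeneous quasimorphism $\ol{\mu}$ that will be used (see \cite[Section 2.2.2]{scl}): it is invariant under conjugation, and $\ol{\mu}(ab) = \ol{\mu}(a) + \ol{\mu}(b)$ whenever $ab = ba$.

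First I would construct the commuting conjugate. Let $\{\phi_t\}_{t\in[0,1]}$ be a contact isotopy supported in $[0,1]\times\mathcal{U}$ with $\mathcal{U}$ displaceable; choose a contactomorphism $\psi$, contact isotopic to the identity, with $\psi(\mathcal{U})\cap\ol{\mathcal{U}} = \emptyset$, fix a contact isotopy $\{\psi_s\}_{s\in[0,1]}$ from the identity to $\psi$, and set $g = [\{\psi_s\}]$ and $h = [\{\phi_t\}]$ in $\widetilde{\Cont}_0(L_k^{2n-1})$. For every $m\geq1$ the element $h^m$ is represented by the contact isotopy $\{\phi_t^{\circ m}\}_{t\in[0,1]}$, the $m$-fold composition of the time-$t$ maps, which is again supported in $[0,1]\times\mathcal{U}$. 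After reparametrizing so that $\{\phi_t^{\circ m}\}$ is the identity for $t$ near $0$ and $\{\psi_s\}$ equals $\psi$ for $s$ near $1$, the conjugate $gh^mg^{-1}$ is represented by a contact isotopy supported in $[0,1]\times\psi(\mathcal{U})$; as $\mathcal{U}$ and $\psi(\mathcal{U})$ are disjoint, $h^m$ and $gh^mg^{-1}$ have representatives with disjoint supports and hence commute.

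The heart of the matter, which I would import from \cite{BZ}, is to turn this into a bound on $\ol{\mu}(h^m)$ that is \emph{uniform in} $m$. Informally, the displacement lets one dominate $h^m$, from above and from below in the partial order of \cite{EP - Partially ordered groups}, by a \emph{fixed} chunk $[\{r_{ct}\}]$ of the Reeb flow whose size $c = c(\mathcal{U},\{\psi_s\})$ depends on the displacement but not on $m$ --- the commuting conjugate $gh^mg^{-1}$, supported away from $\mathcal{U}$, being precisely what allows the part of $h^m$ living over $\mathcal{U}$ to be ``cancelled'' against a short Reeb push. Granting this domination, monotonicity of $\ol{\mu}$ together with the value of $\ol{\mu}$ on Reeb chunks (Example \ref{Reeb flow}) gives $|\ol{\mu}(h^m)|\leq C(\mathcal{U})$ for all $m$. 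Since $\ol{\mu}$ is homogeneous, $\ol{\mu}(h^m) = m\,\ol{\mu}(h)$, so dividing by $m$ and letting $m\to\infty$ yields $\ol{\mu}(h) = 0$, which is the vanishing property.

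The main obstacle is exactly this uniform-in-$m$ domination. The naive estimate --- dominating $h^m$ by a Reeb chunk whose size is proportional to the sup-norm of the contact Hamiltonian generating $\{\phi_t^{\circ m}\}$ --- degrades linearly under iteration and is therefore useless; it is the displaceability of $\mathcal{U}$, used through the commuting conjugate, that keeps the Reeb budget bounded, and spelling this out is the one genuinely nontrivial step of the argument, which I would take over with the appropriate changes from the corresponding lemma in \cite{BZ}.
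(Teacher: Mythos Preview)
Your approach is genuinely different from the paper's, and the one step you flag as ``the one genuinely nontrivial step'' --- the uniform-in-$m$ domination of $h^m$ by a fixed Reeb chunk --- is precisely what the paper sidesteps entirely. The paper does \emph{not} use monotonicity, the partial order, or commuting conjugates. Instead it argues directly with discriminant points: after a small perturbation one may assume the displacing map $\psi$ itself has no discriminant points, and then a short support argument shows that $\psi\phi_t^m$ has no discriminant points for any $t\in[0,1]$ and any $m$. By Proposition~\ref{discriminant points}(i), the Maslov index of the concatenation $\{\psi_t\}\sqcup\{\psi\phi_t^m\}$ therefore equals $\mu(\{\psi_t\})$, and since this concatenation represents $[\{\psi_t\}]\cdot[\{\phi_t\}]^m$, the quasimorphism bound gives $|\mu(\{\phi_t^m\})|\leq 2n+1$ for every $m$. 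Dividing by $m$ finishes. The uniform bound thus comes for free from the quasimorphism defect, not from any Reeb-flow sandwich.

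Your outline, by contrast, leaves the crucial mechanism unexplained. Setting up the commuting conjugate $gh^mg^{-1}$ is fine, but you then assert that this ``allows the part of $h^m$ living over $\mathcal{U}$ to be cancelled against a short Reeb push'' without saying how; the algebraic identities one gets from commutation and conjugation-invariance (e.g.\ $\ol{\mu}(h^m\cdot gh^{-m}g^{-1})=0$) are tautologies and do not bound $\ol{\mu}(h^m)$. A domination $h^m\leq[\{r_{ct}\}]$ with $c$ independent of $m$ is not a formal consequence of disjoint supports, and your reference to \cite{BZ} for ``the corresponding lemma'' is vague --- indeed, the present paper says it follows \cite{BZ} for this result, and what it actually writes down is the discriminant-point argument above. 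So either the imported lemma \emph{is} that argument (in which case your monotonicity/partial-order framing is a detour), or you owe an independent proof of the domination, which you have not supplied.
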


\begin{proof}
Suppose that a subset $\mathcal{U}$ of $L_k^{2n-1}$ is displaceable
by a contactomorphism $\psi$ contact isotopic to the identity.
After taking a $\mathcal{C}^0$-perturbation,
we can assume that $\psi$ has no discriminant points.
Let $\{\psi_t\}_{t\in [0,1]}$ be a contact isotopy from the identity to $\psi_1 = \psi$,
and $\{\phi_t\}_{t\in[0,1]}$ a contact isotopy
supported in $[0,1] \times \mathcal{U}$.
We need to show that $\ol{\mu} \big([\{\phi_t\}]\big) = 0$.
Observe first that,
for every $m \in \mathbb{N}$ and $t \in [0,1]$,
the contactomorphism $\psi \circ \phi_t^m$ has no discriminant points.
Indeed, assume by contradiction 
that $p$ is a discriminant point of $\psi \circ \phi_t^m$.
Since $\phi_t^m$ is supported in $\mathcal{U}$
and $\psi$ has no discriminant points
we must have $p \in \mathcal{U}$.
But then $\phi_t^m(p) \in \mathcal{U}$ and so 
$\psi \circ \phi_t^m(p) \in \mathcal{U} \cap \psi(\mathcal{U})$
contradicting the hypothesis that $\psi$ displaces $\mathcal{U}$.
Since $\psi \circ \phi_t^m$ has no discriminant points for all $t \in [0,1]$,
it follows from Proposition \ref{discriminant points}(i) that for 
the concatenation $\{\psi_t\} \sqcup \{\psi \circ \phi_t^m\}$ we have
$$
\mu \big( \{\psi_t\} \sqcup \{\psi \circ \phi_t^m\} \big) = \mu (\{\psi_t\})\,.
$$
Since $\{\psi_t\} \sqcup \{\psi \circ \phi_t^m\}$ and $\{\psi_t \circ \phi_t^m\}$
are homotopic, the quasimorphism property
(Proposition \ref{p:quasimorphism property})
implies that $\lvert \mu (\{\phi_t^m\})\rvert \leq 2n+1$.
As this holds for every $m \in \mathbb{N}$,
we conclude that $\ol{\mu} \big([\{\phi_t\}]\big) = 0$.
\end{proof}


\section{Applications}
\labell{section: applications}

In this section we use the properties of the non-linear Maslov index
to prove the applications listed in Corollaries \ref{cor1}, \ref{cor 1bis} and \ref{cor2}. 
Most of the arguments are taken from
\cite{EP - Partially ordered groups, CS - Discriminant metric, S - Morse estimate for translated points, BZ}
with only minor changes to adapt them to the case of lens spaces,
and are included here for the sake of completeness.

\subsection*{Orderability}\labell{section: orderability}
As in the case of projective spaces
discussed in \cite{EP - Partially ordered groups},
orderability of lens spaces
follows from 
positivity of the non-linear Maslov index
and the fact that the non-linear Maslov index is well defined on the universal cover 
of the contactomorphism group.
Indeed, suppose by contradiction that
a lens space $L_k^{2n-1}$ is  not orderable,
and thus admits a positive contractible loop $\{\phi_t\}_{t\in[0,1]}$.
Since $\{\phi_t\}_{t\in[0,1]}$ is contractible we have that 
$\mu( [\{\phi_t\}_{t\in[0,1]} ]) = 0$. 
On the other hand, 
since $\{\phi_t\}$ is positive, Proposition \ref{proposition: positivity}
implies that $\mu([\{\phi_t\}]) > 0$,
giving a contradiction.

\subsection*{Unboundedness of the discriminant and oscillation metrics}\labell{subsection: metrics}

Recall that any bi-invariant (pseudo)metric $d: G \times G \rightarrow \mathbb{R}$ on a group $G$
defines a conjugation-invariant (pseudo)norm $\lVert \, \cdot \, \rVert: G \rightarrow \mathbb{R}$
by posing $\lVert g \rVert = d (g, \text{id})$;
conversely, any conjugation-invariant (pseudo)norm $\lVert \, \cdot \, \rVert: G \rightarrow \mathbb{R}$
defines a bi-invariant (pseudo)metric by posing $d(g_1,g_2) = \lVert g_1 g_2^{-1}\rVert$.
The discriminant and oscillation metrics
on $\widetilde{\Cont}_0(V,\xi)$ for a compact co-oriented contact manifold $(V,\xi)$
are defined as follows \cite{CS - Discriminant metric}.
As proved in \cite{CS - Discriminant metric},
any non-trivial element in $\widetilde{\Cont}_0(V,\xi)$
has a representative $\{\phi_t\}_{t \in [0,1]}$ that can be written
as the concatenation of a finite number of pieces $\{\phi_t\}_{t \in [t_{j-1},t_{j}]}$, $j = 1, \ldots, L$, 
such that each piece is \emph{embedded}, 
i.e.\ for every two distinct $t$ and $t'$ in $[t_{j-1},t_{j}]$
the composition $\phi_t \circ \phi_{t'}^{-1}$ does not have any discriminant point.
The discriminant norm of a non-trivial element $[\{\phi_t\}]$
is then defined to be the minimal number of pieces in such decompositions
(in other words, the discriminant norm is the word norm
with respect to the generating set
formed by elements of $\widetilde{\Cont}_0(V,\xi)$
that can be represented by an embedded contact isotopy).
Moreover, any non-trivial element in $\widetilde{\Cont}_0(V,\xi)$ has a representative $\{\phi_t\}_{t\in [0,1]}$
that can be written as the concatenation of a finite number of embedded pieces
such that each piece is either non-negative or non-positive.
Let $L_+$ and $L_-$ be respectively
the minimal number of non-negative and of non-positive pieces
in such decompositions;
the oscillation pseudonorm of $[\{\phi_t\}]$ is then defined to be $L_+ + L_-$.
The oscillation pseudometric is non-degenerate, hence a metric,
if and only if $(V,\xi) $ is orderable.

In \cite{CS - Discriminant metric} the non-linear Maslov index
has been used to show that
the discriminant and oscillation metrics for real projective space are unbounded, 
hence not equivalent to the trivial metric. 
The argument, applied to lens spaces, is as follows.

Consider the Reeb flow $\{r_t\}$ on $L_k^{2n-1}$
with respect to the contact form whose pullback to $S^{2n-1}$
is equal to the pullback from $\mathbb{R}^{2n}$
of the 1-form $\sum_{j=1}^n ( x_j dy_j - y_j dx_j)$.
We first show that the discriminant norm
of the $3l$-th iteration $\{ r_{6 \pi l t} \}_{t \in [0,1]}$ of the loop $\{r_{2 \pi t} \}_{t\in [0,1]}$
is at least $l + 1$. 
By Example \ref{Reeb flow} we know that $\mu([\{ r_{6 \pi l t} \}_{t \in [0,1]}]) = 6nl$. 
Let $\{\phi_t\}_{t \in [0,1]}$ be a contact isotopy that represents $[\{ r_{6 \pi l t} \}_{t \in [0,1]}]$, 
is a concatenation of embedded pieces
and minimizes the number of such pieces. 
Then $\mu ([\{\phi_t\}_{t\in[0,1]}]) = 6nl$.
If $l > 0$ then, by Example \ref{example: def in small case},
for a sufficiently small $\epsilon > 0$
we have that $\mu ([\{\phi_t\}_{t\in[0,\epsilon]}])$ is ($\leq 2n$, hence) different from $6nl$.
By Proposition \ref{discriminant points}(i)
this implies that $\{\phi_t\}_{t \in [0,1]}$ must intersect the discriminant,
and so $\{\phi_t\}_{t \in [0,1]}$ has at least two embedded pieces.
Suppose now that $l > 1$,
and write $\{\phi_t\}_{t \in [0,1]}$ as a concatenation
of $L$ embedded pieces $\{ \phi_t \}_{t \in [t_{j-1},t_j]}$, $j = 1, \ldots, L$.
For each $j$, since $\{ \phi_t \}_{t \in [t_{j-1},t_j]}$ is embedded
we have in particular that $\phi_t \circ \phi_{t_{j-1}}^{\phantom{j-}-1}$
does not have any discriminant point for every $t \in (t_{j-1},t_j]$.
Consider a value of time $\underline{t} \in (t_{j-1},t_j]$ such that
$\{ \phi_t \circ \phi_{t_{j-1}}^{\phantom{j-}-1} \}_{t \in [t_{j-1},\underline{t}]}$ is $\mathcal{C}^1$-small.
By Proposition \ref{discriminant points}(i) and Example \ref{example: def in small case} we have
$$
\mu \big( \big[\{ \phi_t \circ \phi_{t_{j-1}}^{\phantom{j-}-1} \}_{t \in [t_{j-1},t_j]} \big] \big) = 
\mu \big( \big[\{ \phi_t \circ \phi_{t_{j-1}}^{\phantom{j-}-1} \}_{t \in [t_{j-1},\underline{t}]} \big] \big) \leq 2n\,.
$$
Suppose now by contradiction that $L < l+1$.
Then
\begin{equation}\label{equation: discr}
\sum_{j = 1}^{L} \; \mu \big( \big[\{ \phi_t \circ \phi_{t_{j-1}}^{\phantom{j-}-1} \}_{t \in [t_{j-1},t_j]} \big] \big) \leq 2nL < 2n (l+1) \,.
\end{equation}
On the other hand, by the quasimorphism property
(Proposition \ref{p:quasimorphism property})
we have
$$
\Big\lvert \; \mu([\{\phi_t\}_{t\in[0,1]}]) - \sum_{j = 1}^{L} \; \mu \big( \big[\{ \phi_t \circ \phi_{t_{j-1}}^{\phantom{j-}-1} \}_{t \in [t_{j-1},t_j]} \big] \big) \; \Big\rvert
\leq (L-1) (2n+1) < l (2n+1) \,,
$$
and thus
$$
\sum_{j = 1}^{L} \; \mu \big( \big[\{ \phi_t \circ \phi_{t_{j-1}}^{\phantom{j-}-1} \}_{t \in [t_{j-1},t_j]} \big] \big) > 4nl - l \,.
$$
This contradicts \eqref{equation: discr},
and thus concludes the proof that
the discriminant norm of $\{ r_{6 \pi l t} \}_{t \in [0,1]}$ is at least $l + 1$. 

The fact that the oscillation metric is unbounded
can be seen by combining the above argument
with positivity of the non-linear Maslov index, as follows.
We show that the oscillation norm
of the class $[\{ r_{20 \pi l t} \}_{t \in [0,1]}]$ of $10l$-th iteration
of the loop $\{r_{2 \pi t} \}_{t\in [0,1]}$ is at least $l + 1$.
For $[\{ r_{20 \pi l t} \}_{t \in [0,1]}]$,
the minimal number $L_-$ of non-positive embedded pieces is zero,
because any subdivision of the representative $\{ r_{20 \pi l t} \}_{t \in [0,1]}$
has no non-positive pieces.
So we need to show that
the minimal number $L_+$ of embedded non-negative pieces is at least $l + 1$.
Let $\{\phi_t\}_{t \in [0,1]}$ be a contact isotopy that represents $[\{ r_{20 \pi l t} \}_{t \in [0,1]}]$, 
is a concatenation of non-negative or non-positive embedded pieces
and minimizes the number of non-negative ones.
Then, by Example \ref{Reeb flow}, $\mu([\{\phi_t\}_{t \in [0,1]}]) = 20 nl$.
Regarding adjacent embedded pieces of the same sign
as a single non-negative or non-positive isotopy,
let $K_+$ and $K_-$ respectively
be the number of non-negative and of non-positive isotopies in the decomposition.
If $K_+ \geq l+1$ then the number $L_+$ of non-negative embedded pieces
is also at least $l+1$ and so we are done.
Assume thus that $K_+ < l+1$,
and so (since $K_- \leq K_+ + 1$) $K := K_+ + K_- < 2l + 2$.
By the quasimorphism property we then have
$$
\Big\lvert \; \mu([\{\phi_t\}_{t\in[0,1]}]) - \sum_{j = 1}^{K} \; \mu \big( \big[\{ \phi_t \circ \phi_{t_{j-1}}^{\phantom{j-}-1} \}_{t \in [t_{j-1},t_j]} \big] \big) \; \Big\rvert
\leq (K - 1) (2n+1) < (2l+1) (2n+1) \,.
$$
Write
$\sum_{j = 1}^{K} \; \mu \big( \big[\{ \phi_t \circ \phi_{t_{j-1}}^{\phantom{j-}-1} \}_{t \in [t_{j-1},t_j]} \big] \big) = \mu_+ + \mu_-$,
where $\mu_+$ and $\mu_-$ are respectively the sum
of the non-linear Maslov indices of the non-negative and of the non-positive pieces.
By Proposition \ref{proposition: positivity} we have $\mu_- \leq 0$,
and thus we conclude that
\begin{equation}\label{equation: osc}
\mu_+ > 20 nl - (2l + 1)(2n + 1) \,.
\end{equation}
On the other hand,
if the number $L_+$ of non-negative embedded pieces was less than $l+1$
then we would have $\mu_+ < (4n+1) (l+1)$,
contradicting \eqref{equation: osc}.
This concludes the proof.

\subsection*{Contact Arnold conjecture}
\labell{subsection: Contact Arnold conjecture}

Adapting to our case the argument given for $\RP^{2n-1}$
in \cite{S - Morse estimate for translated points}
(which in turn is an adaptation of the proof
of the Hamiltonian Arnold conjecture for $\mathbb{CP}^n$
given in \cite{Theret - Rotation numbers} and \cite{Givental - Nonlinear Maslov index})
we show that for any contactomorphism of $L_k^{2n-1}$
that is contact isotopic to the identity 
the number of translated points
(with respect to the standard contact form) is at least $n$,
and at least $2n$ if all translated points are assumed to be non-degenerate.

Recall that we denote by $\{r_t\}$ the Reeb flow on $L_k^{2n-1}$.
The set of translated points of a contactomorphism $\phi$ of $L_k^{2n-1}$
contact isotopic to the identity
is equal to the union for $t \in (0,1]$ of the sets of those 
discriminant points of $r_{2 \pi t} \circ \phi$
that correspond to $\mathbb{Z}_k$-orbits of discriminant points
of the lift to the sphere.
Without loss of generality we can assume that $\phi$ has no discriminant points.
Indeed, either $\phi$ has infinitely many translated points
or, for some value $\underline{t}$ of $t \in [0,1]$,
the composition $r_{2\pi \underline{t}} \circ \phi$ has no discriminant points;
we can then replace $\phi$ with $r_{2\pi \underline{t}} \circ \phi$,
since the translated points of $r_{2\pi \underline{t}} \circ \phi$
are in 1--1 correspondence with those of $\phi$.
Assume thus that $\phi$ has no discriminant points,
and let $\{\phi_t\}_{t \in [0,1]}$ be a contact isotopy from the identity to $\phi$.
We first prove that for the concatenation
$\{\phi_t\}_{t \in [0,1]} \sqcup \{r_{2\pi t} \circ \phi \}_{t \in [0,1]}$
we have
\begin{equation}\labell{difference 2n}
\mu \big(\{\phi_t\}_{t \in [0,1]} \sqcup \{r_{2\pi t} \circ \phi \}_{t \in [0,1]} \big)
- \mu \big(\{\phi_t\}_{t \in [0,1]} \big) = 2n\,.
\end{equation}
Let $F_t$ be a based family of generating functions for $\{\phi_t\}_{t \in [0,1]}$,
and $Q_t$ a based family of $\zk$-invariant generating quadratic forms for $\{r_{2\pi t}\}_{t \in [0,1]}$.
Then $F_t \, \sharp \, Q_t$ is a based family
of generating functions for $\{ r_{2 \pi t} \circ \phi_t\}$,
which is homotopic to $\{\phi_t\} \sqcup \{r_{2\pi t} \circ \phi \}$,
and $F_t \, \sharp \, Q_0$ is a based family of generating functions for $\{\phi_t\}$.
Thus
$$
\mu \big(\{\phi_t\}_{t \in [0,1]} \sqcup \{r_{2\pi t} \circ \phi \}_{t \in [0,1]} \big)
- \mu \big(\{\phi_t\}_{t \in [0,1]} \big) 
= \text{ind}(F_1 \, \sharp \, Q_0) - \text{ind}(F_1 \, \sharp \, Q_1)\,.
$$
By Lemma \ref{prop 35 in Theret}
there are isotopies starting from the identity
$\{\Psi_s^0\}_{s \in [0,1]}$ and $\{\Psi_s^1\}_{s \in [0,1]}$
of $\zk$-equivariant fibre preserving linear diffeomorphisms
such that $Q_0 \circ \Psi^0_1$ and $Q_1 \circ \Psi^1_1$ are independent of the base variable,
and so are equal to quadratic forms $\overline{Q}_0$ and $\overline{Q}_1$ on the fibre.
For $j = 0, 1$ we have
$F_1 \, \sharp \, (Q_j \circ \Psi_1^{j}) = (F_1 \, \sharp \, 0) \oplus \overline{Q}_j$,
and so
$$
\ind (F_1 \, \sharp \, Q_j) = \ind \big( F_1 \, \sharp \, (Q_j \circ \Psi_1^{j}) \big)
= \ind \big( (F_1 \, \sharp \, 0) \oplus \overline{Q}_j \big)
= \ind (F_1 \, \sharp \, 0) + \ind (\overline{Q}_j)
$$
where the last equality
follows from Corollary \ref{proposition: stab additivity}.
By Example \ref{Reeb flow} we thus have
$$
\text{ind}(F_1 \, \sharp \, Q_0) - \text{ind}(F_1 \, \sharp \, Q_1)
= \ind (\overline{Q}_0) - \ind (\overline{Q}_1) = 2n \,,
$$
hence \eqref{difference 2n}.
Knowing this, Proposition \ref{discriminant points}(ii)
and the fact that $\phi$ has no discriminant points
imply that either there are at least $n$ distinct values of $t \in (0,1)$
at which $\mu$ jumps,
and so $\phi$ has at least $n$ translated points
(which are necessarily all distinct,
since their lifts to the sphere have different time-shifts),
or there is at least one value of $t$ at which $\mu$ jumps by more than 2, 
and thus $\phi$ has infinitely many translated points. 
Moreover, if all translated points of $\phi$ are non-degenerate 
then Proposition \ref{discriminant points}(iii) implies that
there must be at least $2n$ of them (all distinct).

\subsection*{Existence of translated points
with respect to an arbitrary contact form
that induces the standard contact structure}

Let $\alpha$ be a contact form on $L_k^{2n-1}$
that induces the standard contact structure,
and consider a contactomorphism $\phi$
that is contact isotopic to the identity.
We now prove that
there are infinitely many distinct real numbers
that are time-shifts of translated points of $\phi$ with respect to $\alpha$,
and thus in particular that $\phi$ has at least one translated point with respect to $\alpha$.
Denote by $\alpha_0$ the standard contact form,
and by $f$ the function such that $\alpha = e^f \alpha_0$.
Then $h = e^{-f}$ is the contact Hamiltonian function
(with respect to $\alpha_0$)
of the Reeb flow $\{\varphi_t^{\alpha}\}$ of $\alpha$.
Let
$$
\epsilon := \min h > 0 \,.
$$
For any based contact isotopy $\{\phi_t\}_{t \in [0,1]}$ with $\phi_1 = \phi$
we prove that
\begin{equation}\labell{lower bound}
\mu \Big( \big[ \{\phi_t\}_{t \in [0,1]} \sqcup \{\varphi^{\alpha}_{tT} \circ \phi \}_{t \in [0,1]} \big] \Big)
\geq \mu \big( \big[ \{\phi_t\}_{t \in [0,1]} \big] \big) + 2n \, m_T - 6n - 3 \,,
\end{equation}
where $m_T = \lfloor \frac{\epsilon T}{2 \pi} \rfloor$.
For this, note first that
$$
\big[ \{\phi_t\}_{t \in [0,1]} \sqcup \{\varphi^{\alpha}_{tT} \circ \phi \}_{t \in [0,1]} \big]
\geq \big[ \{\phi_t\}_{t \in [0,1]} \sqcup \{r_{t \epsilon T} \circ \phi \}_{t \in [0,1]} \big] \,,
$$
where, as above, $\{r_t\}_{t \in \mathbb{R}}$ denotes the Reeb flow of $\alpha_0$.
Indeed,
$$
\big[ \{\phi_t\}_{t \in [0,1]} \sqcup \{\varphi^{\alpha}_{tT} \circ \phi \}_{t \in [0,1]} \big] \cdot
\big[ \{\phi_t\}_{t \in [0,1]} \sqcup \{r_{t \epsilon T} \circ \phi \}_{t \in [0,1]} \big]^{-1}
$$
is represented by the contact isotopy
$\{\Id\}_{t \in [0,1]} \sqcup \{ \varphi_{tT}^{\alpha} \circ r_{t \epsilon T}^{-1}\}_{t \in [0,1]}$,
and the contact Hamiltonian function
of $\{\varphi_{tT}^{\alpha} \circ r_{t \epsilon T}^{-1}\}$
with respect to $\alpha_0$ is
$$
T \, h \left( 1 - \frac{\epsilon}{ \, h \circ (\varphi^{\alpha}_{tT})^{-1}} \right) \,
$$
which is non-negative by our choice of $\epsilon$.
By the quasimorphism property (Proposition \ref{p:quasimorphism property})
and positivity (Proposition \ref{proposition: positivity}),
it follows that
\begin{equation}\labell{two reebs}
\mu \big( \big[ \{\phi_t\}_{t \in [0,1]} \big] \cdot \big[ \{\varphi_{tT}^{\alpha}\}_{t \in [0,1]} \big] \big)
\geq \mu \big( \big[ \{\phi_t\}_{t \in [0,1]} \big] \cdot \big[ \{r_{t \epsilon T} \}_{t \in [0,1]} \big] \big) - 2n - 1 \,.
\end{equation}
By Proposition \ref{proposition: positivity},
$\mu \big(\big[ \{r_{t \tau} \}_{t \in [0,1]} \big]\big) \geq 0$
for all $\tau \in [0,2\pi]$;
by the quasimorphism property we thus have
\begin{equation}\labell{lower for reeb}
\mu \big(\big[ \{r_{t \epsilon T} \}_{t \in [0,1]} \big]\big) \geq \mu \big(\big[ \{r_{2 \pi m_T t} \}_{t \in [0,1]} \big]\big) - 2n -1 \,.
\end{equation}
Using \eqref{two reebs}, the quasimorphism property, \eqref{lower for reeb}
and Example \ref{Reeb flow} we obtain
\begin{align*}
\mu \Big( \big[ \{\phi_t\}_{t \in [0,1]} \sqcup \{\varphi^{\alpha}_{tT} \circ \phi \}_{t \in [0,1]} \big] \Big)
&= \mu \big( \big[ \{\phi_t\}_{t \in [0,1]} \big] \cdot \big[ \{\varphi_{tT}^{\alpha}\}_{t \in [0,1]} \big] \big) \\
& \geq \mu \big( \big[ \{\phi_t\}_{t \in [0,1]} \big] \cdot \big[ \{r_{t \epsilon T} \}_{t \in [0,1]} \big] \big) - 2n - 1 \\
& \geq \mu \big( \big[ \{\phi_t\}_{t \in [0,1]} \big] \big) + \mu \big( \big[ \{r_{t \epsilon T} \}_{t \in [0,1]} \big] \big) - 4n - 2 \\
& \geq \mu \big( \big[ \{\phi_t\}_{t \in [0,1]} \big] \big) + \mu \big(\big[ \{r_{2 \pi m_T t} \}_{t \in [0,1]} \big]\big) - 6n - 3 \\
&= \mu \big( \big[ \{\phi_t\}_{t \in [0,1]} \big] \big) + 2n \, m_T - 6n - 3 \,.
\end{align*}
Thus, inequality \eqref{lower bound} holds.
This inequality in turns implies that
$\big\lvert \, \mu \big( \big[ \{\phi_t\}_{t \in [0,1]} \sqcup \{\varphi^{\alpha}_{tT} \circ \phi \}_{t \in [0,1]} \big] \big) \, \big\rvert$
approaches $\infty$ as $T \rightarrow \infty$.
By Proposition \ref{discriminant points}(i)
we thus conclude that
there are infinitely many distinct real numbers
that are time-shifts of translated points of $\phi$ with respect to $\alpha$,
and so in particular $\phi$ has at least one translated point with respect to $\alpha$.

\subsection*{Weinstein conjecture}\labell{section: Weinstein}

Following Givental \cite{Givental - Nonlinear Maslov index},
we now prove that any contact form $\alpha$ on $L_k^{2n-1}$ 
defining the standard contact structure
has closed Reeb orbits.
As above, denote by $\alpha_0$ the standard contact form,
and by $f$ the function such that $\alpha = e^f \alpha_0$.
Then $h = e^{-f}$ is the contact Hamiltonian function
(with respect to $\alpha_0$)
of the Reeb flow $\{\varphi_t^{\alpha}\}$ of $\alpha$
and, for every $m \in \mathbb{N}$,
$mh$ is the contact Hamiltonian function of $\{\varphi_{mt}^{\alpha}\}$.
Discriminant points of $\varphi_{mt}^{\alpha}$ 
correspond to closed Reeb orbits of $\alpha$ of period $mt$.
Since $h > 0$, there is $m \in \mathbb{N}$ such that $mh \geq 2 \pi$.
The constant function $2 \pi$ is the contact Hamiltonian
of $\{r_{2\pi t}\}_{t \in [0,1]}$.
By Proposition \ref{proposition: monotonicity - asymptotic} 
and Example \ref{Reeb flow}
we then have 
$\ol{\mu} \, \big([\{\varphi_{mt}^{\alpha}\}_{t \in [0,1]}]\big) \geq 
\ol{\mu} \, \big( [ \{r_{2\pi t}\}_{t \in [0,1]} ] \big) = 2n > 0$.
It thus follows from Proposition \ref{discriminant points} 
that $\alpha$ has at least one closed Reeb orbit.

\subsection*{Constructing new quasimorphisms via contact reduction,
and more applications to orderability 
and non-displaceability}\labell{relation bz}

In \cite{BZ} Borman and Zapolsky explain
how in certain situations quasimorphisms descend under contact reduction,
and use this to show that
Givental's asymptotic non-linear Maslov index on projective spaces
induces quasimorphisms on certain prequantizations of symplectic toric manifolds.
Moreover they obtain applications to orderability
and existence of non-displaceable pre-Lagrangian toric fibres.
As already observed in \cite[Remark 1.5]{BZ},
our extension of Givental's non-linear Maslov index to lens spaces
allows us to enlarge the class of spaces
to which the results of \cite{BZ} apply.

Consider a closed contact manifold $(V, \xi)$,
and suppose that it is equipped with a non-trivial monotone quasimorphism
$\nu \colon \widetilde{\Cont}_0(V, \xi) \rightarrow \R$.
Following \cite{BZ} we say that a subset $Y$ of $V$
is \emph{subheavy} with respect to $\nu$
if $\nu$ vanishes on all elements that can be represented by
a contact isotopy generated by
an autonomous Hamiltonian that vanishes on $Y$.
Suppose now that $(V,\xi)$ is also equipped with a contact $\mathbb{T}^n$-action,
and denote by $f_{\alpha}: V \rightarrow \R^n$ the moment map
with respect to a $\mathbb{T}^n$-invariant contact form $\alpha$ for $\xi$.
Recall that if $\mathbb{T}^n$ acts freely
on the level set $f_{\alpha}^{-1}(0)$
then $\alpha$ induces a contact form $\alpha'$
on the quotient $V' = f_{\alpha}^{-1}(0) / \mathbb{T}^{n}$
(see for instance \cite[Theorem 7.7.5]{Geiges}).
The contact manifold $(V', \xi' = \text{ker}(\alpha'))$
is said to be the contact reduction of $(V,\xi)$
at the level $f_{\alpha}^{-1}(0)$.
By \cite[Theorem 1.8]{BZ},
if $f_{\alpha}^{-1}(0)$ is subheavy with respect to
the non-trivial monotone quasimorphism $\nu$
then $\nu$ naturally descend to a non-trivial monotone quasimorphism
$\nu' \colon  \widetilde{\Cont}_0(V', \xi') \rightarrow \R$.
Moreover, if $\nu$ has the vanishing property then so does $\nu'$.
By \cite[Theorem 1.3]{BZ},
if a monotone symplectic toric manifold $(W,\omega)$ is \emph{even},
i.e.\ the sum of the normals of the moment polytope $\Delta \subset t^{\ast}$
is in $2 \, \mathfrak{t}_{\mathbb{Z}}$,
then there is a rescaling $a \omega$ of the symplectic form
such that the prequantization $(V,\xi)$ of $(W, a\omega)$
can be written as contact reduction of a projective space $\mathbb{RP}^{2n-1}$
at a level $f_{\alpha}^{-1}(0)$ containing the torus
$$
T_{\RP^{2n-1}} = \{ \, [z] \in \RP^{2n-1} \; \lvert \; \ |z_1|^2 = \ldots = |z_n|^2  \, \} \,.
$$
By \cite[Lemma 1.22 and Theorem 1.11 (i)]{BZ},
$T_{\RP^{2n-1}}$ is subheavy with respect to
the asymptotic non-linear Maslov index $\overline{\mu}$
and so, by \cite[Proposition 1.10(ii)]{BZ},
$f_{\alpha}^{-1}(0)$ is also subheavy.
It follows that $\overline{\mu}$ descends to a non-trivial monotone quasimorphism
on $\widetilde{\Cont}_0(V, \xi)$ with the vanishing property.
By \cite[Theorem 1.28]{BZ},
if a contact manifold $(V,\xi)$ admits a non-trivial monotone quasimorphism
$\nu \colon \widetilde{\Cont}_0(V, \xi) \rightarrow \R$
then it is orderable;
by \cite[Theorem 1.17]{BZ},
if moreover $(V,\xi)$ is the prequantization of a symplectic toric manifold
and the quasimorphism $\nu$ also has the vanishing property
then $V$ has a non-displaceable pre-Lagrangian toric fibre.
The conclusion is thus that any monotone even symplectic toric manifold
has a prequantization that is orderable and has a
non-displaceable pre-Lagrangian toric fibre.

In the case of lens spaces,
repeating the proof of \cite[Lemma 1.22]{BZ}
one sees that
$$
T_{L_k^{2n-1}}=\{ \, [z] \in L_k^{2n-1} \; \lvert \; \ |z_1|^2 = \ldots = |z_n|^2  \, \} \subset L_k^{2n-1}(1,\ldots,1)
$$
is subheavy with respect to the asymptotic non-linear Maslov index on $L_k^{2n-1}(1,\ldots,1)$.
Consider now a compact monotone symplectic toric manifold $(W^{2n}, \omega)$.
Write the moment polytope as
$\Delta = \{\, x \in \mathfrak{t}^{\ast} \text{ , } \langle \nu_j,x \rangle + \lambda \geq 0 \text{ for } j=1,\cdots,d\,\}$,
where $d$ is the number of facets and $\nu_j \in \mathfrak{t}$
are vectors normal to the facets and primitive in the integer lattice
$\mathfrak{t}_{\mathbb{Z}} = \text{ker} \, (\text{exp} \colon \mathfrak{t} \rightarrow \mathbb{T}^n)$.
Suppose that, for some integer $k \geq 2$,
\begin{equation}\labell{e: condition toric}
\sum_{j=1}^d \nu_j \in k \, \mathfrak{t}_{\mathbb{Z}}\,.
\end{equation}
Then the same argument as in the proof of \cite[Theorem 1.3]{BZ}
shows that there is a rescaling\footnote{
If $\omega$ is rescaled
so that $[\omega] = c_1 (TW)$
then we can take $a = k$.
Indeed, the moment polytope for $(W,k\omega)$
can be written as
$\Delta = \{\, x \in \mathfrak{t}^{\ast} \text{ , } \langle \nu_j,x \rangle + k \geq 0 \text{ for } j=1,\cdots,d\,\}$
and the prequantization $(V,\xi)$ of $(W, k \,\omega)$
corresponds to a cone in $\R^{n+1}$
with primitive inward normals $(\nu_j,k)\in \mathfrak{t} \times \R$, $j=1,\cdots,d$.
The contact toric manifold $(V,\xi)$ is a contact reduction of $(S^{2d-1}, \ker \alpha_{std})$
by a subgroup $K$ of $T^d$ containing $[\frac{1}{k}, \ldots, \frac{1}{ k}] \in \R^d/\Z^d = T^d$
(because $\sum \frac 1 k (\nu_j,k)$ is in the lattice of $\mathfrak{t} \times \R$; see \cite{Lerman}).
In fact it would be enough to take $\omega = k \eta$
where $\eta$ denotes the primitive integral class in the direction of $c_1(TW)$.}
$a \omega$
of the symplectic form such that
the prequantization of $(W,a\omega)$
can be written as contact reduction of  $L_k^{2n-1}(1,\ldots,1)$
at a level $f_{\alpha}^{-1}(0)$ containing $T_{L_k^{2n-1}}$. 
Therefore, such a prequantization
admits a non-trivial monotone quasimorphism
with the vanishing property,
and so it is orderable and it contains
a non-displaceable pre-Lagrangian toric fibre.

\begin{ex}
A compact monotone symplectic toric manifold $(W^{2n}, \omega)$
satisfying condition \eqref{e: condition toric}
can be obtained by the following generalization
of \cite[Example (i) of page 385]{BZ}.
Consider the $\CP^1$-bundle over $\CP^n$
obtained, for $1\leq k \leq n$,
as the projectivization $\mathbb{P}(\mathbf{1} \oplus \mathcal{O}(k))$
of the direct sum of a trivial line bundle
with the bundle $\mathcal{O}(k)$ over $\CP^n$.
This manifold can be equipped with a monotone symplectic structure,
and the inward normals of the corresponding moment polytope
are $e_1,\ldots,e_n, e_{n+1},-e_{n+1},ke_{n+1}-e_1-\ldots-e_n$.
\eoe
\end{ex}

\begin{rmk}\labell{rmk BZ}
One can show that for any compact monotone symplectic toric manifold $(W, \omega)$
the prequantization of $W$ with appropriately rescaled $\omega$
can be written as a contact reduction of  $L_k^{2n-1}(\underline{w})$
at a level containing $T_{L_k^{2n-1}(\underline{w})}$.
However, if $\underline{w} \neq (1,\ldots,1)$
we do not know whether $T_{L_k^{2n-1}(\underline{w})}$ is subheavy
and so we cannot conclude that this prequantization has an induced quasimorphism.
In order to prove that $T_{L_k^{2n-1}} \subset L_k^{2n-1}(1,\ldots,1)$ is subheavy
one uses the fact that the Clifford torus in
$\CP^{n-1}=\CP^{n-1}(1,\ldots,1)$ 
is the unique non-displaceable orbit of the standard torus action.
A similar statement is not true in general for weighted projective spaces:
for example $\CP(1,3,5)$ contains a $2$-dimensional family
of non-displaceable Lagrangian toric fibres \cite{WilsonWoodward}.
\end{rmk}


\appendix

\section{On the construction of generating functions}
\labell{section appendix: on the construction}

In Proposition \ref{p:composition formula} we proved that
if $\Phi^{(1)}$ and $\Phi^{(2)}$ are Hamiltonian symplectomorphisms
of $\mathbb{R}^{2n}$ with generating functions 
$F_1 \colon \mathbb{R}^{2n} \times \R^{2nN_1}\rightarrow \mathbb{R}$ 
and $F_2 \colon \mathbb{R}^{2n}\times \R^{2nN_2} \rightarrow \mathbb{R}$ respectively
then the function 
$F_1 \, \sharp \, F_2 \colon \mathbb{R}^{2n} \times (\mathbb{R}^{2n} \times \mathbb{R}^{2n} \times  \R^{2nN_1}\times \R^{2nN_2}) 
\rightarrow \mathbb{R}$
defined by
$$
F_1 \, \sharp \, F_2 (q; \zeta_1, \zeta_2, \nu_1, \nu_2) 
= F_1 (\zeta_1,\nu_1) + F_2 (\zeta_2,\nu_2) - 2 \left<\zeta_2 - q,i(\zeta_1-q)\right>
$$
is a generating function for the composition $\Phi = \Phi^{(2)} \circ \Phi^{(1)}$. 
Here we present two alternative proofs 
of this fact in terms of symplectic reduction,
we generalize the composition formula
to the case of any even number of factors
and discuss its relation with  the method of broken trajectories 
by Chaperon, Laudenbach and Sikorav \cite{Ch1, LS, Sikorav 1, Sikorav 2}.

Recall that if $V$ is a coisotropic submanifold of a symplectic manifold $(W,\omega)$
then the kernel of the restriction of $\omega$ to $V$ is an integrable distribution.
If the space of leaves $V / \hspace{-0.1cm}\sim$ is a manifold
then it inherits a symplectic form $\omega$,
and is said to be the symplectic reduction of $(W,\omega)$ along $V$.
If $L$ is a Lagrangian submanifold of $W$ that is transverse to $V$
then the restriction to $L \cap V$ of the projection
$V \rightarrow V / \hspace{-0.1cm}\sim$ is a Lagrangian immersion.
For instance, consider a fibre bundle $p \colon E \rightarrow B$.
The fibre conormal bundle $N_E^{\ast}$
is a coisotropic submanifold of $T^{\ast}E$,
and the symplectic reduction can be identified with $T^{\ast}B$.
If $F \colon E \rightarrow \mathbb{R}$ is a generating function
then the Lagrangian immersion $i_F \colon \Sigma_F \rightarrow T^{\ast}B$
described in Section \ref{sec:generating functions}
is the reduction of $dF \subset T^{\ast}E$ with respect to $N_E^{\ast}$.

\subsection*{First interpretation}

The first interpretation of Proposition \ref{p:composition formula}
in terms of symplectic reduction that we present 
is an adaptation to our composition formula 
of the discussion in Th\'{e}ret \cite[Section I.3]{Theret - Thesis}.
We use three basic properties of generating functions
(Lemmas \ref{lemma: direct sum}, \ref{lemma: add dh} and \ref{lemma: gf and reduction})
whose verification is immediate and therefore left to the reader.

\begin{lemma}\labell{lemma: direct sum}
If $L_1 \subset T^{\ast}B_1$ has generating function
$F_1 \colon B_1 \times \mathbb{R}^{N_1} \rightarrow \mathbb{R}$
and $L_2 \subset T^{\ast}B_2$ has generating function
$F_2 \colon B_2 \times \mathbb{R}^{N_2} \rightarrow \mathbb{R}$
then the product $L_1 \times L_2 \subset T^{\ast}B_1 \times T^{\ast}B_2 \equiv T^{\ast}(B_1\times B_2)$
has generating function 
$F_1 \oplus F_2 \colon
(B_1\times B_2) \times (\mathbb{R}^{N_1} \times \mathbb{R}^{N_2}) \rightarrow \mathbb{R}$.
\end{lemma}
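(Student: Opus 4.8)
The plan is to unwind the definitions from Section~\ref{sec:generating functions} and observe that the whole construction $F \mapsto (\Sigma_F, i_F, L_F)$ is natural with respect to products. Write $E_i = B_i \times \mathbb{R}^{N_i}$ with bundle projection $p_i \colon E_i \to B_i$, and let $E = (B_1 \times B_2) \times (\mathbb{R}^{N_1} \times \mathbb{R}^{N_2})$ with projection $p \colon E \to B_1 \times B_2$. The obvious shuffle of coordinates identifies $E \cong E_1 \times E_2$ as a fibre bundle over $B_1 \times B_2$ with fibre $\mathbb{R}^{N_1}\times\mathbb{R}^{N_2}$, and correspondingly identifies $T^{\ast}E \cong T^{\ast}E_1 \times T^{\ast}E_2$ and $T^{\ast}(B_1 \times B_2) \cong T^{\ast}B_1 \times T^{\ast}B_2$ as symplectic manifolds. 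Under these identifications the fibre conormal bundle splits, $N_E^{\ast} \cong N_{E_1}^{\ast} \times N_{E_2}^{\ast}$, since the kernel of $dp$ at a point is the product of the kernels of $dp_1$ and $dp_2$.

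First I would check that $F_1 \oplus F_2$ is a generating function. Since $(F_1 \oplus F_2)(e_1,e_2) = F_1(e_1) + F_2(e_2)$, its differential is the product map $d(F_1 \oplus F_2) = dF_1 \times dF_2 \colon E_1 \times E_2 \to T^{\ast}E_1 \times T^{\ast}E_2$. A product map is transverse to a product submanifold exactly when each factor is transverse to the corresponding factor; hence $d(F_1 \oplus F_2)$ is transverse to $N_E^{\ast}$ because each $dF_i$ is transverse to $N_{E_i}^{\ast}$ by hypothesis. It follows that the fibre critical set is $\Sigma_{F_1 \oplus F_2} = \Sigma_{F_1} \times \Sigma_{F_2}$.

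Next I would identify the associated Lagrangian immersion. For $(e_1,e_2) \in \Sigma_{F_1}\times\Sigma_{F_2}$ and a tangent vector $(X_1,X_2) \in T_{p_1(e_1)}B_1 \times T_{p_2(e_2)}B_2$, a lift through $dp$ may be taken of the form $(\widehat{X_1}, \widehat{X_2})$ with $\widehat{X_i}$ a $dp_i$-lift of $X_i$, and then $d(F_1\oplus F_2)(\widehat{X_1},\widehat{X_2}) = dF_1(\widehat{X_1}) + dF_2(\widehat{X_2})$. Reading off the definition of $i_F$, this gives $i_{F_1\oplus F_2}(e_1,e_2) = \big(i_{F_1}(e_1), i_{F_2}(e_2)\big)$, i.e.\ $i_{F_1\oplus F_2} = i_{F_1}\times i_{F_2}$ under $T^{\ast}(B_1\times B_2)\cong T^{\ast}B_1\times T^{\ast}B_2$. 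Since a product of embeddings is an embedding, $F_1\oplus F_2$ is a generating function for $i_{F_1}(\Sigma_{F_1})\times i_{F_2}(\Sigma_{F_2}) = L_1 \times L_2$, as claimed. Alternatively, one can run the argument through the symplectic reduction picture recalled in this appendix: $N_E^{\ast}\cong N_{E_1}^{\ast}\times N_{E_2}^{\ast}$ and the reduction of $d(F_1\oplus F_2) = dF_1\times dF_2$ along it is the product of the reductions of $dF_1$ along $N_{E_1}^{\ast}$ and of $dF_2$ along $N_{E_2}^{\ast}$, namely $i_{F_1}$ and $i_{F_2}$.

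There is no genuine obstacle here — the entire content is the observation that passing to generating functions commutes with taking products. The only point requiring a moment's care is that the canonical symplectomorphism $T^{\ast}(B_1\times B_2)\cong T^{\ast}B_1\times T^{\ast}B_2$ is exactly the one under which $i_{F_1\oplus F_2}$ becomes the product map, and this is immediate once the cotangent coordinates are written out explicitly.
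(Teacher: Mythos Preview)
Your argument is correct and is precisely the ``immediate verification'' the paper alludes to; the paper itself leaves the proof of this lemma to the reader, and what you have written is exactly the routine check that the generating-function construction splits over products.
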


\begin{lemma}\labell{lemma: add dh}
Suppose that $L \subset T^{\ast}B$ has generating function 
$F \colon B \times \mathbb{R}^N \rightarrow \mathbb{R}$,
and consider a symplectomorphism $A_h$ of $T^{\ast}B$
of the form $A_h (q,p) = \big(q, p + dh(q)\big)$
for some function $h: B \rightarrow \mathbb{R}$.
Then $A_h(L) \subset T^{\ast}B$ has generating function $F + h$.
\end{lemma}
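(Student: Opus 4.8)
\textbf{Proof plan for Lemma \ref{lemma: add dh}.}

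The plan is to verify directly that the defining data of a generating function behave correctly under the transformation $F \mapsto F + h$. Write $\pi \colon B \times \mathbb{R}^N \to B$ for the projection; the fibre variable is $\nu \in \mathbb{R}^N$ and the base variable is $q \in B$. First I would observe that adding $h \circ \pi$ to $F$ does not change the vertical (fibre) derivative: $\frac{\partial (F + h)}{\partial \nu} = \frac{\partial F}{\partial \nu}$, since $h$ depends only on $q$. Hence $F + h$ satisfies the transversality condition (that $0$ is a regular value of the vertical derivative, equivalently that $d(F+h)$ is transverse to $N_E^{\ast}$) if and only if $F$ does, and the fibre critical sets coincide: $\Sigma_{F + h} = \Sigma_F$. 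This is the routine part.

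Next I would compare the two Lagrangian immersions $i_F$ and $i_{F+h}$ on this common domain $\Sigma_F = \Sigma_{F+h}$. Using the formula $i_F(\zeta,\nu) = \big(\zeta, \frac{\partial F}{\partial \zeta}(\zeta,\nu)\big)$ recalled in Section \ref{sec:generating functions} (here in the trivial-bundle description, with base coordinate $\zeta$), we get
$$
i_{F+h}(\zeta,\nu) = \Big(\zeta,\ \frac{\partial F}{\partial \zeta}(\zeta,\nu) + dh(\zeta)\Big) = A_h\big(i_F(\zeta,\nu)\big),
$$
because $\frac{\partial (h\circ\pi)}{\partial \zeta} = dh(\zeta)$. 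Thus $i_{F+h} = A_h \circ i_F$ as maps $\Sigma_F \to T^{\ast}B$. Since $A_h$ is a diffeomorphism of $T^{\ast}B$, the image of $i_{F+h}$ is exactly $A_h$ applied to the image of $i_F$, i.e. $A_h(L)$; and $i_{F+h}$ is an embedding precisely because $i_F$ is. Therefore $F + h$ is a generating function for $A_h(L)$, as claimed.

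There is essentially no hard part here: the statement is a bookkeeping check that the construction of $i_F$ from $F$ commutes with the fibrewise-affine shift $A_h$. The only point deserving a word is that one should do the verification in the chart/description actually in use — for us $B = \mathbb{R}^{2n}$ and $E = \mathbb{R}^{2n}\times\mathbb{R}^{2nN}$, so all derivatives are literal partial derivatives and the identity $i_{F+h} = A_h \circ i_F$ is an equality of maps $\mathbb{R}^{2n}\times\mathbb{R}^{2nN} \supset \Sigma_F \to T^{\ast}\mathbb{R}^{2n}$ — and to note that this also covers the conical/equivariant setting, since adding $h$ with $h$ homogeneous of degree $2$ and $\zk$-invariant (e.g. $h = $ the generating function of a shift coming from the Reeb flow, cf. the Hamilton--Jacobi argument in Proposition \ref{proposition: monotonicity of generating functions}) preserves all the regularity and invariance properties. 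I would state the proof in one or two sentences and leave these verifications to the reader, consistent with the style of the surrounding lemmas.
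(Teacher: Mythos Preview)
Your proof is correct and is exactly the direct verification the paper has in mind: the paper states this lemma together with Lemmas \ref{lemma: direct sum} and \ref{lemma: gf and reduction} as ``basic properties of generating functions whose verification is immediate and therefore left to the reader.'' Your check that $\partial_\nu(F+h)=\partial_\nu F$ (hence same transversality and same $\Sigma$) and that $i_{F+h}=A_h\circ i_F$ is precisely that immediate verification.
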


\begin{lemma}\labell{lemma: gf and reduction}
If a Lagrangian submanifold $L$ of $T^{\ast}(\mathbb{R}^{n} \times \mathbb{R}^{m})$
has a generating function
$F \colon (\mathbb{R}^{n} \times \mathbb{R}^{m}) \times \mathbb{R}^N \rightarrow \mathbb{R}$,
then the reduction $\overline{L} \subset T^{\ast}\mathbb{R}^{n}$ of $L$
with respect to the coisotropic submanifold 
$V = \mathbb{R}^{n} \times \mathbb{R}^{m} \times (\mathbb{R}^{n})^{\ast} \times 0$
of $T^{\ast}(\mathbb{R}^{n} \times \mathbb{R}^{m})$
has a generating function
$\overline{F} \colon \mathbb{R}^{n} \times (\mathbb{R}^{m} \times \mathbb{R}^N) \rightarrow \mathbb{R}$,
$\overline{F}(\zeta_1; \zeta_2,\nu) = F(\zeta_1,\zeta_2; \nu)$.
\end{lemma}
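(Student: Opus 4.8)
The plan is to unwind the definitions on both sides and observe that passing from $F$ to $\overline F$ is exactly reducing $L_F = L$ along $V$. First I would note that $V = \R^n \times \R^m \times (\R^n)^\ast \times 0$ is precisely the fibre conormal bundle $N^\ast_{\R^n\times\R^m}$ of the projection $\pi\colon \R^n \times \R^m \to \R^n$, so by the general fact recalled above its symplectic reduction is $T^\ast\R^n$, with reduction map $(\zeta_1, \zeta_2, p_1, 0) \mapsto (\zeta_1, p_1)$ in the coordinates $(\zeta_1, \zeta_2, p_1, p_2)$ on $T^\ast(\R^n\times\R^m)$ (this is the identification used in the statement). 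By definition of symplectic reduction,
\[
\overline L = \bigl\{\, (\zeta_1, p_1) \in T^\ast\R^n \ \big|\ (\zeta_1, \zeta_2, p_1, 0) \in L \text{ for some } \zeta_2 \in \R^m \,\bigr\}.
\]

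Next I would substitute $L = L_F$. By the trivial-bundle description of generating functions from Section~\ref{sec:generating functions}, $\Sigma_F = (\partial F/\partial\nu)^{-1}(0)$ and $i_F(\zeta_1, \zeta_2, \nu) = (\zeta_1, \zeta_2, \partial F/\partial\zeta_1, \partial F/\partial\zeta_2)$, so $(\zeta_1, \zeta_2, p_1, 0) \in L$ exactly when there is a $\nu$ with $\partial F/\partial\nu = \partial F/\partial\zeta_2 = 0$ and $p_1 = \partial F/\partial\zeta_1$ at $(\zeta_1, \zeta_2, \nu)$. On the other hand, regarding $\overline F(\zeta_1; \zeta_2, \nu) = F(\zeta_1, \zeta_2, \nu)$ as a function on the trivial bundle over $\R^n$ with fibre variable $(\zeta_2, \nu)$, its vertical derivative is $(\partial F/\partial\zeta_2, \partial F/\partial\nu)$, so $\Sigma_{\overline F} = \{\partial F/\partial\zeta_2 = 0,\ \partial F/\partial\nu = 0\}$ and $i_{\overline F}(\zeta_1; \zeta_2, \nu) = (\zeta_1, \partial F/\partial\zeta_1)$. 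Comparing the two computations shows that, under the tautological identification $\Sigma_{\overline F} = i_F^{-1}(L\cap V)$, the map $i_{\overline F}$ agrees with the reduction map $L\cap V \to V/{\sim} = T^\ast\R^n$; in particular $i_{\overline F}(\Sigma_{\overline F}) = \overline L$, so once $\overline F$ is known to be a generating function it generates $\overline L$ (an embedded Lagrangian exactly when $\overline L$ is one).

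It remains to verify that $\overline F$ is a generating function, i.e.\ that $0$ is a regular value of its vertical derivative $(\partial F/\partial\zeta_2, \partial F/\partial\nu)$. Here I would combine the standing hypothesis that $0$ is a regular value of $\partial F/\partial\nu$ with the transversality $L \pitchfork V$, which is implicit in the statement since it is what makes $\overline L$ a manifold: as $V$ is the zero locus of the coordinate function $p_2$ and $i_F^\ast p_2 = (\partial F/\partial\zeta_2)\big|_{\Sigma_F}$, the condition $L \pitchfork V$ says exactly that $0$ is a regular value of $(\partial F/\partial\zeta_2)\big|_{\Sigma_F}$; and the elementary fact that $0$ is a regular value of $(h, g)$ if and only if it is a regular value of $g$ and of $h\big|_{g^{-1}(0)}$, applied with $g = \partial F/\partial\nu$ and $h = \partial F/\partial\zeta_2$, then gives the claim.

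I expect no genuine obstacle: the content is the bookkeeping identity that reduction along $\{p_2 = 0\}$ corresponds to demoting the base variable $\zeta_2$ to a fibre variable (and, if one prefers, it can also be phrased as reduction-in-stages applied to $dF$, using that $L_F$ itself is the reduction of $dF$ along the fibre conormal bundle of $\R^n\times\R^m\times\R^N \to \R^n\times\R^m$). The only step needing a line of care is the last one, matching $L \pitchfork V$ with the regularity condition for $\overline F$.
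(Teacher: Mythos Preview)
Your proposal is correct. The paper does not actually prove this lemma: it introduces Lemmas~\ref{lemma: direct sum}, \ref{lemma: add dh}, and \ref{lemma: gf and reduction} with the sentence ``whose verification is immediate and therefore left to the reader,'' and gives no further argument. What you have written is precisely the immediate verification the paper has in mind, and your final paragraph also recovers the alternative viewpoint the paper sets up just before these lemmas (that $L_F$ is itself the reduction of $dF$ along the fibre conormal bundle, so the result can be read as reduction in stages).
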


Since $\Gamma_{\text{id}}$
has generating function $\mathbb{R}^{2n} \rightarrow \mathbb{R}$, $q \mapsto 0$,
Lemma \ref{lemma: direct sum} implies that the function 
$\mathbb{R}^{2n} \times \mathbb{R}^{2n} \times \mathbb{R}^{2n} \times  (\R^{2nN_1}\times \R^{2nN_2})
\rightarrow \mathbb{R}$ defined by
$$
(q, \zeta_1, \zeta_2;\nu_1, \nu_2) \mapsto F_1 (\zeta_1, \nu_1) + F_2 (\zeta_2, \nu_2)
$$
is a generating function for 
$\Gamma_{\text{id}} \times \Gamma_{\Phi^{(1)}} \times \Gamma_{\Phi^{(2)}} 
\subset 
T^{\ast} (\mathbb{R}^{2n} \times \mathbb{R}^{2n} \times \mathbb{R}^{2n})$. 
By applying Lemma \ref{lemma: add dh} with
$$
h \colon \mathbb{R}^{2n} \times \mathbb{R}^{2n}  \times \mathbb{R}^{2n} \rightarrow \mathbb{R},
\ \ h(q,\zeta_1,\zeta_2) = - 2 \left< \zeta_2 - q, i(\zeta_1 - q) \right>
$$
we obtain that the function 
$\mathbb{R}^{2n} \times \mathbb{R}^{2n} \times \mathbb{R}^{2n} \times  (\R^{2nN_1}\times \R^{2nN_2})
\rightarrow \mathbb{R}$
defined by
\begin{equation}\labell{equation: interpretation as symplectic reduction}
(q,\zeta_1,\zeta_2;\nu_1, \nu_2) \mapsto 
F_1(\zeta_1, \nu_1) + F_2(\zeta_2, \nu_2) - 2 \left<\zeta_2 - q,i(\zeta_1-q)\right>
\end{equation}
is a generating function for
$A_h \big( \Gamma_{\text{id}} \times \Gamma_{\Phi^{(1)}} \times \Gamma_{\Phi^{(2)}}\big)
\subset  T^{\ast} (\mathbb{R}^{2n} \times \mathbb{R}^{2n}  \times \mathbb{R}^{2n})$.
The function \eqref{equation: interpretation as symplectic reduction}
is equal to $F_1 \, \sharp \, F_2$, 
except that in the latter $\zeta_1$ and $\zeta_2$ are fibre variables. 
Thus, it follows from Lemma \ref{lemma: gf and reduction}
that $F_1 \, \sharp \, F_2$ is a generating function for the reduction of 
$$
L := A_h \big( \Gamma_{\text{id}} \times \Gamma_{\Phi^{(1)}} \times \Gamma_{\Phi^{(2)}}\big)
\subset  T^{\ast} (\mathbb{R}^{2n} \times \mathbb{R}^{2n}  \times \mathbb{R}^{2n})
$$
along the coisotropic submanifold
$$
V = \mathbb{R}^{2n} \times \mathbb{R}^{2n}  \times \mathbb{R}^{2n} \times (\R^{2n})^* \times \, \{0\}  \times \, \{0\}
$$
of $T^*(\mathbb{R}^{2n} \times \mathbb{R}^{2n}  \times \mathbb{R}^{2n})$.
We are left to prove that such reduction 
is equal to $\Gamma_{\Phi}$.
Observe that the reduction $V \rightarrow V / \hspace{-0.1cm}\sim$ sends a point
$(q,\zeta_1,\zeta_2,\xi,0,0)$ to $(q,\xi)$.
We have
\begin{align*}
L &= A_h \big(\Gamma_{\text{id}} \times \Gamma_{\Phi^{(1)}} \times \Gamma_{\Phi^{(2)}}\big) \\
&= A_h \Big( \Big\{ \Big(q,\frac{z_1+\Phi^{(1)}(z_1)}{2},\frac{z_2+\Phi^{(2)}(z_2)}{2}, 
0, i\big(z_1-\Phi^{(1)}(z_1)\big), i \big(z_2-\Phi^{(2)}(z_2)\,\big)\Big)\Big\} \Big)\\
&= \Big\{ \Big(q,\frac{z_1+\Phi^{(1)}(z_1)}{2}, \frac{z_2 + \Phi^{(2)}(z_2)}{2}, 0, i(z_1-\Phi^{(1)}(z_1)),i(z_2-\Phi^{(2)}(z_2)\,\big)\Big) \\
&\quad \quad + dh \big(q,\frac{z_1+\Phi^{(1)}(z_1)}{2},\frac{z_2 + \Phi^{(2)}(z_2)}{2}\big)\Big\}\\
&= \Big\{ \Big(q,\frac{z_1+ \Phi^{(1)}(z_1)}{2},\frac{z_2 + \Phi^{(2)}(z_2)}{2}, 
i \big(z_1 + \Phi^{(1)}(z_1) - z_2 - \Phi^{(2)}(z_2)\big), \\
&\quad \quad i \big(z_1 - \Phi^{(1)}(z_1) + z_2 + \Phi^{(2)}(z_2) - 2q\big),
i(z_2 - \Phi^{(2)}(z_2) - z_1 - \Phi^{(1)}(z_1) + 2q\,\big)\Big)\Big\}\,.
\end{align*}

The intersection $L \cap V$ is given by the points  in the above set that satisfy
\begin{displaymath}
\begin{cases}
2 q = z_1 - \Phi^{(1)}(z_1) + z_2 + \Phi^{(2)}(z_2) \\
2 q = z_1 + \Phi^{(1)}(z_1) - z_2 + \Phi^{(2)}(z_2)
\end{cases}
\end{displaymath}
hence $z_2 = \Phi^{(1)}(z_1)$
and $q = \frac{z_1 + \Phi^{(2)} (z_2)}{2} = \frac{z_1+ \Phi(z_1)}{2}$.
The reduction is thus given by
\begin{align*}
(L\cap V) / \sim &= \Big\{ \Big(q, i \big(z_1 + \Phi^{(1)}(z_1) - z_2 - \Phi^{(2)}(z_2)\big)\Big)
\text{ with } z_2 = \Phi^{(1)}(z_1) \text{ and } q = \frac{z_1+ \Phi(z_1)}{2} \Big\}\\
&= \Big\{ \Big(\frac{z_1 + \Phi (z_1)}{2}, i \big(z_1 - \Phi(z_1)\big)\Big) \Big\}
= \Gamma_{\Phi}
\end{align*}
as we wanted.

\subsection*{Second interpretation}

The second alternative proof
of Proposition \ref{p:composition formula}
that we discuss
uses symplectic reduction at the level of graphs 
and is based on the fact, immediate to verify,
that the function 
$$
h \colon \mathbb{R}^{2n} \times \mathbb{R}^{2n}  \times \mathbb{R}^{2n} \rightarrow \mathbb{R} 
 \; , \quad 
h(q,\zeta_1,\zeta_2) = - 2 \left< \zeta_2 - q, i(\zeta_1 - q) \right>
$$
is a generating function for the symplectomorphism
$$
\sigma \colon \mathbb{R}^{2n} \times \mathbb{R}^{2n}  \times \mathbb{R}^{2n}
\rightarrow \mathbb{R}^{2n} \times \mathbb{R}^{2n}  \times \mathbb{R}^{2n} \; , \quad
\sigma(z_0,z_1,z_2) = (z_2,z_0,z_1) \,.
$$
Proposition \ref{p:composition formula} 
can be deduced from this fact as follows.
For simplicity of notation we assume that the generating functions 
of $\Phi^{(1)}$ and $\Phi^{(2)}$ have no fibre variables
(the general case does not present any additional difficulty).
Suppose thus that $\Phi^{(1)}$ and $\Phi^{(2)}$ are Hamiltonian symplectomorphisms
of $\mathbb{R}^{2n}$ with generating functions 
$F_1 \colon \mathbb{R}^{2n} \rightarrow \mathbb{R}$ 
and $F_2 \colon \mathbb{R}^{2n} \rightarrow \mathbb{R}$ respectively,
and consider the function 
$F = F_1 \, \sharp \, F_2 \colon \mathbb{R}^{2n} \times (\mathbb{R}^{2n} \times \mathbb{R}^{2n}) \rightarrow \mathbb{R}$,
$F (q; \zeta_1, \zeta_2) 
= F_1 (\zeta_1) + F_2 (\zeta_2) + h(q,\zeta_1,\zeta_2)$.
Denote the coordinates of $T^{\ast} (\R^{2n} \times \R^{2n} \times \R^{2n})$
by $(q,\zeta_1,\zeta_2;p_0,p_1,p_2)$,
and recall that, by the definition of generating function,
the Lagrangian submanifold $L_F$ of $T^{\ast} \R^{2n}$ generated by $F$
is the symplectic reduction of $dF \subset T^{\ast} (\R^{2n} \times \R^{2n} \times \R^{2n})$
with respect to the fibre conormal bundle 
$V = \{\, p_1 = p_2 = 0\,\}$.
The submanifold
$$
V_{\Phi} := \Big\{ \big(q,\zeta_1,\zeta_2,p_0,- \frac{\partial F_1}{\partial \zeta_1} (\zeta_1),- \frac{\partial F_2}{\partial \zeta_2} (\zeta_2)\big) \Big\}
$$
is also coisotropic
(it is $T^*(\R^{2n}) \times (-dF_1) \times (-dF_2)$
hence the product of one symplectic and two Lagrangian factors).
The projection $V_\Phi \to V_\Phi\, /  \hspace{-0.05 cm}\sim$ forgets the Lagrangian factors
and is therefore given by 
$(q,\zeta_1,\zeta_2,p_0,p_1,p_2) \mapsto (q,p_0)$.

The plane $dh$ intersects $V_\Phi$ at the points defined by the conditions
$$
p_0 = 2i (\zeta_1- \zeta_2); \quad \quad -\frac{\partial F_1}{\partial \zeta_1} = 2i(\zeta_2-q);
\quad \quad - \frac{\partial F_2}{\partial \zeta_2} = - 2i (\zeta_1 - q) \,.
$$
These are equivalent to the conditions that
$(q,\zeta_1,\zeta_2)$ is a fibre critical point and $p_0 = \frac{\partial F}{\partial q}$.
It follows that $L_F$ also equals the symplectic reduction of $dh$ with respect to $V_\Phi$.

Since $\tau^{-1}(dh) = \text{gr}(\sigma)$,
our problem is reduced to proving that the reduction of 
$\text{gr}(\sigma)$ along
$$
\tau^{-1} (V_{\Phi}) = 
\big\{\, (z_0,z_1,z_2; Z_0,Z_1,Z_2)  \; \lvert \; 
Z_1 = (\Phi^{(1)})^{-1}(z_1) \text{ and } Z_2 = (\Phi^{(2)})^{-1}(z_2) \, \big\}
$$
is equal to the graph of $\Phi$.
But, the projection
$\tau^{-1} (V_{\Phi}) \rightarrow \tau^{-1} (V_{\Phi}) \, /  \hspace{-0.05 cm}\sim$ is given by
$$
(z_0,z_1,z_2; Z_0,Z_1,Z_2) \mapsto (z_0,Z_0)
$$
and $\text{gr}(\sigma) \cap \tau^{-1} (V_{\Phi})$ is the set of points
$(z_0,z_1,z_2,z_2,z_0,z_1)$ such that
$z_0 = (\Phi^{(1)})^{-1}(z_1)$ and $z_1 = (\Phi^{(1)})^{-1}(z_2)$.
The projection sends such a point to
$(z_0,z_2) = \big(z_0, \Phi(z_0)\big)$.

\subsection*{Generalization to any even number of factors}

The second interpretation of Proposition \ref{p:composition formula}
in terms of symplectic reduction permits to easily generalize the composition formula 
to the case of any even number of factors
(obtaining an alternative proof 
of Proposition \ref{existence of generating functions}).
Assume that $N$ is even,
and consider the symplectomorphism $\sigma$ of  $\R^{2n} \times (\mathbb{R}^{2n})^N$
defined by
$$
\sigma (z_0,z_1,\cdots,z_N) = (z_N, z_0, z_1, \cdots, z_{N-1})\,.
$$
A straightforward calculation shows that
the function $h \colon \R^{2n} \times \R^{2nN} \to \R$ given by
$$ 
h (q,\zeta_1,\ldots,\zeta_N) = 
    2 \sum_{1 \leq j \leq N} (-1)^{j} \left< \zeta_{j} \ , \ iq \right>
 + 2 \sum_{1 \leq j < \ell \leq N} (-1)^{j+\ell-1} \left< \zeta_j , i \zeta_{\ell} \right> 
 $$
is a generating function for $\sigma$.

\begin{Proposition}
\labell{app proposition: construction of generating functions}
Suppose that, for each $j = 1 , \ldots , N$,
$\Phi^{(j)}$ is a Hamiltonian diffeomorphism of $\R^{2n}$
with generating function $F_{j} \colon \R^{2n} \to \R$. 
Then the function
$ F \colon \R^{2n} \times \R^{2nN} \to \R$
defined by 
$$
F (q ; \zeta_1,\ldots,\zeta_N) = F_1(\zeta_1) + \ldots + F_N(\zeta_N) + h (q, \zeta_1,\ldots,\zeta_N)
$$
is a generating function for the composition
$\Phi = \Phi^{(N)} \circ \cdots \circ \Phi^{(1)}$.
\end{Proposition}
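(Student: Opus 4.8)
The plan is to carry over the \emph{second interpretation} of Proposition \ref{p:composition formula} given above, now with $N+1$ factors. First I would check that $F$ really is a generating function, i.e.\ that $dF\colon\R^{2n}\times\R^{2nN}\to T^{\ast}(\R^{2n}\times\R^{2nN})$ is transverse to the fibre conormal bundle $V=\{p_1=\cdots=p_N=0\}$; since $\frac{\del F}{\del\zeta_j}=\frac{\del F_j}{\del\zeta_j}(\zeta_j)+\frac{\del h}{\del\zeta_j}(q,\zeta_1,\ldots,\zeta_N)$ with $\frac{\del h}{\del\zeta_j}$ linear and with all the off-diagonal second derivatives $\frac{\del^2 h}{\del\zeta_j\del\zeta_\ell}$ and $\frac{\del^2 h}{\del\zeta_j\del q}$ invertible, this is the same elementary verification as in Step~2 of the proof of Proposition \ref{p:composition formula} (using that each $F_j$ is a generating function, so that the analogues of the matrices $M_j$ there are invertible, which forces $0$ to be a regular value of the vertical derivative of $F$). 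Once this is known, $L_F=i_F(\Sigma_F)$ is by definition the symplectic reduction of $dF\subset T^{\ast}(\R^{2n}\times\R^{2nN})$ along $V$, and it remains to identify this reduction with $\Gamma_\Phi$.

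Next I would use that $F=h+\sum_{j=1}^N F_j(\zeta_j)$ and that the second summand depends only on the fibre variables $\zeta=(\zeta_1,\ldots,\zeta_N)$. Writing $T$ for the fibrewise shift $T(q,\zeta;p_0,p_1,\ldots,p_N)=(q,\zeta;p_0,\,p_1+\frac{\del F_1}{\del\zeta_1}(\zeta_1),\ldots,p_N+\frac{\del F_N}{\del\zeta_N}(\zeta_N))$, which is a symplectomorphism of $T^{\ast}(\R^{2n}\times\R^{2nN})$ fixing $q$ and $p_0$, we have $dF=T(dh)$. Hence the reduction of $dF$ along $V$ equals the reduction of $dh$ along the coisotropic submanifold $V_\Phi:=T^{-1}(V)=\{\,p_j=-\frac{\del F_j}{\del\zeta_j}(\zeta_j)\ \text{for}\ j=1,\ldots,N\,\}$, whose reduction projection is $(q,\zeta,p_0,p_1,\ldots,p_N)\mapsto(q,p_0)$. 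This is the exact analogue of the $N=2$ step above, so I expect it to be routine.

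Then I would transport everything through the diagonal action of the symplectomorphism $\tau$ of \eqref{e:identification}. By the stated fact that $h$ generates $\sigma$ we get $\tau^{-1}(dh)=\gr(\sigma)=\{(z_0,z_1,\ldots,z_N;\,z_N,z_0,z_1,\ldots,z_{N-1})\}$, and since $\Gamma_{\Phi^{(j)}}=dF_j$ the minus sign in $V_\Phi$ turns $\Phi^{(j)}$ into its inverse, so that $\tau^{-1}(V_\Phi)=\{(z_0,\ldots,z_N;Z_0,\ldots,Z_N)\mid Z_j=(\Phi^{(j)})^{-1}(z_j),\ j=1,\ldots,N\}$, with reduction projection $(z_\bullet;Z_\bullet)\mapsto(z_0,Z_0)$. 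On $\gr(\sigma)\cap\tau^{-1}(V_\Phi)$ one reads off $z_{j-1}=Z_j=(\Phi^{(j)})^{-1}(z_j)$, i.e.\ $z_j=\Phi^{(j)}(z_{j-1})$, so that $Z_0=z_N=\Phi^{(N)}\circ\cdots\circ\Phi^{(1)}(z_0)=\Phi(z_0)$; the image of the reduction projection is therefore $\gr(\Phi)$, and applying $\tau$ gives $L_F=\tau(\gr(\Phi))=\Gamma_\Phi$, as wanted.

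The one genuine input is the fact — quoted above as a ``straightforward calculation'' — that $h$ is a generating function for $\sigma$; I expect this to be the main bookkeeping step, and it is where the hypothesis that $N$ is even enters, since it is precisely what makes the alternating signs in $h$ produce the cyclic shift $\sigma$ itself rather than a shift composed with an extra reflection. The remaining points (cleanness of the intersection $\gr(\sigma)\cap\tau^{-1}(V_\Phi)$, hence that the reduction is an honest submanifold) are automatic from the transversality established in the first step. Finally, to also recover the alternative proof of Proposition \ref{existence of generating functions}, one allows the $\Phi^{(j)}$ to have generating functions $F_j$ with fibre variables $\nu_j$: these are carried along unchanged as extra fibre variables of $F$, exactly as in Proposition \ref{p:composition formula}, and the homogeneity and $\zk$-equivariance considerations of Section \ref{sec:generating functions} then yield the corresponding conical family.
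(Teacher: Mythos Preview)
Your proposal is correct and follows essentially the same route as the paper's proof: both identify $L_F$ with the reduction of $dh$ along the coisotropic $V_\Phi=\{p_j=-\frac{\del F_j}{\del\zeta_j}(\zeta_j)\}$, transport via $\tau$ to the reduction of $\gr(\sigma)$ along $\tau^{-1}(V_\Phi)$, and then read off $z_{j-1}=(\Phi^{(j)})^{-1}(z_j)$ on the intersection to conclude that the projection lands on $\gr(\Phi)$. You supply a bit more detail than the paper (the explicit transversality check and the shift map $T$ justifying the passage from $V$ to $V_\Phi$), but the argument is the same.
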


\begin{proof}
Denote the coordinates on $T^{\ast} (\R^{2n} \times \R^{2nN})$
by $(q,\zeta_1, \cdots, \zeta_N ; p_0, p_1, \cdots, p_N)$.
The Lagrangian submanifold $L_F$ of $T^{\ast} \R^{2n}$ generated by $F$
is the symplectic reduction of 
$dh \subset T^{\ast} (\R^{2n} \times \R^{2nN})$
with respect to the coisotropic submanifold
$$
V_{\Phi} = \big\{\, p_1 = - \frac{\partial F_1}{\partial \zeta_1} (\zeta_1) \, , \, \cdots \, , \, 
p_N = - \frac{\partial F_N}{\partial \zeta_N} (\zeta_N)\, \big\} \,.
$$
Since
$\tau^{-1}(dh) = \text{gr}(\sigma)$,
our problem is reduced to proving that the reduction of 
$\text{gr}(\sigma)$ along
$$
\tau^{-1} (V_{\Phi}) = 
\{\, (z_0,z_1,\cdots,z_N; Z_0,Z_1,\cdots,Z_N) \; \lvert \; 
Z_1 = (\Phi^{(1)})^{-1}(z_1), \cdots, Z_N = (\Phi^{(N)})^{-1}(z_N) \,\}
$$
is equal to the graph of $\Phi$.
But, the projection
$\tau^{-1} (V_{\Phi}) \rightarrow \tau^{-1} (V_{\Phi}) \, / \hspace{-0.05cm}\sim$ is given by
$$
(z_0,z_1, \cdots,z_N; Z_0,Z_1,\cdots, Z_N) \mapsto (z_0,Z_0)
$$
and $\text{gr}(\sigma) \cap \tau^{-1} (V_{\Phi})$ is the set of points
$(z_0,z_1, \cdots,z_N, z_N,z_0,z_1,\cdots,z_{N-1})$ such that
$z_{j-1} = (\Phi^{(j)})^{-1}(z_j)$ for $j=1, \cdots, N$.
The projection sends such a point to 
$\big(z_0, \Phi(z_0)\big)$.
\end{proof}

\begin{rmk}\labell{remark: cf Givental}
In the case of $\RP^{2n-1}$ Givental does not use directly
the generating function given by Proposition 
\ref{app proposition: construction of generating functions}. 
Instead he studies a path $- \Phi_t$ starting at $- \text{id}$
by looking at a family of generating functions $F_t$
of the path $\Phi_t$ (starting at the identity).
For fibre critical points of $F_t$ we have $q = \frac{z_1 + \Phi_t (z_1)}{2}$.
Thus critical points of the restriction of $F_t$ to the fibre over $q=0$
correspond to fixed points of $- \Phi_t$.
So, instead of looking at the whole function $F_t$,
Givental only considers
the restriction of $F_t$ to the fibre over $q = 0$.
\eor
\end{rmk}

\subsection*{Relation with the method of broken trajectories}

We now discuss the relation between
the composition formula of
Proposition \ref{p:composition formula}
and the construction of generating functions
via the method of broken trajectories,
due to Chaperon, Laudenbach and Sikorav
\cite{Ch1, LS, Sikorav 1, Sikorav 2}.

The method of broken trajectories
is used to construct generating functions
for Lagrangian submanifolds of a cotangent bundle $T^{\ast}B$
that are Hamiltonian isotopic to the zero section.
The idea is to first interpret the symplectic action functional 
on a space of paths in $T^{\ast}B$
as a generating function with infinite dimensional domain,
and then to construct a finite dimensional approximation.
Recall that the symplectic action functional
associated to a time-dependent Hamiltonian function $H_t$
on an exact symplectic manifold $(W,\omega = - d\lambda)$
is the functional $\mathcal{A}_H$
on the space of paths $\gamma \colon [0,1] \rightarrow W$
which is defined by
$$
\mathcal{A}_H(\gamma) =
\int_{0}^{1} \lambda \Big(\frac{\partial \gamma}{\partial t} \Big) - H_t\,\big(\gamma(t)\big)\:dt\,.
$$
A path $\gamma$ is a critical point of $\mathcal{A}_H$ 
with respect to variations with fixed endpoints
if and only if it is a trajectory of the Hamiltonian flow of $H_t$. 
Consider now the case where $W$
is a cotangent bundle $T^{\ast}B$.
Let $E$ be the space of paths $\gamma \colon [0,1] \rightarrow T^{\ast}B$
that begin at the zero section,
and see it as the total space of a fibre bundle over $B$
with projection $p \colon E \rightarrow B$ given by
$p(\gamma) = \pi\big(\gamma(1)\big)$,
where $\pi$ is the projection of $T^{\ast}B$ into $B$. 
Given a time-dependent Hamiltonian function $H_t: T^{\ast}B \rightarrow \mathbb{R}$,
consider the functional $F \colon E \rightarrow \mathbb{R}$ defined by 
$$
F(\gamma) = \mathcal{A}_H(\gamma)\,.
$$
The fibre critical points of $F \colon E \rightarrow \mathbb{R}$
are the trajectories of the Hamiltonian flow of $H_t$,
and the covector $v^{\ast}(\gamma)$
associated to a fibre critical point $\gamma$ is $\gamma(1)$.
Thus, $F$ generates the image of the  zero section
by the time-1 map of the Hamiltonian flow of $H_t$.
Although $F$ is not a generating function in the usual sense,
because its domain is infinite dimensional,
a finite dimensional reduction can be obtained as follows.
Let $N$ be an integer.
Consider the direct sum
$\bigoplus_{i=1}^{N-1} TB \oplus T^{\ast}B$,
and denote its elements by expressions of the form $e = (q,X,P)$,
where $q$ is a point of $B$,
$X = (X_1,\cdots,X_{N-1})$ is an $(N-1)$-tuple of vectors $X_j \in T_qB$
and $P = (P_1,\cdots,P_{N-1})$ is
an $(N-1)$-tuple of covectors $P_j \in T_{q}^{\ast}B$. 
Let $\mathcal{U}$ be a neighborhood of the zero section of $TB$,
and consider the subspace $E_N$ of $\bigoplus_{i=1}^{N-1} TB \oplus T^{\ast}B$
that is formed by those elements $e = (q,X,P)$
such that all $X_j$ belong to $\mathcal{U}$.
If $\mathcal{U}$ is sufficiently small then
an element $e = (q,X,P)$ of $E_N$ can be interpreted
as a broken Hamiltonian trajectory of $H_t$,
with $N$ smooth pieces and $N-1$ jumps, as follows.
The first smooth piece $\gamma_1$ is obtained
by following the Hamiltonian flow
of $H_t$ for $t\in[0,\frac{1}{N}]$
from the point $(q,0)$
to a point of $T^{\ast}B$ that we denote by $\eta_1^{-}$. 
The second smooth piece $\gamma_2$
starts from a point $\eta_1^{+}$,
which is uniquely determined 
by $\eta_1^{-}$, $X_1$ and $P_1$ in a way that we describe later,
and follows the flow of $H_t$ for $t\in[\frac{1}{N}, \frac{2}{N}]$
to a point $\eta_2^-$.
We continue in this way to obtain the whole broken trajectory. 
In order to describe the jumps we fix a Riemannian metric on $B$, 
and consider the associated Levi--Civita connections on $TB$ and $T^{\ast}B$. 
The point $\eta_1^{+} = (q_1^{+},p_1^{+})$ is determined 
by $\eta_1^{-} = (q_1^{-},p_1^{-})$, $X_1$
and $P_1$ in the following way. 
Denote by $\overline{X}_1\in T_{q_1^{-}}B$ 
and $\overline{P}_1\in T_{q_1^{-}}^{\phantom{q_1}\ast}B$
the vector and the covector obtained by parallel transport
of $X_1$ and $P_1$
along the projection to $B$ of the path $\gamma_1$.
Since $X_1$ is in $\mathcal{U}$,
which is assumed to be sufficiently small,
we can then define $q_1^{+} = \text{exp}_{q_1^{-}} (\overline{X}_1)$ and 
$p_1^{+} = \big( (d \, \text{exp}_{q_1^-}\lvert_{\overline{X}_1})^*\big)^{-1} (P_1)$. 
The other jumps are defined similarly.
Consider the projection $p \colon E_N\rightarrow B$
that sends a point $e = (q,X,P)$
to the projection $q_N^-$ to $B$ of the endpoint $\eta_N^-$ 
of the broken Hamiltonian trajectory associated to $e$. 
Define a function $F \colon E_N \rightarrow \mathbb{R}$ by
\begin{equation}\label{e: F}
F(e) = \sum_{j = 1}^{N} \mathcal{A}_H (\gamma_j) + \sum_{j = 1}^{N-1} P_j (X_j)\,.
\end{equation}
Denote the flow of $H_t$ by $\{\varphi_t\}_{t \in [0,1]}$,
and assume that, for all $j = 1$, $\cdots$, $N$,
the symplectomorphism $\varphi_{\frac{j}{N}} \circ (\varphi_{\frac{j-1}{N}})^{-1}$
is sufficiently $\mathcal{C}^1$-small.
Then \cite{Sikorav 1} the fibre critical points of $F$ are the unbroken trajectories,
and the covector $v^{\ast}(e)$
associated to a fibre critical point $e$ is given by $v^{\ast}(e) = p_N^-$.
Thus, $F \colon E_N \rightarrow \mathbb{R}$ is a generating function 
for the image of the zero section by $\varphi_1$. 

If $\varphi_1$ is already sufficiently $\mathcal{C}^1$-small
then the above construction (for $N = 1$) reduces to the following.
The space $E_1$ can be identified with $B$,
by associating to a point $q$ of $B$
the Hamiltonian trajectory $\gamma_{q}$ of $H_t$ starting at $q$.
We see $E_1$ as the total space of a fibre bundle over $B$
by the diffeomorphism
\begin{equation}\labell{equation: fibre bundle one piece}
E_1 \rightarrow B \quad , \quad q \mapsto \pi \big(\gamma_{q}(1)\big)\,.
\end{equation}
Then the function $F \colon E_1 \rightarrow \mathbb{R}$, $F(q) = \mathcal{A}_H(\gamma_{q})$
is a generating function,
with respect to the projection \eqref{equation: fibre bundle one piece},
for the image of the zero section by $\varphi_1$.
In other words,
the function on $B$ obtained by precomposing $F$
with the inverse of \eqref{equation: fibre bundle one piece}
is a generating function
(with respect to the projection $B \rightarrow B$
given by the identity)
for the image of the zero section by $\varphi_1$.

Returning to a general $N$,
in the case when $B$ is $\mathbb{R}^n$ with the Euclidean metric
we can identify $E_N$ with the product
$\mathbb{R}^n \times (\mathbb{R}^n)^{N-1} \times (\mathbb{R}^n)^{N-1}$.
For an element 
$e = (q,X_1,\ldots,X_{N-1},P_1,\ldots,P_{N-1})$ 
we then have $q_j^+ = q_j^- + X_j$, $p_j^+ = P_j$
and $P_j(X_j) = \langle P_j,X_j \rangle$.

We now discuss how the composition formula
of Proposition \ref{p:composition formula}
is related to this construction.
We are interested in Lagrangians of $T^{\ast}\mathbb{R}^{2n}$
of the form $\Gamma_{\Phi} = \tau \big(\text{gr} (\Phi)\big)$,
where
$\tau \colon \overline{\mathbb{R}^{2n}} \times \mathbb{R}^{2n} \rightarrow T^{\ast}\mathbb{R}^{2n}$
is the identification \eqref{e:identification}.
Any such Lagrangian is Hamiltonian isotopic to the zero section
by the Hamiltonian isotopy of $T^{\ast}\mathbb{R}^{2n}$
that corresponds under $\tau$
to a Hamiltonian isotopy of the form $\text{id} \times \Phi_t$.
The Hamiltonian function thus satisfies
\begin{equation}\label{e: H}
H_t(\tau(z,Z)) = H_t \big(\tau(z+a,Z)\big) =
H_t \big(\tau(z,Z) + (\tfrac a 2,ia)\big) \quad \text{ for all } a \in \R^{2n} \equiv \mathbb{C}^n\,.
\end{equation}
Suppose now that the flow $\{\Psi_t\}_{t \in [0,1]}$ of $H_t$
is the concatenation of pieces
$\{\Psi_t\}_{t \in [\frac{j-1}{N},\frac{j}{N}]}$
so that each $\Psi^{(j)} := \Psi_{\frac{j}{N}} \circ {\Psi_{\frac{j-1}{N}}}^{-1}$
has a generating function $F_j: \mathbb{R}^{2n} \rightarrow \mathbb{R}$.
Using \eqref{e: H} we can relate the action terms in \eqref{e: F}
with the action of Hamiltonian trajectories
starting at the zero section,
and thus with the functions $F_j$.
Observe first that $\mathcal{A}_H(\gamma_1) = F_1 (\zeta_1)$
where $\zeta_1 := q_1^- = \frac{q + \Psi^{(1)}(q)}{2}$.
For $j = 2, \cdots, N$ set
$$
\widetilde{\gamma}_{j}(t) = \gamma_{j}(t) + (\tfrac{iP_{j-1}}{2}, - P_{j-1}) \,.
$$
The $\widetilde{\gamma}_{j}$ are Hamiltonian trajectories
starting at $(q_{j-1}^{-} + X_{j-1} + \frac{i P_{j-1}}{2}, 0)$,
and
$$
\mathcal A_H (\widetilde{\gamma}_{j}) - \mathcal A_H(\gamma_{j})
= \int_0^1 \lambda \Big(\frac{\partial \widetilde{\gamma}_{j}}{\partial t}\Big) - \lambda \Big(\frac{\partial \gamma_{j}}{\partial t}\Big) \, dt
= - \, \langle P_{j-1} \,, \, q_{j}^- - (q_{j-1}^- + X_{j-1}) \rangle \,.
$$
Set
$$
\zeta_j = \frac{u_j + \Psi^{(j)}(u_j)}{2}
$$
where
$u_j = q_{j-1}^- + X_{j-1} + \frac{i P_{j-1}}{2}$.
Then $\mathcal{A}_H (\widetilde{\gamma}_j) = F_j (\zeta_j)$,
and so the function \eqref{e: F} reduces to
\begin{align*}
F(e) &= F_1 (\zeta_1) + \cdots + F_N (\zeta_N)
+ \sum_{j = 2}^{N} \, \langle P_{j-1} \,,\,  q_{j}^- - (q_{j-1}^- + X_{j-1}) \rangle
+ \sum_{j = 1}^{N-1} \, \langle P_j, X_j \rangle \\
&= F_1 (\zeta_1) + \cdots + F_N (\zeta_N)
+ \sum_{j=1}^{N-1} \, \langle P_j, q_{j+1}^- - q_j^- \rangle \,.
\end{align*}
Set $q_j := q_{j+1}^- = \zeta_{j+1} - \frac{i P_j}{2}$
for $j=1$, $\cdots$, $N-1$,
and consider the change of variables
$$
e = (q, X_1,\cdots,X_{N-1},P_1,\cdots,P_{N-1})
\mapsto (\zeta_1, \cdots,\zeta_N,q_1,\cdots,q_{N-1}) \,.
$$
Then 
\begin{align*}
\sum_{j=1}^{N-1} \, \langle P_j, q_{j+1}^- - q_j^- \rangle
&= \langle -2i (\zeta_2 - q_1), q_1 - \zeta_1\rangle
+ \sum_{j=2}^{N-1} \, \langle -2i (\zeta_{j+1} - q_j), q_j - q_{j-1} \rangle\\
&= -2 \, \langle \zeta_2 - q_1, i(q_1 - \zeta_1) \rangle
-2 \, \sum_{j = 2}^{N-1} \, \langle \zeta_{j+1} - q_j, i(q_j - q_{j-1}) \rangle
\end{align*}
and so the function \eqref{e: F}
reduces to the function
$\big(\big( (F_1 \, \sharp \, F_2) \, \sharp \ldots \big)\, \sharp \, F_N \big)$
obtained  by iteratively applying the composition formula
of Proposition \ref{p:composition formula}.


\section{The homology join}
\labell{section: additivity under join}

Recall that the $\Z_k$-join $A \ast_{\Z_k} B$  of subsets $A$ of $L_k^{2M-1}(\underline w)$ 
and $B$ of $L_k^{2M'-1}(\underline w')$ 
is the subset of $L_k^{2(M+M')-1}(\underline w, \underline w')$
defined by~\eqref{join1}.
In this section we complete the proof of the join quasi-additivity property
of the cohomological index 
(Part (v) of Proposition \ref{proposition: cohomological index})
by proving the following lower bounds on the index
of the equivariant join: if $A$ and $B$ are closed, then
\begin{equation} \labell{lower bounds}
\text{ind} (A \ast_{\zk} B) \geq \begin{cases} 
\text{ind} (A) + \text{ind}(B) & \text{ if at least one of the indices is even} \\
\text{ind} (A) + \text{ind}(B)-1 & \text{ if both indices are odd} .
\end{cases}
\end{equation}

To prove this lower bound
we develop an equivariant join operation on homology.

The join stability property
of Proposition \ref{proposition: cohomological index}(v),
\begin{equation} \labell{equation: js in app}
\ind \big(A \ast_{\mathbb{Z}_k} L_k^{2K-1}(\ul{w}')\big) 
 = \ind (A) + 2K \,,
\end{equation}
is a special case of the join quasi-additivity property,
but since several of our applications
of the non-linear Maslov index only need this special case,
we also give a short direct proof of it.

\begin{Remark} \labell{open sets}
By continuity of the index 
(Proposition \ref{proposition: cohomological index}(i)),
and since for every neighborhood $\calO$ of $A*_{\zk}B$
there exist neighborhoods $U$ of $A$ and $V$ of $B$
such that $\calO$ contains $U *_{\zk} V$,
it is enough to prove \eqref{lower bounds}
and \eqref{equation: js in app}
when $A$ and $B$ are \emph{open} subsets of lens spaces.
\end{Remark}

\begin{lemma} \labell{ind in homology}
Let $A$ be an open subset of a lens space $L_k^{2M-1}(\ul{w})$.
Then the map 
$$
\iota_* \colon H_j(A; \zk) \to H_j \big(L_k^{2M-1}(\ul{w}); \zk\big)
$$
that is induced by the inclusion 
$\iota \colon A \hookrightarrow L_k^{2M-1}(\ul{w})$
is surjective (with image $\cong \zk$) for all $j < \ind(A)$
and is the zero map for all $j \geq \ind(A)$.
\end{lemma}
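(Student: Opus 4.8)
The plan is to deduce this homological statement from its cohomological counterpart, Lemma~\ref{no holes in index}, by elementary linear-algebra duality over the field $\zk$ (recall that $k$ is assumed prime, so $\zk$ is a field).

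First I would record the relevant facts on the ambient lens space and on $A$. Since $A$ is open in the manifold $L_k^{2M-1}(\ul w)$ it is itself paracompact and locally contractible, so its \v{C}ech and singular cohomology coincide; hence Lemma~\ref{no holes in index} applies to $A$ and says that, writing $\iota^*\colon H^*\big(L_k^{2M-1}(\ul w);\zk\big)\to H^*(A;\zk)$ for the map induced by $\iota$, the subspace $\im\iota^*\cap H^j(A;\zk)$ is isomorphic to $\zk$ for $0\le j<\ind(A)$ and is zero for $j\ge\ind(A)$. By~\eqref{cohomology lens space} we also have $H^j\big(L_k^{2M-1}(\ul w);\zk\big)\cong\zk$ for every $0\le j\le 2M-1$, and dually $H_j\big(L_k^{2M-1}(\ul w);\zk\big)\cong\zk$ in the same range (both vanishing outside it); since $\ind(A)\le 2M$, it follows that $\im\iota_*$ is, in every degree, a subspace of a space of dimension at most one, hence equal either to all of $H_j\big(L_k^{2M-1}(\ul w);\zk\big)$ or to zero. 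So it suffices to decide, degree by degree, whether $\iota_*$ is nonzero.

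Next I would invoke the universal coefficient theorem over a field, which provides a \emph{natural} isomorphism $H^j(X;\zk)\cong\Hom_{\zk}\!\big(H_j(X;\zk),\zk\big)$ for every space $X$, under which $\iota^*$ is identified with the transpose of $\iota_*$. Then I use the elementary remark that a linear map $T\colon W\to V$ with $\dim_{\zk}V\le 1$ is surjective if and only if its transpose $T^\vee\colon V^\vee\to W^\vee$ is nonzero (if $Tw\ne0$ for some $w$ and $\varphi\in V^\vee$ is nonzero then $(T^\vee\varphi)(w)=\varphi(Tw)\ne0$; the converse is trivial). Applying this with $T=\iota_*$ in degree $j$ gives: $\iota_*$ is surjective precisely when $\iota^*$ is nonzero in degree $j$, i.e.\ precisely when $j<\ind(A)$ by the cohomological statement; and $\iota_*$ is zero precisely when $\iota^*$ is zero, i.e.\ when $j\ge\ind(A)$. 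As these alternatives are exhaustive, this yields both clauses, with $\im\iota_*\cong\zk$ on the surjective range.

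The only point demanding a little care — and what I would regard as the main, and fairly minor, obstacle — is the bookkeeping of which (co)homology theory is used where: Lemma~\ref{no holes in index} is phrased in \v{C}ech cohomology, whereas the duality $H^*\cong(H_*)^\vee$ is cleanest in singular (co)homology. This is resolved by the observation above that $A$, being open in a manifold, is locally contractible and paracompact so the two theories agree on it (and they agree on the ambient manifold, which is a CW complex); one must also explicitly use the \emph{naturality} of the universal coefficient isomorphism, so that $\iota^*$ genuinely becomes $(\iota_*)^\vee$ rather than merely being abstractly isomorphic to it. Everything else in the argument is formal.
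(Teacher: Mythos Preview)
Your proposal is correct and follows essentially the same approach as the paper's own proof, which reads in its entirety: ``Because $A$ is a manifold, its \v{C}ech cohomology agrees with its singular cohomology. The result then follows from Lemma~\ref{no holes in index} by the duality between homology and cohomology with field coefficients.'' You have simply spelled out the duality step (universal coefficients plus the transpose argument) and the \v{C}ech-versus-singular bookkeeping in more detail than the paper does.
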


\begin{proof}
Because $A$ is a manifold, its \v{C}ech cohomology 
agrees with its singular cohomology.
The result then follows from Lemma \ref{no holes in index}
by the duality between homology and cohomology with field coefficients.
\end{proof}

\subsection*{Proof of the join stability property 
\eqref{equation: js in app}} \ 
\labell{proof join stability} 

Recall that the Thom space
of a real vector bundle $\pi \colon E \rightarrow X$
is the space $\textrm{Th}(\pi) = D(E)/S(E)$,
where $S(E)$ and $D(E)$ denote the total spaces
of the corresponding unit sphere bundle and closed disk bundle.
An orientation of $E$ gives rise
to the Thom isomorphism
$$
T \colon \widetilde H_{j+m} \big(\textrm{Th}(\pi)\big) \to H_j (X)
\quad \text{ for } j \geq 0 \,,
$$
where the tilde denotes reduced homology,
and $m$ is the rank of $\pi \colon E \rightarrow X$
(see for instance \cite[Theorem 5.7.10]{Spanier}).

By Remark \ref{open sets}, we can assume that
$A$ is an \emph{open} subset of $L_k^{2M-1}(\ul{w})$.
The preimage $\tA$ of $A$ in $S_k^{2M-1}(\ul{w}')$
is a principal $\zk$-bundle.
Consider the associated vector bundle
\begin{equation} \labell{E}
 \pi \colon \big( \tA \times \R^{2K}(\ul{w}') \big) / \, \zk \to A \,.
\end{equation}
We claim that there is a cofibre sequence
\begin{equation} \labell{cofibre}
L^{2K-1}_k(\ul{w}') \hookrightarrow 
A \ast_{\zk} L^{2K-1}_k(\ul{w}') \stackrel{q}{\to} \textrm{Th}(\pi),
\end{equation}
where the first map is the canonical embedding into the join.
Indeed, there is a natural $\zk$-equivariant homeomorphism
$$
\psi \colon \big( \widetilde{A} \ast S^{2K-1}(\ul{w}') \big) 
   \ssminus S^{2K-1}(\ul{w}') 
   \, \to \, \widetilde{A} \times \textrm{int} \big( D^{2K}(\ul{w}') \big) 
$$
where $\textrm{int} \big(D^{2K}(\ul{w}')\big) \subset \C^k(\ul{w}')$ 
is the open unit disc,
and $\zk$ acts on $\wt{A} \times \textrm{int} \big(D^{2K}(\ul{w}')\big)$ diagonally.
The map that $\psi$ induces on the quotient spaces identifies
$\big( A \ast_{\zk} L^{2K-1}_k(\ul{w}') \big) \ssminus L^{2K-1}_k(\ul{w}')$
with the total space of the open disk bundle of the vector bundle \eqref{E}
and extends to a homeomorphism
$$
\left(A\ast_{\zk} L^{2K-1}_k(\ul{w}')\right)
   /L^{2K-1}_k(\ul{w}') \, \to \, \textrm{Th}(\pi),
$$
which expresses \eqref{cofibre} as a cofibre sequence.

The canonical embedding
$L^{2K-1}_k(\ul{w}') \hookrightarrow A \ast_{\zk} L^{2K-1}_k(\ul{w}')$
is injective in homology
(for example, because its composition 
with the classifying map of $A *_{\zk} L^{2K-1}_k(\ul{w}')$
is a classifying map for $L^{2K-1}_k(\ul{w}')$, which is injective in homology).
It follows that the long exact sequence
associated to~\eqref{cofibre} splits
into short exact sequences
$$ 0 \to H_j(L_k^{2K-1}(\ul{w}'))
    \to H_j(A *_{\zk} L_k^{2K-1}(\ul{w}'))
    \to H_j(\textrm{Th}(\pi))
    \to 0,$$
and thus the collapse map
$$
q \colon A \ast_{\zk} L^{2K-1}_k(\ul{w}') \to \textrm{Th}(\pi)
$$
induces an isomorphism in homology in degrees $\geq 2K$.
Consider the isomorphism $\ast \ell$ defined by
$$
\ast \ell \colon 
H_j (A)  \xrightarrow{T^{-1}}  H_{j+2K} \big(\textrm{Th}(\pi)\big)
\xrightarrow{(q_*)^{-1}} 
H_{j+2K} \left(A\ast_{\zk} L^{2K-1}_k(\ul{w}') \right)  
\text{ for } j \geq 0.
$$
Since $\ast \ell$ is natural, 
by applying it to $A$ and to $L_k^{2M-1}(\ul{w})$ we obtain
a commuting diagram
\begin{equation} \labell{commuting}
\xymatrix{
H_j (A) \ar[d]_{\iota_*} \ar[r]^{\ast \ell \qquad \qquad \ } 
   & H_{j+2K} \left( A \ast_{\zk} L^{2K-1}_k(\ul{w}')\right)
 \ar[d]^{(\iota \, \ast_{\zk} \text{id})_{\ast}} \\
H_j \big(L^{2M-1}_k(\underline w)\big)  \ar[r]_{\ast \ell \quad \qquad \ } 
 & H_{j+2K} \big(L^{2(M+K)-1}_k(\underline w, \ul{w}')\big) \,.
}
\end{equation}
Since $\ind(A)$ and $\ind(A *_{\zk} L_k^{2K}(\ul{w}') )$ 
are the lowest degrees of the homology groups on which 
the corresponding vertical arrows of \eqref{commuting} are the zero maps
(by Lemma \ref{ind in homology}),
and since the horizontal arrows of \eqref{commuting} are isomorphisms 
that increase the degree by $2K$, 
we conclude that
$
\text{ind} \big(A \ast_{\zk} L_k^{2K-1}(\ul{w}')\big) = \text{ind}(A) + 2K
$.

\subsection*{The join and the equivariant join}

In order to construct a join operation on singular homology we first need 
to discuss in some detail the join operation on topological spaces.
Here, all topological spaces are assumed to be Hausdorff. 
Let
$$
\Delta^m = \big\{(t_0,\ldots,t_m) \in \R^{m+1}_{\geq 0} \;\lvert\; 
   t_0 + \ldots + t_m = 1 \big\}
$$
be the standard $m$-simplex ($m\geq 0$).
The \emph{join} of two topological spaces $X$ and $Y$
is defined to be the quotient
$$
X \ast Y = (X \times Y \times \Delta^1) / \sim 
$$
where the only non-trivial relations are
$(x,y,(0,1)) \sim (x',y,(0,1))$ and $(x,y,(1,0)) \sim (x,y',(1,0))$
for all $x, x' \in X$ and $y,y' \in Y$.
We denote the equivalence class of $(x,y,(t_0,t_1))$ in $X \ast Y$ 
by $ t_0 x+t_1 y$.
Similarly,
the join of $(m+1)$ topological spaces $X_0,\ldots,X_m$
is defined by
$$ X_0 \ast \ldots \ast X_m = 
 \left( X_0 \times \ldots \times X_m \times \Delta^m \right) / \sim ,$$
with the class of $(x_0,\ldots,x_m,(t_0,\ldots,t_m))$
denoted by $t_0 x_0 + \ldots + t_m x_m$, and with 
$t_0 x_0 + \ldots + t_m x_m 
 = t_0' x_0' + \ldots + t_m' x_m'$
if and only if $t_j = t_j'$ for all $j$ and $x_j = x_j'$ whenever $t_j \neq 0$.

The join operation is natural,
in the sense that 
continuous functions $f_j \colon X_j \to X_j'$ induce a continuous function
$ f_0 \ast \cdots \ast f_m 
  \colon X_0 \ast \cdots \ast X_m \to X_0' \ast \cdots \ast X_m'$,
so that $(g_0 \circ f_0) \ast \cdots \ast (g_m \circ f_m)
 = (g_0 \ast \cdots \ast g_m) \circ (f_0 \ast \cdots \ast f_m)$.
In particular, when the spaces $X_j$ are equipped with an action 
of a group $G$, their join acquires the $G$-action 
given by $a \cdot (t_0 x_0 + \cdots + t_m x_m) = 
t_0(a \cdot x_0) + \cdots + t_m (a \cdot x_m)$.
The (equivariant) join of principal $G$-bundles is a principal $G$-bundle.
If $X$ and $Y$ are spaces equipped with principal $G$-bundles 
$\tX \to X$ and $\tY \to Y$, we define their $\mathit G${\it -join}
to be the space $X \ast_G Y := (\tX \ast \tY) / G$
equipped with the principal bundle $\tX \ast \tY \to X \ast_G Y$.

The natural maps
\begin{equation}\labell{assocjoin}
(X \ast Y) \ast Z \xrightarrow{\varphi_1} X \ast Y \ast Z 
 \xleftarrow{\varphi_2} X\ast(Y\ast Z)
\end{equation}
given by
$\varphi_1 \colon s_0 ( t_0 x + t_1 y) + s_1 z 
 \mapsto  (s_0 t_0) x + (s_0 t_1) y + s_1 z $
and similarly for $\varphi_2$
are homeomorphisms
(because they are continuous proper bijections of Hausdorff spaces).
We have similar maps for any parenthetization of any number of factors.
We also have the map 
\begin{equation}\labell{commjoin}
\tau \colon X\ast Y \to Y \ast X \ , \quad
   t_0 x + t_1 y \mapsto t_1y + t_0 x \,.
\end{equation}
These maps are natural, in the sense that 
for any continuous maps $f \colon X \to X'$,
\ $g \colon Y \to Y'$ and $h \colon Z \to Z'$
the diagrams
\begin{equation} \labell{naturality}
\xymatrix{
 (X \ast Y) \ast Z    \ar[r]^{\varphi_1} \ar[d]^{ (f \ast g) \ast h } 
  & X \ast Y \ast Z \ar[d]^{f \ast g \ast h} \\
 (X' \ast Y') \ast Z' \ar[r]^{\varphi_1} & X' \ast Y' \ast Z' 
}
\quad \text{ and } \quad
\xymatrix{ 
 X \ast Y \ar[d]^{f \ast g} \ar[r]^{\tau} & Y \ast X \ar[d]^{g \ast f} \\
 X' \ast Y' \ar[r]_{\tau} & Y' \ast X'
}
\end{equation}
commute, and similarly for $\varphi_2$. 
For principal $G$-bundles over $X$, $Y$, $Z$,
the associativity and commutativity homeomorphisms 
\eqref{assocjoin} and \eqref{commjoin}
pass to the quotients and yield homeomorphisms
$$
(X \ast_G Y) \ast_G Z \, \xrightarrow{} \,  X \ast_G Y \ast_G Z
    \, \xleftarrow{} \, X \ast_G ( Y \ast_G Z) 
\quad \text{ and } \quad X \ast_G Y \, \xrightarrow{} \, Y \ast_G X \,
$$
that are natural in the sense analogous to \eqref{naturality}.

\subsection*{Joins of standard simplices, spheres, and lens spaces}

For the standard unit spheres, we have identifications 
$$ \psi_{M,M'} \colon
   S^{2M-1} \ast S^{2M'-1} \, \xrightarrow{\cong} \, S^{2(M+M')-1} $$
given by
$$
 t (z_0,\ldots,z_M) + t' (z_0', \ldots, z_{M'}') \xrightarrow{}
 ( \sqrt{t} z_0 \, , \ldots, \, \sqrt{t} z_M \, , 
   \sqrt{t'} z_0' \, , \ldots, \, \sqrt{t'} z_{M'}')
$$
(these maps are continuous proper bijections, hence homeomorphisms).
These identifications descend to lens spaces, where we use the same notation
\begin{equation}\labell{identlens}
\psi_{M,M'} \colon 
L_k^{2M-1}(\underline w) \ast_{\zk} L_k^{2M'-1}(\underline w') 
\xrightarrow{\cong} L_k^{2(M+M')-1}(\underline w,\underline w').
\end{equation}
We have similar identifications for multiple joins.

These identifications are consistent,
in the sense that the diagrams
\begin{equation} \labell{consistent}
\xymatrix{
 L_k^{2M-1} \ast_\zk L_k^{2M'-1} 
                    \ast_\zk L_k^{2M''-1} 
\ar[rr]_{ \quad \psi_{M,M',M''} }
\ar[d]^{ (\varphi_1)^{-1} }
 && L_k^{2(M+M'+M'')-1} \\
\left( L_k^{2M-1} \ast_\zk L_k^{2M'-1} \right) 
          \ast_\zk L_k^{2M''-1} 
   \ar[rr]_{ \quad \psi_{M,M'} \ast_\zk \text{Id} }  
 && L_k^{2(M+M')-1} \ast_\zk L_k^{2M''-1} 
   \ar[u]_{ \psi_{M+M',M''} } 
}
\end{equation}
commute, and similarly for the other parenthetization,
where to simplify notation we omitted the weights.
By induction we get consistent identifications
of iterated multiple joins of lens spaces with higher lens spaces.

Given subsets $A \subset L_k^{2M-1}(\ul{w})$
and $B \subset L_k^{2M'-1}(\ul{w}')$, 
identifying $A \ast_{\zk} B$ with a subset of 
$L_k^{2M-1}(\ul{w}) \ast_{\zk} L_k^{2M'-1}(\ul{w}')$
by naturality and further with a subset of $L_k^{2(M+M')-1}(\ul{w},\ul{w}')$
by \eqref{identlens}, 
we get the same subset of $L_k^{2(M+M')-1}(\ul{w},\ul{w}')$
that was described in \eqref{join1}.

\begin{Remark*}
If $f$ and $g$ are smooth maps between spheres or lens spaces,
then $f \ast g$
(viewed as a map between higher dimensional spheres or lens spaces)
might not be smooth.
\end{Remark*}

For the standard simplices, the identifications
\begin{equation} \labell{identsimp}
\Delta^l \ast \Delta^m \xrightarrow{\cong} \Delta^{l+m+1}
\end{equation}
given by 
$$
u_0(t_0,\ldots,t_l) + u_1(s_0,\ldots,s_m) 
 \mapsto (u_0t_0,\cdots,u_0t_l,u_1s_0,\ldots,u_1s_m) 
$$
are consistent, in a sense similar to \eqref{consistent}.  
In particular, the composition 
\begin{equation} \labell{identity}
\Delta^{l+m+n+2} \, \cong \, \Delta^l \ast \Delta^m \ast \Delta^n
 \, \cong \, (\Delta^l \ast \Delta^m) \ast \Delta^n 
 \, \cong \, \Delta^{l+m+1} \ast \Delta^n
 \, \cong \, \Delta^{l+m+n+2} 
\end{equation}
is the identity map, and similarly with the other parenthetization.

\subsection*{The join operation on homology}

Given singular simplices $\sigma \colon \Delta^l \to X$
and $\mu \colon \Delta^m \to Y$ on $X$ and $Y$,
the identification \eqref{identsimp}
makes their join into a singular $(l+m+1)$-simplex
$$ 
\sigma \ast \mu \colon \Delta^{l+m+1} \to X \ast Y \,.
$$
Extending bilinearly, we obtain a map of singular chains
\begin{equation}
\labell{joinhom}
 \ast \colon C_l(X) \otimes C_m(Y) \xrightarrow{} C_{l+m+1}(X\ast Y)\,.
\end{equation}
Similarly, the triple join gives a map of singular chains
$$
C_j(X) \otimes C_l(Y) \otimes C_m(Z) \to C_{j+l+m+2}(X \ast Y \ast Z) \,.
$$
The definitions of the join and boundary operations
$\ast$ and $\partial$ directly imply that
for any two singular simplices $\sigma \colon \Delta^l \to X$ 
and $\mu \colon \Delta^m \to Y$ we have
\begin{equation}\labell{boundaryjoin}
\partial(\sigma\ast \mu) = \begin{cases}
(\partial \sigma) \ast \mu + (-1)^{l+1} \sigma \ast \partial \mu 
                  & \text{ if } \; l,m > 0, \\
(\partial \sigma) \ast \mu + (-1)^{l+1} \sigma 
                  & \text{ if } \; l>0, m=0, \\
\mu - \sigma \ast (\partial \mu) & \text{ if } \; l = 0, m > 0 \\
\mu - \sigma & \text{ if } \; l = m = 0\,.
\end{cases}
\end{equation}
It follows from \eqref{boundaryjoin}
that the chain join \eqref{joinhom} defines an operation
$$
\ast \colon H_l (X) \otimes H_m (Y) \to H_{l+m+1}(X \ast Y) 
 \quad \text{ when } l>0 \text{ and } m>0 \,.
$$
We now show that the same considerations go through
also for principal $G$-bundles for a finite group $G$,
and that moreover if we consider $\zk$-coefficients
with $k$ dividing the order of $G$
then the induced operation in homology is defined in all degrees.

Let $\tX \to X$ be a principal $G$-bundle and $C_*( \widetilde{X})^G$
the complex of $G$-invariant chains on $\tX$.
There is a canonical isomorphism of complexes
$$
\varphi \colon C_*(X) \xrightarrow{\cong} C_*(\widetilde{X})^G
$$
which on a singular simplex $\sigma \colon \Delta^m \to X$ is given by 
$\varphi(\sigma) = \sum_{g \in G} g \cdot \widetilde{\sigma}\,$,
where $\widetilde{\sigma} \colon \Delta^m \to \wt{X}$ 
is any lift of $\sigma$.
The join operation sends $G$-invariant chains
on $\widetilde{X}$ and $\widetilde{Y}$ to $G$-invariant chains
on $\widetilde{X} \ast \widetilde{Y}$.  Defining
$$
\sigma \ast_G \mu 
   := \varphi^{-1} \big( \varphi(\sigma) \ast \varphi(\mu)\big)
$$
on simplices, and extending bilinearly, we obtain an 
equivariant join operation on chains,
\begin{equation}\labell{joinequivhom}
\ast_G \colon C_l (X) \otimes C_m(Y) \to C_{l+m+1}(X \ast_G Y) \,.
\end{equation}

The definitions of the join operation on chains \eqref{joinhom}
and on equivariant chains \eqref{joinequivhom}
make sense with arbitrary ring coefficients.
We now consider $\zk$-coefficients, for $k$ dividing the order of $G$.

\begin{lemma} \labell{del of join}
Let $\widetilde{X} \to X$ and $\widetilde{Y} \to Y$ be principal $G$-bundles.
Assume that $k$ divides the order of $G$.
Then the equivariant join operation on chains \eqref{joinequivhom} satisfies
$$
\partial \, (\sigma \ast_G \mu) 
   = (\partial \sigma) \ast_G \mu + (-1)^{l+1} \sigma \ast_G \partial \mu\,.
$$ 
\end{lemma}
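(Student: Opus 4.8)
The plan is to compute $\partial(\sigma \ast_G \mu)$ directly from the definition $\sigma \ast_G \mu = \varphi^{-1}\big(\varphi(\sigma) \ast \varphi(\mu)\big)$ and the non-equivariant boundary formula \eqref{boundaryjoin}, and to show that the boundary terms of ``exceptional'' degree (the ones that appeared in the bottom three cases of \eqref{boundaryjoin}, where a factor $\sigma$ or $\mu$ of dimension $0$ survives ``undecorated'') vanish when we work with $\zk$-coefficients and $k \mid |G|$. First I would reduce to simplices: since $\ast_G$ is bilinear and $\varphi$ is a chain isomorphism, it suffices to verify the formula for a single simplex $\sigma \colon \Delta^l \to X$ and $\mu \colon \Delta^m \to Y$. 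Because $\varphi$ is an isomorphism of chain complexes (in particular $\varphi^{-1}\partial = \partial\varphi^{-1}$), we have
$$
\partial(\sigma \ast_G \mu) = \varphi^{-1}\Big( \partial\big(\varphi(\sigma) \ast \varphi(\mu)\big)\Big),
$$
so everything comes down to expanding $\partial\big(\varphi(\sigma)\ast\varphi(\mu)\big)$ via \eqref{boundaryjoin}, applied to each pair of lifted simplices occurring in $\varphi(\sigma) = \sum_{g} g\cdot\widetilde\sigma$ and $\varphi(\mu) = \sum_{h} h\cdot\widetilde\mu$.

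The key step is the case analysis on $(l,m)$. When $l,m>0$, formula \eqref{boundaryjoin} has no exceptional terms and gives immediately, after applying $\varphi^{-1}$, that $\partial(\sigma\ast_G\mu) = (\partial\sigma)\ast_G\mu + (-1)^{l+1}\sigma\ast_G\partial\mu$; here I would just note that $\varphi$ commutes with $\ast$ and with the obvious identifications. The work is in the cases where $l=0$ or $m=0$. Consider $l>0$, $m=0$: then $\varphi(\mu) = \sum_{h\in G} h\cdot\widetilde\mu$ is a sum of $|G|$ lifted $0$-simplices, and \eqref{boundaryjoin} produces, besides the expected term $(\partial\sigma)\ast_G\mu$, a term $(-1)^{l+1}\varphi(\sigma)$ appearing once for each of the $|G|$ summands of $\varphi(\mu)$ — i.e. $(-1)^{l+1}\,|G|\,\varphi(\sigma)$. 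Since $k \mid |G|$, this term is zero in $C_*(\cdot;\zk)$, so after applying $\varphi^{-1}$ we are left with exactly $(\partial\sigma)\ast_G\mu$, which matches the desired formula because the $\sigma\ast_G\partial\mu$ term is zero ($\partial\mu = 0$ for a $0$-simplex). The case $l=0$, $m>0$ is symmetric: the exceptional term is $|G|$ copies of $\varphi(\mu)$ (the ``$\mu$'' in the third line of \eqref{boundaryjoin}), again zero mod $k$; and in the case $l=m=0$ both exceptional terms $\mu$ and $\sigma$ are multiplied by $|G|$ and vanish. I would also double-check orientation/sign bookkeeping against the identifications $\Delta^l\ast\Delta^m\cong\Delta^{l+m+1}$ of \eqref{identsimp}, but that is exactly the same bookkeeping that underlies \eqref{boundaryjoin} in the non-equivariant setting, so no new subtlety arises.

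The main obstacle I anticipate is purely combinatorial rather than conceptual: being careful that when one expands $\varphi(\sigma)\ast\varphi(\mu) = \sum_{g,h}(g\cdot\widetilde\sigma)\ast(h\cdot\widetilde\mu)$ and applies \eqref{boundaryjoin} termwise, the ``undecorated'' $0$-simplex that appears in the exceptional cases is $g\cdot\widetilde\sigma$ (resp. $h\cdot\widetilde\mu$) itself, and summing over the $|G|$ values of the other index genuinely reproduces $|G|\cdot\varphi(\sigma)$ (resp. $|G|\cdot\varphi(\mu)$) as a $G$-invariant chain — so that $\varphi^{-1}$ of it is $|G|\cdot\sigma$ (resp. $|G|\cdot\mu$), which is $0$ in $\zk$-coefficients precisely because $k \mid |G|$. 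Once that identification is made cleanly, the rest is a routine check, and the lemma follows.
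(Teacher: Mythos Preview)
Your proposal is correct and follows essentially the same approach as the paper: reduce to simplices by bilinearity, use that $\varphi$ is a chain isomorphism to transport $\partial$ inside, expand $\varphi(\sigma)\ast\varphi(\mu)$ termwise via \eqref{boundaryjoin}, and observe that in the degenerate cases the extra ``undecorated'' terms occur with multiplicity $|G|$ and hence vanish in $\zk$-coefficients. The paper writes out explicitly only the case $l>0$, $m=0$ and declares the others similar, exactly as you do.
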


\begin{proof}
By bilinearity, 
it is enough to consider the case of simplices (rather than chains)
$\sigma \colon \Delta^l \to X$ and $\mu \colon \Delta^m \to Y$.
If $l > 0$ and $m > 0$ the result follows from \eqref{boundaryjoin}.
Assume now that $l > 0$ and $m = 0$ (the remaining cases are similar).
Let $\wt{\sigma} \colon \Delta^l \to \tX$
and $\wt{\mu} \colon \Delta^m \to \tY$
be any lifts of $\sigma$ and $\mu$.
Since we use $\zk$-coefficients, we have
\begin{eqnarray*}
\partial \, (\sigma \ast_G \mu) &=& \partial \, \Big(\varphi^{-1}\big(\varphi(\sigma) \ast \varphi(\mu)\big)\Big) \\
&=& \varphi^{-1} \left( \partial \, \Big( \sum_{g,h \in G} (g \cdot \wt \sigma) \ast (h \cdot \wt \mu) \Big) \right) \\
&=& \varphi^{-1} \left( \sum_{g,h \in G} \Big( \big( \partial \, (g \cdot \wt \sigma)\big) \ast (h \cdot \wt \mu) + (-1)^{l+1} g 
\cdot \wt \sigma \Big) \right) \\
&=& \varphi^{-1} \Big(\varphi(\partial\sigma) \ast \varphi(\mu)\Big)
+ \varphi^{-1} \Big(|G| (-1)^{l+1} \varphi(\sigma)\Big) \\
&=&  (\partial\sigma) \ast_G \mu\,.
\end{eqnarray*}
\end{proof}

Lemma \ref{del of join} implies that
if $k$ divides the order of $G$ then
the equivariant join operation on chains induces an operation on homology:
$$
\ast_G \colon H_l(X;\zk) \otimes H_m(Y;\zk) \to H_{l+m+1}(X \ast_G Y;\zk) 
 \quad \text{ for all } l \geq 0 \text{ and } m \geq 0 \,.
$$
Moreover, the naturality of the chain level formula
provides join operations on relative homology
$$ 
\ast_G \colon H_l(X,A;\zk) \otimes H_m(Y,B;\zk) 
 \to H_{l+m+1} \big(X \ast_G Y \, , \, (X \ast_G B) \cup (A \ast_G Y) \, ;\, \zk \big)
$$
such that $\ol{ x_l \ast_G y_m } = \ol{ x_l } \ast_G \ol{ y_m }$
for any $x_l \in H_l(X; \zk)$ and $y_m \in H_m(Y; \zk)$,
where $\ol{x_l}$, $\ol{y_m}$ and $\ol{x_l \ast_G y_m}$
denote the images of $x_l$, $y_m$ and $x_l \ast_G y_m$
in relative homology.

It follows from the consistency of the identifications 
of the standard simplices 
(specifically, from \eqref{identity} being the identity map)
that the join operation on chains \eqref{joinhom} 
is associative, in the following sense.
For any three singular simplices $\sigma \colon \Delta^j \to X$, \
$\mu \colon \Delta^l \to Y$ and $\nu \colon \Delta^m \to Z$ we have
\begin{equation} \labell{assocjoinchains}
\varphi_1 \circ \big( ( \sigma\ast\mu ) \ast \nu \big) 
 = \sigma \ast \mu \ast \nu 
 = \varphi_2 \circ \big( \sigma \ast (\mu\ast\nu)\big) \,,
\end{equation}
where $\varphi_1$ and $\varphi_2$ are the homeomorphisms of \eqref{assocjoin}.
This further implies that the equivariant join operation 
on homology is associative, in the following sense.
For homology classes $\alpha,\beta,\gamma$
on spaces $X,Y,Z$ equipped with principal $G$-bundles
with $k$ dividing the order of $G$, we have
$ (\varphi_1)_* \big( (\alpha \ast \beta) \ast \gamma \big)
 = \alpha \ast \beta \ast \gamma
 = (\varphi_2)_* \big( \alpha * (\beta * \gamma) \big)$.

The join operation in homology also satisfies a commutativity 
property.  We postpone this result to at the end of this appendix
(Proposition \ref{twist});
we do not need it for our applications.

\subsection*{Computations for lens spaces}

Lemma \ref{alternative} and Proposition \ref{complens}
contain computations of equivariant joins for lens spaces.
For our applications, we only need the ``if'' direction 
of Proposition \ref{complens} 
and we don't need Lemma~\ref{alternative}.

\begin{lemma}\labell{alternative}
Let $x_0 \in H_0(L^1_k;\zk)$ be the homology class of a point.
Then $x_0 \ast_\zk x_0 = 0$.
\end{lemma}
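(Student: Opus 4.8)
The plan is to prove $x_0 \ast_{\zk} x_0 = 0$ in $H_1(L^1_k \ast_{\zk} L^1_k; \zk) = H_1(L^3_k; \zk)$ by an explicit chain-level computation, using the description of the equivariant join on chains in terms of invariant chains upstairs on the spheres. First I would identify $L^1_k = S^1/\zk$ with the principal $\zk$-bundle $S^1 \to S^1/\zk$ (the $k$-fold cover), and recall that $L^1_k \ast_{\zk} L^1_k$ is identified via $\psi_{1,1}$ with $L^3_k = S^3/\zk$, whose first homology with $\zk$-coefficients is $\zk$, generated by the image of a generator of $H_1(L^1_k;\zk)$ under the standard inclusion. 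So the claim is that the join $x_0 \ast_{\zk} x_0$, which a priori lives in degree $1$, is the \emph{zero} class rather than a generator.

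The key computation is at the chain level. Pick a point $p \in S^1$ and let $\sigma \colon \Delta^0 \to L^1_k$ be the constant simplex at $[p]$, with lift $\widetilde\sigma \colon \Delta^0 \to S^1$ the constant simplex at $p$; then $\varphi(\sigma) = \sum_{g\in\zk} g\cdot\widetilde\sigma$. By definition $x_0 \ast_{\zk} x_0$ is represented by $\varphi^{-1}\big(\varphi(\sigma) \ast \varphi(\sigma)\big) = \varphi^{-1}\big(\sum_{g,h\in\zk} (g\cdot\widetilde\sigma)\ast(h\cdot\widetilde\sigma)\big)$. Now each $(g\cdot\widetilde\sigma)\ast(h\cdot\widetilde\sigma) \colon \Delta^1 \to S^1 \ast S^1 = S^3$ is the $1$-simplex parametrizing the arc $t \mapsto t(g\cdot p) + (1-t)(h\cdot p)$ in the join, i.e.\ the great-circle arc in $S^3$ from $g\cdot p$ (in the first $S^1$ factor) to $h\cdot p$ (in the second $S^1$ factor). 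The plan is to show this sum of arcs is a \emph{boundary} in $C_1(S^3;\zk)^{\zk}$, hence maps to $0$ in $H_1(L^3_k;\zk)$.

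To see this, I would use the formula \eqref{boundaryjoin}: for $0$-simplices $g\cdot\widetilde\sigma$ and $h\cdot\widetilde\sigma$ we have $\partial\big((g\cdot\widetilde\sigma)\ast(h\cdot\widetilde\sigma)\big) = h\cdot\widetilde\sigma - g\cdot\widetilde\sigma$ as $0$-chains on $S^3$ (where we view the endpoints via the two canonical inclusions $S^1 \hookrightarrow S^3$). Since these two inclusions are homotopic in $S^3$ — indeed $S^3$ is simply connected and even the two $S^1$'s are isotopic through the family $t\mapsto t(\cdot)+(1-t)(\cdot)$ — the cycle $\varphi(\sigma)\ast\varphi(\sigma)$ will turn out to be a cycle (its boundary $\sum_{g,h}(h\cdot\widetilde\sigma - g\cdot\widetilde\sigma) = k\sum_h h\cdot\widetilde\sigma - k\sum_g g\cdot\widetilde\sigma = 0$, using $\zk$-coefficients and $|\zk| = k$), and I must exhibit an invariant $2$-chain on $S^3$ bounding it. The cleanest route: $H_1(S^3;\zk) = 0$, so $\varphi(\sigma)\ast\varphi(\sigma)$, being a cycle on $S^3$, is already a boundary $\partial c$ for some (not necessarily invariant) $c \in C_2(S^3;\zk)$; then the averaged chain $\frac{1}{?}$—no, rather, apply $\varphi\circ\varphi^{-1}$ carefully: one shows $\varphi(\sigma)\ast\varphi(\sigma)$ is itself already invariant (clear from the double sum) and a cycle, hence represents a class in $H_1(S^3;\zk)^{\zk}$-component; but $H_1(S^3;\zk) = 0$ outright, and the map $H_1(S^3;\zk) \to H_1(L^3_k;\zk)$ (transfer/pushforward under $\varphi^{-1}$) is the relevant one.

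The main obstacle is being careful about \emph{which} homology class $\varphi^{-1}(\varphi(\sigma)\ast\varphi(\sigma))$ represents: the isomorphism $\varphi^{-1} \colon C_*(\widetilde X)^G \to C_*(X)$ is \emph{not} induced by the covering projection (it is its inverse), so one cannot naively say ``$H_1(S^3;\zk) = 0$ kills it.'' The correct statement is: if $z$ is an invariant cycle on $S^3$ that bounds an \emph{invariant} chain, then $\varphi^{-1}(z)$ bounds in $C_*(L^3_k;\zk)$; and any invariant cycle on $S^3$ bounds an invariant chain because $H_1(S^3;\zk) = 0$ and one can average a bounding chain over $\zk$ (legitimate with $\zk$-coefficients since $|\zk| = k$ is invertible... wait, it is $0$ in $\zk$, so averaging in the naive sense fails). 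So the genuinely delicate point is the averaging argument; I expect to resolve it by instead directly writing down an explicit invariant $2$-chain on $S^3$ — a ``fan'' of $2$-simplices sweeping the arcs $(g\cdot\widetilde\sigma)\ast(h\cdot\widetilde\sigma)$ across the obvious contracting homotopy of $S^3$ — and checking its boundary equals $\varphi(\sigma)\ast\varphi(\sigma)$ using \eqref{boundaryjoin} again. This explicit construction, and verifying the boundary cancellation holds with $\zk$ (not $\Z$) coefficients, is the step I would budget the most care for.
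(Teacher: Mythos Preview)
Your setup and chain-level description are correct, but there is a genuine gap at exactly the step you flag. You correctly note that averaging a bounding chain over $\zk$ fails because $|\zk|=k$ vanishes in $\zk$-coefficients, and you propose to resolve this by constructing an explicit $\zk$-invariant $2$-chain on $S^3$; but you give no construction, and nothing in your argument distinguishes $k=2$ from $k$ odd. This is fatal: for $k=2$ the class $x_0 \ast_{\zt} x_0$ is a \emph{generator} of $H_1(\RP^3;\zt)$ (this is what underlies join additivity for projective spaces), so no invariant bounding chain exists in that case. Any correct argument must use $k\neq 2$ somewhere, and your ``fan'' idea does not indicate where or how.

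The paper's proof takes a shorter and different route: work downstairs in $L^3_k$ rather than upstairs in $S^3$. After applying $\varphi^{-1}$, the cycle representing $x_0\ast_{\zk}x_0$ is $\sum_{j=0}^{k-1}\sigma_j$ where each $\sigma_j$ is an arc in $L^3_k$ from $[1,0]$ to $[0,1]$. The loop $\ol{\sigma_0}\sigma_j$ lifts to a path in $S^3$ from $(0,1)$ to $(0,e^{2\pi ij/k})$, so it represents $j$ times a generator of $\pi_1(L^3_k)\cong\zk$; hence $[\sigma_j-\sigma_0]=j\,[\sigma_1-\sigma_0]$ in $H_1(L^3_k;\zk)$, and
\[
x_0\ast_{\zk}x_0=\sum_{j=0}^{k-1}j\,[\sigma_1-\sigma_0]=\frac{k(k-1)}{2}\,[\sigma_1-\sigma_0]=0
\]
in $\zk$. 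The arithmetic identity $\tfrac{k(k-1)}{2}\equiv 0\pmod k$ for odd $k$ is precisely where $k\neq 2$ enters.
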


\begin{proof}
The class $x_0 \in H_0(L^1_k; \zk)$ is represented
by the singular simplex sending $\Delta^0$ to $[1] \in L^1_k$.
The class $x_0 \ast_{\zk} x_0 \in H_1(L^3_k;\zk)$
is represented by $\sum_{j=0}^{k-1} \sigma_j$,
where, for $j = 0, \ldots, k-1$,
$\sigma_j \colon \Delta^1=[0,1] \to L^3_k$ is the singular $1$-simplex 
given by 
$$
\sigma_j(t)=\left[ \sqrt{1-t} , \sqrt{t} \, e^{j\frac{2\pi i }{k}} \right] \,.
$$ 
The paths $\sigma_j$ all have initial point $[1,0]$ and end point $[0,1]$ 
in $L^3_k$.
The concatenation $\ol{\sigma_0} \sigma_j$ is a loop in $L_k^3$
in the homology class $[\sigma_j - \sigma_0]$.
As the lift of $\ol{\sigma_0} \sigma_j$ to $S^3$ starting at $(0,1)$ ends at $(0,e^{2 \pi i j / k})$
we see that, for $1 \leq j \leq k-1$, the loop $\ol{\sigma_0} \sigma_j$ 
generates $\pi_1(L_k^3)$ 
and $\ol{\sigma_0}\sigma_j = (\ol{\sigma_0} \sigma_1)^j$ in $\pi_1 (L_k^3) $.
In $H_1 (L_k^3)$ we have $[\sigma_j - \sigma_0] = j [\sigma_1 - \sigma_0]$.
So
$$
x_0 \ast_{\zk} x_0 = \Big[ \sum_{j=0}^{k-1} \sigma_j \Big] 
   =  \Big[ \sum_{j=0}^{k-1} (\sigma_j-\sigma_0) \Big]
   = \sum_{j=0}^{k-1} j [\sigma_1 - \sigma_0] 
   = \frac{k(k-1)}{2} \, [\sigma_1 - \sigma_0] = 0 \,.
$$
\end{proof}

\begin{prop}\labell{complens}
Suppose that  $x_m$ and $x_{m'}$ are non-zero elements
of $H_m \big(L^{2M-1}_k(\underline w); \zk\big)$
and $H_{m'} \big(L^{2M'-1}_k(\underline w'); \zk\big)$.
Then the join $x_{m} \ast_{\zk} x_{m'}$ is non-zero 
if and only if $m$ or $m'$ is odd.
\end{prop}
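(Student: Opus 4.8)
\textbf{Proof plan for Proposition \ref{complens}.}
The strategy is to reduce both directions to properties of the cohomological index that are already available, namely Lemma \ref{no holes in index}, Lemma \ref{ind in homology}, and join stability \eqref{equation: js in app}, together with the associativity and naturality of the equivariant homology join. First I would dispose of the ``only if'' direction. Suppose both $m$ and $m'$ are even. Since $x_m \neq 0$ and $x_{m'} \neq 0$, by Lemma \ref{no holes in index} they are non-zero multiples of $\beta^{m/2}$ and $\beta^{m'/2}$ respectively, and the classes $\beta^i$ are in the image of the map induced by the inclusions of low-dimensional lens subspaces. Concretely, $x_m$ lies in the image of $H_m(L_k^{2(i+1)-1})\to H_m(L_k^{2M-1})$ where $i=m/2$, and similarly for $x_{m'}$. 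By naturality of $\ast_{\zk}$, it suffices to show the join vanishes when $x_m$ and $x_{m'}$ are the fundamental classes of the lens subspaces $L_k^{2i-1}$ and $L_k^{2i'-1}$ of odd dimensions $2i-1$ and $2i'-1$. The plan here is to peel off the join one complex dimension at a time: write $L_k^{2i-1} = L_k^1 \ast_\zk \cdots \ast_\zk L_k^1$ ($i$ factors), use associativity \eqref{assocjoinchains} of the chain join, and reduce to showing that $x_0 \ast_\zk x_0 = 0$ in $H_1(L_k^3;\zk)$, which is exactly Lemma \ref{alternative}. More precisely, for a point class $x_0 \in H_0(L_k^1;\zk)$ and any class $y$ one has $x_0 \ast_\zk y \ast_\zk x_0 = (x_0 \ast_\zk x_0) \ast_\zk y = 0$ by associativity and Lemma \ref{alternative}; since the fundamental class of an even-real-dimensional piece of the join can always be written with a repeated $x_0$ factor using commutativity (Proposition \ref{twist}), the whole join collapses.

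For the ``if'' direction, suppose without loss of generality that $m$ is odd. Again by Lemma \ref{no holes in index}, $x_m$ is a non-zero multiple of $\alpha\beta^{(m-1)/2}$, so it lies in the image of the inclusion of the lens subspace $L_k^{2r-1}$ with $2r-1 = m$, i.e.\ with $r=(m+1)/2$; in other words $x_m$ is (a multiple of) the image of the fundamental class of $L_k^{2r-1}$, which has $\ind = 2r = m+1$. Similarly $x_{m'}$ is supported on a lens subspace $L_k^{2r'-1}$ of cohomological index at least $m'+1$ if $m'$ is odd, or of index exactly $m'$ (its real dimension rounded up) if $m'$ is even; in either case $\ind(\text{supp}) \geq m'+1$ exactly when... — here I would instead argue directly in homology. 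The cleanest route: by naturality of the join, $\overline{x_m \ast_\zk x_{m'}}$ is the image of $\iota_{m*} \ast_\zk \iota_{m'*}$ where $\iota_m, \iota_{m'}$ are fundamental classes of suitable lens subspaces $L_k^{2r-1}, L_k^{2r'-1}$. By join stability \eqref{equation: js in app}, iterated, $L_k^{2r-1} \ast_\zk L_k^{2r'-1} \cong L_k^{2(r+r')-1}$ has cohomological index $2(r+r')$, and the join stability isomorphism $\ast\ell$ from the proof of \eqref{equation: js in app} carries a generator of $H_{m}$ of the first factor (when paired with $L_k^{2r'-1} = L_k^{2K-1}$, $K=r'$) to a generator of $H_{m+2r'}$ of the total space; but $m + 2r' = m + 2r' $ and since $m$ is odd this degree is $\leq 2(r+r')-1$, hence non-zero by Lemma \ref{ind in homology}. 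Thus $x_m \ast_\zk x_{m'} \neq 0$.

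The main obstacle will be matching the two formalisms: join stability \eqref{equation: js in app} is proved via a Thom-space argument that produces \emph{an} isomorphism $\ast\ell$ shifting degree by $2K$, and I need to know that this isomorphism agrees (up to non-zero scalar) with the chain-level equivariant join $\ast_\zk$ defined later in the appendix, at least when one factor is a full lens space $L_k^{2K-1}$. I expect this compatibility to follow from chasing the definitions — the Thom class of the bundle \eqref{E} restricts on a fibre to the fundamental class of $\C^K$, and joining with the fundamental class of $S^{2K-1}$ is exactly the sphere-level incarnation of the Thom isomorphism — but making this identification precise, including keeping track of orientations and the $\zk$-quotient, is the delicate point. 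The fallback, if that compatibility is awkward to state cleanly, is to prove the ``if'' direction purely by induction on $M'$ using associativity: reduce to $x_m \ast_\zk x_0$ with $x_0$ a point class of $L_k^1$, and show $x_m \ast_\zk x_0 \neq 0$ directly by observing that its image in $H_{m+1}(L_k^{2(M+1)-1};\zk)$ generates, because $m+1$ is even and the join with a point class realizes the Bockstein-type passage from $\alpha\beta^{(m-1)/2}$ to $\beta^{(m+1)/2}$; this last computation can be done on the standard $3$-simplex model exactly as in Lemma \ref{alternative}, tracking that the relevant sum $\sum_j \sigma_j$ is now \emph{not} a multiple of $k$ in the appropriate quotient. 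I would carry out the argument in the order: (1) reduce to lens-subspace fundamental classes by naturality and Lemma \ref{no holes in index}; (2) prove ``only if'' via associativity and Lemma \ref{alternative}; (3) prove ``if'' via join stability plus Lemma \ref{ind in homology}, establishing the needed compatibility as a lemma or falling back to the inductive $3$-simplex computation.
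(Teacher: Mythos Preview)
Your plan has a structural gap that affects both directions: the identification $L_k^{2i-1} \cong L_k^1 \ast_\zk \cdots \ast_\zk L_k^1$ decomposes the \emph{space}, but not a priori its homology classes, as iterated joins. In the ``only if'' argument this is hidden in the sentence ``the fundamental class of an even-real-dimensional piece of the join can always be written with a repeated $x_0$ factor'' --- an even-degree class $x_m$ is not the fundamental class of any lens subspace, and writing it as $y_0 \ast_\zk (\text{something})$ is precisely what must be proved. In the ``if'' argument the same issue resurfaces as the compatibility between the Thom-space isomorphism $\ast\ell$ and the chain-level operation $\ast_\zk$, which you rightly flag as the main obstacle and which the paper never establishes.

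The paper resolves this with what is essentially your fallback, organized around the standard CW structure on $L_k^{2M+1}$ (after reducing to weights $(1,\ldots,1)$ by functoriality). Under the identification $L_k^{2M+1} = L_k^{2M-1} \ast_\zk L_k^1$ the top two cells satisfy $e^{2M+j} = e^{2M-1} \ast_\zk e^j$ for $j=0,1$; hence if $\sigma_{2M-1}$ is a characteristic map for $e^{2M-1}$ and $\sigma_0,\sigma_1$ are the obvious $0$- and $1$-simplices on $L_k^1$, the chain $\sigma_{2M-1} \ast_\zk \sigma_j$ triangulates $e^{2M+j}$ relative to its boundary, so $[\sigma_{2M-1}] \ast_\zk [\sigma_j]$ generates $H_{2M+j}$. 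Iterating with associativity shows that $y_1 \ast_\zk \cdots \ast_\zk y_1$, $y_1 \ast_\zk \cdots \ast_\zk y_1 \ast_\zk y_0$, and $y_0 \ast_\zk y_1 \ast_\zk \cdots \ast_\zk y_1$ are all generators in their degrees. Once this is in hand, both directions are immediate: for ``if'' arrange the single $y_0$ (if any) at an outer end of $x_m \ast_\zk x_{m'}$; for ``only if'' place the two $y_0$'s adjacently and invoke Lemma \ref{alternative}. Neither commutativity (Proposition \ref{twist}) nor the Thom-isomorphism form of join stability is used.
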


\begin{proof}
By functoriality, it suffices to consider the case
when the weights $\underline w$ and $\underline w'$ 
are of the form $(1,\ldots,1)$.
Indeed, for any $\ul{w}$ (and similarly for $\ul{w}'$), 
the classifying map of $L^{2M-1}_k(\ul{w})$
induces an injection in homology (even over $\Z$)
and can be obtained as the composition of a map
$L^{2M-1}_k(\ul{w}) \to L^{2K-1}_k := L^{2K-1}_k (1,\ldots,1)$
with the classifying map $L^{2K-1}_k \to L^\infty_k$
for some sufficiently large $K$.

Let $\sigma_0 \colon \Delta^0 \to L_k^1$ be the simplex $1 \mapsto [1]$
and let $\sigma_1 \colon \Delta^1 \to L_k^1$ 
be the simplex $(t_0,t_1) \mapsto [e^{2\pi i t_1/k} ]$.
As chains, $\sigma_0$ and $\sigma_1$ are closed;
denote their homology classes by $y_0 = [\sigma_0]$ and $y_1 = [\sigma_1]$.

The standard cell decomposition of $L^{2M+1}_k$
can be described as follows (see for instance \cite[Example 2.43]{Hatcher}).
There is one cell $e^j$ in each dimension $0 \leq j \leq 2M+1$.
The standard inclusion $L_k^{2M-1} \to L_k^{2M+1}$, $[z] \mapsto [z,0]$
takes the $j$th cell of $L_k^{2M-1}$
to the $j$th cell of $L_k^{2M+1}$
for all $0 \leq j \leq 2M-1$,
and is injective in homology.
Moreover, under the identification
$L^{2M+1}_k = L^{2M-1}_k \ast_{\zk} L^1_k$ of \eqref{identlens},
we have that $e^{2M} = e^{2M-1} \ast_{\zk} e^0$ 
and $e^{2M+1} = e^{2M-1}\ast_{\zk} e^1$.

When $j$ is odd, the $j$th skeleton of the cellular decomposition
of $L_k^{2M+1}$ is the lens subspace $L_k^j$,
embedded by the standard inclusion $[z] \mapsto [z,0]$.
We denote the $j$th skeleton of the cell complex by $L_k^j$
even when $j$ is even.

Let 
$$
\sigma_{2M-1} \colon (\Delta^{2M-1}, \partial \Delta^{2M-1}) 
              \to (L^{2M-1}_k,L^{2M-2}_k)
$$
be a characteristic map of $e^{2M-1}$.
Then $[\sigma_{2M-1}]$ is a generator of $H_{2M-1}(L^{2M-1}_k,L^{2M-2}_k;\zk)$.
Let $y_{2M-1}$ be the generator of $H_{2M-1}(L_k^{2M-1})$
that maps to $[\sigma_{2M-1}]$
under the isomorphism
$$ H_{2M-1}(L_k^{2M-1};\zk) \to H_{2M-1} (L_k^{2M-1}, L_k^{2M-2}; \zk ).$$  

For $j = 0,1$ the chains $\sigma_{2M-1} \ast_\zk \sigma_j$
are triangulations of the cells $e^{2M+j}$ relative to their boundary,
so they represent generators of $H_{2M+j}(L^{2M+j}_k, L^{2M+j-1}_k;\zk)$.
By the naturality of the equivariant join, $y_{2M-1} \ast_\zk y_j$ 
maps to $[\sigma_{2M-1} \ast_\zk \sigma_j]$ under the isomorphism 
$$
H_{2M+j}(L^{2M+j}_k; \zk) \to H_{2M+j}(L^{2M+j}_k,L^{2M+j-1}_k;\zk)  \,.
$$
It follows that $y_{2M-1} \ast_\zk y_j $ is a generator
of $H_{2M+j}(L_k^{2M+1};\zk)$.

Taking iterations, and using associativity to remove the brackets,
we conclude that each of the classes
$y_1 \ast_\zk \ldots \ast_\zk y_1$, \ 
$y_1 \ast_\zk \ldots \ast_\zk y_1 \ast_\zk y_0$
and (by a similar argument) 
$y_0 \ast_\zk y_1 \ast_\zk \ldots \ast_\zk y_1 $
is a generator of the homology group in the appropriate dimension.

Let $x_m \in H_m(L_k^{2M-1};\zk)$ 
and $x_{m'} \in H_{m'}(L_k^{2M'-1};\zk)$ be non-zero classes.
Suppose that $m$ and $m'$ are not both even.
Expressing $x_m$ as a non-zero scalar multiple 
of $y_1 \ast_\zk \ldots \ast_\zk y_1$
or $y_0 \ast_\zk y_1 \ast_\zk \ldots \ast_\zk y_1$, 
and expressing $x_{m'}$ as a non-zero scalar multiple 
of $y_1 \ast_\zk \ldots \ast_\zk y_1$
or $y_1 \ast_\zk \ldots \ast_\zk y_1 \ast_\zk y_0$,
we conclude (by associativity) that $x_m \ast x_{m'}$
(in which $y_0$ might occur as a first or last factor but not both)
is non-zero.

Since $(y_1 \ast_\zk \ldots \ast_\zk y_1 \ast_\zk y_0) \ast_\zk
(y_0 \ast_\zk y_1 \ast_\zk \ldots \ast_\zk y_1)$ is zero
(by associativity and by Lemma \ref{alternative}),
it similarly follows that if $m$ and $m'$ are both even
then $x_m \ast_\zk x_{m'}$ is zero.
\end{proof}

\subsection*{Proof of the lower bounds \eqref{lower bounds}
on the index of a join}

By Remark \ref{open sets}, we can assume that $A$ and $B$
are open subsets of $L_k^{2M-1}(\underline w)$ and of $L_k^{2M'-1}(\underline w')$.

First, suppose that $\ind(A)$ or $\ind(B)$ is even.
By Lemma \ref{ind in homology}, 
there exist classes $\alpha \in H_{\ind(A)-1}(A)$
and $\beta \in H_{\ind(B)-1}(B)$
whose images in $H_{\ind(A)-1}(L_k^{2M-1}(\ul{w}))$ 
and in $H_{\ind(B)-1}(L_k^{2M'-1}(\ul{w}'))$ are non-zero.
By Lemma \ref{complens} and the naturality of the equivariant join,
$\alpha \ast_\zk \beta$ is a class in 
$H_{\ind(A)+\ind(B)-1} (A \ast_\zk B)$ whose image in 
$H_{\ind(A)+\ind(B)-1} ( L_k^{2(M+M'-1)}(\ul{w},\ul{w}') )$
is non-zero.  By Lemma \ref{ind in homology}, 
this shows that $\ind(A \ast_\zk B) \geq \ind(A) + \ind(B)$.

If $\ind(A)$ or $\ind(B)$ are both odd,
we apply a similar argument to classes
$\alpha \in H_{\ind(A)-1}(A)$ and $\beta' \in H_{\ind(B)-2}(B)$
to conclude that
$\ind(A \ast_\zk B) \geq \ind(A) + \ind(B) - 1$.

\begin{rmk}\labell{remark: RP}
In the case of projective space,
any cell of the standard cellular decomposition
is the equivariant join of the cell in the previous degree with a 0-cell.
Therefore the proof of Proposition \ref{complens}
shows that in this case the join of two generators in any degree is non-zero.
It follows from this argument
and property (iv') in Remark \ref{remark: stronger properties for RP}
that in the case of projective spaces
the cohomological index is join additive:
for closed subsets $A$ of $\mathbb{RP}^M$
and $B$ of $\mathbb{RP}^{M'}$ we have
$\ind (A \ast_{\zt} B) = \ind (A) + \ind (B)$.
\eor
\end{rmk}

\subsection*{Commutativity of the homology join}

We complete our discussion of the join operation on homology
with a commutativity property of this operation.

\begin{prop}\labell{twist}
Let $\widetilde{X} \to X$ and $\widetilde{Y} \to Y$ be principal $G$-bundles
and suppose that $k$ divides the order of $G$.
Let $\tau \colon X \ast_G Y \to Y \ast_G X$
denote the homeomorphism~\eqref{commjoin}.
Then for $\alpha \in H_l(X;\zk)$ and $\beta \in H_m(Y;\zk)$ we have
$$
\tau_{\ast} (\alpha\ast_G \beta) = (-1)^{(l+1)(m+1)} \beta \ast_G \alpha\,.
$$
\end{prop}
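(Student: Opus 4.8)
The plan is to reduce the commutativity statement to a purely combinatorial computation with the standard simplices, exactly in the spirit of how associativity \eqref{assocjoinchains} was handled. First I would observe that, by bilinearity and by the chain-level definition $\sigma \ast_G \mu = \varphi^{-1}\big(\varphi(\sigma) \ast \varphi(\mu)\big)$, it suffices to understand how the homeomorphism $\tau$ of \eqref{commjoin} interacts with the join of singular simplices. Concretely, for singular simplices $\sigma \colon \Delta^l \to X$ and $\mu \colon \Delta^m \to Y$, one has a commuting square whose horizontal maps are $\tau$ (at the level of $X \ast Y$ and of $\Delta^l \ast \Delta^m$) and whose vertical maps are $\sigma \ast \mu$ and $\mu \ast \sigma$; the point is that $\tau \colon \Delta^l \ast \Delta^m \to \Delta^m \ast \Delta^l$, composed with the identifications \eqref{identsimp} with $\Delta^{l+m+1}$, is precisely the affine simplicial map that permutes the first $l+1$ barycentric coordinates past the last $m+1$. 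This permutation is the block transposition, whose sign is $(-1)^{(l+1)(m+1)}$.

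The key steps, in order, would be: (1) make precise the chain-level identity $\tau_{\#}(\sigma \ast \mu) = (\text{block transposition of }\Delta^{l+m+1})_{\#}\,(\mu \ast \sigma)$, using naturality of the join \eqref{naturality} and the consistency of the simplex identifications \eqref{identsimp}; (2) recall the standard fact that if $P$ is the affine automorphism of $\Delta^{N}$ induced by a permutation $\rho \in S_{N+1}$ of the vertices, then for any singular chain $c$, $P_{\#}(c)$ is homologous to $\operatorname{sgn}(\rho)\, c$ — this is the usual computation showing the sign of a permutation of vertices acts by that sign on homology (it reduces to the case of transpositions and an explicit prism/chain-homotopy, or one can cite it directly); (3) compute $\operatorname{sgn}$ of the block transposition $(1,\dots,l+1\mid l+2,\dots,l+m+2) \mapsto (l+2,\dots,l+m+2\mid 1,\dots,l+1)$, which equals $(-1)^{(l+1)(m+1)}$; (4) pass from $\ast$ to $\ast_G$ by applying the isomorphism $\varphi$ of $G$-invariant chains — here one uses that $\tau$ is $G$-equivariant, so $\varphi$ intertwines $\tau_{\#}$ on $C_*(X \ast_G Y)$ with $\tau_{\#}$ on $C_*(\widetilde X \ast \widetilde Y)^G$, and that the block-transposition chain homotopy can be chosen $G$-equivariantly (it is induced by an affine map on simplices and hence commutes with the $G$-action on lifts); (5) pass to homology and apply bilinearity to general classes $\alpha, \beta$.

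The main obstacle I expect is step (2) combined with its equivariant refinement in step (4): one must be a little careful that the chain homotopy realizing $P_{\#}(c) \simeq \operatorname{sgn}(\rho)\,c$ can be taken to be natural enough that, after applying $\varphi^{-1}$, it descends to a chain homotopy on $C_*(X \ast_G Y; \zk)$. The clean way is to note that the standard such homotopy is itself built out of affine maps $\Delta^{N+1} \to \Delta^{N}$ (a prism decomposition), hence is functorial for affine maps, hence is $G$-equivariant when applied to lifted simplices; then $\varphi^{-1}$ transports it down. One should also double-check the edge cases $l=0$ or $m=0$ (where $\partial$ of a join picks up the extra terms in \eqref{boundaryjoin}), but since we only use $\zk$-coefficients with $k \mid |G|$, Lemma \ref{del of join} already guarantees that $\ast_G$ is a chain map in all these cases, so no separate treatment is needed beyond bookkeeping the signs. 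Everything else — bilinearity, naturality, the sign of the block transposition — is routine.
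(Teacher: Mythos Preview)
Your overall strategy is right --- the sign does come from the block transposition on $\Delta^{l+m+1}$, and step (1), (3), (4), (5) are fine --- but step (2) as you state it is not a standard fact and is in fact false for singular chains. Precomposing each simplex of a cycle $c = \sum a_i\nu_i \in C_N(Z)$ with a vertex permutation $P$ does \emph{not} define a chain map, and the resulting chain $c^P$ need not even be a cycle. For a concrete counterexample, take $Z = \partial\Delta^3 \cong S^2$ and let $c = d^0 - d^1 + d^2 - d^3$ be the fundamental $2$-cycle; precomposing each face with the adjacent transposition $P = (0\,1)$ of $\Delta^2$ gives a $2$-chain whose boundary is $[2,1]+[1,2]-[2,0]-[0,2] \neq 0$. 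So there is no natural chain homotopy of the form you describe, and the reduction to ``sign of a permutation on homology'' cannot be cited directly. In your specific situation $c^P$ \emph{is} a cycle, but only because $c^P = \tau_\#(x \ast_G y)$ and $\tau_\#$ is a chain map --- this does not come from any general property of $P$.

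The paper's proof supplies exactly the missing ingredient: it realizes the cycle $x \ast_G y$ geometrically as the pushforward of a cellular cycle under a map $\nu \colon K_{x\ast_G y} \to X\ast_G Y$, where $K_{x\ast_G y}$ is the $\Delta$-complex obtained by gluing one $(l{+}m{+}1)$-simplex for each term along matching faces (Hatcher's construction, pp.~108--109). The block transposition on each top simplex then assembles into an honest cellular homeomorphism $T \colon K_{x\ast_G y} \to K_{y\ast_G x}$ fitting into a commutative square with $\tau$, and in cellular homology $T_\ast$ multiplies the top class by the degree $(-1)^{(l+1)(m+1)}$. Your chain-homotopy approach could likely be repaired by carefully tracking the degenerate correction terms through an induction on adjacent transpositions, but as written the passage to a cellular model is the cleanest fix --- and it is precisely what the paper does.
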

 
\begin{proof}
We use the geometric interpretation of singular cycles
that appears in \cite[p. 108--109]{Hatcher}.
Let $x \in C_l(X; \zk)$ be a cycle, and write 
$x = \sum_i x_i \sigma_i$
with $\sigma_i \colon \Delta^l \to X$ and $x_i \neq 0$.
Let 
$$
K_x = \left(\coprod \Delta_i^l \right)/ \sim
$$
be the disjoint union of one $l$-simplex $\Delta_i^l$ for each $\sigma_i$,
quotiented by the identification of 
the facets of the $\Delta^l_i$s
that give rise (via the maps $\sigma_i$) to the same 
singular $(l-1)$-simplex.
Then the singular simplices $\sigma_i$'s assemble to give a map
$\sigma \colon K_x \to X$.
We denote by $\bar \sigma_i \colon \Delta^l_i \to K_x$
the inclusion in the coproduct followed by quotient.
Then 
$$
\bar{x} := \sum_i x_i \bar \sigma_i \in C_l (K_x; \zk)
$$
is a cycle and $ \sigma_*(\bar{x}) = x$.
Now let $x = \sum_i x_i \sigma_i \in C_l(X; \zk)$ and $y = \sum_j y_j \mu_j \in C_m(Y;\zk)$
be cycles representing the homology classes $\alpha$ and $\beta$ respectively.
Then 
$$
x \ast_G y = \varphi^{-1} \Big( \sum_{g,h \in G} \sum_{i,j} x_i y_j (g \cdot \wt \sigma_i) \ast (h \cdot \wt \mu_j)\Big)
$$
and
$$
y \ast_G x = \varphi^{-1} \Big( \sum_{g,h \in G} \sum_{i,j} x_i y_j (h \cdot \wt \mu_j) \ast (g \cdot \wt \sigma_i) \Big) \,
$$
where $\wt{\sigma}_i$ and $\wt{\mu}_j$ 
are some lifts of $\sigma_i$ and $\mu_j$.
Let $\nu \colon K_{x \ast_G y} \to X \ast_G Y$ and $\eta \colon K_{y \ast_G x} \to Y \ast_G X$
be the maps geometrically realizing the cycles $x \ast_G y$ and $y \ast_G x$
via the procedure described above,
and $\overline{x \ast_G y} \in C_{l+m+1}(K_{x \ast_G y}; \zk)$
and $\overline{y \ast_G x} \in C_{l+m+1}(K_{y \ast_G x}; \zk)$
be the corresponding cycles. 
Then the diagram
$$ 
\xymatrix{
K_{x \ast_G y} \ar[d]_{T} \ar[r]^{\nu} & X \ast_G Y \ar[d]^\tau \\
K_{y \ast_G x} \ar[r]_{\eta} & Y \ast_G X 
}
$$
commutes, where $T$ is the homeomorphism induced by the canonical homeomorphisms
$$
\Delta^{l+m+1} = \Delta^l \ast \Delta^m \xrightarrow{\tau} \Delta^m \ast \Delta^l = \Delta^{l+m+1}
$$
between the top cells of $K_{x \ast_G y}$ and $K_{y \ast_G x}$.
Since for cellular homology with $\zk$-coefficients we have
$T_*([\overline{x \ast_G y}]) = (-1)^{(m+1)(l+1)} [\overline{y \ast_G x}]$, 
the same holds in singular homology.
Hence
\begin{align*}
\tau_{\ast} (\alpha\ast_G \beta)
& = \tau_{\ast} \, \nu_{\ast} \big([\overline{x \ast_G y}]\big) = \eta_{\ast} \, T_* \big([\overline{x \ast_G y}]\big) \\
& = \eta_* \big( (-1)^{(l+1)(m+1)}[\overline{y \ast_G x}] \big) = (-1)^{(l+1)(m+1)} \, \beta \ast_G \alpha \,. 
\end{align*}
\end{proof}



\end{document}